\providecommand{\U}[1]{\protect\rule{.1in}{.1in}}
\newtheorem{theorem}{Theorem}
\theoremstyle{plain}
\newtheorem{corollary}{Corollary}
\newtheorem{definition}{Definition}
\newtheorem{example}{Example}
\newtheorem{lemma}{Lemma}
\newtheorem{proposition}{Proposition}
\newtheorem{remark}{Remark}
\numberwithin{equation}{section}
\numberwithin{theorem}{section}
\numberwithin{definition}{section}
\numberwithin{corollary}{section}
\numberwithin{lemma}{section}
\numberwithin{proposition}{section}
\numberwithin{example}{section}
\numberwithin{remark}{section}
\numberwithin{equation}{section}
\numberwithin{figure}{section}
\numberwithin{table}{section}
\begin{document}
\title[A $\operatorname{cosq}_{K}$-characterization of Aleksandrov spaces of
curvature $\leq K$]{A $K$-quadrilateral cosine characterization of Aleksandrov spaces of\\curvature bounded above}
\author{I.D. Berg}
\address{Department of Mathematics, University of Illinois at Urbana-Champaign, Urbana,
IL 61801, USA}
\email{berg@math.uiuc.edu}
\author{Igor G. Nikolaev}
\curraddr{Department of Mathematics, University of Illinois at \\
Urbana-Champaign, Urbana, IL 61801, USA}
\email{inik@illinois.edu}
\subjclass[2000]{Primary 53C20; Secondary 53C45, 51K10}
\keywords{Aleksandrov space of curvature $\leq K$, $\Re_{K}$ domain, Gromov's curvature
class, $K$-quadrilateral cosine, $K$-Euler's inequality }

\begin{abstract}
In this note, we extend the main results of our paper on quasilinearization
and curvature of Aleksandrov spaces of curvature $\leq0$ to curvature bounds
other than $0$. For non-zero $K$, we employ the previously introduced notion
of the $K$-quadrilateral cosine, which is the cosine under parallel transport
in model $K$-space, and which is denoted by $\operatorname{cosq}_{K}$. Our
principal result states that a geodesically connected metric space (of
diameter not greater than $\pi/\left(  2\sqrt{K}\right)  $ if $K>0$) is an
$\Re_{K}$ domain (otherwise known as a $\operatorname{CAT}\left(  K\right)  $
space) if and only if always $\operatorname{cosq}_{K}\leq1$ or always
$\operatorname{cosq}_{K}$ $\geq-1$. (We prove that in such spaces always
$\operatorname{cosq}_{K}\leq1$ is equivalent to always $\operatorname{cosq}%
_{K}$ $\geq-1$). As a corollary, we give necessary and sufficient conditions
for a Cauchy complete semimetric space to be a complete $\Re_{K}$ domain. We
show that in our theorem the diameter hypothesis for positive $K$ is sharp and
we prove an extremal theorem when $\left\vert \operatorname{cosq}%
_{K}\right\vert $ attains an upper bound of $1$. We derive from our main
theorem and our previous result for $K=0$ a complete solution of Gromov's
curvature problem in the context of Aleksandrov spaces of curvature bounded
above. Then we establish the $K$-Euler's inequality and the extremal theorem
for equality in the $K$-Euler's inequality in an $\Re_{K}$ domain.

\end{abstract}
\maketitle

\section{Introduction}

Classes of Riemannian metrics that satisfy uniform sectional curvature bounds
often arise in geometry. In his fundamental papers \cite{A1951} and
\cite{A1957a}, Aleksandrov presented the upper and lower curvature conditions
for a geodesically connected metric space, i.e., a metric space in which any
two points can be joined by a shortest. In particular, Aleksandrov introduced
the notion of an $\Re_{K}$ domain, also known as a $\operatorname{CAT}\left(
K\right)  $ space, a geodesically connected metric space of curvature $\leq K$
in the sense of Aleksandrov, in which shortests depend continuously on their
end points and in which the perimeter of every geodesic triangle is less than
$2\pi/\sqrt{K}$ if $K>0$.

In this note, we present a deeper metric analysis of Aleksandrov's upper
boundedness curvature condition by extending the main results of our paper
\cite{BergNik2008} for $K=0$ to the case of non-zero $K$. There are \ striking
differences in our approach to non-zero $K$ that require different methods.
Our results are not local; hence the lack of linearity in the model space
presents substantial conceptual and technical problems.

Our main result in \cite{BergNik2008} states that a geodesically connected
metric space $\left(  \mathcal{M},\rho\right)  $ is an $\Re_{0}$ domain if and
only if for every two ordered pairs of distinct points $\overrightarrow
{AP}=\left(  A,P\right)  $ and $\overrightarrow{BQ}=\left(  B,Q\right)  $ in
$\mathcal{M}$, called (non-zero) \emph{bound vectors}, their
\emph{quadrilateral cosine, }$\operatorname{cosq}\left(  \overrightarrow
{AP},\overrightarrow{BQ}\right)  $, satisfies the following inequality
\[
\operatorname{cosq}\left(  \overrightarrow{AP},\overrightarrow{BQ}\right)
\equiv\frac{\rho^{2}\left(  A,Q\right)  +\rho^{2}\left(  B,P\right)  -\rho
^{2}\left(  A,B\right)  -\rho^{2}\left(  P,Q\right)  }{2\rho\left(
A,P\right)  \rho\left(  B,Q\right)  }\leq1.
\]

The quadrilateral cosine \ was introduced in \cite{Nik1990} under the name of
function $h$ and was used to construct the generalized Sasaki metric on the
set of tangent elements of a metric space and to obtain a pure metric
characterization of Riemannian spaces \cite{Nik1990}, \cite{Nik1999}.

The generalization of $\operatorname{cosq}$ to non-zero $K$ is not
straightforward. Let $K\neq0$ and $\kappa=\sqrt{\left\vert K\right\vert }$. In
what follows, $\widehat{\kappa}=\kappa=\sqrt{K}$ if $K>0$ and $\widehat
{\kappa}=i\kappa=i\sqrt{-K}$ if $K<0$. The following definition is equivalent
to Definition 3.2 in \cite{BergNik2007a}.

\begin{definition}
Let $\left(  \mathcal{M},\rho\right)  $ be a metric space and $A,P,B,Q\in
\mathcal{M}$ be such that $A\neq P,$ $B\neq Q$. If $K>0$, we assume that
$\rho\left(  A,P\right)  ,$ $\rho\left(  B,Q\right)  $ and $\rho\left(
A,B\right)  <\pi/\sqrt{K}$. Set
\begin{align*}
\rho\left(  A,P\right)   &  =x,\text{ }\rho\left(  B,Q\right)  =y,\text{ }%
\rho\left(  A,B\right)  =a,\\
\rho\left(  P,Q\right)   &  =b,\text{ }\rho\left(  P,B\right)  =d\text{ and
}\rho\left(  A,Q\right)  =f,
\end{align*}
as shown in Fig. \ref{fig1}.
\begin{figure}
[pth]
\begin{center}
\includegraphics[
natheight=1.506400in,
natwidth=2.106900in,
height=1.4326in,
width=1.9925in
]%
{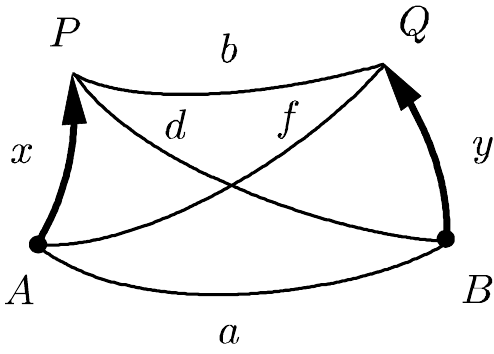}%
\caption{Definition of $\operatorname{cosq}_{K}$}%
\label{fig1}%
\end{center}
\end{figure}
Then the $K$\emph{-quadrilateral cosine} $\operatorname{cosq}_{K}\left(
\overrightarrow{AP},\overrightarrow{BQ}\right)  $ is defined by%
\begin{align*}
\operatorname{cosq}_{K}\left(  \overrightarrow{AP},\overrightarrow{BQ}\right)
&  =\frac{\cos\widehat{\kappa}b+\cos\widehat{\kappa}x\cos\widehat{\kappa}%
y}{\sin\widehat{\kappa}x\sin\widehat{\kappa}y}\\
&  -\frac{\left(  \cos\widehat{\kappa}x+\cos\widehat{\kappa}d\right)  \left(
\cos\widehat{\kappa}y+\cos\widehat{\kappa}f\right)  }{\left(  1+\cos
\widehat{\kappa}a\right)  \sin\widehat{\kappa}x\sin\widehat{\kappa}y}.
\end{align*}
In particular, if $K>0$, then%
\begin{align*}
\operatorname{cosq}_{K}\left(  \overrightarrow{AP},\overrightarrow{BQ}\right)
&  =\frac{\cos\kappa b+\cos\kappa x\cos\kappa y}{\sin\kappa x\sin\kappa y}\\
&  -\frac{\left(  \cos\kappa x+\cos\kappa d\right)  \left(  \cos\kappa
y+\cos\kappa f\right)  }{\left(  1+\cos\kappa a\right)  \sin\kappa x\sin\kappa
y}\text{,}%
\end{align*}
and if $K<0$, then%
\begin{align*}
\operatorname{cosq}_{K}\left(  \overrightarrow{AP},\overrightarrow{BQ}\right)
&  =\frac{\left(  \cosh\kappa x+\cosh\kappa d\right)  \left(  \cosh\kappa
y+\cosh\kappa f\right)  }{\left(  1+\cosh\kappa a\right)  \sinh\kappa
x\sinh\kappa y}\\
&  -\frac{\cosh\kappa b+\cosh\kappa x\cosh\kappa y}{\sinh\kappa x\sinh\kappa
y}.
\end{align*}
If $K=0$, we set $\operatorname{cosq}_{0}\left(  \overrightarrow
{AP},\overrightarrow{BQ}\right)  =\operatorname{cosq}\left(  \overrightarrow
{AP},\overrightarrow{BQ}\right)  $.
\end{definition}

We introduce the following conditions for a metric space $\left(
\mathcal{M},\rho\right)  $:

(i) The \emph{upper} \emph{four point }$\operatorname{cosq}_{K}$%
\emph{\ condition: }$\operatorname{cosq}_{K}\left(  \overrightarrow
{AP},\overrightarrow{BQ}\right)  \leq1$ for every pair of non-zero bound
vectors $\overrightarrow{AP}$ and $\overrightarrow{BQ}$ in $\mathcal{M}$ and
such that $\rho\left(  A,P\right)  , $ $\rho\left(  B,Q\right)  $ and
$\rho\left(  A,B\right)  <\pi/\sqrt{K}$ when $K>0$.

(ii) The \emph{lower} \emph{four point }$\operatorname{cosq}_{K}%
$\emph{\ condition: }$\operatorname{cosq}_{K}\left(  \overrightarrow
{AP},\overrightarrow{BQ}\right)  \geq-1 $ for every pair of non-zero bound
vectors $\overrightarrow{AP}$ and $\overrightarrow{BQ}$ in $\mathcal{M}$ and
such that $\rho\left(  A,P\right)  , $ $\rho\left(  B,Q\right)  $ and
$\rho\left(  A,B\right)  <\pi/\sqrt{K}$ when $K>0$.

We say that $\left(  \mathcal{M},\rho\right)  $ satisfies the \emph{one-sided
four point} $\operatorname{cosq}_{K}$\emph{\ condition }if it satisfies either
the upper four point $\operatorname{cosq}_{K}$ condition or the lower four
point $\operatorname{cosq}_{K}$ condition.

Our present main result is given by the following

\begin{theorem}
\label{MainTh}Let $K\neq0$ and let $\left(  \mathcal{M},\rho\right)  $ be a
geodesically connected metric space such that $\operatorname{diam}\left(
\mathcal{M}\right)  \leq\pi/\left(  2\sqrt{K}\right)  $ when $K>0$. Then
$\left(  \mathcal{M},\rho\right)  $ is an $\Re_{K}$ domain with the same
diameter restriction if and only if $\left(  \mathcal{M},\rho\right)  $
satisfies the one-sided $\operatorname{cosq}_{K}$\ condition.
\end{theorem}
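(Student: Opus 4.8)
\emph{Overall strategy.} I would prove the two implications of the equivalence separately and then read off the equivalence of the upper and lower conditions as a by-product. The organizing principle is that, by the very definition of $\operatorname{cosq}_{K}$ as a cosine under parallel transport, four points lying in the model plane of constant curvature $K$ have $\operatorname{cosq}_{K}\in\lbrack-1,1]$; so the whole game is to compare the metric quantity $\operatorname{cosq}_{K}$ in $\left(\mathcal{M},\rho\right)$ with the honest cosine of a model configuration.

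\emph{Necessity (the easy direction).} Suppose $\left(\mathcal{M},\rho\right)$ is an $\Re_{K}$ domain and fix admissible bound vectors $\overrightarrow{AP}$, $\overrightarrow{BQ}$. I regard $A,P,Q,B$ as the successive vertices of a geodesic quadrilateral, whose sides are $x,b,y,a$ and whose diagonals are $f=\rho(A,Q)$ and $d=\rho(P,B)$; under the diameter hypothesis its perimeter is $\leq2\pi/\sqrt{K}$. By Reshetnyak's majorization theorem this quadrilateral is majorized by a convex quadrilateral $\bar{A}\bar{P}\bar{Q}\bar{B}$ in the model plane with the same side lengths and with diagonals $\bar{f}\geq f$, $\bar{d}\geq d$ (the majorizing map is $1$-Lipschitz and boundary-isometric). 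Inspecting the defining formula shows that, with $x,y,a,b$ fixed and all six arguments in $[0,\pi/2]$ as forced by the diameter bound, $\operatorname{cosq}_{K}$ is \emph{monotone increasing} in each diagonal $d,f$; hence
\[
\operatorname{cosq}_{K}\left(\overrightarrow{AP},\overrightarrow{BQ}\right)\leq\operatorname{cosq}_{K}\left(\overrightarrow{\bar{A}\bar{P}},\overrightarrow{\bar{B}\bar{Q}}\right)\leq1 ,
\]
the last inequality because the middle term is a genuine cosine in the model. This gives the upper $\operatorname{cosq}_{K}$ condition, hence the one-sided condition. The only delicate point is the borderline perimeter estimate $2\pi/\sqrt{K}$ when $K>0$: I would dispose of it by a short continuity argument, contracting the configuration slightly toward the interior, where the diameter bound is strict, before invoking majorization.

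\emph{Sufficiency (the main work).} Assume the upper condition $\operatorname{cosq}_{K}\leq1$; the lower case is handled by the symmetric argument below. The plan is to deduce Aleksandrov's angle/convexity characterization of $\Re_{K}$ domains: for every geodesic triangle $\triangle pqr$ and every $z\in\lbrack q,r]$, $\rho(p,z)\leq\tilde{\rho}(\tilde{p},\tilde{z})$ for the comparison triangle in the model plane, together with uniqueness and continuous dependence of shortests, all of which follow once this convexity is in force. The substantive step is to recover a statement about a geodesic triangle from a genuinely four-point inequality: I would run families of bound vectors $\overrightarrow{AP}$, $\overrightarrow{BQ}$ whose base points sweep along the sides of the triangle and let the configuration degenerate, so that $\operatorname{cosq}_{K}\leq1$ passes, in the limit, to monotonicity of the comparison angle along a geodesic, which is equivalent to the desired convexity.

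\emph{The main obstacle, and the equivalence of conditions.} I expect this extraction to be the crux. The vertex-sharing specialization $B=A$ collapses the formula to $\operatorname{cosq}_{K}=\cos\tilde{\gamma}_{A}\leq1$, which is vacuous; so all curvature content must be squeezed out of genuinely non-degenerate configurations, where the absence of a linear structure on the model plane for $K\neq0$ destroys the polarization and parallelogram identities that made the $K=0$ argument of \cite{BergNik2008} transparent, and leaves only global, non-local trigonometric estimates in the curved model, exactly the difficulty flagged in the introduction. Finally, since each one-sided condition separately yields $\Re_{K}$ by the symmetric sufficiency argument, and since (by necessity) $\Re_{K}$ forces the upper condition, the equivalence of the upper and lower conditions follows once I also establish $\operatorname{cosq}_{K}\geq-1$ in every $\Re_{K}$ domain; because reversal of a bound vector no longer simply negates $\operatorname{cosq}_{K}$ when $K\neq0$, this last point requires its own trigonometric argument rather than the sign symmetry available for $K=0$, and the sharpness of the diameter hypothesis for $K>0$ is precisely what constrains where these estimates can be pushed.
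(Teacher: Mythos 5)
Your necessity direction is essentially the paper's own argument (Theorem \ref{ThRKBound}): majorize the closed quadrilateral $\mathcal{APQBA}$ by a convex model quadrilateral via Reshetnyak's theorem, use monotonicity of $\operatorname{cosq}_{K}$ in the two diagonals, bound the model value by $1$ through the parallel-transport interpretation, and dispose of the borderline diameter by a perturbation argument. Two caveats, though. First, the monotonicity is \emph{not} "forced by the diameter bound" alone: for $K>0$ the majorant's diagonals need not lie in $[0,\pi/(2\sqrt{K})]$, and the paper must extract from Reshetnyak's construction the refined inequalities $d\leq d^{\prime}<\pi/(2\sqrt{K})$ and $f\leq f^{\prime}<\pi/\sqrt{K}$ (inequalities (\ref{diag_cond}), resting on Corollary \ref{Cor_Pi_by_2}) before each term in the expanded product difference can be seen to be nonnegative. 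Second, your argument yields only $\operatorname{cosq}_{K}\leq1$; the theorem as stated (one-sided condition, either sign) also needs $\operatorname{cosq}_{K}\geq-1$ in every $\Re_{K}$ domain, which you explicitly defer to "its own trigonometric argument" without supplying it — the paper gets it from \cite[Theorem 4.2]{BergNik2007a}.

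The genuine gap is the sufficiency direction, which is the bulk of the theorem. You correctly identify the crux — extracting triangle comparison from a four-point inequality when the vertex-sharing specialization is vacuous — but you then only assert that you "would run families of bound vectors" and let configurations degenerate so that the condition "passes, in the limit, to monotonicity of the comparison angle." That limit statement is precisely what must be proved, and nothing in your proposal produces it. The paper's route is long and quantitative: continuity and uniqueness of shortests derived directly from the four-point condition (Lemma \ref{Lemma_cont_geod}); the cross-diagonal estimate (Lemma \ref{Lemma_cross_diag}) and the growth estimate (Lemma \ref{Lemma_Growth_est}), which are second-order trigonometric expansions of $\operatorname{cosq}_{K}$ on degenerating configurations; existence of proportional angles and then of Aleksandrov angles (Corollary \ref{Cor_prop_angle}, Proposition \ref{Prop_Exist_Angle}); a model-space identity (Proposition \ref{Prop_sper_ident}) combined with the limit form of the growth estimate to obtain the local angle comparison (Proposition \ref{Prop_angle_comp}); and finally Aleksandrov's local-to-global theorem. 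Note also that your ordering is backwards: continuity of shortests is an \emph{input} to the local-to-global step (and the paper proves it from the four-point condition before anything else), not a consequence of convexity as you assert, and the degeneration you envision would naturally yield only local (infinitesimal) information, so the local-to-global issue cannot be skipped. Finally, "the lower case is handled by the symmetric argument" is not available for $K\neq0$ — as you yourself observe elsewhere, reversing a bound vector does not negate $\operatorname{cosq}_{K}$, and indeed the paper runs separate, parallel but genuinely different computations (with different pairs of bound vectors) for the upper and lower conditions in every lemma of the sufficiency proof.
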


\begin{remark}
As Example \ref{Ex_counter_1} shows, the restriction on the diameter of
$\left(  \mathcal{M},\rho\right)  $ for positive $K$ cannot be dropped and the
diameter bound in the hypothesis of Theorem \ref{MainTh} is sharp.
\end{remark}

\begin{remark}
A normed vector space of curvature $\leq K$ in the sense of Aleksandrov is an
inner product space \cite[p. 7]{A1957b}. Hence, we can complement the results
of the paper by Schoenberg \cite{Sch1952} by deriving from Theorem
\ref{MainTh} that a normed vector space is an inner product space if and only
if it satisfies the one-sided $\operatorname{cosq}_{K}$\ condition for some
positive $K$.
\end{remark}

Recall that the $K$-plane $\mathbb{S}_{K}$ is the Euclidean plane if $K=0$,
the open hemisphere of radius $1/\sqrt{K}$ if $K>0$ and the hyperbolic plane
of curvature $K$ if $K<0$. The definition of $K$-space $\mathbb{S}_{K}^{3}$ is similar.

\begin{remark}
Notice that in a domain of $\mathbb{S}_{K}^{3}$ of diameter less than
$\pi/\left(  2\sqrt{K}\right)  $ if $K>0$, the $K$-quadrilateral cosine of a
pair of non-zero bound vectors $\overrightarrow{AP}$ and $\overrightarrow{BQ}
$ equals the cosine of the angle between the vector $\exp_{B}^{-1}\left(
Q\right)  $ and the vector $\exp_{A}^{-1}\left(  P\right)  $ under the
Levi-Civita parallel translation from the point $A$ to the point $B$ along the
shortest joining these two points (see, Sec. \ref{cosqk_in_SK}). Moreover, in
$\mathbb{S}_{K}^{3}$, for every $K$, $\operatorname{cosq}_{K}$ always can be
interpreted as a cosine of an angle (Corollary \ref{CorcosqKbound}). By
Theorems \ref{MainTh} and \ref{ThRKBound}, in a geodesically connected metric
space satisfying the diameter restriction of Theorem \ref{MainTh},
$\operatorname{cosq}_{K}$ can be interpreted as the cosine of an angle when
and only when the metric space is an $\Re_{K}$ domain.
\end{remark}

If $K=0$, the upper four point $\operatorname{cosq}_{K}$ condition is
immediately equivalent to the lower four point $\operatorname{cosq}_{K}$
condition \cite[Introduction]{BergNik2008}. According to Examples
\ref{ExFPC_PosK} and \ref{ExFPC_NegK}, in a general metric space, this is not
true anymore for non-zero $K$. However, we derive from Theorem \ref{MainTh}
and Theorem \ref{ThRKBound} of Sec. \ref{Sec_RK} the following:

\begin{corollary}
Let $K\neq0$ and let $\left(  \mathcal{M},\rho\right)  $ be a geodesically
connected metric space such that $\operatorname{diam}\left(  \mathcal{M}%
\right)  \leq\pi/\left(  2\sqrt{K}\right)  $ when $K>0$. Then $\left(
\mathcal{M},\rho\right)  $ satisfies the upper four point $\operatorname{cosq}%
_{K}$ condition if and only if $\left(  \mathcal{M},\rho\right)  $ satisfies
the lower four point $\operatorname{cosq}_{K}$ condition.\newline
\end{corollary}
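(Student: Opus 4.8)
The plan is to route both one-sided conditions through the property of being an $\Re_{K}$ domain, using Theorem \ref{MainTh} to pass from either condition to the $\Re_{K}$ structure, and Theorem \ref{ThRKBound} to recover the other condition from that structure. The crucial observation is that the upper four point $\operatorname{cosq}_{K}$ condition and the lower four point $\operatorname{cosq}_{K}$ condition are each, by themselves, instances of the one-sided $\operatorname{cosq}_{K}$ condition, since the latter is by definition the disjunction of the two.

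First I would assume that $\left(\mathcal{M},\rho\right)$ satisfies the upper four point $\operatorname{cosq}_{K}$ condition. Because this is one of the two alternatives constituting the one-sided condition, $\left(\mathcal{M},\rho\right)$ satisfies the one-sided $\operatorname{cosq}_{K}$ condition; and Theorem \ref{MainTh}, whose diameter hypothesis is precisely the one assumed here, then yields that $\left(\mathcal{M},\rho\right)$ is an $\Re_{K}$ domain (with the same diameter restriction when $K>0$).

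Next I would invoke Theorem \ref{ThRKBound}, which guarantees that in an $\Re_{K}$ domain, under the stated diameter restriction, the quantity $\operatorname{cosq}_{K}$ satisfies $\left\vert\operatorname{cosq}_{K}\right\vert\leq1$ for every admissible pair of non-zero bound vectors. In particular the lower bound $\operatorname{cosq}_{K}\geq-1$ holds, so $\left(\mathcal{M},\rho\right)$ satisfies the lower four point $\operatorname{cosq}_{K}$ condition. The reverse implication is entirely symmetric: assuming the lower condition again places $\left(\mathcal{M},\rho\right)$ under the one-sided condition, Theorem \ref{MainTh} makes it an $\Re_{K}$ domain, and Theorem \ref{ThRKBound} then delivers the upper bound $\operatorname{cosq}_{K}\leq1$, i.e., the upper four point $\operatorname{cosq}_{K}$ condition.

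There is no genuine analytic obstacle in this argument; its entire force is borrowed from Theorems \ref{MainTh} and \ref{ThRKBound}. The only point requiring care is logical rather than computational: one must use that the one-sided condition is a disjunction, so that either single-sided hypothesis alone suffices to trigger Theorem \ref{MainTh}, and one must carry the diameter hypothesis through both applications so that both theorems apply verbatim.
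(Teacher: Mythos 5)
Your proof is correct and follows exactly the route the paper intends: the corollary is stated there as a consequence of Theorem \ref{MainTh} and Theorem \ref{ThRKBound}, and your argument (either one-sided condition triggers Theorem \ref{MainTh} to get an $\Re_{K}$ domain, then Theorem \ref{ThRKBound} gives $\left\vert\operatorname{cosq}_{K}\right\vert\leq1$, hence both conditions) is precisely that derivation, with the diameter hypothesis correctly carried through so that every quadruple in $\mathcal{M}$ satisfies the hypotheses of Theorem \ref{ThRKBound}.
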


Recall that a polygonal curve $\mathcal{APQBA}$ in a Riemannian space is
called a Levi-Civita parallelogramoid \cite{Car1983} if the distances between
$A$ and $P$ and $B$ and $Q$ are equal, and the vectors $\exp_{A}^{-1}\left(
P\right)  $ and $\exp_{B}^{-1}\left(  Q\right)  $ are parallel along a
shortest joining $A$ to $B$. We say that a polygonal curve $\mathcal{APQBA}$
in $\mathbb{S}_{K}$ is a Levi-Civita trapezoid if either the vectors $\exp
_{A}^{-1}\left(  P\right)  $ and $\exp_{B}^{-1}\left(  Q\right)  $ are
parallel along the shortest $\mathcal{AB}$ or the vectors $\exp_{A}%
^{-1}\left(  P\right)  $ and $-\exp_{B}^{-1}\left(  Q\right)  $ are parallel
along the shortest $\mathcal{AB}$. A convex domain in $\mathbb{S}_{K}$
enclosed by a Levi-Civita trapezoid is called a Levi-Civita trapezoidal
domain. In particular, the set of points of a shortest in $\mathbb{S}_{K}$ is
a degenerate Levi-Civita trapezoidal domain. The following theorem
generalizing \cite[Theorem 15]{BergNik1998} and \cite[Theorem 6.2]%
{BergNik2007a} describes the extremal cases when $\operatorname{cosq}_{K} $
takes values $1$ or $-1$.

\begin{theorem}
\label{ThExtr}Let $K\neq0$ and let $\left(  \mathcal{M},\rho\right)  $ be a
geodesically connected metric space such that $\operatorname{diam}\left(
\mathcal{M}\right)  <\pi/\left(  2\sqrt{K}\right)  $ when $K>0$. If $\left(
\mathcal{M},\rho\right)  $ satisfies the one-sided four point
$\operatorname{cosq}_{K}$ condition, and for a pair of non-zero bound vectors
$\overrightarrow{AP}$ and $\overrightarrow{BQ}$ in $\mathcal{M}$, $\left\vert
\operatorname{cosq}_{K}\left(  \overrightarrow{AP},\overrightarrow{BQ}\right)
\right\vert $ $=1$, then the convex hull of the quadruple $\left\{
A,P,Q,B\right\}  $ is isometric to a Levi-Civita trapezoidal domain in
$\mathbb{S}_{K}$.\newline
\end{theorem}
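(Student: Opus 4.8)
The plan is to leverage Theorem~\ref{MainTh} together with the already-established behavior of $\operatorname{cosq}_{K}$ inside $\mathbb{S}_{K}^{3}$ (Remark following Theorem~\ref{MainTh} and Corollary~\ref{CorcosqKbound}). Since $(\mathcal{M},\rho)$ satisfies the one-sided four point $\operatorname{cosq}_{K}$ condition and the strict diameter bound $\operatorname{diam}(\mathcal{M})<\pi/(2\sqrt{K})$ when $K>0$, Theorem~\ref{MainTh} tells us that $(\mathcal{M},\rho)$ is an $\Re_{K}$ domain. First I would fix the pair of bound vectors $\overrightarrow{AP}$ and $\overrightarrow{BQ}$ with $|\operatorname{cosq}_{K}(\overrightarrow{AP},\overrightarrow{BQ})|=1$ and pass to the four points $A,P,B,Q$ together with the shortests joining them. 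Because we are in an $\Re_{K}$ domain, each geodesic triangle admits a comparison triangle in $\mathbb{S}_{K}$ whose corresponding sides have equal length and whose angles dominate the original angles (the $\operatorname{CAT}(K)$ inequality).

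Second, I would build a comparison quadrilateral $\widetilde{A}\widetilde{P}\widetilde{Q}\widetilde{B}$ in $\mathbb{S}_{K}^{3}$ (or, when it degenerates, in $\mathbb{S}_{K}$) realizing the six pairwise distances $x,y,a,b,d,f$ of Definition~1; the point is that in the model space $\operatorname{cosq}_{K}$ is literally the cosine of the angle between $\exp_{\widetilde{B}}^{-1}(\widetilde{Q})$ and the parallel transport of $\exp_{\widetilde{A}}^{-1}(\widetilde{P})$ along $\widetilde{A}\widetilde{B}$. Hence the hypothesis $|\operatorname{cosq}_{K}|=1$ forces this model cosine to equal $\pm1$, i.e. the two transported vectors are parallel (value $+1$) or antiparallel (value $-1$) along $\widetilde{A}\widetilde{B}$. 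This is exactly the condition that $\widetilde{A}\widetilde{P}\widetilde{Q}\widetilde{B}$ is a Levi-Civita trapezoid in $\mathbb{S}_{K}$, so the model configuration already sits inside a Levi-Civita trapezoidal domain.

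Third, and this is where the real work lies, I would upgrade the equality $|\operatorname{cosq}_{K}|=1$ into a rigidity statement forcing the convex hull of $\{A,P,Q,B\}$ in $\mathcal{M}$ to be isometric to that model domain. The strategy mirrors the $K=0$ argument of \cite[Theorem~15]{BergNik1998} and the $\Re_{K}$ version \cite[Theorem~6.2]{BergNik2007a}: I would examine an auxiliary point, e.g. the midpoint $M$ of the shortest $\mathcal{AB}$, and compare the actual distances from $M$ to $P$ and $Q$ against their model counterparts. In an $\Re_{K}$ domain the comparison angles are monotone and the function $\operatorname{cosq}_{K}$ is built precisely so that its attaining the extreme value $\pm1$ propagates equality through each of the constituent comparison triangles. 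I would argue that every comparison triangle in the decomposition must itself be flat (isometrically embeddable with equality in the $\operatorname{CAT}(K)$ inequality), which by the standard flat-triangle rigidity in $\Re_{K}$ domains forces each triangle to bound a convex region isometric to its model triangle, and these glue along the common shortest $\mathcal{AB}$ (or $\mathcal{PQ}$) into an isometric copy of the Levi-Civita trapezoidal domain. The degenerate case, where all four points lie on a single shortest, is handled separately and yields the degenerate trapezoidal domain.

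The main obstacle I anticipate is precisely this gluing-and-rigidity step: translating the single scalar equality $|\operatorname{cosq}_{K}|=1$ into simultaneous equalities in all the comparison triangles, and then invoking the flat-triangle rigidity to conclude a genuine isometry (not merely a distance-preserving map on the four points) onto a convex trapezoidal domain. Controlling the non-linearity of $\mathbb{S}_{K}$ — the fact that parallel transport and the law of cosines are genuinely curved — is what makes the passage from the four-point equality to the full convex-hull isometry delicate, and it is here that the strict diameter bound is used to keep all the relevant comparison triangles nondegenerate and within a single convex region of the model space.
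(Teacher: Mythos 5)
Your outline has the right endpoints (Theorem~\ref{MainTh} to land in an $\Re_{K}$ domain; the interpretation of $\operatorname{cosq}_{K}=\pm1$ in the model as parallelism/antiparallelism; a flat-triangle-rigidity-plus-gluing argument for the convex hull, which is exactly the role played by the paper's Lemma~\ref{Lemma_hull}), but the central step is a genuine gap. In your second step you propose to ``build a comparison quadrilateral in $\mathbb{S}_{K}^{3}$ realizing the six pairwise distances'' of $\left\{ A,P,B,Q\right\}$. No such configuration exists in general: a quadruple in an $\Re_{K}$ domain need not embed isometrically in any model space. For example, in the metric tree consisting of three segments of length $\varepsilon$ glued at a point (an $\Re_{K}$ domain of arbitrarily small diameter), the branch point together with the three endpoints admits no isometric copy in $\mathbb{S}_{K}^{3}$: a point at distance $\varepsilon$ from both endpoints of a geodesic of length $2\varepsilon$ must be its midpoint, forcing two of the endpoints to coincide. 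The existence of a model quadruple realizing all six distances is precisely the nontrivial content of the extremal theorem, so positing it as a construction is circular; and your third step, which is supposed to supply the rigidity, never identifies a mechanism converting the single scalar equality $\left\vert \operatorname{cosq}_{K}\right\vert =1$ into that six-distance realization --- ``propagates equality through each of the constituent comparison triangles'' is the assertion to be proved, not an argument.

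The mechanism the paper uses, and which your sketch is missing, is Reshetnyak's majorization theorem combined with the monotonicity of $\operatorname{cosq}_{K}$ in the diagonals isolated in the proof of Theorem~\ref{ThRKBound}. One majorizes the closed polygonal curve $\mathcal{APQBA}$ when $\operatorname{cosq}_{K}\left( \overrightarrow{AP},\overrightarrow{BQ}\right) =1$, and the differently ordered curve $\mathcal{AQBPA}$ when the value is $-1$ (the choice of cyclic order matters, and your proposal does not distinguish the two cases), obtaining a convex quadrangle in the two-dimensional $\mathbb{S}_{K}$ with the same four sides and with diagonals $d\leq d^{\prime}$, $f\leq f^{\prime}$. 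By the computation in the proof of Theorem~\ref{ThRKBound}, enlarging a diagonal strictly increases $\operatorname{cosq}_{K}$, so a strict inequality in either diagonal would push $\operatorname{cosq}_{K}\left( \overrightarrow{A^{\prime}P^{\prime}},\overrightarrow{B^{\prime}Q^{\prime}}\right)$ strictly beyond the bound of Corollary~\ref{CorcosqKbound}, a contradiction; hence $d=d^{\prime}$ and $f=f^{\prime}$, the quadruple is isometric to the vertex set of the convex model quadrangle (coplanarity comes for free, since the majorant lives in $\mathbb{S}_{K}$ rather than $\mathbb{S}_{K}^{3}$), and Lemma~\ref{Lemma_hull} then upgrades this four-point isometry to an isometry of the geodesic convex hull onto the Levi-Civita trapezoidal domain. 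Without this (or an equivalent) mechanism, your proof does not get past the four points.
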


By Example \ref{Ex_toExtr_th}, Theorem \ref{ThExtr} need not be true if
$\operatorname{diam}\left(  \mathcal{M}\right)  =\pi/\left(  2\sqrt{K}\right)
$ when $K>0 $.

Recall that a semimetric space is a distance space with a positive definite
and symmetric distance. A semimetric space $\left(  \mathcal{M},\rho\right)  $
is said to be \emph{weakly convex} if, for every $A,$ $B\in\mathcal{M}$, there
is $\lambda\in\left(  0,1\right)  $, such that, for every $\varepsilon>0$,
there is $C_{\varepsilon}\in\mathcal{M}$ satisfying the inequalities
$\left\vert \rho\left(  A,C_{\varepsilon}\right)  -\lambda\rho\left(
A,B\right)  \right\vert <\varepsilon$ and $\left\vert \rho\left(
B,C_{\varepsilon}\right)  -\left(  1-\lambda\right)  \rho\left(  A,B\right)
\right\vert <\varepsilon$. Cauchy sequences in a semimetric space and the
diameter of a semimetric space are defined in the same way as in a metric
space. Finally, notice that the upper and the lower four point
$\operatorname{cosq}_{K}$ conditions can also be stated for semimetric spaces.
We derive from Theorem \ref{MainTh} and Menger's theorem \cite[Theorem
14.1]{Bl1970} the following extension of \cite[Theorem 5]{BergNik2008} to
non-zero $K$:

\begin{theorem}
\label{Thsemimetr}Let $K\neq0$ and let $\left(  \mathcal{M},\rho\right)  $ be
a semimetric space such that $\operatorname{diam}\left(  \mathcal{M}\right)
\leq\pi/\left(  2\sqrt{K}\right)  $ when $K>0$. Then $\left(  \mathcal{M}%
,\rho\right)  $ is a complete $\Re_{K}$ domain with the same diameter
restriction if and only if the following conditions are satisfied:\newline(a)
$\left(  \mathcal{M},\rho\right)  $ is weakly convex.\newline(b) Each Cauchy
sequence in $\left(  \mathcal{M},\rho\right)  $ has a limit.\newline(c)
$\left(  \mathcal{M},\rho\right)  $ satisfies the one-sided four point
$\operatorname{cosq}_{K}$ condition.\newline
\end{theorem}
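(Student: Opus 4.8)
The plan is to reduce the statement to the already-established Theorem \ref{MainTh} by showing that conditions (a)--(c) allow us to upgrade the semimetric space $(\mathcal{M},\rho)$ to a geodesically connected metric space, and then invoke Theorem \ref{MainTh} directly. The forward implication is the routine direction: if $(\mathcal{M},\rho)$ is a complete $\Re_K$ domain, then it is in particular a metric space in which shortest paths exist and depend continuously on their endpoints, so it is geodesically connected, hence weakly convex (taking $C_\varepsilon$ to be the actual midpoint of a shortest joining $A$ to $B$); completeness gives (b); and since $(\mathcal{M},\rho)$ is then a geodesically connected metric space satisfying the diameter bound, the easy half of Theorem \ref{MainTh} yields the one-sided four point $\operatorname{cosq}_K$ condition (c). So the substance is the reverse implication.

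For the reverse direction, I would first verify that $(\mathcal{M},\rho)$ is genuinely a metric space, i.e., that the triangle inequality holds. This is exactly where Menger's theorem \cite[Theorem 14.1]{Bl1970} enters: weak convexity (a) together with completeness (b) guarantees that between any two points there is a metric segment (a shortest), and the existence of approximate midpoints that are promoted to genuine midpoints by the Cauchy completeness forces the triangle inequality and upgrades the semimetric $\rho$ to a true metric for which $(\mathcal{M},\rho)$ is geodesically connected. The key step is to confirm that Menger's convexity hypotheses are met: condition (a) supplies approximate midpoints for each pair $A,B$, and (b) allows one to extract a convergent subsequence of the $C_{1/n}$ whose limit is an exact point dividing the distance $\rho(A,B)$ in the ratio $\lambda:(1-\lambda)$; iterating along a dyadic scheme and completing yields a shortest segment, which is Menger's conclusion.

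Once $(\mathcal{M},\rho)$ is known to be a geodesically connected metric space satisfying the diameter restriction, condition (c) is precisely the hypothesis of Theorem \ref{MainTh}, so that theorem immediately gives that $(\mathcal{M},\rho)$ is an $\Re_K$ domain with the same diameter restriction. It remains only to observe that this $\Re_K$ domain is \emph{complete}: but completeness is exactly condition (b), which is preserved under the identification of the semimetric with the metric since Cauchy sequences and the diameter are defined identically in both settings. Thus the semimetric space is a complete $\Re_K$ domain, as required.

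The main obstacle I anticipate is the careful application of Menger's theorem to pass from weak convexity --- which only furnishes \emph{approximate} intermediate points at a single, possibly $A,B$-dependent ratio $\lambda$, rather than exact midpoints at ratio $1/2$ --- to honest shortest paths, while simultaneously establishing the triangle inequality that a mere semimetric is not assumed to satisfy. The subtlety is that weak convexity as stated gives a point near the $\lambda$-division rather than near the midpoint, so one must argue that the approximate-betweenness supplied by (a), combined with completeness (b), satisfies the precise metric convexity hypothesis required by Menger's theorem; verifying this compatibility, and checking that it genuinely forces the triangle inequality, is where the care is needed. The corresponding argument for $K=0$ is \cite[Theorem 5]{BergNik2008}, and the point of the present theorem is that, once the metric and geodesic structure are recovered, the passage to nonzero $K$ is handled entirely by Theorem \ref{MainTh} with no further geometric input.
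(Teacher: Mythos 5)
Your overall skeleton (recover a geodesically connected metric space, then quote Theorem \ref{MainTh}, with Menger's theorem supplying geodesic connectedness) is the same as the paper's, and your forward direction is fine. But the reverse direction has a genuine gap: you derive the triangle inequality and the metric structure from (a) and (b) alone, via Menger's theorem. Menger's theorem \cite[Theorem 14.1]{Bl1970} is a statement about \emph{metric} spaces (complete plus metrically convex implies geodesically connected); it presupposes the triangle inequality and cannot be its source. Moreover, in a bare semimetric space, weak convexity and Cauchy completeness give no control over the mutual distances $\rho\left(  C_{n},C_{m}\right)  $ of the approximate $\lambda$-division points: knowing $\rho\left(  A,C_{n}\right)  ,\rho\left(  B,C_{n}\right)  ,\rho\left(  A,C_{m}\right)  ,\rho\left(  B,C_{m}\right)  $ constrains $\rho\left(  C_{n},C_{m}\right)  $ in no way whatsoever without a triangle inequality, so the sequence $\left(  C_{n}\right)  $ cannot be shown Cauchy from (a) and (b). Your phrase ``(b) allows one to extract a convergent subsequence of the $C_{1/n}$'' also conflates completeness with compactness: condition (b) only promotes sequences \emph{already known to be Cauchy} to convergent ones.

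Both of the missing steps must be driven by condition (c), and this is exactly how the paper proceeds. First, Lemma \ref{Lemma_semimetr} derives the triangle inequality directly from the one-sided four point $\operatorname{cosq}_{K}$ condition applied to degenerate configurations: for instance $\operatorname{cosq}_{K}\left(  \overrightarrow{CA},\overrightarrow{CB}\right)  \leq1$ unwinds, after the product-to-sum manipulation, to $b\leq a+c$ (and similarly for the lower condition). Second, with the metric structure in hand, the paper shows that the approximate $\lambda$-division points $C_{n}$ supplied by (a) form a Cauchy sequence by reusing the estimate (\ref{cont_g_c}) from the proof of Lemma \ref{Lemma_cont_geod} --- again an application of condition (c), with $A_{n}:=A$, $B_{n}:=B$, $P:=C_{n}$, $P_{n}:=C_{m}$ --- and only then does (b) produce a limit $C$ with $AB=AC+CB$. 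After that, Menger's theorem and Theorem \ref{MainTh} finish the argument exactly as you indicate. So the reduction you propose is the right one, but without invoking (c) in the two places above, the passage from semimetric to complete convex metric space does not go through.
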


In his book \cite{Gr1999}, Gromov offered a method to define classes of metric
spaces corresponding to Riemannian manifolds with prescribed curvature
restrictions by introducing global and local $\mathcal{K}$-curvature classes.
Let $r\in\mathbb{N}$ and $M_{r}$ denote the set of all symmetric $r\times r$
matrices with zero diagonal entries and non-negative entries otherwise. Let
$\mathcal{X}$ be a set and $d:\mathcal{X}\times\mathcal{X}\rightarrow
\mathbb{R}$ be a non-negative function such that $d\left(  P,Q\right)
=d\left(  Q,P\right)  $ and $d\left(  P,Q\right)  =0$ if and only if $P=Q$,
for all $P,Q\in\mathcal{X}$. Then $K_{r}\left(  \mathcal{X}\right)  $ consists
of all matrices $A=\left(  a_{ij}\right)  $ in $M_{r}$ such that for every
$A\in K_{r}\left(  \mathcal{X}\right)  $ there is an $r$-tuple $\left\{
P_{1},P_{2},...,P_{r}\right\}  \subseteq\mathcal{X}$ satisfying $a_{ij}%
=d\left(  P_{i},P_{j}\right)  $, $i,j=1,2,...,r$. A subset
$\mathcal{K\subseteq}M_{r}$ defines the (global) $\mathcal{K}$-curvature class
as follows. The $\mathcal{K}$-curvature class consists of all $\left(
\mathcal{X},d\right)  $ such that $K_{r}\left(  \mathcal{X}\right)
\mathcal{\subseteq K}$. Gromov's curvature problem is the problem of a
meaningful geometric description of $\mathcal{K}$-curvature classes
(\cite{Gr1999}, Section 1.19$_{+}$, Curvature Problem).

In \cite[Theorem 8]{BergNik2008} we gave a solution of Gromov's curvature
problem in the context of $\Re_{0}$ domains and therefore for Aleksandrov
spaces of non-positive curvature. In this note, we obtain a complete solution
of Gromov's curvature problem in the context of $\Re_{K}$ domains and
Aleksandrov spaces of curvature $\leq K$ by solving Gromov's curvature problem
for non-zero $K$ as a corollary of Theorems \ref{MainTh} and \ref{Thsemimetr}.

Let $\mathfrak{M}_{G}$ be the set of all geodesically connected metric spaces
and $\mathfrak{M}_{S}$ denote the set of all semimetric spaces satisfying
conditions (a) and (b) of Theorem \ref{Thsemimetr}. For $\kappa>0$, let
$\mathcal{K}^{+}\left(  \kappa^{2}\right)  $ denote the set of matrices
$A=\left(  a_{ij}\right)  \in M_{4}$ such that%

\begin{align*}
&  \left(  \cos\kappa a_{23}+\cos\kappa a_{12}\cos\kappa a_{34}\right)
\left(  1+\cos\kappa a_{14}\right)  -\\
&  \left(  \cos\kappa a_{12}+\cos\kappa a_{24}\right)  \left(  \cos\kappa
a_{34}+\cos\kappa a_{13}\right)  \leq\\
&  \text{ }\sin\kappa a_{12}\sin\kappa a_{34}\left(  1+\cos\kappa
a_{14}\right)
\end{align*}
and $a_{12},a_{13},a_{14},a_{23},a_{24},a_{34}\leq\pi/\left(  2\kappa\right)
$. For $\mathcal{K}^{-}\left(  \kappa^{2}\right)  $, multiply the left-hand
side of the above inequality by $\left(  -1\right)  $. In a similar way, we
define $\mathcal{K}^{+}\left(  -\kappa^{2}\right)  $ as the set of all
matrices $A=\left(  a_{ij}\right)  \in M_{4}$ such that
\[%
\begin{array}
[c]{c}%
\left(  \cosh\kappa a_{12}+\cosh\kappa a_{24}\right)  \left(  \cosh\kappa
a_{34}+\cosh\kappa a_{13}\right)  -\\
\left(  \cosh\kappa a_{23}+\cosh\kappa a_{12}\cosh\kappa a_{34}\right)
\left(  1+\cosh\kappa a_{14}\right)  \leq\\
\text{ }\sinh\kappa a_{12}\sinh\kappa a_{34}\left(  1+\cosh\kappa
a_{14}\right)
\end{array}
\]
and for $\mathcal{K}^{-}\left(  -\kappa^{2}\right)  $, multiply the left-hand
side of the above inequality by $\left(  -1\right)  $.

\begin{theorem}
\label{ThCurvPr}Let $\kappa>0$ and $K=\kappa^{2}$ if $K>0$ and $K=-\kappa^{2}
$ if $K<0$. Then\newline(i) $\left(  \mathcal{X},\rho\right)  \in
\mathfrak{M}_{G}$ (respectively $\left(  \mathcal{X},\rho\right)
\in\mathfrak{M}_{S}$) is in the global $\mathcal{K}^{\pm}\left(  \kappa
^{2}\right)  $-curvature class if and only if $\left(  \mathcal{X}%
,\rho\right)  $ is an $\Re_{K}$ domain (respectively complete $\Re_{K}$
domain) of diameter not greater than $\pi/\left(  2\kappa\right)  $%
.\newline(ii) $\left(  \mathcal{X},\rho\right)  \in\mathfrak{M}_{G}$
(respectively $\left(  \mathcal{X},\rho\right)  \in\mathfrak{M}_{S}$) is in
the global $\mathcal{K}^{\pm}\left(  -\kappa^{2}\right)  $-curvature class if
and only if $\left(  \mathcal{X},\rho\right)  $ is an $\Re_{K}$ domain
(respectively complete $\Re_{K}$ domain). \newline
\end{theorem}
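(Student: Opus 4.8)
The plan is to observe that, for a fixed labeling of the four points, membership of a distance matrix in $\mathcal{K}^{\pm}\left(  \pm\kappa^{2}\right)  $ is, after clearing a positive denominator, literally the inequality $\operatorname{cosq}_{K}\leq1$ (for the $+$ class) or $\operatorname{cosq}_{K}\geq-1$ (for the $-$ class). Once this dictionary between the matrix inequalities and the one-sided four point $\operatorname{cosq}_{K}$ condition is in place, part (i) follows from Theorem \ref{MainTh} for $\mathfrak{M}_{G}$ and from Theorem \ref{Thsemimetr} for $\mathfrak{M}_{S}$, and likewise for part (ii).

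First I would set $A=P_{1}$, $P=P_{2}$, $B=P_{4}$, $Q=P_{3}$, so that $x=\rho\left(  A,P\right)  =a_{12}$, $y=\rho\left(  B,Q\right)  =a_{34}$, $a=\rho\left(  A,B\right)  =a_{14}$, $b=\rho\left(  P,Q\right)  =a_{23}$, $d=\rho\left(  P,B\right)  =a_{24}$, and $f=\rho\left(  A,Q\right)  =a_{13}$. With this identification I would verify by direct substitution into the Definition of $\operatorname{cosq}_{K}$ that
\[
\operatorname{cosq}_{K}\left(  \overrightarrow{AP},\overrightarrow{BQ}\right)  \cdot\left(  1+\cos\kappa a_{14}\right)  \sin\kappa a_{12}\sin\kappa a_{34}
\]
equals exactly the left-hand side of the inequality defining $\mathcal{K}^{+}\left(  \kappa^{2}\right)  $, and the analogous hyperbolic identity for $\mathcal{K}^{+}\left(  -\kappa^{2}\right)  $. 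Since $a_{ij}\leq\pi/\left(  2\kappa\right)  $ forces $\left(  1+\cos\kappa a_{14}\right)  \sin\kappa a_{12}\sin\kappa a_{34}>0$ (and $\left(  1+\cosh\kappa a_{14}\right)  \sinh\kappa a_{12}\sinh\kappa a_{34}>0$ always holds in the hyperbolic case), dividing shows that the inequality of $\mathcal{K}^{+}$ is equivalent to $\operatorname{cosq}_{K}\leq1$ and, multiplying the left-hand side by $\left(  -1\right)  $, that the inequality of $\mathcal{K}^{-}$ is equivalent to $\operatorname{cosq}_{K}\geq-1$.

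Next I would record two bookkeeping points. The side constraint $a_{ij}\leq\pi/\left(  2\kappa\right)  $ appearing in $\mathcal{K}^{\pm}\left(  \kappa^{2}\right)  $ holds for every realizable $4$-tuple precisely when $\operatorname{diam}\left(  \mathcal{X}\right)  \leq\pi/\left(  2\kappa\right)  $, so requiring $K_{4}\left(  \mathcal{X}\right)  \subseteq\mathcal{K}^{\pm}\left(  \kappa^{2}\right)  $ builds in the diameter hypothesis of Theorem \ref{MainTh}. Moreover, as the ordered $4$-tuple $\left(  P_{1},P_{2},P_{3},P_{4}\right)  $ ranges over $\mathcal{X}$ (with repetitions permitted), the pair $\left(  \overrightarrow{P_{1}P_{2}},\overrightarrow{P_{4}P_{3}}\right)  $ ranges over all pairs of bound vectors; the configurations in which $P_{1}=P_{2}$ or $P_{3}=P_{4}$ make both sides of the displayed identity vanish and are thus automatically satisfied, imposing no constraint, in exact agreement with the requirement $A\neq P$, $B\neq Q$ in the four point conditions. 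Consequently $K_{4}\left(  \mathcal{X}\right)  \subseteq\mathcal{K}^{+}\left(  \kappa^{2}\right)  $ is equivalent to the conjunction of $\operatorname{diam}\left(  \mathcal{X}\right)  \leq\pi/\left(  2\kappa\right)  $ and the upper four point $\operatorname{cosq}_{K}$ condition, while $K_{4}\left(  \mathcal{X}\right)  \subseteq\mathcal{K}^{-}\left(  \kappa^{2}\right)  $ is equivalent to that same diameter bound together with the lower condition; for $-\kappa^{2}$ the same equivalences hold with no diameter restriction and with the clearing factor positive unconditionally.

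With the dictionary established, the theorem is immediate. For $\left(  \mathcal{X},\rho\right)  \in\mathfrak{M}_{G}$ I would invoke Theorem \ref{MainTh}: under the stated diameter bound the one-sided four point $\operatorname{cosq}_{K}$ condition is equivalent to $\left(  \mathcal{X},\rho\right)  $ being an $\Re_{K}$ domain with that bound, which yields both (i) and (ii) in the geodesically connected case. For $\left(  \mathcal{X},\rho\right)  \in\mathfrak{M}_{S}$, conditions (a) and (b) of Theorem \ref{Thsemimetr} are built into the definition of $\mathfrak{M}_{S}$, the curvature class membership supplies condition (c), and Theorem \ref{Thsemimetr} then gives that $\left(  \mathcal{X},\rho\right)  $ is a complete $\Re_{K}$ domain with the stated diameter restriction, and conversely. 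The one genuine point requiring care, and hence the main obstacle, is the algebraic verification of the displayed identity together with the positivity of the clearing factor and the harmless vanishing in the degenerate configurations; once these are settled, everything reduces to an application of the already-established Theorems \ref{MainTh} and \ref{Thsemimetr}.
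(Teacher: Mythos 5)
Your proposal is correct and follows exactly the route the paper intends: the paper states Theorem \ref{ThCurvPr} as a corollary of Theorems \ref{MainTh} and \ref{Thsemimetr}, the classes $\mathcal{K}^{\pm}\left(\pm\kappa^{2}\right)$ having been defined precisely so that, after clearing the positive factor $\left(1+\cos\widehat{\kappa}a_{14}\right)\sin\widehat{\kappa}a_{12}\sin\widehat{\kappa}a_{34}$, membership encodes the upper (resp.\ lower) four point $\operatorname{cosq}_{K}$ condition together with the diameter bound when $K>0$. Your dictionary, the handling of degenerate tuples, and the observation that conditions (a) and (b) of Theorem \ref{Thsemimetr} are built into $\mathfrak{M}_{S}$ supply exactly the bookkeeping the paper leaves implicit.
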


\begin{remark}
In particular, $\left(  \mathcal{X},\rho\right)  \in\mathfrak{M}_{G}$ is in
the local $\mathcal{K}^{\pm}\left(  \pm\kappa^{2}\right)  $-curvature class if
and only if $\left(  \mathcal{X},\rho\right)  $ is an Aleksandrov space of
curvature $\leq K$ where $K=\pm\kappa^{2}$.
\end{remark}

\begin{remark}
For an alternative proof of one of our main theorems \cite[Theorem
6]{BergNik2008} solving Gromov's curvature problem in the context of $\Re_{0}%
$-domains, see \cite{Sa2009}.
\end{remark}

In Sec. \ref{euler_ineq}, we generalize the familiar Euler's equality
\cite[Corollary 4]{Eul750} to non-zero $K$. Hence, we can extend the
quadrilateral inequality condition (also known as Enflo's $2$-roundness
condition \cite{Enf1969}) to the case of non-zero $K$. \newline

\textit{The }$K$\textit{-quadrilateral (or }$K$\textit{-Euler) inequality
condition: for every quadruple of points }$\left\{  A,\text{ }B,\text{
}C,\text{ }D\right\}  $ \textit{in a metric space} $\left(  \mathcal{M}%
,\mathcal{\rho}\right)  $, \textit{\ }\newline\textit{(i) if }$K>0$\textit{,
then}%
\begin{align*}
&  \cos\kappa\rho\left(  A,B\right)  +\cos\kappa\rho\left(  B,C\right)
+\cos\kappa\rho\left(  C,D\right)  +\cos\kappa\rho\left(  D,A\right) \\
&  \leq4\cos\kappa\frac{\rho\left(  A,C\right)  }{2}\cos\kappa\frac
{\rho\left(  B,D\right)  }{2},
\end{align*}
\textit{(ii) if }$K<0$\textit{, then}%
\begin{align*}
&  \cosh\kappa\rho\left(  A,B\right)  +\cosh\kappa\rho\left(  B,C\right)
+\cosh\kappa\rho\left(  C,D\right)  +\cosh\kappa\rho\left(  D,A\right) \\
&  \geq4\cosh\kappa\frac{\rho\left(  A,C\right)  }{2}\cosh\kappa\frac
{\rho\left(  B,D\right)  }{2}.
\end{align*}
According to Theorem 6 in \cite{BergNik2008}, a geodesically connected metric
space is an $\Re_{0}$ domain if and only if it satisfies the $0$-quadrilateral
inequality condition. In Sec. \ref{euler_ineq}, we prove that the
$K$-quadrilateral inequality condition holds in an $\Re_{K}$ domain for
non-zero $K$. We do not know if the converse is true.

In \cite{LafPrass2006}, Lafont and Prassidis established the $0$-quadrilateral
inequality in $\Re_{0}$ domains. In \cite{FLS2007} (also, see the correction
in \cite{FLS2007er}) Foertsch, Lytchak and Schroeder considered a weaker
Ptolemaic condition and showed that while each $\Re_{0}$ domain is Ptolemaic,
the converse may not be true.

Sec. \ref{Al_upper_curv_cond} is a short review of Aleksandrov spaces of
curvature bounded above. In Sec. \ref{cosqk_in_SK}, we prove that $\left\vert
\operatorname{cosq}_{K}\right\vert \leq1$ in $K$-space. Sec. \ref{Sec_RK}
presents the proof of $\left\vert \operatorname{cosq}_{K}\right\vert \leq1$ in
an $\Re_{K}$ domain of diameter not greater than $\pi/2\sqrt{K}$ if $K>0$. We
show that, in contrast to $\mathbb{S}_{K}^{3}$, the diameter restriction
cannot be dropped for an $\Re_{K}$ domain. In Sec. \ref{Counter_Examples}, we
present counterexamples showing that in a non-geodesically connected metric
space the upper four point $\operatorname{cosq}_{K}$ condition need not be
equivalent to the lower four point $\operatorname{cosq}_{K}$ condition. Sec.
\ref{Proof_Main_Th} contains the proof of our main result--Theorem
\ref{MainTh}. In this section, we assume that $\left(  \mathcal{M}%
,\rho\right)  $ is a geodesically connected metric space (of diameter not
greater than $\pi/2\sqrt{K}$ if $K>0$) satisfying the one-sided four point
$\operatorname{cosq}_{K}$ condition. In Sec. \ref{Cont_geod}, we prove that in
$\left(  \mathcal{M},\rho\right)  $ shortests depend continuously on their end
points; in particular, any pair of points can be joined by a unique shortest.
Hence, by Theorem 9 in \cite[\S \ 3]{A1957a}, the global angle comparison in
$\left(  \mathcal{M},\rho\right)  $ will follow from the local angle
comparison, i.e., locally, each vertex angle of a geodesic triangle
$\mathcal{T}$ is not greater than the corresponding angle of the isometric
copy of $\mathcal{T}$ $\ $in the $K$-plane. In Section
\ref{Sec_cross_diag_est}, we derive the main auxiliary estimate--the
cross-diagonal estimate. In Section \ref{Growth_Est}, the cross-diagonal
estimate lemma is used to derive our major estimate of Sec.
\ref{Proof_Main_Th}--the growth estimate lemma. In Sec. \ref{Prop_Angles}, we
show that the growth estimate lemma implies that in $\left(  \mathcal{M}%
,\rho\right)  $, between any pair of shortests starting at a common point $A$,
the proportional angle exists, that is, the limit of $\measuredangle_{K}%
X_{t}AY_{t}$ as $t\rightarrow0+$ exists if $\rho\left(  X_{t},A\right)
/\rho\left(  Y_{t},A\right)  =\operatorname{const}$ (for the notation, see
Sec. \ref{Al_upper_curv_cond} and Fig. \ref{fig12}). In Sec. \ref{Exist_Angle}%
, following the method of our proof of Proposition 20 in \cite{BergNik2008},
we derive from existence of proportional angles and growth estimate lemma that
in $\left(  \mathcal{M},\rho\right)  $, between any pair of shortests
emanating from a common point, Aleksandrov's angle exists. Existence of
Aleksandrov's angle and growth estimate lemma enables us to prove the local
angle comparison and thereby the global angle comparison (Sec.
\ref{Angle_Comparison}). In Sec. \ref{SecThExtr}, we consider an extremal case
when $\left\vert \operatorname{cosq}_{K}\right\vert =1$. In Sec.
\ref{SecThsemimetr}, we extend our main result to complete weakly convex
semimetric spaces satisfying the one-sided four point $\operatorname{cosq}%
_{K}$ condition. In Sec. \ref{euler_ineq}, we derive $K$-Euler's inequality
for $\Re_{K}$ domains and discuss the extremal case of equality in $K$-Euler's
inequality. In Sec. \ref{Remarks}, we show that for an individual quadruple in
a metric space, the one-sided four point $\operatorname{cosq}_{K}$ conditions
are weaker than previously introduced curvature conditions.

\section{Aleksandrov's upper curvature condition\label{Al_upper_curv_cond}}

In this section, we recall some basic definitions of Aleksandrov geometry.

Let $\left(  \mathcal{M},\rho\right)  $ be a metric space and $\mathcal{L}$ be
a curve in $\mathcal{M}$. We denote by $\ell_{\rho}\left(  \mathcal{L}\right)
$ the length of $\mathcal{L}$ in the metric $\rho$. A rectifiable curve
$\mathcal{L}$ joining $P$ to $Q$ is called a \emph{shortest}, or \emph{minimal
geodesic} (joining\emph{\ }$P$ to $Q$) if $\rho\left(  P,Q\right)  =\ell
_{\rho}\left(  \mathcal{L}\right)  $. If $\mathcal{L}$ is a shortest joining
$P$ to $Q$, then often we denote the shortest $\mathcal{L}$ by $\mathcal{PQ}$
if there is no possible ambiguity, and the distance between its end points
(or, in general, between a pair of points in $\mathcal{M}$) $P $ and $Q$ by
$PQ$. A subset $\mathcal{U}$ of a metric space$\mathcal{\,}$\ is\emph{\ }said
to be\emph{\ convex} if every pair of points $P,Q\in\mathcal{U}$ can be joined
by a shortest and all shortests joining $P$ to $Q$ are contained in
$\mathcal{U}$.

A configuration consisting of three distinct points $A,B,C\in\mathcal{M}$
(\emph{vertices}) and three shortests $\mathcal{AB},$ $\mathcal{BC}$ and
$\mathcal{AC}$ (\emph{sides}) is called a \emph{(geodesic) triangle}
$\mathcal{T=}$ $ABC$. The \emph{perimeter} $p\left(  \mathcal{T}\right)  $ of
a triangle $\mathcal{T}=ABC$ (or, in general, of a triple of points
$\mathcal{T=}\left\{  A,B,C\right\}  $ in $\mathcal{M}$) is the sum
$AB+BC+AC$. The \emph{isometric copy} in the $K$-plane of the triangle
$\mathcal{T}$ is the triangle $\mathcal{T}^{K}=A^{K}B^{K}C^{K}$ in
$\mathbb{S}_{K}$ having the same side lengths as $\mathcal{T}$: $AB=A^{K}%
B^{K},$ $AC=A^{K}C^{K}$ and $BC=B^{K}C^{K}$ (if $K>0$ we require that
$p\left(  \mathcal{T}\right)  <2\pi/\sqrt{K}$). We let $\measuredangle_{K}BAC$
denote the angle $\measuredangle B^{K}A^{K}C^{K}$. The area $\sigma\left(
ABC\right)  $ of the triangle $ABC$ is the area of the euclidean triangle
$A^{0}B^{0}C^{0}$.

Let $\mathcal{L}$ and $\mathcal{N}$ be two shortest arcs with a common
starting point $O$ in a metric space $(\mathcal{M},\rho)$. Let $X\in
\mathcal{L}\backslash\left\{  O\right\}  $ and $Y\in\mathcal{N}\backslash
\left\{  O\right\}  $. Set $x=OX$, $y=OY$ and $\measuredangle_{K}\left(
x,y\right)  =\measuredangle_{K}XOY$. The \emph{upper }and\emph{\ lower angles}
between the curves $\mathcal{L}$ and $\mathcal{N}$ are defined by%

\[
\overline{\measuredangle}(\mathcal{L},\mathcal{N})=\underset{x\rightarrow
0+,y\rightarrow0+}{\overline{\lim}}\measuredangle_{K}\left(  x,y\right)
\text{ and }\underline{\measuredangle}(\mathcal{L},\mathcal{N})=\underset
{x\rightarrow0+,y\rightarrow0+}{\underline{\lim}}\measuredangle_{K}\left(
x,y\right)  .
\]
It is known that the above definitions do not depend on $K$. We say that the
angle $\measuredangle\left(  \mathcal{L},\mathcal{N}\right)  $ between
$\mathcal{L}$ and $\mathcal{N}$ exists if $\overline{\measuredangle
}(\mathcal{L},\mathcal{N})=\underline{\measuredangle}(\mathcal{L},\mathcal{N})
$.

The (upper) $K$\emph{-excess} $\delta_{K}(\mathcal{T})$ of the triangle
$\mathcal{T}$ is defined by
\[
\delta_{K}(\mathcal{T})=(\overline{\measuredangle}ABC+\overline{\measuredangle
}ACB+\overline{\measuredangle}BAC)-(\measuredangle_{K}ABC+\measuredangle
_{K}ACB+\measuredangle_{K}BAC).
\]

An $\Re_{K}$ $\ $\emph{domain }(otherwise known as a $\operatorname{CAT}%
\left(  K\right)  $ space) is a metric space with the following properties:

(i) $\Re_{K}$ is convex (that is, $\Re_{K}$ is geodesically connected).

(ii) If $K>0$, then the perimeter of every triangle in $\Re_{K}$ is less than
$2\pi/\sqrt{K}$.

(iii) Each triangle $\mathcal{T}$ in $\Re_{K}$ has non-positive $K$-excess
$\delta_{K}(\mathcal{T})$.

We remark that by (ii), $\operatorname{diam}\left(  \Re_{K}\right)  <\pi
/\sqrt{K}$ when $K>0$.

Another name for an $\Re_{K}$ domain is a $\operatorname{CAT}\left(  K\right)
$ space; we will use Aleksandrov's original notation (see, \cite{A1951} and
\cite{A1957a}). A metric space $\left(  \mathcal{M},\rho\right)  $ is
a\emph{\ space} \emph{of curvature }$\leq K$ in the sense of Aleksandrov if
each point of $\mathcal{M}$ is contained in some neighborhood that is an
$\Re_{K} $ domain. For more information on Aleksandrov spaces of curvature
$\leq K$, see \cite{A1951}, \cite{A1957a}, \cite{BerNik1993} and
\cite{BrH1990}.

We will find useful the following theorem of Reshetnyak \cite{Resh1968}.

Let $\mathcal{L}$ be a closed rectifiable curve in a metric space $\left(
\mathcal{M},\rho\right)  $ such that $\ell_{\rho}\left(  \mathcal{L}\right)
<2\pi/\sqrt{K}$ if $K>0$. Let $\mathcal{V}$ be a convex domain in
$\mathbb{S}_{K}$ with the bounding curve $\mathcal{N}$. We say that
$\mathcal{V}$ \emph{majorizes} the curve $\mathcal{L}$ if there is a
non-expanding mapping of the domain $\mathcal{V}$ into $\mathcal{M}$ that maps
$\mathcal{N}$ onto $\mathcal{L}$ and preserves arc length. The domain
$\mathcal{V}$ is called the \emph{majorant} for $\mathcal{L}$.

\textbf{Reshetnyak's majorization theorem:} \textit{In an }$\Re_{K}%
$\textit{\ domain}, \textit{for every rectifiable closed curve }$\mathcal{L}%
$\textit{\ (whose length is less than }$2\pi/\sqrt{K}$\textit{\ when }%
$K>0$)\textit{, there is a convex domain in }$\mathbb{S}_{K}$\textit{\ that
majorizes }$\mathcal{L}$\textit{.}

Let $\left(  A_{1},A_{2},...,A_{n}\right)  $ be an $n$-tuple of distinct
points in $\left(  \mathcal{M},\rho\right)  $. Suppose that for every
$j\in\left\{  1,2,...,n-1\right\}  $, the points $A_{j}$ and $A_{j+1}$ can be
joined by a shortest $\mathcal{L}_{j}=\mathcal{A}_{j}\mathcal{A}_{j+1}$. Then
we call the curve $\mathcal{L=A}_{1}\mathcal{A}_{2}...\mathcal{A}_{n}$ formed
by the consecutive shortests $\mathcal{L}_{j}$, a \emph{polygonal curve} (with
vertices at $A_{1},A_{2},...,A_{n}$ in $\mathcal{M}$). It is not difficult to
see that in Reshetnyak's theorem if $\mathcal{L=A}_{1}\mathcal{A}%
_{2}...\mathcal{A}_{n}\mathcal{A}_{1}$ is a closed polygonal curve, then
$\mathcal{N}$ is also a closed polygonal curve $\mathcal{A}_{1}^{\prime
}\mathcal{A}_{2}^{\prime}...\mathcal{A}_{n}^{\prime}\mathcal{A}_{1}^{\prime}$
in $\mathbb{S}_{K}$. In our notation, we always assume that the vertices of
$\mathcal{N}$ are labeled so that $A_{j}A_{j+1}=A_{j}^{\prime}A_{j+1}^{\prime
}$ for every $j=1,2,..,n$, where $A_{n+1}=A_{1}$ and $A_{n+1}^{\prime}%
=A_{1}^{\prime}$.

If $\mathcal{L}$ is a polygonal curve $\mathcal{A}_{1}\mathcal{A}%
_{2}...\mathcal{A}_{n}$ of length $l$ in a metric space, then the \emph{arc
length parametrization of }$\mathcal{L}$\emph{\ relative to} $A_{1}$ is an arc
length parametrization of $\mathcal{L}$, $\mathfrak{g}_{al}=\mathfrak{g}%
_{al,\mathcal{L}}:\left[  0,l\right]  \rightarrow\mathcal{M}$, such that the
length of the arc of $\mathcal{L}$ with the end points at $A_{1}$ and
$\mathfrak{g}_{al}\left(  s\right)  $ is equal to $s\in\left[  0,l\right]  $.
The \emph{reduced parametrization of }$\mathcal{L}$\emph{\ relative to }$A$ is
the mapping $\mathfrak{g}_{r}=\mathfrak{g}_{r,\mathcal{L}}:\left[  0,1\right]
\rightarrow\mathcal{M~}$given by $\mathfrak{g}_{r}\left(  t\right)  =$
$\mathfrak{g}_{al}\left(  tl\right)  $ for every $t\in\left[  0,1\right]  $.
If $l_{0}>0$, then the $l_{0}$-arc length proportional parametrization of
$\mathcal{L}$ is the mapping $\mathfrak{g}_{l_{0},pr}=\mathfrak{g}%
_{l_{0},pr,\mathcal{L}}:\left[  0,l_{0}\right]  \rightarrow\mathcal{M}$ given
by $\mathfrak{g}_{l_{0},pr}\left(  u\right)  =\mathfrak{g}_{al}\left(
ul/l_{0}\right)  $.

Let $\left(  \mathcal{M},\rho\right)  $ be a geodesically connected metric
space and $\mathcal{F}\subseteq\mathcal{M}$ be a non-empty set. For a pair of
points $P,Q\in\left(  \mathcal{M},\rho\right)  $, we let $\mathcal{G}\left[
P,Q\right]  $ denote the set of points each of which belongs to a shortest
joining the points $P$ and $Q$. We define $\mathcal{G}\left[  \mathcal{F}%
\right]  $ by $\mathcal{G}\left[  \mathcal{F}\right]  =\cup_{P,Q\in
\mathcal{F}}\mathcal{G}\left[  P,Q\right]  $. Next, denote $\mathcal{F\,}$ by
$\mathcal{G}^{0}\left[  \mathcal{F}\right]  $ and $\underset{n\text{ times}%
}{\underbrace{\mathcal{G}\left[  \mathcal{G}\left[  ...\mathcal{G}\left[
\mathcal{F}\right]  \right]  \right]  }}$ by $\mathcal{G}^{n}\left[
\mathcal{F}\right]  $. Then the \emph{geodesic} \emph{convex hull} of
$\mathcal{F}$ is defined as $\mathcal{GC}\left[  \mathcal{F}\right]
=\cup_{n=0}^{\infty}\mathcal{G}^{n}\left[  \mathcal{F}\right]  $.

\section{$K$-quadrilateral cosine in $K$-space\label{cosqk_in_SK}}

In this section, we prove that $\left\vert \operatorname{cosq}_{K}\right\vert
\leq1$ in $\mathbb{S}_{K}^{3}$.

Let $K\neq0$. Let $\left\{  A,B,P,Q\right\}  $ be a quadruple of distinct
points in $\mathbb{S}_{K}^{3}$. Let $O$ be the midpoint of the shortest arc
$\mathcal{AB}$. If $PO<\pi/\left(  2\sqrt{K}\right)  $ when $K>0$, we can use
the following constructive interpretation of $\operatorname{cosq}_{K}$ in
$\mathbb{S}_{K}^{3}$. Indeed, let $P^{\prime}$ be the point symmetric to the
point $P$ relative to $O$, that is, $O$ is the midpoint of the shortest arc
$\mathcal{PP}^{\prime}$, as illustrated in Fig. \ref{fig2}. Then $\xi=\exp
_{A}^{-1}\left(  P\right)  $ is (Levi-Civita) parallel along $\mathcal{AB}$ to
the vector $\xi^{\prime\prime}=-\xi^{\prime}$, where $\xi^{\prime}=\exp
_{B}^{-1}\left(  P^{\prime}\right)  $.
\begin{figure}
[pth]
\begin{center}
\includegraphics[
natheight=1.835700in,
natwidth=2.136400in,
height=1.7398in,
width=2.0202in
]%
{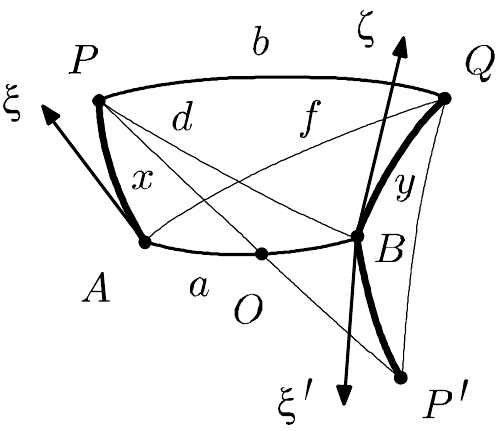}%
\caption{$\operatorname{cosq}_{K}$ in $\mathbb{S}_{K}^{3}$}%
\label{fig2}%
\end{center}
\end{figure}
Let $\zeta=\exp_{B}^{-1}\left(  Q\right)  $. In \cite[Lemma 3.1]%
{BergNik2007a}, we showed that $\operatorname{cosq}_{K}\left(  \overrightarrow
{AP},\overrightarrow{BQ}\right)  =-\cos\measuredangle P^{\prime}%
BQ=\cos\measuredangle\left(  \zeta,\xi^{\prime\prime}\right)  $. Hence, for
the $K$-quadrilateral cosine in $\mathbb{S}_{K}^{3}$, we always have
\[
\left\vert \operatorname{cosq}_{K}\left(  \overrightarrow{AP},\overrightarrow
{BQ}\right)  \right\vert \leq1
\]
as long as $PO<\pi/\left(  2\sqrt{K}\right)  $ when $K>0$.

Next, we show that the restriction $PO<\pi/\left(  2\sqrt{K}\right)  $ for
positive $K$ can be dropped for $\mathbb{S}_{K}^{3}$ itself. We begin with
\ the following simple corollary of the spherical cosine formula.

\begin{lemma}
\label{LemmaBruhat}Let $K>0$ and $\mathcal{T}=ABC$ be a non-degenerate
triangle in $\mathbb{S}_{K}$. Let $M\in\mathcal{AB}\backslash\left\{
A,B\right\}  $. Set $a=BC,$ $b=AC,$ $c=AB,$ $l=MC$ and $t=AM/c$, as shown in
the Fig. \ref{fig3}.%
\begin{figure}
[pt]
\begin{center}
\includegraphics[
natheight=1.452900in,
natwidth=1.452900in,
height=1.3828in,
width=1.3828in
]%
{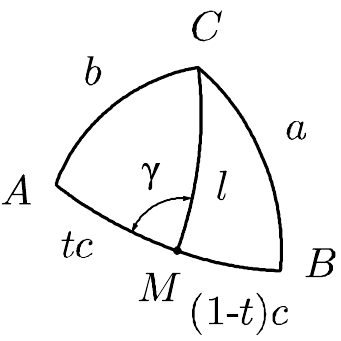}%
\caption{Sketch for Lemma \ref{LemmaBruhat}}%
\label{fig3}%
\end{center}
\end{figure}
Then
\[
\cos\kappa l=\frac{\cos\kappa a\sin\kappa tc+\cos\kappa b\sin\kappa\left(
1-t\right)  c}{\sin\kappa c}.
\]
In particular, if $M$ is the midpoint of the shortest $\mathcal{AB}$, we
obtain a familiar spherical Bruhat-Tits equality:
\[
\cos\kappa l=\frac{\cos\kappa a+\cos\kappa b}{2\cos\frac{\kappa c}{2}}%
\]
(for $K=0$, see Bruhat-Tits inequality in \cite{BrT1972}, Lemma 3.2.1).
\end{lemma}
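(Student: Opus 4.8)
The plan is to reduce the statement to two applications of the spherical law of cosines and then eliminate the vertex angle at $A$. First I would set $\alpha=\measuredangle BAC$ and observe that, since $M$ lies on the side $\mathcal{AB}$ strictly between $A$ and $B$, the angle $\measuredangle MAC$ coincides with $\alpha$, while $AM=tc$ and $AC=b$. Applying the spherical cosine formula in the triangle $AMC$ (whose side opposite $A$ is $MC=l$) gives
\[
\cos\kappa l=\cos\kappa tc\,\cos\kappa b+\sin\kappa tc\,\sin\kappa b\,\cos\alpha.
\]
The only quantity here not already present in the desired identity is $\cos\alpha$, so the next step is to compute it from the full triangle.

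Applying the spherical cosine formula in $\mathcal{T}=ABC$, with side $a=BC$ opposite $A$, yields $\cos\kappa a=\cos\kappa c\,\cos\kappa b+\sin\kappa c\,\sin\kappa b\,\cos\alpha$, whence
\[
\cos\alpha=\frac{\cos\kappa a-\cos\kappa c\,\cos\kappa b}{\sin\kappa c\,\sin\kappa b};
\]
the denominator is nonzero because $\mathcal{T}$ is non-degenerate and $0<b,c<\pi/\kappa$, so $\sin\kappa b,\sin\kappa c>0$. Substituting this expression into the first identity and multiplying through by $\sin\kappa c$ clears the denominator and leaves
\[
\sin\kappa c\,\cos\kappa l=\cos\kappa b\left(\sin\kappa c\,\cos\kappa tc-\cos\kappa c\,\sin\kappa tc\right)+\sin\kappa tc\,\cos\kappa a.
\]

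The one genuine simplification---and the only place where care is needed---is recognizing the bracketed expression as a sine of a difference, $\sin\kappa c\,\cos\kappa tc-\cos\kappa c\,\sin\kappa tc=\sin\kappa(1-t)c$. This collapses the $\cos\kappa b$ term into the stated form and, after dividing by $\sin\kappa c>0$, produces exactly
\[
\cos\kappa l=\frac{\cos\kappa a\,\sin\kappa tc+\cos\kappa b\,\sin\kappa(1-t)c}{\sin\kappa c}.
\]
Finally, the Bruhat--Tits equality follows by specializing to $t=\tfrac12$: both sine factors become $\sin\tfrac{\kappa c}{2}$, and the double-angle identity $\sin\kappa c=2\sin\tfrac{\kappa c}{2}\cos\tfrac{\kappa c}{2}$ cancels the common factor, giving $\cos\kappa l=(\cos\kappa a+\cos\kappa b)/(2\cos\tfrac{\kappa c}{2})$. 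I expect no serious obstacle here; the content is entirely trigonometric bookkeeping, with the sine-difference collapse being the step that makes the formula come out in the stated symmetric shape.
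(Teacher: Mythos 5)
Your proof is correct, and it follows exactly the route the paper indicates: the paper states this lemma without proof, calling it "a simple corollary of the spherical cosine formula," and your two applications of that formula (in $AMC$ and in $ABC$) with the elimination of $\cos\alpha$ and the sine-difference collapse supply precisely the omitted computation. The midpoint specialization via the double-angle identity is also correct.
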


By $K$-concavity in $\Re_{K}$ \cite[\S 3, Theorem 2]{A1957a}, we also have the following

\begin{corollary}
Let $K>0$ and $\mathcal{T}=ABC$ be a non-degenerate triangle in $\Re_{K}$ and
$M\in\mathcal{AB}\backslash\left\{  A,B\right\}  $. Set $a=BC,$ $b=AC,$
$c=AB,$ $l=MC$ and $t=AM/c$. Then%
\[
\cos\kappa l\geq\frac{\cos\kappa a\sin\kappa tc+\cos\kappa b\sin\kappa\left(
1-t\right)  c}{\sin\kappa c}.
\]

\end{corollary}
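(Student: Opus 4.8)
The plan is to transplant the exact equality of Lemma \ref{LemmaBruhat} to the model comparison triangle and then invoke the distance comparison ($K$-concavity) in $\Re_{K}$ to pass from equality to the desired inequality. First I would form the isometric copy $\mathcal{T}^{K}=A^{K}B^{K}C^{K}$ of $\mathcal{T}=ABC$ in $\mathbb{S}_{K}$; this is legitimate because the perimeter of $\mathcal{T}$ is less than $2\pi/\sqrt{K}$ by property (ii) of an $\Re_{K}$ domain. On the side $A^{K}B^{K}$ I would mark the point $M^{K}$ with $A^{K}M^{K}=tc$, so that $M^{K}$ corresponds to $M$ under the matching side lengths $A^{K}B^{K}=c=AB$ and $A^{K}M^{K}=AM$. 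Applying Lemma \ref{LemmaBruhat} to the non-degenerate triangle $A^{K}B^{K}C^{K}$ with $l^{K}=M^{K}C^{K}$ yields
\[
\cos\kappa\left(  M^{K}C^{K}\right)  =\frac{\cos\kappa a\sin\kappa tc+\cos\kappa b\sin\kappa\left(  1-t\right)  c}{\sin\kappa c},
\]
which is exactly the right-hand side of the claimed inequality.

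Next I would compare $l=MC$ with $M^{K}C^{K}$. The $K$-concavity property of an $\Re_{K}$ domain \cite[\S 3, Theorem 2]{A1957a}---equivalently, the distance comparison that follows from the non-positivity of the $K$-excess $\delta_{K}(\mathcal{T})$ in property (iii)---asserts that for the point $M\in\mathcal{AB}\backslash\left\{  A,B\right\}  $ and its comparison point $M^{K}$ one has $MC\leq M^{K}C^{K}$. Since $\cos\kappa x$ is decreasing in $x$ on $\left[  0,\pi/\kappa\right]  $, it remains only to check that both $l$ and $M^{K}C^{K}$ lie in $\left[  0,\pi/\kappa\right)  $, so that the inequality between the two distances reverses correctly under $\cos\kappa\left(  \cdot\right)  $. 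For $l=MC$ this follows from $\operatorname{diam}\left(  \Re_{K}\right)  <\pi/\sqrt{K}$ (the remark after the definition of an $\Re_{K}$ domain), and for $M^{K}C^{K}$ it follows because every distance in the open hemisphere $\mathbb{S}_{K}$ is less than $\pi/\sqrt{K}$. Consequently $\cos\kappa l\geq\cos\kappa\left(  M^{K}C^{K}\right)  $, which is the assertion.

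The only point requiring genuine care is the second step: the cited $K$-concavity theorem must be read in its distance-comparison form (a point on a side versus the opposite vertex), and one must confirm that the arguments of the cosine stay inside the interval of monotonicity. Both are immediate from the defining properties of an $\Re_{K}$ domain, so once Lemma \ref{LemmaBruhat} supplies the model equality the corollary follows with no further computation.
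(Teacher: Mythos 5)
Your proposal is correct and is essentially the paper's own argument: the corollary is obtained exactly by applying Lemma \ref{LemmaBruhat} to the comparison triangle in $\mathbb{S}_{K}$ and then using $K$-concavity \cite[\S 3, Theorem 2]{A1957a} in its distance-comparison form, $MC\leq M^{K}C^{K}$, together with the monotonicity of $\cos\kappa\left(\cdot\right)$ on $\left[0,\pi/\kappa\right]$. Your extra care about the perimeter bound and about both distances lying below $\pi/\kappa$ (from $\operatorname{diam}\left(\Re_{K}\right)<\pi/\sqrt{K}$ and from the hemisphere model) fills in precisely the details the paper leaves implicit.
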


\begin{corollary}
\label{Cor_Pi_by_2}Let $K>0$ and $\mathcal{T}=ABC$ be a non-degenerate
triangle in $\Re_{K}$ and $M\in\mathcal{AB}\backslash\left\{  A,B\right\}  $.
Let $AC,BC\leq\pi/\left(  2\kappa\right)  $. Then $CM\leq\pi/\left(
2\kappa\right)  $. In addition, if either $AC$ or $BC$ is less than
$\pi/\left(  2\kappa\right)  $, then $CM<\pi/\left(  2\kappa\right)  $.
\end{corollary}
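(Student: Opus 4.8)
The plan is to read off the conclusion directly from the lower bound supplied by the preceding corollary, combined with the elementary sign analysis that the $\Re_{K}$ diameter bound makes available. Writing $a=BC$, $b=AC$, $c=AB$, $l=CM$ and $t=AM/c$, the previous corollary gives
\[
\cos\kappa l\geq\frac{\cos\kappa a\,\sin\kappa tc+\cos\kappa b\,\sin\kappa\left(  1-t\right)  c}{\sin\kappa c},
\]
so it suffices to prove that the right-hand side is non-negative; the conclusion $CM\leq\pi/\left(  2\kappa\right)  $ will then follow because $\cos\kappa l\geq0$ forces $\kappa l\leq\pi/2$ once we know that $\kappa l\in\left(  0,\pi\right)  $.

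First I would record the consequences of the $\Re_{K}$ diameter bound. Since $\operatorname{diam}\left(  \Re_{K}\right)  <\pi/\sqrt{K}=\pi/\kappa$ when $K>0$, every distance occurring here is smaller than $\pi/\kappa$; in particular $c<\pi/\kappa$ and $l<\pi/\kappa$, so $\kappa c,\kappa l\in\left(  0,\pi\right)  $. Because $M$ lies strictly between $A$ and $B$, we have $t\in\left(  0,1\right)  $, whence $tc=AM$ and $\left(  1-t\right)  c=MB$ both lie in $\left(  0,c\right)  \subset\left(  0,\pi/\kappa\right)  $. Consequently $\sin\kappa c>0$, $\sin\kappa tc>0$ and $\sin\kappa\left(  1-t\right)  c>0$.

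Next I would invoke the hypothesis $AC,BC\leq\pi/\left(  2\kappa\right)  $, that is, $\kappa a,\kappa b\leq\pi/2$, which gives $\cos\kappa a\geq0$ and $\cos\kappa b\geq0$. The numerator of the displayed bound is therefore a sum of two products of non-negative factors, hence non-negative, and the denominator is positive; thus $\cos\kappa l\geq0$. Combined with $\kappa l\in\left(  0,\pi\right)  $ this yields $\kappa l\leq\pi/2$, i.e. $CM\leq\pi/\left(  2\kappa\right)  $.

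For the strict statement, suppose one of $AC$, $BC$ is strictly less than $\pi/\left(  2\kappa\right)  $, say $b=AC<\pi/\left(  2\kappa\right)  $. Then $\cos\kappa b>0$, and since $\sin\kappa\left(  1-t\right)  c>0$ the summand $\cos\kappa b\,\sin\kappa\left(  1-t\right)  c$ is strictly positive, making the whole numerator strictly positive. Hence $\cos\kappa l>0$, so $\kappa l<\pi/2$ and $CM<\pi/\left(  2\kappa\right)  $. The only point requiring care is confirming that every argument of a sine lies in $\left(  0,\pi\right)  $, which is precisely where the $\Re_{K}$ diameter bound enters; beyond this sign bookkeeping there is no genuine obstacle.
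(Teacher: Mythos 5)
Your proof is correct and follows exactly the route the paper intends: it reads the bound off the preceding $K$-concavity corollary and combines it with the sign analysis made possible by the hypothesis $\kappa a,\kappa b\leq\pi/2$ and the fact that $\operatorname{diam}\left(\Re_{K}\right)<\pi/\sqrt{K}$ keeps every argument $\kappa c$, $\kappa tc$, $\kappa\left(1-t\right)c$, $\kappa l$ inside $\left(0,\pi\right)$. The strict case is also handled correctly, so there is nothing to add.
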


Finally, we show that $\operatorname{cosq}_{K}$ remains the same in the
half-sphere after cutting the lengths of bound vectors in half.

\begin{lemma}
\label{Lemma_In_Half}Let $K>0$ and $\overrightarrow{AP},$ $\overrightarrow
{BQ}$ be a pair of non-zero bound vectors in $\mathbb{S}_{K}^{3}$. Let $M_{1}$
and $M_{2}$ be the midpoints of the shortests $\mathcal{AP}$ and
$\mathcal{BQ}$, respectively. Then%
\[
\operatorname{cosq}_{K}\left(  \overrightarrow{AP},\overrightarrow{BQ}\right)
=\operatorname{cosq}_{K}\left(  \overrightarrow{AM_{1}},\overrightarrow
{BM_{2}}\right)  .
\]

\end{lemma}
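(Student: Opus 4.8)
The plan is to prove the identity by a direct trigonometric reduction, using the spherical Bruhat--Tits midpoint equality of Lemma \ref{LemmaBruhat} to rewrite every distance occurring in $\operatorname{cosq}_{K}(\overrightarrow{AM_{1}},\overrightarrow{BM_{2}})$ in terms of the six distances $x=AP$, $y=BQ$, $a=AB$, $b=PQ$, $d=PB$, $f=AQ$ that determine $\operatorname{cosq}_{K}(\overrightarrow{AP},\overrightarrow{BQ})$. Introduce the half-angle abbreviations $c_{x}=\cos\tfrac{\kappa x}{2}$, $s_{x}=\sin\tfrac{\kappa x}{2}$, and likewise $c_{y},s_{y}$. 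Since $AM_{1}=x/2$ and $BM_{2}=y/2$, the two legs in the right-hand $\operatorname{cosq}_{K}$ are $x/2$ and $y/2$, and the remaining distances to be controlled are $M_{1}B$, $AM_{2}$ and $M_{1}M_{2}$. Each of these is pinned down by a midpoint, so Lemma \ref{LemmaBruhat} applies. Applying the spherical Bruhat--Tits equality in the triangle $ABP$ (with $M_{1}$ the midpoint of $\mathcal{AP}$) gives $\cos\kappa\,(M_{1}B)=(\cos\kappa a+\cos\kappa d)/(2c_{x})$; symmetrically, in $ABQ$ one gets $\cos\kappa\,(AM_{2})=(\cos\kappa a+\cos\kappa f)/(2c_{y})$. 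For the cross term $M_{1}M_{2}$ I would iterate: first apply the equality in $APQ$ to obtain $\cos\kappa\,(M_{1}Q)=(\cos\kappa f+\cos\kappa b)/(2c_{x})$, then apply it once more in $M_{1}BQ$ to get $\cos\kappa\,(M_{1}M_{2})=(\cos\kappa\,(M_{1}B)+\cos\kappa\,(M_{1}Q))/(2c_{y})$. Substituting the previous two formulas collapses this to the symmetric expression
\[
\cos\kappa\,(M_{1}M_{2})=\frac{\cos\kappa a+\cos\kappa b+\cos\kappa d+\cos\kappa f}{4c_{x}c_{y}}.
\]

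Second, I would substitute these three formulas, together with the double-angle identities $\cos\kappa x=2c_{x}^{2}-1$, $\sin\kappa x=2s_{x}c_{x}$ (and their analogues in $y$), into the defining formula for $\operatorname{cosq}_{K}(\overrightarrow{AM_{1}},\overrightarrow{BM_{2}})$. Both sides then carry the common prefactor $1/(4c_{x}c_{y}s_{x}s_{y})$ --- on the left because $\sin\kappa x\sin\kappa y=4s_{x}c_{x}s_{y}c_{y}$, on the right because clearing the $2c_{x},2c_{y}$ denominators produces exactly that factor --- so the claim reduces to the single algebraic identity that the two bracketed numerators agree. Writing $p=\cos\kappa x$, $q=\cos\kappa y$ and $A_{c},B_{c},D_{c},F_{c}$ for the cosines of $a,b,d,f$, this is the statement that
\[
B_{c}+pq-\frac{(p+D_{c})(q+F_{c})}{1+A_{c}}=A_{c}+B_{c}+D_{c}+F_{c}+(1+p)(1+q)-\frac{(1+p+A_{c}+D_{c})(1+q+A_{c}+F_{c})}{1+A_{c}}.
\]
The verification is a short factorization: writing $1+p+A_{c}+D_{c}=(p+D_{c})+(1+A_{c})$ and likewise in $y$, the product numerator on the right minus $(p+D_{c})(q+F_{c})$ factors as $(1+A_{c})\big[(p+D_{c})+(q+F_{c})+(1+A_{c})\big]$, whence the fraction difference cancels against the polynomial terms $A_{c}+D_{c}+F_{c}+(1+p)(1+q)-pq$ and both sides coincide.

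The only genuine obstacle here is bookkeeping rather than conceptual depth, but two points deserve care. First, Lemma \ref{LemmaBruhat} is stated for a non-degenerate triangle, so the degenerate configurations (where some relevant triple is collinear) must be disposed of separately, most cleanly by continuity of $\operatorname{cosq}_{K}$ in the positions of $P$ and $Q$ --- note $1+\cos\kappa a>0$ since $a<\pi/\kappa$ --- or by checking the midpoint formula directly along a shortest. Second, it is tempting to argue geometrically: $\exp_{A}^{-1}(M_{1})=\tfrac12\exp_{A}^{-1}(P)$ and $\exp_{B}^{-1}(M_{2})=\tfrac12\exp_{B}^{-1}(Q)$ point in the same directions as the original vectors, and parallel transport preserves the angle between directions regardless of their lengths, so $\operatorname{cosq}_{K}$ should be unchanged. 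However, this shortcut relies on the constructive interpretation $\operatorname{cosq}_{K}=\cos\measuredangle(\zeta,\xi'')$, which was justified only under $PO<\pi/(2\sqrt{K})$; since removing exactly that kind of restriction is the purpose of this lemma, the geometric argument would be circular, and the computational route above is the safe one.
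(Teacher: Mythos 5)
Your proof is correct and takes essentially the same route as the paper's: both compute the three new distances by iterating the spherical Bruhat--Tits equality of Lemma \ref{LemmaBruhat} (you via triangles $ABP$, $ABQ$, $APQ$, $M_{1}BQ$; the paper via $ABP$, $ABQ$, $PQB$, $APM_{2}$ --- a symmetric choice yielding the same formula $\cos\kappa\,(M_{1}M_{2})=\left(  \cos\kappa a+\cos\kappa b+\cos\kappa d+\cos\kappa f\right)  /\left(  4\cos\frac{\kappa x}{2}\cos\frac{\kappa y}{2}\right)  $), and then reduce the claim to the identical algebraic identity, which the paper dismisses as ``elementary but tedious simplifications'' and you settle with an explicit factorization. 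Your added remarks --- handling degenerate triangles by continuity, and the observation that the parallel-transport shortcut would be circular here --- are sound refinements not spelled out in the paper, but they do not change the substance of the argument.
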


\begin{proof}
We have:%
\begin{align*}
\operatorname{cosq}_{K}\left(  \overrightarrow{AM_{1}},\overrightarrow{BM_{2}%
}\right)   &  =\frac{\cos\kappa b^{\prime}+\cos\frac{\kappa x}{2}\cos
\frac{\kappa y}{2}}{\sin\frac{\kappa x}{2}\sin\frac{\kappa y}{2}}-\\
&  \frac{\left(  \cos\frac{\kappa x}{2}+\cos\kappa d^{\prime}\right)  \left(
\cos\frac{\kappa y}{2}+\cos\kappa f^{\prime}\right)  }{\left(  1+\cos\kappa
a\right)  \sin\frac{\kappa x}{2}\sin\frac{\kappa y}{2}},
\end{align*}
where the notation is given in Fig. \ref{fig4}.%
\begin{figure}
[pth]
\begin{center}
\includegraphics[
natheight=1.726900in,
natwidth=2.665900in,
height=1.6392in,
width=2.5137in
]%
{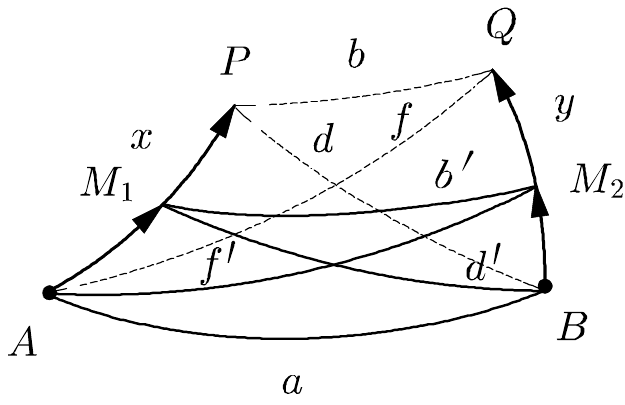}%
\caption{Sketch for Lemma \ref{Lemma_In_Half}}%
\label{fig4}%
\end{center}
\end{figure}
By the Bruhat-Tits equality (Lemma \ref{LemmaBruhat}),%
\begin{align*}
\cos\kappa d^{\prime}  &  =\frac{\cos\kappa a+\cos\kappa d}{2\cos\frac{\kappa
x}{2}}\text{ (triangle }ABP\text{), }\\
\cos\kappa f^{\prime}  &  =\frac{\cos\kappa a+\cos\kappa f}{2\cos\frac{\kappa
y}{2}}\text{ (triangle }ABQ\text{),}\\
\cos\kappa g  &  =\frac{\cos\kappa b+\cos\kappa d}{2\cos\frac{\kappa y}{2}%
}\text{, }g=PM_{2}\text{ (triangle }PQB\text{),}\\
\cos\kappa b^{\prime}  &  =\frac{\cos\kappa g+\cos\kappa f^{\prime}}%
{2\cos\frac{\kappa x}{2}}\text{ (triangle }APM_{2}\text{),}%
\end{align*}
whence $\cos\kappa b^{\prime}=\left(  \cos\kappa a+\cos\kappa b+\cos\kappa
d+\cos\kappa f\right)  /\left(  4\cos\frac{\kappa x}{2}\cos\frac{\kappa y}%
{2}\right)  $. Hence,
\begin{align*}
&  \operatorname{cosq}_{K}\left(  \overrightarrow{AM_{1}},\overrightarrow
{BM_{2}}\right) \\
&  =\left(  1+\cos\kappa a\right)  \left[  \cos\kappa a+\cos\kappa
b+\cos\kappa d+\cos\kappa f\right.  +\\
&  \left.  4\cos^{2}\frac{\kappa x}{2}\cos^{2}\frac{\kappa y}{2}\right]
-\left(  2\cos^{2}\frac{\kappa x}{2}+\cos\kappa a+\cos\kappa d\right)
\times\\
&  \left(  2\cos^{2}\frac{\kappa y}{2}+\cos\kappa a+\cos\kappa f\right)
\left[  \left(  1+\cos\kappa a\right)  \sin\kappa x\sin\kappa y\right]
^{-1}\\
&  =\left[  \left(  1+\cos\kappa a\right)  \left(  \cos\kappa a+\cos\kappa
b+\cos\kappa d+\cos\kappa f\right.  +\right. \\
&  \left.  1+\cos\kappa x+\cos\kappa y+\cos\kappa x\cos\kappa y\right)
-\left(  1+\cos\kappa x+\cos\kappa a+\cos\kappa d\right)  \times\\
&  \left.  \left(  1+\cos\kappa y+\cos\kappa a+\cos\kappa f\right)  \right]
\left/  \left[  \left(  1+\cos\kappa a\right)  \sin\kappa x\sin\kappa
y\right]  \right.  .
\end{align*}
After elementary but tedious simplifications of the last expression, we get:
\begin{align*}
&  \operatorname{cosq}_{K}\left(  \overrightarrow{AM_{1}},\overrightarrow
{BM_{2}}\right) \\
&  =\left[  \left(  1+\cos\kappa a\right)  \sin\kappa x\sin\kappa y\right]
^{-1}\left(  \cos\kappa b+\cos\kappa a\cos\kappa b+\cos\kappa a\cos\kappa
x\cos\kappa y\right. \\
&  \left.  -\cos\kappa x\cos\kappa f-\cos\kappa y\cos\kappa d-\cos\kappa
d\cos\kappa f\right) \\
&  =\operatorname{cosq}_{K}\left(  \overrightarrow{AP},\overrightarrow
{BQ}\right)  \text{,}%
\end{align*}
as needed.
\end{proof}

Let $K>0$. Recall that $\operatorname{diam}\left(  \Re_{K}\right)  <\pi
/\sqrt{K}$. By Lemma \ref{Lemma_In_Half}, there is no restriction in assuming
that $AP$ and $BQ$ are as small as we wish. Hence, without loss of generality,
we can assume that $PO<\pi/\left(  2\sqrt{K}\right)  $ (see Fig. \ref{fig2}).
So, we get the following

\begin{corollary}
\label{CorcosqKbound}Let $K\neq0$. Then for every pair of non-zero bound
vectors $\overrightarrow{AP}$ and $\overrightarrow{BQ}$ in $\mathbb{S}_{K}%
^{3}$, $\left\vert \operatorname{cosq}_{K}\left(  \overrightarrow
{AP},\overrightarrow{BQ}\right)  \right\vert \leq1$.
\end{corollary}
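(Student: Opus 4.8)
The plan is to reduce the bound $|\operatorname{cosq}_K|\leq1$ in both curvature cases to the constructive interpretation recorded just above, namely the identity $\operatorname{cosq}_K(\overrightarrow{AP},\overrightarrow{BQ})=\cos\measuredangle(\zeta,\xi'')$ from \cite[Lemma 3.1]{BergNik2007a}. Since the cosine of an angle automatically lies in $[-1,1]$, the desired inequality is immediate wherever this identity is available. The only defect is that, for $K>0$, the identity was established under the auxiliary hypothesis $PO<\pi/(2\sqrt{K})$, where $O$ is the midpoint of $\mathcal{AB}$; so the whole task is to dispose of that hypothesis.

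For $K<0$ I would simply observe that no hypothesis needs to be removed: in the hyperbolic space $\mathbb{S}_K^3$ the midpoint $O$ of $\mathcal{AB}$, the point $P'$ symmetric to $P$ through $O$, and the Levi-Civita transport along $\mathcal{AB}$ are all globally defined, so the identity $\operatorname{cosq}_K=\cos\measuredangle(\zeta,\xi'')$ holds for every quadruple and yields $|\operatorname{cosq}_K|\leq1$ at once.

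For $K>0$ I would use Lemma \ref{Lemma_In_Half} as a rescaling device. First note that $\mathbb{S}_K^3$ is an $\Re_K$ domain, so any two of its points obey $AB<\pi/\sqrt{K}$; in particular the midpoint $O$ of $\mathcal{AB}$ satisfies $AO=\tfrac{1}{2}AB<\pi/(2\sqrt{K})$ with strict inequality. Lemma \ref{Lemma_In_Half} says that replacing $P$ by the midpoint $M_1$ of $\mathcal{AP}$ and $Q$ by the midpoint $M_2$ of $\mathcal{BQ}$ does not change $\operatorname{cosq}_K$, while it halves $AP$ and $BQ$ and keeps $A$, $B$, and hence $O$, fixed. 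Iterating this $n$ times moves the first endpoint to distance $AP/2^n$ from $A$ along $\mathcal{AP}$; by the triangle inequality the corresponding distance to $O$ differs from $AO$ by at most $AP/2^n$. Because $AO<\pi/(2\sqrt{K})$ strictly, for all large $n$ the iterated quadruple $\{A,M_1^{(n)},B,M_2^{(n)}\}$ satisfies the constructive hypothesis, whence $|\operatorname{cosq}_K|\leq1$ for it; since $\operatorname{cosq}_K$ is unchanged along the iteration, the same bound transfers back to $\overrightarrow{AP}$, $\overrightarrow{BQ}$.

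The hard part is precisely this passage from the restricted to the unrestricted configuration for positive $K$: one must be sure that the iterated midpoint distance to $O$ can be pushed strictly below $\pi/(2\sqrt{K})$, and this is exactly where the strictness of $AB<\pi/\sqrt{K}$ in the open hemisphere is used to leave room for the vanishing remainder $AP/2^n$. A minor but necessary check is that Lemma \ref{Lemma_In_Half} may be applied repeatedly, each time to a quadruple sharing the same midpoint $O$ of $\mathcal{AB}$; this is legitimate because on the relevant scale $\mathbb{S}_K^3$ is uniquely geodesic, so all the midpoints involved are unambiguously defined.
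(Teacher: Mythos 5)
Your proposal is correct and follows essentially the same route as the paper: for $K<0$ the constructive interpretation $\operatorname{cosq}_K=\cos\measuredangle(\zeta,\xi'')$ applies without restriction, and for $K>0$ one invokes Lemma \ref{Lemma_In_Half} to shrink $AP$ and $BQ$ until $PO<\pi/\left(2\sqrt{K}\right)$, exactly as the paper does when it notes that ``there is no restriction in assuming that $AP$ and $BQ$ are as small as we wish.'' Your explicit iteration with the triangle inequality $M_1^{(n)}O\leq AO+AP/2^n$ and the strict bound $AO<\pi/\left(2\sqrt{K}\right)$ simply fills in details the paper leaves to the reader.
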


\section{$K$-quadrilateral cosine in an $\Re_{K}$ domain\label{Sec_RK}}

The main goal of this section is to show that $\left\vert \operatorname{cosq}%
_{K}\right\vert \leq1$ in an $\Re_{K}$ domain of diameter not greater than
$\pi/2\sqrt{K}$ if $K>0$. In addition, for $K>0$ we present examples of
$\Re_{K}$ domains of diameter greater than $\pi/2\sqrt{K}$ and arbitrarily
close to $\pi/2\sqrt{K}$ for which $\left\vert \operatorname{cosq}%
_{K}\right\vert \leq1$ does not hold.

The following theorem is a minor generalization of Theorem 4.2 in
\cite{BergNik2007a}.

\begin{theorem}
\label{ThRKBound}Let $K\neq0$ and let $\mathfrak{Q}\mathcal{=}\left\{
A,P,B,Q\right\}  $ be a quadruple of points in an $\Re_{K}$ domain such that
$A\neq P,$ $B\neq Q$ and $\operatorname{diam}\left(  \mathfrak{Q}\right)
\leq\pi/\left(  2\sqrt{K}\right)  $ if $K>0$. Then
\[
\left\vert \operatorname{cosq}_{K}\left(  \overrightarrow{AP},\overrightarrow
{BQ}\right)  \right\vert \leq1.
\]

\end{theorem}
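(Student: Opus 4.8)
The plan is to transfer the model-space bound of Corollary \ref{CorcosqKbound} (which gives $\left\vert \operatorname{cosq}_{K}\right\vert \leq 1$ for \emph{every} quadruple in $\mathbb{S}_{K}^{3}$, with no diameter restriction) into the $\Re_{K}$ domain by means of Reshetnyak's majorization theorem, using the explicit $\operatorname{cosq}_{K}$ formula to compare the two sides. Write $x=AP$, $y=BQ$, $a=AB$, $b=PQ$, $d=PB$, $f=AQ$ as in the Definition. The two inequalities $\operatorname{cosq}_{K}\leq 1$ and $\operatorname{cosq}_{K}\geq -1$ will be obtained, respectively, from majorizing the \emph{two different} cyclic orderings of $\mathfrak{Q}$, and in each case the transfer is driven by a monotonicity of $\operatorname{cosq}_{K}$ in those of its six arguments that the majorization is allowed to move.

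For the upper bound I would apply Reshetnyak's theorem to the closed polygonal curve $\mathcal{APQBA}$, whose sides are $x,b,y,a$ (its length is $\leq 2\pi/\kappa$ by the diameter hypothesis, so the theorem applies; the equality case $\operatorname{diam}(\mathfrak{Q})=\pi/(2\kappa)$ is reached by a continuity argument). The majorant is a convex polygon $A'P'Q'B'$ in $\mathbb{S}_{K}$ with the same side lengths, and since the majorizing map is non-expanding the two cross-diagonals grow: $P'B'\geq d$ and $A'Q'\geq f$. The four vertices form a quadruple in $\mathbb{S}_{K}\subset\mathbb{S}_{K}^{3}$, so Corollary \ref{CorcosqKbound} yields $\operatorname{cosq}_{K}(\overrightarrow{A'P'},\overrightarrow{B'Q'})\leq 1$. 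One reads off from the formula that, with $x,y,a,b$ held fixed, $\operatorname{cosq}_{K}$ is non-decreasing in each of $d$ and $f$; hence $\operatorname{cosq}_{K}(\overrightarrow{AP},\overrightarrow{BQ})\leq\operatorname{cosq}_{K}(\overrightarrow{A'P'},\overrightarrow{B'Q'})\leq 1$. For the lower bound I would instead majorize the cyclic ordering $\mathcal{APBQA}$, whose sides are $x,d,y,f$ and whose cross-diagonals therefore grow as $A'B'\geq a$ and $P'Q'\geq b$. Here the relevant monotonicity is that $\operatorname{cosq}_{K}$ is non-increasing in each of $a$ and $b$ (indeed $a$ enters only through the factor $1/(1+\cos\kappa a)$ and $b$ only through $\cos\kappa b$), which gives $\operatorname{cosq}_{K}(\overrightarrow{AP},\overrightarrow{BQ})\geq\operatorname{cosq}_{K}(\overrightarrow{A'P'},\overrightarrow{B'Q'})\geq -1$ by Corollary \ref{CorcosqKbound}.

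I expect the lower bound to be routine and the upper bound to be the main obstacle. For $K<0$ there is no difficulty at all: each partial derivative of $\operatorname{cosq}_{K}$ carries the sign of some $\sinh\kappa(\cdot)>0$, so all four monotonicity statements hold unconditionally and no diameter restriction is needed. For $K>0$, however, $\partial_{d}\operatorname{cosq}_{K}$ and $\partial_{f}\operatorname{cosq}_{K}$ have the signs of the factors $\cos\kappa y+\cos\kappa f$ and $\cos\kappa x+\cos\kappa d$, and in the upper-bound argument $d$ and $f$ are precisely the \emph{grown} diagonals $P'B'$, $A'Q'$ of the majorant, which can exceed $\pi/(2\kappa)$ and drive these factors negative. (In the lower-bound argument the analogous factors involve the \emph{preserved} short sides $d,f\leq\pi/(2\kappa)$, so they stay non-negative automatically; this is why that direction is clean.)

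The hard part, then, is to control the range so that the upper-bound monotonicity remains valid along the whole increment from $(d,f)$ to the majorant's diagonals. This is exactly where the hypothesis $\operatorname{diam}(\mathfrak{Q})\leq\pi/(2\kappa)$ is used and is sharp: it forces $x,y,d,f\in[0,\pi/(2\kappa)]$, and through Corollary \ref{Cor_Pi_by_2} together with the Bruhat--Tits concavity behind Lemma \ref{LemmaBruhat} one keeps the moving arguments inside the region where $\cos\kappa x+\cos\kappa d$ and $\cos\kappa y+\cos\kappa f$ do not change sign (using also that the majorant lies in the open hemisphere, so its two diagonals cannot both be large). It is precisely this step that the forthcoming example will show cannot be relaxed once the diameter exceeds $\pi/(2\kappa)$.
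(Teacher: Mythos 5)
Your overall route is the same as the paper's: majorize the closed quadrilateral $\mathcal{APQBA}$ (and, for the lower bound, the other cyclic ordering) by a convex domain in $\mathbb{S}_{K}$, transfer the model bound of Corollary \ref{CorcosqKbound} back through monotonicity of the $\operatorname{cosq}_{K}$ formula in the arguments that the majorization moves, and handle $\operatorname{diam}\left(  \mathfrak{Q}\right)  =\pi/\left(  2\kappa\right)  $ by perturbation. Your lower-bound argument and your observation that $K<0$ needs no range control are both correct; the paper disposes of those cases by citing Theorem 4.2 of \cite{BergNik2007a}, whose proof is exactly your majorization of $\mathcal{APBQA}$.

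The genuine gap is in the step you yourself isolate as the hard one, and the mechanism you propose for it does not work. For $K>0$ you need $\cos\kappa x+\cos\kappa d$ to remain non-negative as $d$ increases to the majorant's diagonal $d^{\prime}=P^{\prime}B^{\prime}$, i.e.\ you need $d^{\prime}\leq\pi/\left(  2\kappa\right)  $ (or at least $d^{\prime}\leq\pi/\kappa-x$). Your justification is that ``the majorant lies in the open hemisphere, so its two diagonals cannot both be large''; this is false. Take $u\in\left(  \pi/4,\pi/2\right)  $ and the convex spherical rhombus in $\mathbb{S}_{1}$ whose vertices lie at distance $u$ from a center point along two orthogonal great circles: its four sides equal $\arccos\left(  \cos^{2}u\right)  <\pi/2$, it lies in the open hemisphere, and both of its diagonals equal $2u>\pi/2$. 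Moreover, Corollary \ref{Cor_Pi_by_2} is a cevian estimate for triangles and by itself says nothing about a convex quadrilateral handed to you by Reshetnyak's theorem used as a black box. What closes the gap in the paper is precisely that Reshetnyak's theorem is \emph{not} used as a black box: following Reshetnyak's own construction, the majorant is first assembled as a fan of the two triangles with sides $\left(  x,d,a\right)  $ and $\left(  b,y,d\right)  $ glued along the diagonal $P^{\prime}B^{\prime}$, so that in the convex case $d^{\prime}=d\leq\pi/\left(  2\kappa\right)  $ exactly; if the fan fails to be convex at $\widetilde{P}^{\prime}$ (or at $\widetilde{B}^{\prime}$), one rectifies the corresponding two-edge arc, and then $P^{\prime}B^{\prime}$ becomes a cevian of a spherical triangle whose two other sides are $<\pi/\left(  2\kappa\right)  $, so Corollary \ref{Cor_Pi_by_2} applies and gives $d^{\prime}<\pi/\left(  2\kappa\right)  $; the second diagonal needs only $f^{\prime}<\pi/\kappa$, which membership in the open hemisphere does provide for free. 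This is exactly inequality (\ref{diag_cond}) of the paper, and once it is available your two-step monotonicity path (first $d\rightarrow d^{\prime}$ at fixed $f$, then $f\rightarrow f^{\prime}$ at fixed $d^{\prime}$) goes through, as does the paper's four-term expansion of the difference of products. Note finally that the borderline case $\operatorname{diam}\left(  \mathfrak{Q}\right)  =\pi/\left(  2\kappa\right)  $ is not bare continuity: one must produce nearby points all of whose six mutual distances are strictly below $\pi/\left(  2\kappa\right)  $, which the paper again does by means of Corollary \ref{Cor_Pi_by_2}.
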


\begin{proof}
It is sufficient to consider the case of positive $K$. If $\operatorname{diam}%
\left(  \mathfrak{Q}\right)  <\pi/\left(  2\sqrt{K}\right)  $, then by
\cite[Theorem 4.2]{BergNik2007a}, $\operatorname{cosq}_{K}\left(
\overrightarrow{AP},\overrightarrow{BQ}\right)  \geq-1$. For the reader's
convenience, we include some omitted details in \cite{BergNik2007a} of the
proof of the inequality $\operatorname{cosq}_{K}\left(  \overrightarrow
{AP},\overrightarrow{BQ}\right)  \leq1$. Consider the closed polygonal curve
$\mathcal{L=APQBA}$, as shown in Fig. \ref{fig1}. We will follow the part of
the proof of Reshetnyak's Lemma 2 in \cite{Resh1968} corresponding to the case
of $K$-fans consisting of two triangles in $\mathbb{S}_{K}$ (a special case of
Reshetnyak's majorization theorem). Namely, under the hypothesis of Theorem
\ref{ThRKBound}, in addition to the existence of a convex domain
$\mathcal{V\subseteq}\mathbb{S}_{K}$ majorizing the polygonal curve
$\mathcal{L}$, Reshetnyak's proof also implies that the domain $\mathcal{V}$
can be selected so that%
\begin{equation}
d=PB\leq d^{\prime}=P^{\prime}B^{\prime}<\pi/\left(  2\sqrt{K}\right)  ,\text{
}f=AQ\leq f^{\prime}=A^{\prime}Q^{\prime}<\pi/\sqrt{K} \label{diag_cond}%
\end{equation}
where $\mathcal{L}^{\prime}\mathcal{=A}^{\prime}\mathcal{P}^{\prime
}\mathcal{Q}^{\prime}\mathcal{B}^{\prime}\mathcal{A}^{\prime}$ is the bounding
curve of $\mathcal{V}$. Indeed, as shown in the proof of Lemma 2 in
\cite{Resh1968}, there is a quadrangular domain $\mathcal{F}$ in
$\mathbb{S}_{K}$ bounded by a quadrangle $\widetilde{\mathcal{L}}^{\prime
}=\widetilde{A}^{\prime}\widetilde{P}^{\prime}\widetilde{Q}^{\prime}%
\widetilde{B}^{\prime}$ such that
\[
AP=\widetilde{A}^{\prime}\widetilde{P}^{\prime},\text{ }AB=\widetilde
{A}^{\prime}\widetilde{B}^{\prime},\text{ }PB=\widetilde{P}^{\prime}%
\widetilde{B}^{\prime}\text{ and }PQ=\widetilde{P}^{\prime}\widetilde
{Q}^{\prime},\text{ }BQ=\widetilde{B}^{\prime}\widetilde{Q}^{\prime}.
\]
If $\mathcal{F}$ is convex, then we put $\mathcal{F=V}$ and we have
$d=\widetilde{d}^{\prime}=\widetilde{P}^{\prime}\widetilde{B}^{\prime}%
<\pi/\left(  2\sqrt{K}\right)  $ and (as shown in Reshetnyak's proof)
$f=AQ\leq\widetilde{f}^{\prime}=\widetilde{A}^{\prime}\widetilde{Q}^{\prime
}<\pi/\sqrt{K}$. Now suppose that the quadrangular domain $\mathcal{F}$ is not
convex. Then either the angle of the quadrangle $\widetilde{\mathcal{L}%
}^{\prime}$ at its vertex $\widetilde{P}^{\prime}$ is greater than $\pi$ or
the angle at its vertex $\widetilde{B}^{\prime}$ is greater than $\pi$. For
definiteness, suppose that the angle of $\widetilde{\mathcal{L}}^{\prime}$ at
$\widetilde{P}^{\prime}$ is greater than $\pi$, as shown in Fig. \ref{fig5}.%
\begin{figure}
[pth]
\begin{center}
\includegraphics[
natheight=1.401200in,
natwidth=3.052400in,
height=1.3339in,
width=2.8753in
]%
{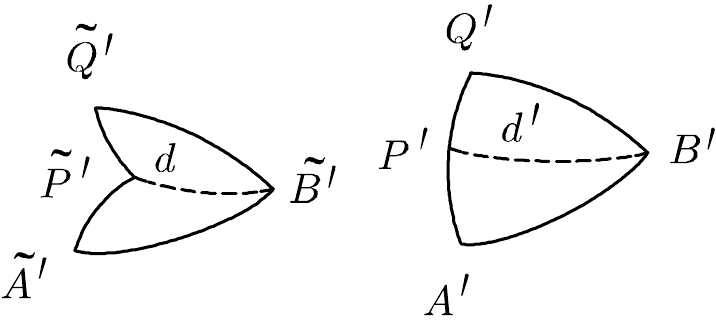}%
\caption{Sketch for Theorem \ref{ThRKBound}}%
\label{fig5}%
\end{center}
\end{figure}
Let $\mathcal{V}\subseteq\mathbb{S}_{K}$ be the domain bounded by the triangle
$A^{\prime}Q^{\prime}B^{\prime}$ obtained from the polygonal curve
$\widetilde{\mathcal{L}}^{\prime}$ by rectifying the arc $\widetilde
{A}^{\prime}\widetilde{P}^{\prime}\widetilde{Q}^{\prime}$. Then by \cite[Lemma
2, \S 3]{A1957a}, $d=\widetilde{d}^{\prime}<d^{\prime}=P^{\prime}B^{\prime}$
and $f\leq A^{\prime}Q^{\prime}<\pi/\sqrt{K}$ (as shown in Reshetnyak's
proof). By Corollary \ref{Cor_Pi_by_2}, $d^{\prime}<\pi/\left(  2\sqrt
{K}\right)  $; so, inequalities (\ref{diag_cond}) hold true.

By (\ref{diag_cond}) and because $\operatorname{diam}\left(  \Re_{K}\right)
<\pi/\left(  2\sqrt{K}\right)  $, we see that the difference of the products
\begin{align*}
&  \left(  \cos\kappa x+\cos\kappa d\right)  \left(  \cos\kappa y+\cos\kappa
f\right)  -\left(  \cos\kappa x+\cos\kappa d^{\prime}\right)  \left(
\cos\kappa y+\cos\kappa f^{\prime}\right) \\
&  =\cos\kappa x\left(  \cos\kappa f-\cos\kappa f^{\prime}\right)  +\cos\kappa
y\left(  \cos\kappa d-\cos\kappa d^{\prime}\right) \\
&  +\cos\kappa f\left(  \cos\kappa d-\cos\kappa d^{\prime}\right)  +\cos\kappa
d^{\prime}\left(  \cos\kappa f-\cos\kappa f^{\prime}\right)
\end{align*}
is non-negative. So, $\operatorname{cosq}_{K}\left(  \overrightarrow
{AP},\overrightarrow{BQ}\right)  \leq\operatorname{cosq}_{K}\left(
\overrightarrow{A^{\prime}P^{\prime}},\overrightarrow{B^{\prime}Q^{\prime}%
}\right)  $ follows. By Corollary \ref{CorcosqKbound},
\[
\operatorname{cosq}_{K}\left(  \overrightarrow{AP},\overrightarrow{BQ}\right)
\leq1,
\]
as needed.

Now, we consider the case when $\operatorname{diam}\left(  \mathfrak{Q}%
\right)  =\pi/\left(  2\sqrt{K}\right)  $. If $\varepsilon>0$ is sufficiently
small, by invoking Corollary \ref{Cor_Pi_by_2}, it is not difficult to select
points $A_{\varepsilon},P_{\varepsilon},B_{\varepsilon}$ and $Q_{\varepsilon}$
in $\Re_{K}$ such that the distances $AA_{\varepsilon},$ $PP_{\varepsilon},$
$BB_{\varepsilon}$ and $QQ_{\varepsilon}$ do not exceed $\varepsilon$ and such
that $\operatorname{diam}\left\{  A_{\varepsilon},P_{\varepsilon
},B_{\varepsilon},Q_{\varepsilon}\right\}  <\pi/\left(  2\kappa\right)  $. One
of such configurations is shown in Fig. \ref{fig6}.
\begin{figure}
[pth]
\begin{center}
\includegraphics[
natheight=1.790500in,
natwidth=1.809000in,
height=1.6973in,
width=1.7149in
]%
{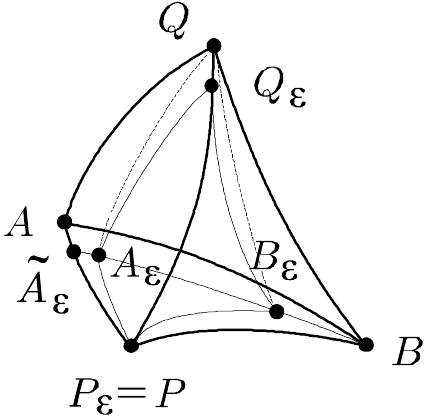}%
\caption{$\operatorname{diam}\left(  \mathfrak{Q}\right)  =\pi/\left(
2\sqrt{K}\right)  $}%
\label{fig6}%
\end{center}
\end{figure}
From the first part of the proof, we see that $\left\vert \operatorname{cosq}%
_{K}\left(  \overrightarrow{A_{\varepsilon}P},\overrightarrow{B_{\varepsilon
}Q_{\varepsilon}}\right)  \right\vert \leq1$ for every small positive
$\varepsilon$. Hence, by passing to the limit as $\varepsilon\rightarrow0+$,
we get $\left\vert \operatorname{cosq}_{K}\left(  \overrightarrow
{AP},\overrightarrow{BQ}\right)  \right\vert \leq1$, as claimed.
\end{proof}

The following example shows that for positive $K$, the restriction on the
diameter of $\Re_{K}$ cannot be dropped and the diameter bound in Theorem
\ref{ThRKBound} is sharp. For simplicity, we consider $K=1$.

\begin{example}
\label{Ex_counter_1}Let $\varepsilon>0$. Consider the $T$-shaped graph
$\left(  \mathcal{M}_{\varepsilon},\rho_{\varepsilon}\right)  $ obtained by
gluing a segment of straight line $AO$ of length $\pi/4+\varepsilon$ to the
middle $O$ of another segment of straight line $BQ$ of length $\pi
/2+2\varepsilon$, as shown in Fig. \ref{fig7}.%
\begin{figure}
[pth]
\begin{center}
\includegraphics[
natheight=1.652100in,
natwidth=2.205600in,
height=1.5682in,
width=2.0848in
]%
{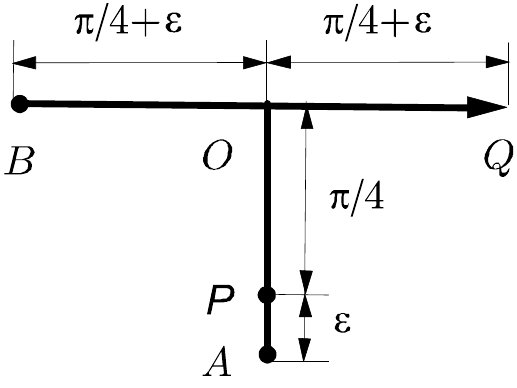}%
\caption{Sketch for Example \ref{Ex_counter_1}}%
\label{fig7}%
\end{center}
\end{figure}
It is readily seen that $\left(  \mathcal{M}_{\varepsilon},\rho_{\varepsilon
}\right)  $ is an $\Re_{0}$ domain and that the perimeter of every triangle in
$\left(  \mathcal{M}_{\varepsilon},\rho_{\varepsilon}\right)  $ is less than
$2\pi$ for small positive $\varepsilon$. Hence, $\left(  \mathcal{M}%
_{\varepsilon},\rho_{\varepsilon}\right)  $ is also an $\Re_{1}$ domain.
Notice that $\operatorname{diam}\left(  \mathcal{M}_{\varepsilon}\right)
=\pi/2+2\varepsilon$. Let $P\in\mathcal{AO}\backslash\left\{  A,O\right\}  $
be such that $AP=\varepsilon$. \newline(a)
\begin{align*}
\operatorname{cosq}_{1}\left(  \overrightarrow{BQ},\overrightarrow{AP}\right)
&  =\frac{\cos\left(  \frac{\pi}{2}+\varepsilon\right)  +\cos\left(  \frac
{\pi}{2}+2\varepsilon\right)  \cos\varepsilon}{\sin\left(  \frac{\pi}%
{2}+2\varepsilon\right)  \sin\varepsilon}-\\
&  \frac{2\cos\left(  \frac{\pi}{2}+2\varepsilon\right)  \left[
\cos\varepsilon+\cos\left(  \frac{\pi}{2}+\varepsilon\right)  \right]
}{\left[  1+\cos\left(  \frac{\pi}{2}+2\varepsilon\right)  \right]
\sin\left(  \frac{\pi}{2}+2\varepsilon\right)  \sin\varepsilon}\\
&  =\frac{1+\sin2\varepsilon}{1-\sin2\varepsilon}>1
\end{align*}
for every $\varepsilon\in\left(  0,\pi/4\right)  $ and therefore for small
positive $\varepsilon$. \newline(b) In a similar way,%
\begin{align*}
\operatorname{cosq}_{1}\left(  \overrightarrow{BQ},\overrightarrow{PA}\right)
&  =\frac{\cos\left(  \frac{\pi}{2}+2\varepsilon\right)  +\cos\left(
\frac{\pi}{2}+2\varepsilon\right)  \cos\varepsilon}{\sin\left(  \frac{\pi}%
{2}+2\varepsilon\right)  \sin\varepsilon}-\\
&  \frac{\left[  \cos\left(  \frac{\pi}{2}+2\varepsilon\right)  +\cos\left(
\frac{\pi}{2}+\varepsilon\right)  \right]  \left[  \cos\varepsilon+\cos\left(
\frac{\pi}{2}+2\varepsilon\right)  \right]  }{\left[  1+\cos\left(  \frac{\pi
}{2}+\varepsilon\right)  \right]  \sin\left(  \frac{\pi}{2}+2\varepsilon
\right)  \sin\varepsilon}\\
&  =-\frac{\left(  1+\sin2\varepsilon\right)  \cos\varepsilon}{\left(
1-\sin\varepsilon\right)  \cos2\varepsilon}<-1
\end{align*}
for every $\varepsilon\in\left(  0,\pi/4\right)  $ and therefore for small
positive $\varepsilon$. \newline So, for small positive $\varepsilon$, the
metric space $\left(  \mathcal{M}_{\varepsilon},\rho_{\varepsilon}\right)  $
is an $\Re_{1}$ domain, the diameter of $\left(  \mathcal{M}_{\varepsilon
},\rho_{\varepsilon}\right)  $ is greater than $\pi/2$ and
\[
\lim_{\varepsilon\rightarrow0+}\operatorname{diam}\left(  \mathcal{M}%
_{\varepsilon}\right)  =\pi/2,
\]
whereas $\operatorname{cosq}_{1}$ takes values greater than $1$ and less than
$-1$.\newline
\end{example}

\section{Testing $\operatorname{cosq}_{K}$.
Counterexamples\label{Counter_Examples}}

We begin with the discussion of testing a metric space for the one-sided four
point $\operatorname{cosq}_{K}$ condition. We present counterexamples showing
that in general the upper four point $\operatorname{cosq}_{K}$ condition is
different from the lower four point $\operatorname{cosq}_{K}$ condition.

Let $K\in%
\mathbb{R}
$ and let $\mathfrak{Q}\mathcal{=}\left\{  A,P,B,Q\right\}  $ be a quadruple
of distinct points in a metric space $\left(  \mathcal{M},\rho\right)  $ such
that the perimeter of every triple $\left\{  A,B,C\right\}  $ in
$\mathfrak{Q}$ is less than $2\pi/\sqrt{K}$ when $K>0$. For every triple
$X,Y,Z\in\mathfrak{Q}$, the absolute value of the $K$-quadrilateral cosine
between any pair of non-zero bound vectors with heads and tails in the triple
$\left\{  X,Y,Z\right\}  $ always does not exceed one. Indeed, each such
triple can be embedded isometrically into $\mathbb{S}_{K}$; hence, by
Corollary \ref{CorcosqKbound}, $\left\vert \operatorname{cosq}_{K}\right\vert
$ does not exceed $1$ for every pair of such bound vectors. So, by recalling
that $\operatorname{cosq}_{K}$ is symmetric, we need consider only the
following 12 main cases given in Table \ref{table1} where the two non-zero
bound vectors have no point in common.

\begin{center}%
\begin{table}[h]
\begin{center}%
$%
\begin{tabular}
[c]{|l|l|l|l|l|l|l|}\hline
Case & I & II & III & IV & V & VI\\\hline
$\operatorname{cosq}_{1}$ & $\overrightarrow{AP},\overrightarrow{BQ}$ &
$\overrightarrow{AP},\overrightarrow{QB}$ & $\overrightarrow{AB}%
,\overrightarrow{PQ}$ & $\overrightarrow{AB},\overrightarrow{QP}$ &
$\overrightarrow{AQ},\overrightarrow{PB}$ & $\overrightarrow{AQ}%
,\overrightarrow{BP}$\\\hline
Case & VII & VIII & IX & X & XI & XII\\\hline
$\operatorname{cosq}_{1}$ & $\overrightarrow{PA},\overrightarrow{BQ}$ &
$\overrightarrow{PA},\overrightarrow{QB}$ & $\overrightarrow{PB}%
,\overrightarrow{QA}$ & $\overrightarrow{PQ},\overrightarrow{BA}$ &
$\overrightarrow{BA},\overrightarrow{QP}$ & $\overrightarrow{BP}%
,\overrightarrow{QA}$\\\hline
\end{tabular}
\ $%
\end{center}
\caption{Twelve main cases}
\label{table1}
\end{table}%

\end{center}

The following examples show that the upper and the lower four point
$\operatorname{cosq}_{K}$ conditions are not equivalent for non-zero $K$. For
simplicity, we consider $K=\pm1$. Adjustment for arbitrary non-zero $K$ is straightforward.

\begin{example}
[$K=1$]\label{ExFPC_PosK}(a) \textbf{The lower four point }%
$\operatorname{cosq}_{1}$\textbf{\ condition holds, whereas the upper four
point }$\operatorname{cosq}_{1}$\textbf{\ condition fails. }Consider the
$T$-shaped graph obtained by gluing a segment of straight line $AP$ of length
$\pi/4+0.1$ to the middle $P$ of another segment of straight line $BQ$ of
length $\pi/2+0.2$, as shown in Fig. \ref{fig8}.
\begin{figure}
[pth]
\begin{center}
\includegraphics[
natheight=1.625400in,
natwidth=2.202900in,
height=1.5433in,
width=2.082in
]%
{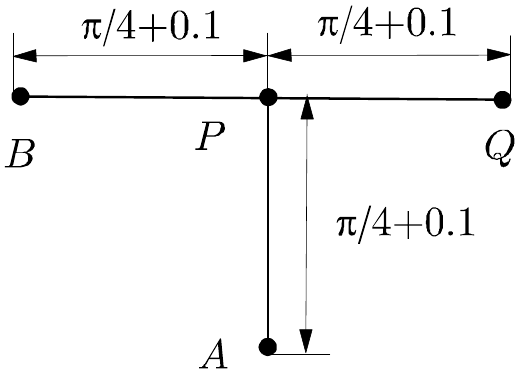}%
\caption{Sketch for Example \ref{ExFPC_PosK}, part (a)}%
\label{fig8}%
\end{center}
\end{figure}
Let $\mathcal{M=}\left\{  A,P,B,Q\right\}  $ with the induced metric $\rho$.
All $12$ main (approximate) values of $\operatorname{cosq}_{1}$ for the four
point metric space $\left(  \mathcal{M},\rho\right)  $ are given in Table
\ref{table2}.\newline%
\begin{table}[h]
\begin{center}%
$%
\begin{tabular}
[c]{|l|l|l|l|l|l|l|}\hline
Case & I & II & III & IV & V & VI\\\hline
$\operatorname{cosq}_{1}$ & $1.496$ & $1.496$ & $-0.58$ & $1.496$ & $-0.58$ &
$1.496$\\\hline
Case & VII & VIII & IX & X & XI & XII\\\hline
$\operatorname{cosq}_{1}$ & $-0.58$ & $-0.58$ & $-0.58$ & $-0.58$ & $1.496$ &
$1.496$\\\hline
\end{tabular}
\ \ \ $%
\end{center}
\caption{Example \ref{ExFPC_PosK}, part (a)}
\label{table2}
\end{table}%
\newline(b) \textbf{The upper four point }$\operatorname{cosq}_{1}%
$\textbf{\ condition holds, whereas the lower four point }$\operatorname{cosq}%
_{1}$\textbf{\ condition fails.} Consider the quadruple $\mathfrak{Q}=\left\{
A,P,B,Q\right\}  $ in $\mathbb{S}_{1}$ with the metric $\rho_{\mathbb{S}_{1}}$
such that the point $P$ is symmetric to the point $Q$ w.r.t. the midpoint of
the shortest $\mathcal{AB}$. All $6$ distances between the pairs of points of
$\mathfrak{Q} $ are shown in Fig. \ref{fig9} with $\varepsilon=0$. Then
$\operatorname{cosq}_{1}\left(  \overrightarrow{AP},\overrightarrow
{BQ}\right)  =-1$.
\begin{figure}
[pth]
\begin{center}
\includegraphics[
natheight=1.345000in,
natwidth=1.927000in,
height=1.2822in,
width=1.8246in
]%
{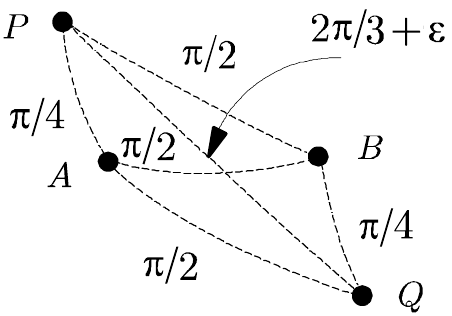}%
\caption{Sketch for Example \ref{ExFPC_PosK}, part (b)}%
\label{fig9}%
\end{center}
\end{figure}
Now we change the metric $\rho_{\mathbb{S}_{1}}$ by increasing the distance
between $P$ and $Q$ by a positive $\varepsilon$ and leaving all other
distances the same. If $\varepsilon$ is sufficiently small, then the new
distance $\rho_{\varepsilon}$ is a metric. For $\varepsilon=0.1$, all $12$
main (approximate) values of $\operatorname{cosq}_{1}$ for the four point
metric space $\left(  \mathfrak{Q},\rho_{0.1}\right)  $ are given in Table
\ref{table3}.\newline%
\begin{table}[h]
\begin{center}%
$%
\begin{tabular}
[c]{|l|l|l|l|l|l|l|}\hline
Case & I & II & III & IV & V & VI\\\hline
$\operatorname{cosq}_{1}$ & $-1.168$ & $0.826$ & $0.871$ & $-0.107$ & $0.707$
& $-1.084$\\\hline
Case & VII & VIII & IX & X & XI & XII\\\hline
$\operatorname{cosq}_{1}$ & $0.826$ & $-1.404$ & $-1.202$ & $-0.107$ & $0.871$
& $0.707$\\\hline
\end{tabular}
\ \ \ \ $%
\end{center}
\caption{Example \ref{ExFPC_PosK}, part (b)}
\label{table3}
\end{table}%
\newline
\end{example}

\begin{example}
[$K=-1$]\label{ExFPC_NegK}We use the same approach to construction of
counterexamples for $K=-1$ as in part (b) of Example \ref{ExFPC_PosK}. Let
$\mathfrak{Q}\mathcal{=}\left\{  A,P,B,Q\right\}  $ be a four element
set.\newline(a) \textbf{The lower four point }$\operatorname{cosq}_{-1}%
$\textbf{\ condition holds, whereas the upper four point }$\operatorname{cosq}%
_{-1}$\textbf{\ condition fails. }The $6$ (symmetric) distances between the
pairs of points in $\mathfrak{Q}$ are given by
\begin{align*}
\rho\left(  A,P\right)   &  =\rho\left(  B,Q\right)  =1,\text{ }\rho\left(
A,B\right)  =2,\\
\rho\left(  P,Q\right)   &  =2.697\text{ and }\rho\left(  A,Q\right)
=\rho\left(  B,P\right)  =2.44.
\end{align*}
All $12$ main (approximate) values of $\operatorname{cosq}_{-1}$ for the four
point metric space $\left(  \mathfrak{Q},\rho\right)  $ are given in Table
\ref{table4}.\newline%
\begin{table}[h]
\begin{center}%
$%
\begin{tabular}
[c]{|l|l|l|l|l|l|l|}\hline
Case & I & II & III & IV & V & VI\\\hline
$\operatorname{cosq}_{-1}$ & $1.0347$ & $-0.8133$ & $0.7495$ & $-0.9998$ &
$0.4534$ & $-0.9133$\\\hline
Case & VII & VIII & IX & X & XI & XII\\\hline
$\operatorname{cosq}_{-1}$ & $-0.8133$ & $0.1465$ & $-0.9511$ & $-0.9998$ &
$0.7495$ & $0.4534$\\\hline
\end{tabular}
\ $%
\end{center}
\caption{Example \ref{ExFPC_NegK}, part (a)}
\label{table4}
\end{table}%
\newline(b) \textbf{The upper four point }$\operatorname{cosq}_{-1}%
$\textbf{\ condition holds, whereas the lower four point }$\operatorname{cosq}%
_{-1}$\textbf{\ condition fails. }The $6$ distances between the pairs of
points in $\mathfrak{Q}$ are given by
\begin{align*}
\rho\left(  A,P\right)   &  =\rho\left(  B,Q\right)  =1,\text{ }\rho\left(
A,B\right)  =2,\\
\rho\left(  P,Q\right)   &  =3.027\text{ and }\rho\left(  A,Q\right)
=\rho\left(  B,P\right)  =2.43.
\end{align*}
All $12$ main (approximate) values of $\operatorname{cosq}_{-1}$ for the four
point metric space $\left(  \mathfrak{Q},\rho\right)  $ are given in Table
\ref{table5}.\newline%
\begin{table}[h]
\begin{center}%
$%
\begin{tabular}
[c]{|l|l|l|l|l|l|l|}\hline
Case & I & II & III & IV & V & VI\\\hline
$\operatorname{cosq}_{-1}$ & $-1.184$ & $0.922$ & $0.522$ & $-0.944$ & $0.807$
& $-1.008$\\\hline
Case & VII & VIII & IX & X & XI & XII\\\hline
$\operatorname{cosq}_{-1}$ & $0.922$ & $-1.077$ & $-1.003$ & $-0.944$ &
$0.522$ & $0.807$\\\hline
\end{tabular}
$%
\end{center}
\caption{Example \ref{ExFPC_NegK}, part (b)}
\label{table5}
\end{table}%
\newline
\end{example}

\section{Proof of Theorem \ref{MainTh}\label{Proof_Main_Th}}

\subsection{Sketch of the proof}

Let $\left(  \mathcal{M},\rho\right)  $ be a geodesically connected metric
space (of diameter not greater than $\pi/2\sqrt{K}$ for positive $K$)
satisfying the one-sided four point $\operatorname{cosq}_{K}$ condition for
non-zero $K$. Theorem \ref{MainTh} is proved once we establish the angle
comparison: for every geodesic triangle $\mathcal{T}=ABC$ in $\left(
\mathcal{M},\rho\right)  $, $\measuredangle ABC\leq\measuredangle_{K}ABC,$
$\measuredangle BAC\leq\measuredangle_{K}BAC$ and $\measuredangle
ACB\leq\measuredangle_{K}ACB$. We begin by proving Lemma \ref{Lemma_cont_geod}
stating that shortests in $\left(  \mathcal{M},\rho\right)  $ depend
continuously on their end points. One of Aleksandrov's theorem and Lemma
\ref{Lemma_cont_geod} enable us to reduce the derivation of the global angle
comparison estimate to the proof of the local angle comparison. The
cross-diagonal estimate lemma (Lemma \ref{Lemma_cross_diag}) is one of the
main steps in the proof of the major growth estimate lemma (Lemma
\ref{Lemma_Growth_est}). Both of these estimates are derived from the
one-sided four point $\operatorname{cosq}_{K}$ condition. We employ the growth
estimate to prove \textquotedblleft almost monotonicity\textquotedblright\ of
the angles $\alpha_{0}\left(  t\right)  $ (Corollary \ref{Cor_cos_tau}) and
existence of proportional angles (Corollary \ref{Cor_prop_angle}), an
important auxiliary step in proving the existence of Aleksandrov angles
(Proposition \ref{Prop_Exist_Angle}). Now we have all necessary means needed
for derivation of the local angle comparison inequality. We begin with the
identity corresponding to the growth estimate in $\mathbb{S}_{K}$ (Proposition
\ref{Prop_sper_ident}). We consider a sufficiently small geodesic triangle
$\mathcal{T}=ABC$ in $\left(  \mathcal{M},\rho\right)  $. Existence of
Aleksandrov angles gives us the freedom of selecting the points in shortests
$\mathcal{AB}$ and $\mathcal{AC}$ respectively approaching to the vertex $A$
in a special way. For every small positive $t$, we select $\widehat{X}_{t}%
\in\mathcal{AB}$ and $\widehat{Y}_{t}\in\mathcal{AC}$, $\widehat{X}%
_{t},\widehat{Y}_{t}$ $\rightarrow A$ as $t\rightarrow0+$ (see Sec.
\ref{Angle_Comparison}) so that $\measuredangle A^{K}\widehat{X}_{t}%
^{K}\widehat{Y}_{t}^{K}$ and $\measuredangle\widehat{X}_{t}^{K}A^{K}%
\widehat{Y}_{t}^{K}$ converge as $t\rightarrow0+$ (Lemma
\ref{Angle_conv_comp_th}). Hence, it is possible to pass to the limit in the
growth estimate as $t\rightarrow0+$. The limit form of the growth estimate and
the identity of Proposition \ref{Prop_sper_ident} enables us to derive the
local angle comparison estimate (Proposition \ref{Prop_angle_comp}).

\subsection{Continuity and uniqueness of shortests\label{Cont_geod}}

The main result of this section is the following

\begin{lemma}
\label{Lemma_cont_geod}Let $K\neq0$ and let $\left(  \mathcal{M},\rho\right)
$ be a metric space such that $\operatorname{diam}\left(  \mathcal{M}\right)
\leq\pi/\left(  2\sqrt{K}\right)  $ when $K>0$. Let $\mathcal{L=AB}$ be a
shortest and $\left(  \mathcal{L}_{n}=\mathcal{A}_{n}\mathcal{B}_{n}\right)
_{n=1}^{\infty}$ be a sequence of shortests in $\left(  \mathcal{M}%
,\rho\right)  $ such that $\lim_{n\rightarrow\infty}A_{n}=A$ and
$\lim_{n\rightarrow\infty}B_{n}=B$. Let $\mathfrak{g}_{r}$ be the reduced
parametrization of $\mathcal{L}$ relative to $A$ and $\mathfrak{g}_{r,n}$ be
the reduced parametrization of $\mathcal{L}_{n}$ relative to $A_{n}$,
$n=1,2,...$ (see, Sec. \ref{Al_upper_curv_cond}) . If $\left(  \mathcal{M}%
,\rho\right)  $ satisfies the one-sided four point $\operatorname{cosq}_{K}$
condition, then the sequence $\left(  \mathfrak{g}_{r,n}\right)
_{n=1}^{\infty}$ converges uniformly to $\mathfrak{g}_{r}$ on the closed
interval $\left[  0,1\right]  $.
\end{lemma}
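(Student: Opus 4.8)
The plan is to prove uniform convergence of the reduced parametrizations by establishing equicontinuity together with pointwise convergence, and then upgrading pointwise convergence to uniform convergence on the compact interval $[0,1]$. Since each $\mathfrak{g}_{r,n}$ is a reduced parametrization of a shortest, it is automatically Lipschitz: for $s,t\in[0,1]$ we have $\rho\bigl(\mathfrak{g}_{r,n}(s),\mathfrak{g}_{r,n}(t)\bigr)=\ell_n|s-t|$, where $\ell_n=A_nB_n$. Because $A_n\to A$ and $B_n\to B$, the triangle inequality gives $\ell_n\to AB=\ell$, so the lengths $\ell_n$ are uniformly bounded and the family $(\mathfrak{g}_{r,n})$ is uniformly Lipschitz, hence equicontinuous. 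By Arzel\`a--Ascoli (the target being a compact subset of $\mathcal{M}$, using that the relevant points lie in a bounded region), every subsequence has a further subsequence converging uniformly to some limit curve. The heart of the matter is therefore to identify that limit uniquely as $\mathfrak{g}_r$, which forces the whole sequence to converge.

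\smallskip

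To identify the limit, I would first argue that any uniform subsequential limit $\mathfrak{h}$ is itself a reduced parametrization of a shortest from $A$ to $B$. Indeed, a uniform limit of shortests of lengths $\ell_n\to\ell$ joining $A_n\to A$ to $B_n\to B$ is a curve of length at most $\ell=AB$ joining $A$ to $B$, hence a shortest $\mathcal{AB}$ of length exactly $\ell$, parametrized proportionally to arc length. Thus the real content reduces to a \emph{uniqueness} statement: under the one-sided four point $\operatorname{cosq}_K$ condition, the shortest joining any two points with $\operatorname{diam}(\mathcal{M})\le\pi/(2\sqrt K)$ (when $K>0$) is unique. Once uniqueness of shortests is in hand, every subsequential limit equals $\mathfrak{g}_r$, and a standard subsequence argument yields uniform convergence of the full sequence.

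\smallskip

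The main obstacle, then, is proving uniqueness of shortests from the $\operatorname{cosq}_K$ hypothesis alone, without yet knowing that $(\mathcal{M},\rho)$ is an $\Re_K$ domain (that conclusion is the goal of the whole proof, so it cannot be invoked here). I would approach this by a midpoint-comparison argument. Suppose $\mathcal{AB}$ and $\mathcal{AB}'$ are two distinct shortests joining $A$ to $B$; choose a point $P$ on one and the corresponding (equal-parameter) point $Q$ on the other, so that $AP=AQ$ and $PB=QB$ but $P\ne Q$. I would then feed a suitable configuration of bound vectors built from $A,B,P,Q$ into the one-sided $\operatorname{cosq}_K$ inequality. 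The key computation is to evaluate $\operatorname{cosq}_K$ for bound vectors such as $\overrightarrow{AP}$ and $\overrightarrow{BP}$ (or symmetric variants), using the collinearity relations forced by $P$ and $Q$ lying on shortests of equal length; the bound $\operatorname{cosq}_K\le 1$ (or $\ge-1$) should collapse to the assertion $\rho(P,Q)=0$, a contradiction. Here the diameter restriction $\operatorname{diam}(\mathcal{M})\le\pi/(2\sqrt K)$ for positive $K$ is essential, guaranteeing that the model quantities $\cos\kappa(\,\cdot\,)$ stay in the regime where the inequality is strict enough to force equality of $P$ and $Q$; this is precisely where the sharpness illustrated in Example~\ref{Ex_counter_1} enters, and getting the trigonometric estimate to close is the delicate step. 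I expect the lower/upper dichotomy to be handled symmetrically, choosing the orientation of bound vectors so that whichever one-sided condition is assumed yields the needed sign.
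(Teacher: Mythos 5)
Your skeleton (uniform Lipschitz bound, identify subsequential limits, uniqueness of shortests, double-subsequence trick) is coherent, but it rests on a step that is not available in this setting: the appeal to Arzel\`a--Ascoli. The lemma is stated for an \emph{arbitrary} metric space $\left(\mathcal{M},\rho\right)$ satisfying the diameter restriction and the one-sided $\operatorname{cosq}_{K}$ condition; $\mathcal{M}$ is not assumed complete, proper, or locally compact, so bounded subsets need not be totally bounded, and an equicontinuous family of maps into $\mathcal{M}$ need not have \emph{any} uniformly convergent subsequence. Your parenthetical ``the target being a compact subset of $\mathcal{M}$, using that the relevant points lie in a bounded region'' conflates bounded with compact. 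Concretely, the hyperboloid model of hyperbolic space over an infinite-dimensional Hilbert space is an $\Re_{-1}$ domain and hence (by Theorem \ref{ThRKBound}) satisfies the one-sided $\operatorname{cosq}_{-1}$ condition, yet its bounded sets are not precompact; a small ball in the unit sphere of an infinite-dimensional Hilbert space gives the analogous example for $K>0$. In such spaces your argument cannot get started, since no subsequential limit curve can be extracted.

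The repair is to prove pointwise convergence \emph{directly}, and your own uniqueness computation already contains the needed idea --- it just has to be run on the moving configuration rather than on two fixed shortests. Fix $t\in\left(0,1\right)$, set $P=\mathfrak{g}_{r}\left(t\right)$, $P_{n}=\mathfrak{g}_{r,n}\left(t\right)$ and $\overline{\delta}=\overline{\lim}_{n\rightarrow\infty}PP_{n}$, and apply the upper condition to the pair $\overrightarrow{AP}$, $\overrightarrow{P_{n}B_{n}}$ (respectively the lower condition to $\overrightarrow{AP}$, $\overrightarrow{B_{n}P_{n}}$). Every distance entering $\operatorname{cosq}_{K}$ except $PP_{n}$ converges to a quantity determined by $l$ and $t$, and the inequality passes in the limit to
\[
1+\frac{\left[1-\cos\left(\widehat{\kappa}\overline{\delta}\right)\right]\left[\cos\left(\widehat{\kappa}\left(1-t\right)l\right)+\cos\left(\widehat{\kappa}l\right)\right]}{\left[1+\cos\left(\widehat{\kappa}tl\right)\right]\sin\left(\widehat{\kappa}tl\right)\sin\left(\widehat{\kappa}\left(1-t\right)l\right)}\leq1,
\]
which forces $\overline{\delta}=0$; the diameter hypothesis for $K>0$ enters exactly here, guaranteeing $\cos\kappa\left(1-t\right)l+\cos\kappa l\geq0$. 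Pointwise convergence together with your (correct) uniform Lipschitz bound then yields uniform convergence on $\left[0,1\right]$ with no compactness whatsoever. Note finally that your logical order is inverted relative to what is provable: uniqueness of shortests is not an ingredient but a consequence of this lemma (Corollary \ref{Cor_uniqueness}, obtained by taking $A_{n}=A$, $B_{n}=B$), so structuring the proof around a prior uniqueness statement, even if its computation is sound, puts the cart before the horse.
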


\begin{proof}
Let $\mathcal{L}=\mathcal{AB}$ and $\mathcal{L}_{n}=\mathcal{A}_{n}%
\mathcal{B}_{n}$ $n=1,2,...\,$\ . We can assume that $l=\ell_{\rho}\left(
\mathcal{L}\right)  >0$ and $l_{n}=\ell_{\rho}\left(  \mathcal{L}_{n}\right)
>0$ for every $n$. For $t\in\left(  0,1\right)  $, set $P=\mathfrak{g}%
_{r}\left(  t\right)  $, $P_{n}=\mathfrak{g}_{r,n}\left(  t\right)  $ and
$\overline{\delta}=\overline{\lim}_{n\rightarrow\infty}PP_{n}$, see Fig.
\ref{fig10}.%
\begin{figure}
[pth]
\begin{center}
\includegraphics[
natheight=1.550700in,
natwidth=2.739700in,
height=1.4741in,
width=2.5829in
]%
{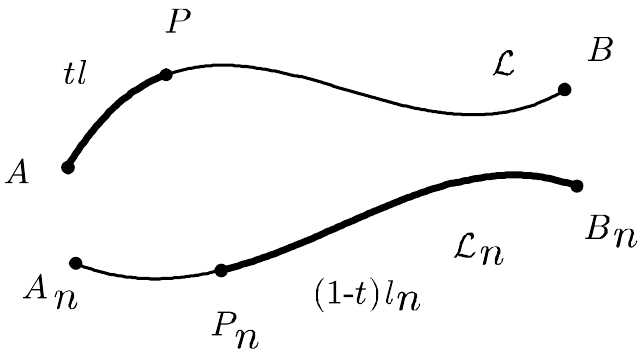}%
\caption{Sketch for Lemma \ref{Lemma_cont_geod}}%
\label{fig10}%
\end{center}
\end{figure}

\textbf{I}. \textbf{Let }$\left(  \mathcal{M},\rho\right)  $\textbf{\ satisfy
the upper four point }$\operatorname{cosq}_{K}$\textbf{\ condition.} Consider
the non-zero bound vectors $\overrightarrow{AP}$ and $\overrightarrow
{P_{n}B_{n}}$. By the upper four point $\operatorname{cosq}_{K}$ condition,%
\begin{align*}
&  \operatorname{cosq}_{K}\left(  \overrightarrow{AP},\overrightarrow
{P_{n}B_{n}}\right)  =\frac{\cos\widehat{\kappa}PB_{n}+\cos\widehat{\kappa
}AP\cos\widehat{\kappa}P_{n}B_{n}}{\sin\widehat{\kappa}AP\sin\widehat{\kappa
}P_{n}B_{n}}\\
&  -\frac{\left(  \cos\widehat{\kappa}AP+\cos\widehat{\kappa}PP_{n}\right)
\left(  \cos\widehat{\kappa}P_{n}B_{n}+\cos\widehat{\kappa}B_{n}A\right)
}{\left(  1+\cos\widehat{\kappa}AP_{n}\right)  \sin\widehat{\kappa}%
AP\sin\widehat{\kappa}P_{n}B_{n}}%
\end{align*}
does not exceed $1$. By letting $n\rightarrow\infty$, we get%
\begin{align}
&  \overline{\lim}_{n\rightarrow\infty}\operatorname{cosq}_{K}\left(
\overrightarrow{AP},\overrightarrow{P_{n}B_{n}}\right)  =\frac{\cos
\widehat{\kappa}\left(  1-t\right)  l+\cos\widehat{\kappa}tl\cos
\widehat{\kappa}\left(  1-t\right)  l}{\sin\widehat{\kappa}tl\sin
\widehat{\kappa}\left(  1-t\right)  l}\nonumber\\
&  -\frac{\left(  \cos\widehat{\kappa}tl+\cos\widehat{\kappa}\overline{\delta
}\right)  \left[  \cos\widehat{\kappa}\left(  1-t\right)  l+\cos
\widehat{\kappa}l\right]  }{\left(  1+\cos\widehat{\kappa}tl\right)
\sin\widehat{\kappa}tl\sin\widehat{\kappa}\left(  1-t\right)  l}=1+\nonumber\\
&  \frac{\cos\widehat{\kappa}\left(  1-t\right)  l+\cos\widehat{\kappa}l}%
{\sin\widehat{\kappa}tl\sin\widehat{\kappa}\left(  1-t\right)  l}%
-\frac{\left(  \cos\widehat{\kappa}tl+\cos\widehat{\kappa}\overline{\delta
}\right)  \left[  \cos\widehat{\kappa}\left(  1-t\right)  l+\cos
\widehat{\kappa}l\right]  }{\left(  1+\cos\widehat{\kappa}tl\right)
\sin\widehat{\kappa}tl\sin\widehat{\kappa}\left(  1-t\right)  l}\nonumber\\
&  =1+\frac{\left[  1-\cos\left(  \widehat{\kappa}\overline{\delta}\right)
\right]  \left[  \cos\left(  \widehat{\kappa}\left(  1-t\right)  l\right)
+\cos\left(  \widehat{\kappa}l\right)  \right]  }{\left[  1+\cos\left(
\widehat{\kappa}tl\right)  \right]  \sin\left(  \widehat{\kappa}tl\right)
\sin\left(  \widehat{\kappa}\left(  1-t\right)  l\right)  }\leq1.
\label{cont_g_c}%
\end{align}

If $K>0$, then%
\[
\frac{\left(  1-\cos\kappa\overline{\delta}\right)  \left[  \cos\kappa\left(
1-t\right)  l+\cos\kappa l\right]  }{\left(  1+\cos\kappa tl\right)
\sin\kappa tl\sin\kappa\left(  1-t\right)  l}\leq0.
\]
Because $\operatorname{diam}\left(  \mathcal{M}\right)  \leq\pi/\left(
2\kappa\right)  $, $\overline{\delta}=0$ follows.

If $K<0$, then%
\[
\frac{\left(  \cosh\kappa\overline{\delta}-1\right)  \left[  \cosh
\kappa\left(  1-t\right)  l+\cosh\kappa l\right]  }{\left(  1+\cosh\kappa
tl\right)  \sinh\kappa tl\sinh\kappa\left(  1-t\right)  l}\leq0,
\]
whence $\overline{\delta}=0$ follows.

\textbf{II. Let }$\left(  \mathcal{M},\rho\right)  $\textbf{\ satisfy the
lower four point }$\operatorname{cosq}_{K}$\textbf{\ condition. }In a manner
similar to I, we get%
\begin{align*}
&  \underline{\lim}_{n\rightarrow\infty}\operatorname{cosq}_{K}\left(
\overrightarrow{AP},\overrightarrow{B_{n}P_{n}}\right) \\
&  =\frac{\cos\widehat{\kappa}\overline{\delta}+\cos\widehat{\kappa}%
tl\cos\widehat{\kappa}\left(  1-t\right)  l}{\sin\widehat{\kappa}%
tl\sin\widehat{\kappa}\left(  1-t\right)  l}-\frac{\left[  \cos\widehat
{\kappa}tl+\cos\widehat{\kappa}\left(  1-t\right)  l\right]  ^{2}}{\left(
1+\cos\widehat{\kappa}l\right)  \sin\widehat{\kappa}tl\sin\widehat{\kappa
}\left(  1-t\right)  l}\geq-1,
\end{align*}
whence%
\[
\frac{\left[  \cos\widehat{\kappa}\overline{\delta}+\cos\widehat{\kappa
}\left(  1-2t\right)  l\right]  \left(  1+\cos\widehat{\kappa}l\right)
}{\left(  1+\cos\widehat{\kappa}l\right)  \sin\widehat{\kappa}tl\sin
\widehat{\kappa}\left(  1-t\right)  l}-\frac{\left[  \cos\widehat{\kappa
}tl+\cos\widehat{\kappa}\left(  1-t\right)  l\right]  ^{2}}{\left(
1+\cos\widehat{\kappa}l\right)  \sin\widehat{\kappa}tl\sin\widehat{\kappa
}\left(  1-t\right)  l}%
\]
is non-negative. Notice that%
\begin{align*}
\left[  \cos\widehat{\kappa}tl+\cos\widehat{\kappa}\left(  1-t\right)
l\right]  ^{2}  &  =4\cos^{2}\frac{\widehat{\kappa}l}{2}\cos^{2}\frac
{\widehat{\kappa}\left(  1-2t\right)  l}{2}=\\
&  \left(  1+\cos\widehat{\kappa}l\right)  \left(  1+\cos\widehat{\kappa
}\left(  1-2t\right)  l\right)  .
\end{align*}
Hence,
\begin{align*}
&  \left[  \cos\widehat{\kappa}\overline{\delta}+\cos\widehat{\kappa}\left(
1-2t\right)  l\right]  \left(  1+\cos\widehat{\kappa}l\right)  -\left[
\cos\widehat{\kappa}tl+\cos\widehat{\kappa}\left(  1-t\right)  l\right]
^{2}\\
&  =\left(  \cos\widehat{\kappa}\overline{\delta}-1\right)  \left(
1+\cos\widehat{\kappa}l\right)  .
\end{align*}
So,
\[
\underline{\lim}_{n\rightarrow\infty}\operatorname{cosq}_{K}\left(
\overrightarrow{AP},\overrightarrow{B_{n}P_{n}}\right)  +1=\frac{\cos
\widehat{\kappa}\overline{\delta}-1}{\sin\widehat{\kappa}tl\sin\widehat
{\kappa}\left(  1-t\right)  l}\geq0\text{.}%
\]

If $K>0$, then%
\[
\frac{\cos\kappa\overline{\delta}-1}{\sin\kappa tl\sin\kappa\left(
1-t\right)  l}\geq0,
\]
whence $\overline{\delta}=0$ follows.

If $K<0$, then%
\[
\frac{\cosh\widehat{\kappa}\overline{\delta}-1}{\sinh\kappa tl\sinh
\kappa\left(  1-t\right)  l}\leq0
\]
whence $\overline{\delta}=0$.

By I and II, $\mathfrak{g}_{r,n}\left(  t\right)  $ converges pointwise to
$\mathfrak{g}_{r}\left(  t\right)  $ for every $t\in\left[  0,1\right]  $ as
$n\rightarrow\infty$. It is not difficult to see that sequence $\left(
\mathfrak{g}_{r,n}\right)  _{n=1}^{\infty}$ also converges uniformly to
$\mathfrak{g}_{r}$ on the closed interval $\left[  0,1\right]  $.

The proof of Lemma \ref{Lemma_cont_geod} is complete.
\end{proof}

\begin{corollary}
\label{Cor_uniqueness}Let $K\neq0$ and let $\left(  \mathcal{M},\rho\right)  $
be a metric space such that $\operatorname{diam}\left(  \mathcal{M}\right)  $
is not greater than $\pi/\left(  2\sqrt{K}\right)  $ when $K>0$. If $\left(
\mathcal{M},\rho\right)  $ satisfies the one-sided four point
$\operatorname{cosq}_{K}$ condition, then every pair of points in
$\mathcal{M}$ can be joined by at most one shortest.
\end{corollary}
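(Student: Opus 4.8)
The plan is to obtain uniqueness as an immediate specialization of the continuity lemma (Lemma \ref{Lemma_cont_geod}), applied to a \emph{constant} sequence of shortests. The key observation is that Lemma \ref{Lemma_cont_geod} already encodes uniqueness if one is willing to let the approximating shortests coincide with a competitor rather than genuinely vary. Suppose that two points $A$ and $B$ are joined by shortests $\mathcal{L}=\mathcal{AB}$ and $\widetilde{\mathcal{L}}=\mathcal{AB}$. If $A=B$ there is nothing to prove, so assume $A\neq B$; then both shortests have length $\ell_{\rho}=\rho\left(  A,B\right)  >0$, and their reduced parametrizations $\mathfrak{g}_{r},\widetilde{\mathfrak{g}}_{r}\colon\left[  0,1\right]  \rightarrow\mathcal{M}$ relative to $A$ are defined on the same interval.

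First I would set $A_{n}=A$, $B_{n}=B$ and $\mathcal{L}_{n}=\widetilde{\mathcal{L}}$ for every $n$. Then trivially $\lim_{n\rightarrow\infty}A_{n}=A$ and $\lim_{n\rightarrow\infty}B_{n}=B$. Since $\left(  \mathcal{M},\rho\right)  $ satisfies the one-sided four point $\operatorname{cosq}_{K}$ condition and the diameter restriction, all hypotheses of Lemma \ref{Lemma_cont_geod} hold, and the lemma yields that the reduced parametrizations $\mathfrak{g}_{r,n}$ of $\mathcal{L}_{n}$ converge uniformly to $\mathfrak{g}_{r}$ on $\left[  0,1\right]  $. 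But by construction $\mathfrak{g}_{r,n}=\widetilde{\mathfrak{g}}_{r}$ for every $n$, so this is a constant sequence; a constant sequence can converge only to its own value, whence $\widetilde{\mathfrak{g}}_{r}=\mathfrak{g}_{r}$ as maps $\left[  0,1\right]  \rightarrow\mathcal{M}$. Since a reduced parametrization determines its curve completely, the shortests $\mathcal{L}$ and $\widetilde{\mathcal{L}}$ coincide, which is exactly the asserted uniqueness.

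There is no substantial obstacle here; the only point deserving a word of care—rather than a genuine difficulty—is the passage from equality of the two reduced parametrizations to equality of the shortests as subsets of $\mathcal{M}$. For this one notes that $\mathfrak{g}_{r}$ is, up to the affine rescaling $t\mapsto tl$, the arc length parametrization of a shortest, hence its image is precisely the point set of the shortest and it traverses that set in a fixed manner; thus two shortests sharing a reduced parametrization are literally the same curve. Everything else is the verbatim conclusion of Lemma \ref{Lemma_cont_geod} specialized to the constant sequence, so no new estimate involving $\operatorname{cosq}_{K}$ need be carried out.
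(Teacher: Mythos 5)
Your proposal is correct and is essentially the paper's own argument: the paper states this uniqueness result as an immediate corollary of Lemma \ref{Lemma_cont_geod}, and the intended derivation is exactly your specialization of that lemma to the constant sequence $A_{n}=A$, $B_{n}=B$, $\mathcal{L}_{n}=\widetilde{\mathcal{L}}$, forcing the two reduced parametrizations to coincide. Your extra remarks (handling $A=B$ trivially, and noting that a reduced parametrization determines the shortest as a curve) are exactly the right points of care and introduce no gap.
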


\subsection{Cross-diagonal estimate lemma\label{Sec_cross_diag_est}}

Let $\left(  \mathcal{M},\rho\right)  $ be a metric space. Let $A,B,C$ be
three distinct points in $\mathcal{M}$, $0<\underline{m}\leq\overline
{m}<+\infty$ and $s,t\in(0,1]$ satisfying the following conditions:

M1. The points $A$ and $B$ can be joined by a shortest $\mathcal{L}$ and the
points $A$ and $C$ can be joined by a shortest $\mathcal{N}$.

M2. If $K>0$, then $AB,AC\leq\pi/2\sqrt{K}$.

M3. $\underline{m}\leq s/t\leq\overline{m}$.

From now on, we will use the following notation:
\begin{align*}
X_{s}  &  =\mathfrak{g}_{r,\mathcal{L}}\left(  s\right)  \text{, }%
Y_{t}=\mathfrak{g}_{r,\mathcal{N}}\left(  t\right)  ,\text{ }s,\text{ }%
t\in(0,1].\\
x  &  =AB,\text{ }y=AC,\text{ }z=BC,\text{ }d_{s,t}=BY_{t},\text{ }%
f_{s,t}=CX_{s},\text{ }z_{s,t}=X_{s}Y_{t}\text{,}%
\end{align*}
as illustrated in Fig. \ref{fig11},
\begin{figure}
[pth]
\begin{center}
\includegraphics[
natheight=1.866100in,
natwidth=2.136400in,
height=1.7674in,
width=2.0202in
]%
{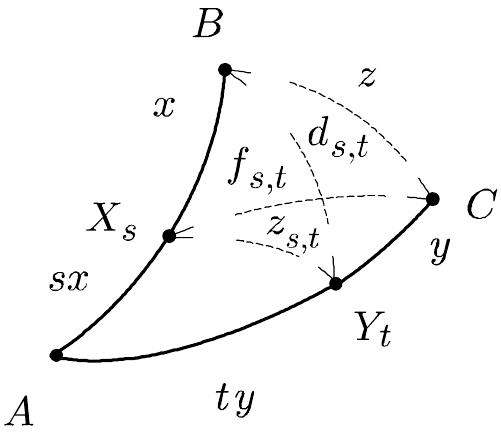}%
\caption{Sketch for the cross-diagonal lemma}%
\label{fig11}%
\end{center}
\end{figure}
and we put $\lambda=\max\left\{  x,y\right\}  ,\eta=x/y$, $\xi=\lambda
\max\left\{  s,t\right\}  $. Also, for $K\in%
\mathbb{R}
$, set
\[
\alpha_{K}\left(  s,t\right)  =\measuredangle_{K}X_{s}AY_{t},\text{ }\beta
_{K}\left(  s,t\right)  =\measuredangle_{K}AX_{s}Y_{t},\text{ }\gamma
_{K}\left(  s,t\right)  =\measuredangle_{K}AY_{t}X_{s}.
\]

If $p>0$, we write $\varphi\left(  s,t\right)  =\mathcal{O}\left(  \xi
^{p}\right)  $ when there is a constant $C>0$ such that $\left\vert
\varphi\left(  s,t\right)  \right\vert \leq C\xi^{p}$ for sufficiently small
$s $ and $t$. If $C$ is a constant depending on $M_{1},M_{2},...,M_{k}$, i.e.,
$C=C\left(  M_{1},M_{2},...,M_{k}\right)  $, then we write $\varphi\left(
s,t\right)  =\mathcal{O}_{M_{1},M_{2},...,M_{k}}\left(  \xi^{p}\right)  $.

If $\mathcal{T}=ABC$ is a triangle in $\mathbb{S}_{1}$, then $f_{s,t}$ is not
less than the length of the orthogonal projection of the shortest
$\mathcal{X}_{s}\mathcal{C}$ onto the shortest $\mathcal{AC}$. So, if $O_{s}$
is the orthogonal projection of the point $X_{s}$ onto the shortest
$\mathcal{AC}$, then $f_{s,t}\geq y-AO_{s}$. It is not difficult to see that
$AO_{s}$ approximately equals $\left(  sx\right)  \cos\alpha_{0}\left(
s,t\right)  $. Hence, approximately, $f_{s,t}$ is bounded below by $y-\left(
sx\right)  \cos\alpha_{0}\left(  s,t\right)  $. The following lemma states a
similar estimate for a triangle $\mathcal{T}=ABC$ in a metric space satisfying
the one-sided four point $\operatorname{cosq}_{K}$ condition.

\begin{lemma}
\label{Lemma_cross_diag}Let $K\neq0$ and $0<\underline{m}\leq\overline
{m}<+\infty$. Let $A,B,C$ be three distinct points in a metric space $\left(
\mathcal{M},\rho\right)  $ and $s,t\in(0,1]$ satisfying M1-M3. Suppose that
$\left(  \mathcal{M},\rho\right)  $ satisfies the one-sided four point
$\operatorname{cosq}_{K}$ condition.\newline(i) If $K>0$, then
\begin{align*}
\cos\kappa f_{s,t}  &  \leq\cos\kappa y+\kappa\left(  sx\right)  \sin\kappa
y\cos\alpha_{0}\left(  s,t\right)  +\mathcal{O}\left(  \xi^{2}\right)  ,\\
\cos\kappa d_{s,t}  &  \leq\cos\kappa x+\kappa\left(  ty\right)  \sin\kappa
x\cos\alpha_{0}\left(  s,t\right)  +\mathcal{O}\left(  \xi^{2}\right)  .
\end{align*}
\newline(ii) If $K<0$, then
\begin{align*}
\cosh\kappa f_{s,t}  &  \geq\cosh\kappa y-\kappa\left(  sx\right)  \sinh\kappa
y\cos\alpha_{0}\left(  s,t\right)  +\mathcal{O}\left(  \xi^{2}\right)
,\newline\\
\cosh\kappa d_{s,t}  &  \geq\cosh\kappa x-\kappa\left(  ty\right)  \sinh\kappa
x\cos\alpha_{0}\left(  s,t\right)  +\mathcal{O}\left(  \xi^{2}\right)  ,
\end{align*}
where $\mathcal{O}\left(  \xi^{2}\right)  =\mathcal{O}_{\lambda,\eta
,\underline{m},\overline{m},K}\left(  \xi^{2}\right)  $.
\end{lemma}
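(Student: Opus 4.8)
The plan is to extract each inequality directly from the one-sided $\operatorname{cosq}_{K}$ condition applied to a single, carefully chosen quadruple, namely $\left\{ A,X_{s},Y_{t},C\right\} $ for the $f_{s,t}$-estimate; the $d_{s,t}$-estimate then follows by the symmetry $B\leftrightarrow C$, $X_{s}\leftrightarrow Y_{t}$, $s\leftrightarrow t$, $x\leftrightarrow y$, under which $\alpha_{0}\left( s,t\right) =\measuredangle_{0}X_{s}AY_{t}$ is invariant, applied to the mirror quadruple $\left\{ A,X_{s},Y_{t},B\right\} $. I would treat only $K>0$ in detail, the case $K<0$ being identical after replacing $\cos,\sin$ by $\cosh,\sinh$ and reversing the inequalities. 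Throughout I abbreviate $u=sx=AX_{s}$, $v=ty=AY_{t}$, $w=z_{s,t}=X_{s}Y_{t}$, all of order $\mathcal{O}\left( \xi\right) $, and I set $\delta=f_{s,t}-y$; since $Y_{t}$ lies on $\mathcal{AC}$ we have $Y_{t}C=\left( 1-t\right) y=y-v$ and $AC=y$, while the triangle inequality gives $\left\vert \delta\right\vert \leq u=\mathcal{O}\left( \xi\right) $.

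The decisive point is the choice of bound vectors. A naive choice such as $\overrightarrow{AX_{s}},\overrightarrow{Y_{t}C}$ (placing $f_{s,t}$ in the $PQ$-slot) does give an upper bound on $\cos\kappa f_{s,t}$, but only the weaker one with $\cos\alpha_{0}$ replaced by $1$: in the model the corresponding $\operatorname{cosq}_{K}$ is far from its extreme value, so the inequality $\operatorname{cosq}_{K}\leq1$ is slack to first order. Instead I would use the pair $\overrightarrow{AY_{t}},\overrightarrow{X_{s}C}$, whose two vectors become parallel (both pointing along $\mathcal{AC}$) as $s,t\rightarrow0+$, so that $\operatorname{cosq}_{K}$ is asymptotically equal to $1$ and the upper condition is sharp. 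With this labeling the six distances are $AY_{t}=v$, $X_{s}C=f_{s,t}$, $AX_{s}=u$, $Y_{t}C=y-v$, $Y_{t}X_{s}=w$, $AC=y$, and $\operatorname{cosq}_{K}\left( \overrightarrow{AY_{t}},\overrightarrow{X_{s}C}\right) \leq1$ becomes, after clearing the positive denominator $\sin\kappa v\sin\kappa f_{s,t}$,
\[
\cos\kappa\left( y-v\right) +\cos\kappa v\cos\kappa f_{s,t}-\frac{\left( \cos\kappa v+\cos\kappa w\right) \left( \cos\kappa f_{s,t}+\cos\kappa y\right) }{1+\cos\kappa u}\leq\sin\kappa v\sin\kappa f_{s,t}.
\]
When only the lower condition is assumed I would instead take the nearly antiparallel pair $\overrightarrow{Y_{t}A},\overrightarrow{X_{s}C}$ and invoke $\operatorname{cosq}_{K}\geq-1$; the analogous computation produces the identical bound.

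Next I would expand this inequality to second order in the small quantities $u,v,w,\delta$, with $y$ fixed, using $\cos\kappa f_{s,t}=\cos\kappa y-\kappa\delta\sin\kappa y-\tfrac{1}{2}\kappa^{2}\delta^{2}\cos\kappa y+\mathcal{O}\left( \delta^{3}\right) $ and the companion expansion of $\sin\kappa f_{s,t}$. The zeroth- and first-order terms cancel identically, and what survives is
\[
\tfrac{1}{2}\kappa^{2}\cos\kappa y\left( w^{2}-u^{2}-v^{2}\right) \leq\kappa^{2}v\delta\cos\kappa y+\mathcal{O}\left( \xi^{3}\right) .
\]
Here I would invoke the diameter hypothesis M2, which guarantees $\cos\kappa y>0$ so that division is legitimate, and then substitute the defining relation $w^{2}=u^{2}+v^{2}-2uv\cos\alpha_{0}\left( s,t\right) $ for the Euclidean comparison angle, giving $v\delta\geq-uv\cos\alpha_{0}\left( s,t\right) +\mathcal{O}\left( \xi^{3}\right) $. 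Dividing by $v=ty$ and using M3 — precisely here the two-sided bound $\underline{m}\leq s/t\leq\overline{m}$ is needed, since it forces $ty$ to be comparable to $\xi$ and hence $\mathcal{O}\left( \xi^{3}\right) /\left( ty\right) =\mathcal{O}_{\lambda,\eta,\underline{m},\overline{m},K}\left( \xi^{2}\right) $ — yields $\delta\geq-\left( sx\right) \cos\alpha_{0}\left( s,t\right) +\mathcal{O}\left( \xi^{2}\right) $. Feeding this back through $\cos\kappa f_{s,t}=\cos\kappa y-\kappa\delta\sin\kappa y+\mathcal{O}\left( \delta^{2}\right) $, a decreasing function of $\delta$, turns the lower bound on $\delta$ into the asserted upper bound $\cos\kappa f_{s,t}\leq\cos\kappa y+\kappa\left( sx\right) \sin\kappa y\cos\alpha_{0}\left( s,t\right) +\mathcal{O}\left( \xi^{2}\right) $.

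I expect the main obstacle to be exactly the sharpness issue in the second paragraph: the factor $\cos\alpha_{0}$, although it contributes at order $\xi$ to the final estimate, is produced only by the second-order combination $w^{2}-u^{2}-v^{2}=-2uv\cos\alpha_{0}$, so a crude first-order expansion loses it, and one is forced both to select the asymptotically extremal (parallel, resp. antiparallel) bound vectors and to carry the expansion consistently to second order. The remaining delicate point is the uniformity of the error, where M2 (to ensure $\cos\kappa y>0$) and M3 (to absorb $\mathcal{O}\left( \xi^{3}\right) /\left( ty\right) $ into $\mathcal{O}\left( \xi^{2}\right) $) are both essential; the hyperbolic case and the $d_{s,t}$ statements then follow by the substitutions and the symmetry noted above.
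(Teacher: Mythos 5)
Your proposal is, in every essential respect, the paper's own proof. The paper tests the quantities $h_{s,t}=\operatorname{cosq}_{K}\left(\overrightarrow{X_{s}C},\overrightarrow{AY_{t}}\right)$ under the upper condition and $g_{s,t}=\operatorname{cosq}_{K}\left(\overrightarrow{X_{s}C},\overrightarrow{Y_{t}A}\right)$ under the lower one; since $\operatorname{cosq}_{K}$ is symmetric in its two arguments, these are exactly your pairs $\left(\overrightarrow{AY_{t}},\overrightarrow{X_{s}C}\right)$ and $\left(\overrightarrow{Y_{t}A},\overrightarrow{X_{s}C}\right)$. Like you, the paper expands to second order in $\xi$, recovers $\cos\alpha_{0}\left(s,t\right)$ from the combination $\left(sx\right)^{2}+\left(ty\right)^{2}-z_{s,t}^{2}$ via the Euclidean law of cosines, treats the $d_{s,t}$-estimate by the symmetry you invoke, and dispatches $K<0$ by the same substitutions. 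Your surviving inequality $\frac{1}{2}\kappa^{2}\cos\kappa y\left(w^{2}-u^{2}-v^{2}\right)\leq\kappa^{2}v\delta\cos\kappa y+\mathcal{O}\left(\xi^{3}\right)$ is the linearized form of the paper's intermediate inequality (\ref{hst_ineq}), and your accounting of where M3 enters (to compare $ty$ with $\xi$) agrees with the paper's.

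The one step that fails as written is the division by $\cos\kappa y$. You assert that M2 guarantees $\cos\kappa y>0$, but M2 is the non-strict bound $AB,AC\leq\pi/\left(2\sqrt{K}\right)$, so the case $y=\pi/\left(2\kappa\right)$, i.e. $\cos\kappa y=0$, is permitted by the hypotheses of the lemma. This is not cosmetic: as your own computation makes explicit, every term surviving at second order carries the factor $\cos\kappa y$, so in that case your key inequality collapses to $0\leq\mathcal{O}\left(\xi^{3}\right)$ and yields no lower bound on $\delta$ at all; the same difficulty hits the $d_{s,t}$-estimate when $x=\pi/\left(2\kappa\right)$. (For $y<\pi/\left(2\kappa\right)$ your division is legitimate, and the resulting factor $1/\cos\kappa y$ is an admissible constant, since $x$ and $y$ are determined by $\lambda$ and $\eta$.) The paper avoids the explicit division by leaving its inequality in the form (\ref{hst_ineq}), with $\sin\widehat{\kappa}f_{s,t}$ in the denominator, and converting to the cosine bound only at the end via $\cos\kappa f_{s,t}=\sqrt{1-\sin^{2}\kappa f_{s,t}}$; that final step is itself delicate in exactly the same boundary configuration, but your variant turns a glossed-over subtlety into an outright division by zero. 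To close the gap, either postpone the sine-to-cosine conversion as the paper does, or supply a separate argument for the extremal case $\cos\kappa y=0$.
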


\begin{proof}
\textbf{I}. \textbf{Let }$\left(  \mathcal{M},\rho\right)  $\textbf{\ satisfy
the upper four point }$\operatorname{cosq}_{K}$\textbf{\ condition}. For the
sake of brevity, set $h_{s,t}=\operatorname{cosq}_{K}\left(  \overrightarrow
{X_{s}C},\overrightarrow{AY_{t}}\right)  $. Then\newline%
\begin{align*}
h_{s,t}  &  =\frac{\cos\widehat{\kappa}\left(  1-t\right)  y+\cos
\widehat{\kappa}f_{s,t}\cos\widehat{\kappa}ty}{\sin\widehat{\kappa}f_{s,t}%
\sin\widehat{\kappa}ty}\\
&  -\frac{\left(  \cos\widehat{\kappa}f_{s,t}+\cos\widehat{\kappa}y\right)
\left(  \cos\widehat{\kappa}ty+\cos\widehat{\kappa}z_{s,t}\right)  }{\left(
1+\cos\widehat{\kappa}sx\right)  \sin\widehat{\kappa}f_{s,t}\sin
\widehat{\kappa}ty}\\
&  =\frac{\cos\widehat{\kappa}y+\widehat{\kappa}ty\sin\widehat{\kappa}%
y-\frac{1}{2}\widehat{\kappa}^{2}\left(  ty\right)  ^{2}\cos\widehat{\kappa
}y+\mathcal{O}\left(  \xi^{3}\right)  }{\sin\widehat{\kappa}f_{s,t}%
\sin\widehat{\kappa}ty}\\
&  +\frac{\cos\widehat{\kappa}f_{s,t}\left[  1-\frac{1}{2}\widehat{\kappa}%
^{2}\left(  ty\right)  ^{2}+\mathcal{O}\left(  \xi^{4}\right)  \right]  }%
{\sin\widehat{\kappa}f_{s,t}\sin\widehat{\kappa}ty}\\
&  -\frac{\cos\widehat{\kappa}f_{s,t}+\cos\widehat{\kappa}y}{\sin
\widehat{\kappa}f_{s,t}\sin\widehat{\kappa}ty}\left[  \frac{1}{2}+\frac{1}%
{8}\widehat{\kappa}^{2}\left(  sx\right)  ^{2}+\mathcal{O}\left(  \xi
^{4}\right)  \right]  \times\\
&  \left[  2-\frac{1}{2}\widehat{\kappa}^{2}\left(  ty\right)  ^{2}-\frac
{1}{2}\widehat{\kappa}^{2}z_{s,t}^{2}+\mathcal{O}\left(  \xi^{4}\right)
\right]  .
\end{align*}
After lengthy but routine simplifications and using the upper four point
$\operatorname{cosq}_{K}$ condition, we get:
\begin{align*}
h_{s,t}  &  =\frac{\widehat{\kappa}\left(  ty\right)  \sin\widehat{\kappa
}y-\widehat{\kappa}^{2}\left(  sx\right)  \left(  ty\right)  \frac
{\cos\widehat{\kappa}y+\cos\widehat{\kappa}f_{s,t}}{2}\cos\alpha_{0}\left(
s,t\right)  +\mathcal{O}\left(  \xi^{3}\right)  }{\widehat{\kappa}\left(
ty\right)  \left[  1+\mathcal{O}\left(  \xi^{2}\right)  \right]  \sin
\widehat{\kappa}f_{s,t}}=\\
&  =\frac{\sin\widehat{\kappa}y-\widehat{\kappa}\left(  sx\right)  \frac
{\cos\widehat{\kappa}y+\cos\widehat{\kappa}f_{s,t}}{2}\cos\alpha_{0}\left(
s,t\right)  }{\sin\widehat{\kappa}f_{s,t}}+\mathcal{O}\left(  \xi^{2}\right)
\leq1.
\end{align*}
Set $\mu=\left(  \cos\widehat{\kappa}y+\cos\widehat{\kappa}f_{s,t}\right)
/2$. By the triangle inequality, $\left\vert f_{s,t}-y\right\vert \leq sx$.
Hence, $\mu=\cos\widehat{\kappa}y+\mathcal{O}\left(  \xi\right)  $ follows and
we have:%
\begin{equation}
h_{s,t}=\frac{\sin\widehat{\kappa}y-\widehat{\kappa}\left(  sx\right)
\cos\widehat{\kappa}y\cos\alpha_{0}\left(  s,t\right)  }{\sin\widehat{\kappa
}f_{s,t}}+\mathcal{O}\left(  \xi^{2}\right)  \leq1. \label{hst_ineq}%
\end{equation}

So, if $K>0$, we get:%
\[
\sin\kappa y-\kappa\left(  sx\right)  \cos\kappa y\cos\alpha_{0}\left(
s,t\right)  \leq\sin\kappa f_{s,t}+\mathcal{O}\left(  \xi^{2}\right)
\]
and if $K<0$, we get:%
\[
\sinh\kappa y-\kappa\left(  sx\right)  \cosh\kappa y\cos\alpha_{0}\left(
s,t\right)  \leq\sinh\kappa f_{s,t}+\mathcal{O}\left(  \xi^{2}\right)  .
\]
Now, by writing $\cos\kappa f_{s,t}=\sqrt{1-\sin^{2}\kappa f_{s,t}}$ if $K>0$
and $\cosh\kappa f_{s,t}=\sqrt{1+\sinh^{2}\kappa f_{s,t}}$ if $K<0$, it is not
difficult to derive the inequalities of (i) and (ii) of the lemma for
$f_{s,t}$.

\textbf{II}. \textbf{Let }$\left(  \mathcal{M},\rho\right)  $\textbf{\ satisfy
the lower four point }$\operatorname{cosq}_{K}$\textbf{\ condition}. Set
\[
g_{s,t}=\operatorname{cosq}_{K}\left(  \overrightarrow{X_{s}C},\overrightarrow
{Y_{t}A}\right)  .
\]
Then\newline%
\begin{align*}
g_{s,t}  &  =\frac{\left(  1+\cos\widehat{\kappa}z_{s,t}\right)  \left(
\cos\widehat{\kappa}y+\cos\widehat{\kappa}f_{s,t}\cos\widehat{\kappa
}ty\right)  }{\left(  1+\cos\widehat{\kappa}z_{s,t}\right)  \sin
\widehat{\kappa}f_{s,t}\sin\widehat{\kappa}ty}\\
&  -\frac{\left[  \cos\widehat{\kappa}f_{s,t}+\cos\widehat{\kappa}\left(
1-t\right)  y\right]  \left(  \cos\widehat{\kappa}ty+\cos\widehat{\kappa
}sx\right)  }{\left(  1+\cos\widehat{\kappa}z_{s,t}\right)  \sin
\widehat{\kappa}f_{s,t}\sin\widehat{\kappa}ty}\geq-1.
\end{align*}
Let $I$ denote the numerator of $g_{s,t}$. We have:%
\begin{align*}
I  &  =\left[  2-\frac{1}{2}\widehat{\kappa}^{2}z_{s,t}^{2}+\mathcal{O}\left(
\xi^{4}\right)  \right]  \left\{  \cos\widehat{\kappa}y+\cos\widehat{\kappa
}f_{s,t}\left[  1-\frac{1}{2}\widehat{\kappa}^{2}\left(  ty\right)
^{2}+\mathcal{O}\left(  \xi^{4}\right)  \right]  \right\} \\
&  -\left[  \cos\widehat{\kappa}f_{s,t}+\cos\widehat{\kappa}y+\widehat{\kappa
}\left(  ty\right)  \sin\widehat{\kappa}y-\frac{1}{2}\widehat{\kappa}%
^{2}\left(  ty\right)  ^{2}\cos\widehat{\kappa}y+\mathcal{O}\left(  \xi
^{3}\right)  \right]  \times\\
&  \left[  2-\frac{1}{2}\widehat{\kappa}^{2}\left(  ty\right)  ^{2}-\frac
{1}{2}\widehat{\kappa}^{2}\left(  sx\right)  ^{2}+\mathcal{O}\left(  \xi
^{4}\right)  \right]  .
\end{align*}
After elementary simplifications, we get%
\begin{align*}
I  &  =-2\widehat{\kappa}\left(  ty\right)  \sin\widehat{\kappa}%
y+\widehat{\kappa}^{2}\left(  \cos\widehat{\kappa}y+\cos\widehat{\kappa
}f_{s,t}\right)  \left(  sx\right)  \left(  ty\right)  \cos\alpha_{0}\left(
s,t\right) \\
&  +\widehat{\kappa}\left(  ty\right)  ^{2}\left(  \cos\widehat{\kappa}%
y-\cos\widehat{\kappa}f_{s,t}\right)  +\mathcal{O}\left(  \xi^{3}\right)
\text{.}%
\end{align*}
By the triangle inequality, $\left\vert y-f_{s,t}\right\vert \leq sx$. Hence,
$\cos\widehat{\kappa}y-\cos\widehat{\kappa}f_{s,t}=\mathcal{O}\left(
\xi\right)  $. So,
\[
I=-2\widehat{\kappa}\left(  ty\right)  \sin\widehat{\kappa}y+\widehat{\kappa
}^{2}\left(  \cos\widehat{\kappa}y+\cos\widehat{\kappa}f_{s,t}\right)  \left(
sx\right)  \left(  ty\right)  \cos\alpha_{0}\left(  s,t\right)  +\mathcal{O}%
\left(  \xi^{3}\right)  .
\]
Hence,
\begin{align*}
g_{s,t}  &  =\frac{-2\widehat{\kappa}\left(  ty\right)  \sin\widehat{\kappa
}y+\widehat{\kappa}^{2}\left(  \cos\widehat{\kappa}y+\cos\widehat{\kappa
}f_{s,t}\right)  \left(  sx\right)  \left(  ty\right)  \cos\alpha_{0}\left(
s,t\right)  +\mathcal{O}\left(  \xi^{3}\right)  }{2\left[  1+\mathcal{O}%
\left(  \xi^{2}\right)  \right]  \widehat{\kappa}\left(  ty\right)
\sin\widehat{\kappa}f_{s,t}}\\
&  =\frac{-\sin\widehat{\kappa}y+\widehat{\kappa}\frac{\cos\widehat{\kappa
}y+\cos\widehat{\kappa}f_{s,t}}{2}\left(  sx\right)  \cos\alpha_{0}\left(
s,t\right)  }{\sin\widehat{\kappa}f_{s,t}}+\mathcal{O}\left(  \xi^{2}\right)
\geq-1,
\end{align*}
which implies (\ref{hst_ineq}). Hence, the inequalities of (i) and (ii) for
$f_{s,t}$ follow.

Derivation of the inequalities of parts (i) and (ii) for $d_{s,t}$ is similar.

The proof of the cross-diagonal lemma is complete.
\end{proof}

\subsection{Growth estimate lemma\label{Growth_Est}}

We keep the notation of Sec. \ref{Sec_cross_diag_est}. To illustrate the
estimates of Lemma \ref{Lemma_Growth_est}, consider a geodesic triangle
$\mathcal{T}=ABC$ in $\mathbb{S}_{1}$ (for the notation, see Fig.
\ref{fig11}). Let $z_{\perp}$denote the length of the orthogonal projection of
the shortest $\mathcal{BC}$ onto the (possibly extended) shortest
$\mathcal{X}_{s}\mathcal{Y}_{t}$. For small $x$ and $y$, we can treat the
triangle $\mathcal{T}$ as approximately Euclidean triangle. Then it is not
difficult to see that $z_{\perp}$ is approximately equal to $x\cos\beta
_{0}\left(  s,t\right)  +y\cos\gamma_{0}\left(  s,t\right)  $. So, for small
$x$ and $y$, the length $z$ is approximately bounded below by $x\cos\beta
_{0}\left(  s,t\right)  +y\cos\gamma_{0}\left(  s,t\right)  $. Lemma
\ref{Lemma_Growth_est} establishes similar estimates for metric spaces
satisfying the one-sided four point $\operatorname{cosq}_{K}$ condition.

\begin{lemma}
\label{Lemma_Growth_est}Let $K\neq0$ and $0<\underline{m}\leq\overline
{m}<+\infty$. Let $A,B,C$ be three distinct points in a metric space $\left(
\mathcal{M},\rho\right)  $ and $s,t\in(0,1]$ satisfying M1-M3 of Sec.
\ref{Sec_cross_diag_est}. In addition, suppose that $\left(  \mathcal{M}%
,\rho\right)  $ satisfies the one-sided four point $\operatorname{cosq}_{K}$
condition. Let $\mathcal{A}\subseteq(0,1]\times(0,1]$ be such that $\left(
0,0\right)  $ is an accumulation point of the set $\mathcal{A}$ and $0<$
$\underline{m}\leq z_{s,t}/\left(  sx\right)  $ for every $\left(  s,t\right)
\in\mathcal{A}$.\newline(i) If $K>0$, then for every $\left(  s,t\right)
\in\mathcal{A}$,
\[
\sin\kappa y\cos\gamma_{0}\left(  s,t\right)  +\frac{\cos\kappa y+\cos\kappa
z}{1+\cos\kappa x}\sin\kappa x\cos\beta_{0}\left(  s,t\right)  \leq\sin\kappa
z+\mathcal{O}\left(  \xi\right)  ,
\]
\newline(ii) If $K<0$, then for every $\left(  s,t\right)  \in\mathcal{A}$,
\[
\sinh\kappa y\cos\gamma_{0}\left(  s,t\right)  +\frac{\cosh\kappa
y+\cosh\kappa z}{1+\cosh\kappa x}\sinh\kappa x\cos\beta_{0}\left(  s,t\right)
\leq\sinh\kappa z+\mathcal{O}\left(  \xi\right)  ,
\]
where $\mathcal{O}\left(  \xi\right)  =\mathcal{O}_{\lambda,\eta,\underline
{m},\overline{m},K}\left(  \xi\right)  $.
\end{lemma}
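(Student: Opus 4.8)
The plan is to obtain the growth estimate by testing the one-sided $\operatorname{cosq}_{K}$ condition on a single, carefully chosen pair of bound vectors, then expanding to first order in $\xi$ and substituting the cross-diagonal estimates of Lemma \ref{Lemma_cross_diag}. If $\left( \mathcal{M},\rho\right) $ satisfies the upper four point condition I would use the pair $\overrightarrow{X_sY_t},\overrightarrow{BC}$, for which $\operatorname{cosq}_{K}(\overrightarrow{X_sY_t},\overrightarrow{BC})\le 1$; if it satisfies the lower condition I would reverse the first vector and use $\operatorname{cosq}_{K}(\overrightarrow{Y_tX_s},\overrightarrow{BC})\ge -1$, in parallel with the way the two cases of Lemma \ref{Lemma_cross_diag} differ by one reversal. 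The point of these choices is that the two tails are then $X_s,B$ at distance $(1-s)x$, or $Y_t,B$ at distance $d_{s,t}$, each tending to $x=AB$; it is exactly this base distance that produces the factor $1+\cos\kappa x$ (resp. $1+\cosh\kappa x$) in the conclusion. I will describe the computation for the upper condition and $K>0$; the lower condition, and $K<0$, run the same way with the reversed vector and with $\cos,\sin$ replaced by $\cosh,\sinh$ and Lemma \ref{Lemma_cross_diag}(ii) in place of (i).

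Writing out the defining formula for $\operatorname{cosq}_{K}(\overrightarrow{X_sY_t},\overrightarrow{BC})$, clearing the positive factor $\sin\kappa z_{s,t}\sin\kappa z$ and transposing, the condition becomes $L\le \sin\kappa z_{s,t}\sin\kappa z$, where $L$ combines $\cos\kappa(1-t)y$, $\cos\kappa z_{s,t}\cos\kappa z$, $\cos\kappa d_{s,t}$, $\cos\kappa f_{s,t}$ and $\cos\kappa(1-s)x$. The decisive first observation is that at $s=t=0$ one has $d_{s,t}\to x$, $f_{s,t}\to y$, $z_{s,t}\to 0$, and $L$ vanishes identically; hence both sides are $\mathcal{O}(\xi)$ and the whole estimate lives at first order. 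I would therefore expand $L$ using $\cos\kappa z_{s,t}=1+\mathcal{O}(\xi^{2})$, $\sin\kappa z_{s,t}=\kappa z_{s,t}+\mathcal{O}(\xi^{3})$, $\cos\kappa(1-t)y=\cos\kappa y+\kappa(ty)\sin\kappa y+\mathcal{O}(\xi^{2})$ and the analogous expansion in $(1-s)x$, writing $\cos\kappa d_{s,t}=\cos\kappa x+D_{1}+\mathcal{O}(\xi^{2})$ and $\cos\kappa f_{s,t}=\cos\kappa y+F_{1}+\mathcal{O}(\xi^{2})$.

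After the $\mathcal{O}(1)$ terms cancel, $D_{1}$ and $F_{1}$ enter $L$ linearly, and — this is the reason the pair was chosen — with signs for which the one available direction of Lemma \ref{Lemma_cross_diag} points the right way: for $K>0$ the upper bounds $D_{1}\le\kappa(ty)\sin\kappa x\cos\alpha_{0}(s,t)+\mathcal{O}(\xi^{2})$ and $F_{1}\le\kappa(sx)\sin\kappa y\cos\alpha_{0}(s,t)+\mathcal{O}(\xi^{2})$, substituted into the negative contributions $-F_{1}-\frac{\cos\kappa y+\cos\kappa z}{1+\cos\kappa x}D_{1}$, only lower $L$, so they preserve the inequality. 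Collecting terms and dividing by $\kappa$, the condition reduces to
\[
\sin\kappa y\,[(ty)-(sx)\cos\alpha_{0}]+\frac{\cos\kappa y+\cos\kappa z}{1+\cos\kappa x}\,\sin\kappa x\,[(sx)-(ty)\cos\alpha_{0}]\le z_{s,t}\sin\kappa z+\mathcal{O}(\xi^{2}).
\]

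Finally I would pass from $\alpha_{0}$ to $\beta_{0},\gamma_{0}$ and divide by $z_{s,t}$. In the comparison triangle $A^{K}X_{s}^{K}Y_{t}^{K}$ all three sides $sx,ty,z_{s,t}$ are $\mathcal{O}(\xi)$, so the $K$-plane law of cosines, Taylor expanded, gives the projection identities $(ty)-(sx)\cos\alpha_{0}=z_{s,t}\cos\gamma_{0}+\mathcal{O}(\xi^{3})$ and $(sx)-(ty)\cos\alpha_{0}=z_{s,t}\cos\beta_{0}+\mathcal{O}(\xi^{3})$, which I would verify directly, noting that at leading order the spherical (or hyperbolic) comparison angles agree with the Euclidean ones. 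Substituting them, every term on the left carries a factor $z_{s,t}$, and dividing by $z_{s,t}$ yields the asserted inequality with remainder $\mathcal{O}(\xi^{2})/z_{s,t}$. Here the hypotheses are used decisively: M3 forces $sx\asymp ty\asymp\xi$, while $z_{s,t}/(sx)\ge\underline{m}$ together with $z_{s,t}\le sx+ty$ gives $z_{s,t}\asymp\xi$, so that $\mathcal{O}(\xi^{2})/z_{s,t}=\mathcal{O}_{\lambda,\eta,\underline{m},\overline{m},K}(\xi)$. The main difficulty I anticipate is organizational rather than conceptual: the pair of bound vectors must be chosen so that simultaneously the base distance tends to $x$ and the signs of the $D_{1},F_{1}$ contributions match the single direction supplied by Lemma \ref{Lemma_cross_diag}, after which the first-order expansion must be carried precisely enough that dividing the $\mathcal{O}(1)$-cancelled inequality by $z_{s,t}\asymp\xi$ leaves only an $\mathcal{O}(\xi)$ error.
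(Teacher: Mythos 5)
Your proposal is correct and follows essentially the same route as the paper's proof: the same test pairs $\overrightarrow{X_{s}Y_{t}},\overrightarrow{BC}$ (upper condition) and $\overrightarrow{Y_{t}X_{s}},\overrightarrow{BC}$ (lower condition), the same first-order expansion in $\xi$, the same substitution of the one-sided cross-diagonal bounds with the same sign bookkeeping, and the same use of $z_{s,t}\geq\underline{m}\left(  sx\right)  $ together with M3 to absorb the remainder after dividing by $z_{s,t}$. The only cosmetic difference is that your projection identities $\left(  ty\right)  -\left(  sx\right)  \cos\alpha_{0}=z_{s,t}\cos\gamma_{0}$ and $\left(  sx\right)  -\left(  ty\right)  \cos\alpha_{0}=z_{s,t}\cos\beta_{0}$ are exact (since $\alpha_{0},\beta_{0},\gamma_{0}$ are by definition the Euclidean comparison angles, as the paper uses them), so no $\mathcal{O}\left(  \xi^{3}\right)  $ correction is actually needed.
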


\begin{proof}
We consider $\left(  s,t\right)  \in\mathcal{A}$.

\textbf{I}. \textbf{Let }$\left(  \mathcal{M},\rho\right)  $\textbf{\ satisfy
the upper four point }$\operatorname{cosq}_{K}$\textbf{\ condition}. Set%
\[
p_{s,t}=\operatorname{cosq}_{K}\left(  \overrightarrow{X_{s}Y_{t}%
},\overrightarrow{BC}\right)  ,
\]
see Fig. \ref{fig11}. Then\newline%
\begin{align*}
p_{s,t}  &  =\frac{\cos\widehat{\kappa}\left(  1-t\right)  y+\cos
\widehat{\kappa}z_{s,t}\cos\widehat{\kappa}z}{\sin\widehat{\kappa}z_{s,t}%
\sin\widehat{\kappa}z}\\
&  -\frac{\left(  \cos\widehat{\kappa}z_{s,t}+\cos\widehat{\kappa}%
d_{s,t}\right)  \left(  \cos\widehat{\kappa}z+\cos\widehat{\kappa}%
f_{s,t}\right)  }{\left[  1+\cos\widehat{\kappa}\left(  1-s\right)  x\right]
\sin\widehat{\kappa}z_{s,t}\sin\widehat{\kappa}z}\\
&  =\frac{\cos\widehat{\kappa}y+\widehat{\kappa}\left(  ty\right)
\sin\widehat{\kappa}y+\mathcal{O}\left(  \xi^{2}\right)  +\cos\widehat{\kappa
}z\left[  1+\mathcal{O}\left(  \xi^{2}\right)  \right]  }{\sin\widehat{\kappa
}z_{s,t}\sin\widehat{\kappa}z}-\\
&  \frac{\left(  1+\cos\widehat{\kappa}d_{s,t}\right)  \left(  \cos
\widehat{\kappa}z+\cos\widehat{\kappa}f_{s,t}\right)  +\mathcal{O}\left(
\xi^{2}\right)  }{\kappa z_{s,t}\sin\widehat{\kappa}z}\times\\
&  \left[  1+\mathcal{O}\left(  \xi^{2}\right)  \right]  \times\left[
\frac{1}{1+\cos\widehat{\kappa}x}-\frac{\widehat{\kappa}\left(  sx\right)
\sin\widehat{\kappa}x}{\left(  1+\cos\widehat{\kappa}x\right)  ^{2}%
}+\mathcal{O}\left(  \xi^{2}\right)  \right]  .
\end{align*}

For the sake of brevity, set $\mu=\cos\widehat{\kappa}z+\cos\widehat{\kappa}y
$ and $\nu=1+\cos\widehat{\kappa}x$. Let $K>0$. By part (i) of the
cross-diagonal estimate lemma (Lemma \ref{Lemma_cross_diag}),
\begin{align*}
p_{s,t}  &  \geq\frac{\mu+\kappa\left(  ty\right)  \sin\kappa y+\mathcal{O}%
\left(  \xi^{2}\right)  }{\kappa z_{s,t}\sin\kappa z}-\\
&  \frac{\left[  \nu+\kappa\left(  ty\right)  \sin\kappa x\cos\alpha
_{0}\left(  s,t\right)  \right]  \left[  \mu+\kappa\left(  sx\right)
\sin\kappa y\cos\alpha_{0}\left(  s,t\right)  \right]  }{\nu\kappa z_{s,t}%
\sin\kappa z}\times\\
&  \left[  1-\frac{\kappa\left(  sx\right)  \sin\kappa x}{\nu}+\mathcal{O}%
\left(  \xi^{2}\right)  \right]  .
\end{align*}
After elementary simplifications and using the upper four point
$\operatorname{cosq}_{K}$ condition, we get:%
\begin{align}
1  &  \geq p_{s,t}\geq\frac{\left(  ty\right)  \sin\kappa y-\left(  sx\right)
\sin\kappa y\cos\alpha_{0}\left(  s,t\right)  }{z_{s,t}\sin\kappa
z}\label{ineq_growth_upper}\\
&  +\frac{\mu}{\nu}\frac{\left(  sx\right)  \sin\kappa x-\left(  ty\right)
\sin\kappa x\cos\alpha_{0}\left(  s,t\right)  }{z_{s,t}\sin\kappa
z}+\mathcal{O}\left(  \xi\right)  .\nonumber
\end{align}
By recalling that $\cos\alpha_{0}\left(  s,t\right)  =\left[  \left(
sx\right)  ^{2}+\left(  ty\right)  ^{2}-z_{s,t}^{2}\right]  /\left[  2\left(
ty\right)  \left(  sx\right)  \right]  $, we readily see that
\begin{align*}
\left(  ty\right)  \sin\kappa y-\left(  sx\right)  \sin\kappa y\cos\alpha
_{0}\left(  s,t\right)   &  =z_{s,t}\sin\kappa y\cos\gamma_{0}\left(
s,t\right)  ,\\
\left(  sx\right)  \sin\kappa x-\left(  ty\right)  \sin\kappa x\cos\alpha
_{0}\left(  s,t\right)   &  =z_{s,t}\sin\kappa x\cos\beta_{0}\left(
s,t\right)  \text{.}%
\end{align*}
Finally, we get:
\[
\frac{\sin\kappa y\cos\gamma_{0}\left(  s,t\right)  +\frac{\cos\kappa
y+\cos\kappa z}{1+\cos\kappa x}\sin\kappa x\cos\beta_{0}\left(  s,t\right)
}{\sin\kappa z}\leq1+\mathcal{O}\left(  \xi\right)  ,
\]
and the inequality of part (i) follows. The case of negative $K$ is similar
and we leave it to the reader.

\textbf{II}. \textbf{Let }$\left(  \mathcal{M},\rho\right)  $\textbf{\ satisfy
the lower four point }$\operatorname{cosq}_{K}$\textbf{\ condition}. Set
\[
q_{s,t}=\operatorname{cosq}_{K}\left(  \overrightarrow{Y_{t}X_{s}%
},\overrightarrow{BC}\right)  .
\]
Then\newline%
\begin{align*}
q_{s,t}  &  =\frac{\cos\widehat{\kappa}f_{s,t}+\cos\widehat{\kappa}z_{s,t}%
\cos\widehat{\kappa}z}{\sin\widehat{\kappa}z_{s,t}\sin\widehat{\kappa}z}-\\
&  \frac{\left[  \cos\widehat{\kappa}z_{s,t}+\cos\widehat{\kappa}\left(
1-s\right)  x\right]  \left[  \cos\widehat{\kappa}z+\cos\widehat{\kappa
}\left(  1-t\right)  y\right]  }{\left(  1+\cos\widehat{\kappa}d_{s,t}\right)
\sin\widehat{\kappa}z_{s,t}\sin\widehat{\kappa}z}\\
&  =\left(  1+\cos\widehat{\kappa}d_{s,t}\right)  \left\{  \left[
\cos\widehat{\kappa}f_{s,t}+\cos\widehat{\kappa}z+\mathcal{O}\left(  \xi
^{2}\right)  \right]  -\right. \\
&  \left[  1+\cos\widehat{\kappa}x+\widehat{\kappa}\left(  sx\right)
\sin\widehat{\kappa}x+\mathcal{O}\left(  \xi^{2}\right)  \right]  \times\\
&  \left[  \cos\widehat{\kappa}z+\cos\widehat{\kappa}y+\widehat{\kappa}\left(
ty\right)  \sin\widehat{\kappa}y+\mathcal{O}\left(  \xi^{2}\right)  \right]
\times\\
&  \left.  \left[  \left(  1+\cos\widehat{\kappa}d_{s,t}\right)  \sin
\widehat{\kappa}z_{s,t}\sin\widehat{\kappa}z\right]  ^{-1}\right\} \\
&  =\left\{  \left(  1+\cos\widehat{\kappa}d_{s,t}\right)  \left[
\cos\widehat{\kappa}f_{s,t}+\cos\widehat{\kappa}z\right]  \right. \\
&  \left.  -\left[  \mu+\widehat{\kappa}\left(  ty\right)  \sin\widehat
{\kappa}y\right]  \left[  \nu+\widehat{\kappa}\left(  sx\right)  \sin
\widehat{\kappa}x\right]  +\mathcal{O}\left(  \xi^{2}\right)  \right\}
\times\\
&  \left[  \left(  1+\cos\widehat{\kappa}d_{s,t}\right)  \sin\widehat{\kappa
}z_{s,t}\sin\widehat{\kappa}z\right]  ^{-1},
\end{align*}
where we keep the notation $\mu=\cos\widehat{\kappa}y+\cos\widehat{\kappa}z$
and $\nu=1+\cos\widehat{\kappa}x$. By invoking the triangle inequality, we see
that $\cos\widehat{\kappa}d_{s,t}=\cos\widehat{\kappa}x+\mathcal{O}\left(
\xi\right)  $, whence $1/\left(  1+\cos\widehat{\kappa}d_{s,t}\right)
=1/\nu+\mathcal{O}\left(  \xi\right)  $. So, we get:%
\[
q_{s,t}=\frac{I}{\nu\widehat{\kappa}z_{s,t}\sin\widehat{\kappa}z}\left[
1+\mathcal{O}\left(  \xi\right)  \right]  ,
\]
where
\begin{align*}
I  &  =\left(  1+\cos\widehat{\kappa}d_{s,t}\right)  \left(  \cos
\widehat{\kappa}f_{s,t}+\cos\widehat{\kappa}z\right) \\
&  -\left[  \mu+\widehat{\kappa}\left(  ty\right)  \sin\widehat{\kappa
}y\right]  \left[  \nu+\widehat{\kappa}\left(  sx\right)  \sin\widehat{\kappa
}x\right]  +\mathcal{O}\left(  \xi^{2}\right)  .
\end{align*}
Let $K>0$. By the cross-diagonal estimate lemma,
\begin{align*}
I  &  \leq I^{\prime}=\left[  \nu+\kappa\left(  ty\right)  \sin\kappa
x\cos\alpha_{0}\left(  s,t\right)  \right]  \times\left[  \mu+\kappa\left(
sx\right)  \sin\kappa y\cos\alpha_{0}\left(  s,t\right)  \right] \\
&  -\left[  \mu+\kappa\left(  ty\right)  \sin\kappa y\right]  \left[
\nu+\kappa\left(  sx\right)  \sin\kappa x\right]  +\mathcal{O}\left(  \xi
^{2}\right) \\
&  =\kappa\left\{  -\nu\sin\kappa y\left[  \left(  ty\right)  -\left(
sx\right)  \cos\alpha_{0}\left(  s,t\right)  \right]  -\right. \\
&  \left.  \mu\sin\kappa x\left[  \left(  sx\right)  -\left(  ty\right)
\cos\alpha_{0}\left(  s,t\right)  \right]  +\mathcal{O}\left(  \xi^{2}\right)
\right\}  ,
\end{align*}
whence by invoking the lower four point $\operatorname{cosq}_{K}$ condition,
we have:%
\begin{align*}
&  \frac{-\nu\sin\kappa y\left[  \left(  ty\right)  -\left(  sx\right)
\cos\alpha_{0}\left(  s,t\right)  \right]  -\mu\sin\kappa x\left[  \left(
sx\right)  -\left(  ty\right)  \cos\alpha_{0}\left(  s,t\right)  \right]
}{\nu z_{s,t}\sin\kappa z}\\
&  \geq q_{s,t}\geq-1+\mathcal{O}\left(  \xi\right)  ,
\end{align*}
which is equivalent to inequality (\ref{ineq_growth_upper}). Hence, the
inequality of part (i) of the lemma follows. The case of negative $K$ is similar.

The proof of the growth estimate lemma is complete.
\end{proof}

It is well-known that $\alpha_{K}\left(  s,t\right)  -\alpha_{0}\left(
s,t\right)  ,\beta_{K}\left(  s,t\right)  -\beta_{0}\left(  s,t\right)  $ and
$\gamma_{K}\left(  s,t\right)  -\gamma_{0}\left(  s,t\right)  $ are
$\mathcal{O}\left(  \sigma\left(  AX_{s}Y_{t}\right)  \right)  =\mathcal{O}%
\left(  \xi^{2}\right)  $. Hence, by recalling that $\alpha_{0}\left(
s,t\right)  +\beta_{0}\left(  s,t\right)  +\gamma_{0}\left(  s,t\right)  =\pi
$, we get the following

\begin{corollary}
\label{Coroll_growth_K}Under the hypotheses of the growth estimate lemma
(Lemma \ref{Lemma_Growth_est}), the following inequalities hold:\newline(i) If
$K>0$, then for every $\left(  s,t\right)  \in\mathcal{A}$,
\begin{align*}
&  \frac{\cos\kappa y+\cos\kappa z}{1+\cos\kappa x}\sin\kappa x\cos\beta
_{K}\left(  s,t\right) \\
&  -\sin\kappa y\left[  \cos\left(  \alpha_{K}\left(  s,t\right)  +\beta
_{K}\left(  s,t\right)  \right)  \right] \\
&  \leq\sin\kappa z+\mathcal{O}\left(  \xi\right)  ,
\end{align*}
\newline(ii) If $K<0$, then for every $\left(  s,t\right)  \in\mathcal{A}$,
\begin{align*}
&  \frac{\cosh\kappa y+\cosh\kappa z}{1+\cosh\kappa x}\sinh\kappa x\cos
\beta_{K}\left(  s,t\right) \\
&  -\sinh\kappa y\left[  \cos\left(  \alpha_{K}\left(  s,t\right)  +\beta
_{K}\left(  s,t\right)  \right)  \right] \\
&  \leq\sinh\kappa z+\mathcal{O}\left(  \xi\right)  ,
\end{align*}
where $\mathcal{O}\left(  \xi\right)  =\mathcal{O}_{\lambda,\eta,\underline
{m},\overline{m},K}\left(  \xi\right)  $.
\end{corollary}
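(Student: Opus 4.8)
The plan is to obtain the corollary directly from the growth estimate lemma (Lemma \ref{Lemma_Growth_est}) by rewriting its conclusion purely in terms of the $K$-comparison angles $\alpha_K(s,t)$ and $\beta_K(s,t)$. The only two inputs needed are the ones quoted immediately before the statement: first, that the angle discrepancies $\alpha_K(s,t)-\alpha_0(s,t)$, $\beta_K(s,t)-\beta_0(s,t)$ and $\gamma_K(s,t)-\gamma_0(s,t)$ are all $\mathcal{O}\left(\sigma(AX_sY_t)\right)=\mathcal{O}\left(\xi^2\right)$, and second, that the Euclidean angles of the comparison triangle satisfy $\alpha_0(s,t)+\beta_0(s,t)+\gamma_0(s,t)=\pi$.

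First I would treat only the case $K>0$, since the case $K<0$ is word-for-word identical with $\cos,\sin$ replaced by $\cosh,\sinh$. Starting from part (i) of Lemma \ref{Lemma_Growth_est},
\[
\sin\kappa y\cos\gamma_0(s,t)+\frac{\cos\kappa y+\cos\kappa z}{1+\cos\kappa x}\sin\kappa x\cos\beta_0(s,t)\leq\sin\kappa z+\mathcal{O}\left(\xi\right),
\]
I would use $\gamma_0(s,t)=\pi-\left(\alpha_0(s,t)+\beta_0(s,t)\right)$, so that $\cos\gamma_0(s,t)=-\cos\left(\alpha_0(s,t)+\beta_0(s,t)\right)$. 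This recasts the left-hand side as $-\sin\kappa y\cos\left(\alpha_0(s,t)+\beta_0(s,t)\right)+C\cos\beta_0(s,t)$, where I abbreviate the coefficient $C=\frac{\cos\kappa y+\cos\kappa z}{1+\cos\kappa x}\sin\kappa x$.

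Next I would pass from the Euclidean to the $K$-comparison angles. Since $x,y,z$ are the \emph{fixed} side lengths of the triangle $ABC$ (only $s,t$ tend to $0$), the coefficients $\sin\kappa y$ and $C$ are constants bounded in terms of $\lambda,\eta,K$. As cosine is $1$-Lipschitz, the angle discrepancy estimate gives $\cos\beta_0(s,t)=\cos\beta_K(s,t)+\mathcal{O}\left(\xi^2\right)$ and $\cos\left(\alpha_0(s,t)+\beta_0(s,t)\right)=\cos\left(\alpha_K(s,t)+\beta_K(s,t)\right)+\mathcal{O}\left(\xi^2\right)$. Multiplying these $\mathcal{O}\left(\xi^2\right)$ corrections by the bounded coefficients keeps them $\mathcal{O}\left(\xi^2\right)\subseteq\mathcal{O}\left(\xi\right)$, so they are absorbed into the existing error term, and substitution yields
\[
\frac{\cos\kappa y+\cos\kappa z}{1+\cos\kappa x}\sin\kappa x\cos\beta_K(s,t)-\sin\kappa y\cos\left(\alpha_K(s,t)+\beta_K(s,t)\right)\leq\sin\kappa z+\mathcal{O}\left(\xi\right),
\]
which is exactly part (i); part (ii) follows in the same way from part (ii) of Lemma \ref{Lemma_Growth_est}. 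There is no genuine obstacle here: the single point requiring care is the error bookkeeping, namely checking that the constants multiplying the $\mathcal{O}\left(\xi^2\right)$ angle discrepancies are uniformly bounded (with the same dependence on $\lambda,\eta,\underline{m},\overline{m},K$ as the implied constants), so that the total correction genuinely stays within $\mathcal{O}\left(\xi\right)$ and is not inadvertently amplified.
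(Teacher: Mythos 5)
Your proposal is correct and is essentially identical to the paper's own (implicit) derivation: the paper obtains the corollary from Lemma \ref{Lemma_Growth_est} precisely by citing the angle discrepancies $\alpha_K-\alpha_0,\,\beta_K-\beta_0,\,\gamma_K-\gamma_0=\mathcal{O}\left(\sigma\left(AX_sY_t\right)\right)=\mathcal{O}\left(\xi^{2}\right)$ together with $\alpha_0+\beta_0+\gamma_0=\pi$, which is exactly your substitution $\cos\gamma_0=-\cos\left(\alpha_0+\beta_0\right)$ followed by absorbing the Lipschitz-controlled $\mathcal{O}\left(\xi^{2}\right)$ corrections into the $\mathcal{O}\left(\xi\right)$ error term.
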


\subsection{Existence of proportional angles\label{Prop_Angles}}

Let $\left(  \mathcal{M},\rho\right)  $ be a metric space, let $\mathcal{L=AB}
$ and $\mathcal{N=AC}$ be shortests in $\left(  \mathcal{M},\rho\right)  $,
starting at a common point $A\in\mathcal{M}$. Let $K\in%
\mathbb{R}
$ and $t\in(0,1]$. Set $\alpha_{K}\left(  t\right)  =\alpha_{K}\left(
t,t\right)  ,$ $\beta_{K}\left(  t\right)  =\beta_{K}\left(  t,t\right)  $,
$\gamma_{K}\left(  t\right)  =\gamma_{K}\left(  t,t\right)  $ and
$z_{t}=z_{t,t} $. In this section, we derive from the growth estimate lemma
that the proportional angle $\lim_{t\rightarrow0+}\alpha_{0}\left(  t\right)
$ exists. We begin with the following

\begin{lemma}
\label{Lemma_z_tau_1}Let $K\neq0$, $\underline{m}>0$ and $\left(
\mathcal{M},\rho\right)  $ be a metric space satisfying the one-sided four
point $\operatorname{cosq}_{K}$ condition. Also, suppose that
$\operatorname{diam}\left(  \mathcal{M}\right)  \leq\pi/\left(  2\sqrt
{K}\right)  $ when $K>0$. Let $\mathcal{L=AB}$, $\mathcal{N=AC}$ be shortests
in $\left(  \mathcal{M},\rho\right)  $ starting at a common point
$A\in\mathcal{M}$ and $t\in(0,1]$. If $0<$ $\underline{m}\leq z_{t}/t$ for
$0<t<\varepsilon$ for some $\varepsilon\in\left(  0,1\right)  $, then there is
$\varepsilon^{\prime}\in(0,\varepsilon]$ such that for every $\tau\in\left(
0,t^{2}\right)  \cap\left(  0,\varepsilon^{\prime}\right)  $, the following
inequality holds:
\[
z_{\tau}\leq\frac{\tau}{t}\left(  z_{t}+\mu t^{2}\right)  ,
\]
where $\mu=\mu\left(  \lambda,\eta,\underline{m},K\right)  >0$.
\end{lemma}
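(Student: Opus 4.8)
The plan is to apply the growth estimate lemma (Lemma \ref{Lemma_Growth_est}) not to the configuration at scale $t$, but to the triangle $\mathcal{T}^{\prime}=AX_{t}Y_{t}$, regarding the subarc of $\mathcal{L}$ joining $A$ to $X_{t}$ and the subarc of $\mathcal{N}$ joining $A$ to $Y_{t}$ as its two sides issuing from $A$, and taking as its comparison points $X_{\tau}$ and $Y_{\tau}$. Writing $x=AB$, $y=AC$, the sides of $\mathcal{T}^{\prime}$ have lengths $AX_{t}=tx$, $AY_{t}=ty$, $X_{t}Y_{t}=z_{t}$, and $X_{\tau},Y_{\tau}$ sit at the common reduced parameter $\sigma=\tau/t$ along these sides; since $\tau<t^{2}$ we have $\sigma<t\leq1$, so this is legitimate. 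The cross-diagonal is $X_{\tau}Y_{\tau}=z_{\tau}$, and in the notation of Lemma \ref{Lemma_Growth_est} applied to $\mathcal{T}^{\prime}$ one has $sx=\tau x$ and $\xi=(t\lambda)\sigma=\lambda\tau$.

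First I would secure the lower-bound hypothesis of the growth estimate by a dichotomy that simultaneously disposes of the degenerate case. If $z_{\tau}\leq\underline{m}\tau$, then, using the standing hypothesis $z_{t}/t\geq\underline{m}$, we get $z_{\tau}\leq\underline{m}\tau\leq\frac{\tau}{t}z_{t}\leq\frac{\tau}{t}(z_{t}+\mu t^{2})$ for any $\mu>0$, so the claimed inequality is immediate. Hence I may assume $z_{\tau}>\underline{m}\tau$, i.e. $z_{\tau}/(\tau x)>\underline{m}/x$, which is exactly the lower bound $\underline{m}_{\ast}\leq z_{s,t}/(sx)$ needed to invoke Lemma \ref{Lemma_Growth_est} for $\mathcal{T}^{\prime}$ (with both parameters equal to $\sigma$, so $\overline{m}_{\ast}=1$ and $\underline{m}_{\ast}=\underline{m}/x$, a quantity determined by $\lambda,\eta,\underline{m}$).

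With the lower bound in hand, I apply part (i) of the growth estimate to $\mathcal{T}^{\prime}$ (the case $K<0$ is identical after replacing $\sin,\cos$ by $\sinh,\cosh$), where $\beta_{0}=\measuredangle_{0}AX_{\tau}Y_{\tau}$ and $\gamma_{0}=\measuredangle_{0}AY_{\tau}X_{\tau}$ are the comparison angles of the triangle $AX_{\tau}Y_{\tau}$. The key algebraic step is the elementary expansion $\sin(\kappa ty)=\kappa ty+\mathcal{O}(t^{3})$, $\sin(\kappa tx)=\kappa tx+\mathcal{O}(t^{3})$, $\sin(\kappa z_{t})=\kappa z_{t}+\mathcal{O}(t^{3})$ and $\frac{\cos\kappa ty+\cos\kappa z_{t}}{1+\cos\kappa tx}=1+\mathcal{O}(t^{2})$ (all legitimate since $tx,ty,z_{t}=\mathcal{O}(t)$), which reduces the growth inequality to $\kappa(tx\cos\beta_{0}+ty\cos\gamma_{0})\leq\kappa z_{t}+\mathcal{O}(t^{3})+\mathcal{O}(\lambda\tau)$. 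Now I would invoke the Euclidean projection identity $z_{\tau}=\tau x\cos\beta_{0}+\tau y\cos\gamma_{0}$ in the comparison triangle $AX_{\tau}Y_{\tau}$ (the same projection relation already used inside the proof of Lemma \ref{Lemma_Growth_est}); multiplying by $t/\tau$ turns the left-hand side into $\kappa\frac{t}{\tau}z_{\tau}$, whence
\[
\tfrac{t}{\tau}z_{\tau}\leq z_{t}+\mathcal{O}(t^{3})+\mathcal{O}(\lambda\tau).
\]

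The decisive point is now the role of the hypothesis $\tau<t^{2}$: it forces $\lambda\tau<\lambda t^{2}$, so both error terms $\mathcal{O}(t^{3})$ and $\mathcal{O}(\lambda\tau)$ are bounded by a constant multiple of $t^{2}$, and collecting them gives $\frac{t}{\tau}z_{\tau}\leq z_{t}+\mu t^{2}$, i.e. $z_{\tau}\leq\frac{\tau}{t}(z_{t}+\mu t^{2})$, as required. I expect the main technical obstacle to be bookkeeping of the $\mathcal{O}$-constant: Lemma \ref{Lemma_Growth_est} produces a constant depending on the data $(t\lambda,\eta,\underline{m}_{\ast},1,K)$ of $\mathcal{T}^{\prime}$ rather than on $(\lambda,\eta,\underline{m},K)$, so I must check it stays bounded as $t$ ranges over $(0,1]$. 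Since $\mathcal{T}^{\prime}$ is no larger than the scale-$\lambda$ configuration ($t\lambda\leq\lambda$) and the error constants in these Taylor-type estimates are monotone in the size of the configuration, the constant for $\mathcal{T}^{\prime}$ is dominated by the corresponding constant at scale $\lambda$, yielding $\mu=\mu(\lambda,\eta,\underline{m},K)$ independent of $t$. Finally, $\varepsilon^{\prime}$ is chosen small enough (it is permitted to depend on $t$ and the data) that $\sigma=\tau/t$ lies in the range where the growth estimate's $\mathcal{O}$-bound is valid.
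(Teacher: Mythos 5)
Your proposal is correct and follows essentially the same route as the paper's proof: both apply the growth estimate lemma to the rescaled triangle $AX_{t}Y_{t}$ with comparison points $X_{\tau},Y_{\tau}$ at reduced parameter $\tau/t$, Taylor-expand the trigonometric factors, identify the left-hand side with $\kappa t z_{\tau}/\tau$ via the Euclidean projection identity (the paper writes this as $y\cos\gamma_{0}(\tau)+x\cos\beta_{0}(\tau)=\eta_{\tau}$ using the law of cosines), and absorb the errors into $\mu t^{2}$ using $\tau<t^{2}$. Your dichotomy securing the lower bound $z_{\tau}/(\tau x)\geq\underline{m}/x$ is not even needed, since the lemma's hypothesis $z_{t}/t\geq\underline{m}$ holds for all parameters in $(0,\varepsilon)$ and hence for $\tau$ itself, but it is harmless.
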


\begin{proof}
The notation of the lemma is illustrated in Fig. \ref{fig12}.
\begin{figure}
[pth]
\begin{center}
\includegraphics[
natheight=1.927000in,
natwidth=2.136400in,
height=1.8246in,
width=2.0202in
]%
{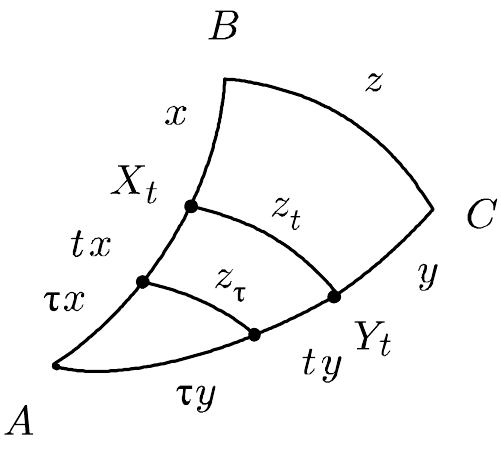}%
\caption{Sketch for Lemma \ref{Lemma_z_tau_1}}%
\label{fig12}%
\end{center}
\end{figure}
Let $0<\tau<t^{2}\leq t\leq1$. In the growth estimate lemma, take $x:=tx$ and
$y:=ty$. Then $\xi:=$ $\xi_{t}=\max\left\{  tx,ty\right\}  \frac{\tau}%
{t}=\frac{\tau}{t}t\lambda=\tau\lambda$. Hence, $\mathcal{O}\left(  \xi
_{t}\right)  =\mathcal{O}\left(  \tau\right)  $. By the growth estimate lemma
applied to the shortests $\mathcal{AX}_{t}$ and $\mathcal{AY}_{t}$, if $K>0$,
then
\begin{align}
&  \sin\kappa ty\cos\gamma_{0}\left(  \tau\right)  +\frac{\cos\kappa
ty+\cos\kappa z_{t}}{1+\cos\kappa tx}\sin\kappa tx\cos\beta_{0}\left(
\tau\right) \label{K_pos_tau}\\
&  \leq\sin\kappa z_{t}+\mathcal{O}\left(  \tau\right)  ,\nonumber
\end{align}
\newline and if $K<0$, then
\begin{align*}
&  \sinh\kappa ty\cos\gamma_{0}\left(  \tau\right)  +\frac{\cosh\kappa
ty+\cosh\kappa z_{t}}{1+\cosh\kappa tx}\sinh\kappa tx\cos\beta_{0}\left(
\tau\right) \\
&  \leq\sinh\kappa z_{t}+\mathcal{O}\left(  \tau\right)  ,
\end{align*}
for every $t\in\left(  0,\varepsilon\right)  $, where $\mathcal{O}\left(
\tau\right)  =\mathcal{O}_{\lambda,\eta,\underline{m},K}\left(  \tau\right)  $.

Let $K>0$. Then we can rewrite (\ref{K_pos_tau}) in the following form:%
\begin{align*}
&  \kappa\left(  ty\right)  \left[  1+\mathcal{O}\left(  t^{2}\right)
\right]  \cos\gamma_{0}\left(  \tau\right)  +\left[  1+\mathcal{O}\left(
t^{2}\right)  \right]  \kappa\left(  tx\right)  \cos\beta_{0}\left(
\tau\right) \\
&  \leq\kappa z_{t}+\mathcal{O}\left(  t^{3}\right)  +\mathcal{O}\left(
\tau\right)  =\kappa z_{t}+\mathcal{O}\left(  t^{2}\right)  \text{,}%
\end{align*}
whence,%
\begin{equation}
y\cos\gamma_{0}\left(  \tau\right)  +x\cos\beta_{0}\left(  \tau\right)
\leq\frac{z_{t}+\mathcal{O}\left(  t^{2}\right)  }{t}. \label{tau_ineq1}%
\end{equation}
Let $\eta_{\tau}=z_{\tau}/\tau$. Recall that
\begin{align*}
\cos\gamma_{0}\left(  \tau\right)   &  =\frac{\tau^{2}y^{2}+z_{\tau}^{2}%
-\tau^{2}x^{2}}{2\tau yz_{\tau}}=\frac{y^{2}+\eta_{\tau}^{2}-x^{2}}%
{2y\eta_{\tau}},\\
\cos\beta_{0}\left(  \tau\right)   &  =\frac{\tau^{2}x^{2}+z_{\tau}^{2}%
-\tau^{2}y^{2}}{2\tau xz_{\tau}}=\frac{x^{2}+\eta_{\tau}^{2}-y^{2}}%
{2x\eta_{\tau}}.
\end{align*}
Hence, by (\ref{tau_ineq1}),
\[
\eta_{\tau}\leq\frac{z_{t}+\mathcal{O}\left(  t^{2}\right)  }{t},
\]
and the claim of the lemma for positive $K$ follows. The case of negative $K$
is similar.

The proof of Lemma \ref{Lemma_z_tau_1} is complete.
\end{proof}

By Lemma \ref{Lemma_z_tau_1},
\begin{align*}
\cos\alpha_{0}\left(  \tau\right)   &  =\frac{t^{2}x^{2}+t^{2}y^{2}%
-\frac{t^{2}}{\tau^{2}}z_{\tau}^{2}}{2t^{2}xy}\geq\frac{t^{2}x^{2}+t^{2}%
y^{2}-\left(  z_{t}+\mu t^{2}\right)  ^{2}}{2t^{2}xy}\\
&  =\cos\alpha_{0}\left(  t\right)  -\frac{\mu z_{t}}{xy}-\frac{\mu^{2}t^{2}%
}{2xy}.
\end{align*}
By the triangle inequality, $z_{t}\leq\left(  x+y\right)  t$. So, we have the following

\begin{corollary}
\label{Cor_cos_tau}Under the hypothesis of Lemma \ref{Lemma_z_tau_1}, the
following inequality holds:%
\[
\cos\alpha_{0}\left(  \tau\right)  \geq\cos\alpha_{0}\left(  t\right)
-\mu^{\prime}t\text{,}%
\]
where $\mu^{\prime}=\mu^{\prime}\left(  \lambda,\eta,\underline{m},K\right)
>0 $.
\end{corollary}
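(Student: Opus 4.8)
The plan is to feed the bound on $z_{\tau}$ furnished by Lemma \ref{Lemma_z_tau_1} into the Euclidean law of cosines for the two comparison triangles $AX_{\tau}Y_{\tau}$ and $AX_{t}Y_{t}$, and then absorb the resulting quadratic error term using the triangle inequality.

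First I would record the law-of-cosines expressions. Since $\mathfrak{g}_{r,\mathcal{L}}$ and $\mathfrak{g}_{r,\mathcal{N}}$ are reduced parametrizations relative to $A$, we have $AX_{\tau}=\tau x$, $AY_{\tau}=\tau y$ and $X_{\tau}Y_{\tau}=z_{\tau}$, so
\[
\cos\alpha_{0}\left(  \tau\right)  =\frac{\tau^{2}x^{2}+\tau^{2}y^{2}-z_{\tau}^{2}}{2\tau^{2}xy}=\frac{t^{2}x^{2}+t^{2}y^{2}-\left(  t/\tau\right)  ^{2}z_{\tau}^{2}}{2t^{2}xy},
\]
the second equality obtained by multiplying numerator and denominator by $t^{2}/\tau^{2}$; likewise $\cos\alpha_{0}\left(  t\right)  =\left(  t^{2}x^{2}+t^{2}y^{2}-z_{t}^{2}\right)  /\left(  2t^{2}xy\right)  $.

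Next I would substitute the estimate $z_{\tau}\leq\left(  \tau/t\right)  \left(  z_{t}+\mu t^{2}\right)  $ of Lemma \ref{Lemma_z_tau_1}, which squares to $\left(  t/\tau\right)  ^{2}z_{\tau}^{2}\leq\left(  z_{t}+\mu t^{2}\right)  ^{2}$. Because this quantity enters $\cos\alpha_{0}\left(  \tau\right)  $ with a minus sign, the inequality is preserved in the favorable direction, and expanding the square then separating off the $\cos\alpha_{0}\left(  t\right)  $ term yields
\[
\cos\alpha_{0}\left(  \tau\right)  \geq\frac{t^{2}x^{2}+t^{2}y^{2}-\left(  z_{t}+\mu t^{2}\right)  ^{2}}{2t^{2}xy}=\cos\alpha_{0}\left(  t\right)  -\frac{\mu z_{t}}{xy}-\frac{\mu^{2}t^{2}}{2xy}.
\]

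The only step requiring any care is converting the two error terms into a single multiple of $t$ whose coefficient has the advertised dependence. By the triangle inequality $z_{t}\leq\left(  x+y\right)  t$, while $t\leq1$ forces $t^{2}\leq t$; hence both error terms are bounded by constant multiples of $t$ and
\[
\cos\alpha_{0}\left(  \tau\right)  \geq\cos\alpha_{0}\left(  t\right)  -\left[  \frac{\mu\left(  x+y\right)  }{xy}+\frac{\mu^{2}}{2xy}\right]  t.
\]
To confirm that the bracketed constant $\mu^{\prime}$ depends only on $\lambda,\eta,\underline{m},K$, I would observe that the unordered pair $\{x,y\}$ is completely determined by $\lambda=\max\{x,y\}$ and $\eta=x/y$, and that $\mu=\mu\left(  \lambda,\eta,\underline{m},K\right)  $ by Lemma \ref{Lemma_z_tau_1}; therefore $\mu^{\prime}=\mu^{\prime}\left(  \lambda,\eta,\underline{m},K\right)  $, as claimed. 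I do not anticipate a genuine obstacle: the substantive content lives entirely in Lemma \ref{Lemma_z_tau_1}, and this corollary is a short algebraic consequence together with bookkeeping of the constant's dependence.
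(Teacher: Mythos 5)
Your proposal is correct and follows essentially the same route as the paper: the same rescaled law-of-cosines identity for $\cos\alpha_{0}(\tau)$, the same substitution of the bound $z_{\tau}\leq(\tau/t)(z_{t}+\mu t^{2})$ from Lemma \ref{Lemma_z_tau_1}, and the same final use of $z_{t}\leq(x+y)t$ to absorb the error terms into a single multiple of $t$. Your explicit bookkeeping that $\mu'$ depends only on $\lambda,\eta,\underline{m},K$ (since $x,y$ are determined by $\lambda$ and $\eta$) is a detail the paper leaves implicit, but it is the same argument.
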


\begin{corollary}
\label{Cor_prop_angle}Let $K\neq0$ and $\left(  \mathcal{M},\rho\right)  $ be
a metric space satisfying the one-sided four point $\operatorname{cosq}_{K}$
condition. Also, suppose that $\operatorname{diam}\left(  \mathcal{M}\right)
\leq\pi/\left(  2\sqrt{K}\right)  $ when $K>0$. Let $\mathcal{L=AB}$ and
$\mathcal{N=AC}$ be shortests in $\left(  \mathcal{M},\rho\right)  $ starting
at a common point $A\in\mathcal{M}$. Then $\lim_{t\rightarrow0+}\alpha
_{0}\left(  t\right)  $ exists.
\end{corollary}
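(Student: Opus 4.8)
The plan is to reduce the existence of $\lim_{t\to0+}\alpha_0(t)$ to the existence of $\lim_{t\to0+}\phi(t)$, where $\phi(t)=z_t/t$. Indeed, applying the Euclidean law of cosines to the comparison triangle $A^0X_t^0Y_t^0$ gives $\cos\alpha_0(t)=(x^2+y^2-\phi(t)^2)/(2xy)$, so $\alpha_0(t)$ and $\phi(t)$ determine each other continuously and monotonically, and $\lim\alpha_0(t)$ exists precisely when $\lim\phi(t)$ does. Note that $\phi$ is continuous on $(0,1]$ (since $z_t=X_tY_t$ varies continuously with $t$) and that $\phi(t)\ge|x-y|$ by the triangle inequality $X_tY_t\ge|AX_t-AY_t|=t|x-y|$. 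Writing $L=\limsup_{t\to0+}\phi(t)$ and $\ell=\liminf_{t\to0+}\phi(t)$, it suffices to prove $L\le\ell$.

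The engine of the proof is the almost-monotonicity furnished by Lemma \ref{Lemma_z_tau_1}, namely $\phi(\tau)\le\phi(t)+\mu t$ whenever $\tau<t^2$ (equivalently, the estimate $\cos\alpha_0(\tau)\ge\cos\alpha_0(t)-\mu' t$ of Corollary \ref{Cor_cos_tau}). Assuming this inequality is available, I would choose $\tau_k\downarrow0$ with $\phi(\tau_k)\to L$ and base scales $t_j\downarrow0$ with $\phi(t_j)\to\ell$; for each fixed $j$ and all large $k$ (so that $\tau_k<t_j^2$) the inequality gives $\phi(\tau_k)\le\phi(t_j)+\mu t_j$. Letting $k\to\infty$ yields $L\le\phi(t_j)+\mu t_j$, and then $j\to\infty$ yields $L\le\ell$, as required.

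The subtlety, and the step I expect to be the main obstacle, is that Lemma \ref{Lemma_z_tau_1} is stated under the non-degeneracy hypothesis $z_t/t\ge\underline m>0$, which can fail. When $x\ne y$, or more generally when $\ell>0$, the bound $\phi(t)\ge\max\{|x-y|,\ell/2\}>0$ holds for all small $t$, the hypothesis of Corollary \ref{Cor_cos_tau} is met with a fixed $\underline m$, and the previous paragraph applies verbatim. The genuinely delicate case is $x=y$ together with $\ell=0$ (which forces $\limsup_{t\to0+}\cos\alpha_0(t)=1$); here $\phi$ dips arbitrarily close to $0$ and no uniform lower bound exists. I would resolve this by revisiting the proof of Lemma \ref{Lemma_z_tau_1} and observing that the growth estimate (Lemma \ref{Lemma_Growth_est}) requires the lower bound only on the target set $\mathcal{A}$, not at every scale. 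Thus, assuming for contradiction $L>0$, I would take $\tau_k\downarrow0$ with $\phi(\tau_k)\to L$, put $\underline m=L/(2x)$ so that $z_{\tau_k}/(\tau_k x)\ge\underline m$ for large $k$, and apply Lemma \ref{Lemma_Growth_est} to the rescaled shortests $\mathcal{AX}_t,\mathcal{AY}_t$ with $\mathcal{A}=\{(\tau_k/t,\tau_k/t):\tau_k<t^2\}$, whose only accumulation point is $(0,0)$. Running the computation of Lemma \ref{Lemma_z_tau_1} then gives $\phi(\tau_k)\le\phi(t)+\mu t$ for every base scale $t$ with $\phi(t)>0$; choosing $t=t_j\to0$ with $0<\phi(t_j)\to\ell=0$ (available by continuity of $\phi$ and the intermediate value theorem, since $\phi$ also attains values near $L$ at arbitrarily small scales) forces $L\le0$, a contradiction. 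Hence $\phi(t)\to0$ and $\alpha_0(t)\to0$. The crux is precisely this relocation of the non-degeneracy requirement onto the large-angle scales comprising $\mathcal{A}$, while the base triangle is permitted to sit at a vanishing-angle scale.
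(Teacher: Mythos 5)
Your proposal is correct and follows essentially the same route as the paper: the paper likewise reduces everything to the almost-monotonicity of Corollary \ref{Cor_cos_tau} (equivalently Lemma \ref{Lemma_z_tau_1}) applied along sequences with $\tau_{n}<t_{n}^{2}$, and isolates the degenerate case $z_{\tau_{n}}/\tau_{n}\rightarrow0$ along the $\limsup$-realizing sequence, where the triangle inequality forces $x=y$ and $\overline{\alpha}_{0}=0$. Your relocation of the non-degeneracy hypothesis onto the set $\mathcal{A}$ of the growth estimate (together with the intermediate-value selection of non-degenerate base scales) is precisely what the paper's Case II uses implicitly -- its citation of Corollary \ref{Cor_cos_tau} is literally justified only when $z_{t}/t$ is uniformly bounded below -- so this is a careful sharpening of the same argument rather than a different one.
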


\begin{proof}
Let $\overline{\alpha}_{0}=\overline{\lim}_{t\rightarrow0+}\alpha_{0}\left(
t\right)  $ and $\underline{\alpha}_{0}=\underline{\lim}_{t\rightarrow
0+}\alpha_{0}\left(  t\right)  $. Then there are sequences $\left(
t_{n}\right)  _{n=1}^{\infty}$ and $\left(  \tau_{n}\right)  _{n=1}^{\infty}$
in $(0,1]$ convergent to zero such that $\overline{\alpha}_{0}=\lim
_{n\rightarrow\infty}\alpha_{0}\left(  \tau_{n}\right)  $ and $\underline
{\alpha}_{0}=\underline{\lim}_{n\rightarrow\infty}\alpha_{0}\left(
t_{n}\right)  $. There is no restriction in assuming that $\tau_{n}<t_{n}^{2}$
for every $n\in\mathbb{N}$. We consider the following cases.

\textbf{I. }$\lim_{n\rightarrow\infty}z_{\tau_{n}}/\tau_{n}=0$. Then%
\[
\cos\alpha_{0}\left(  \tau_{n}\right)  =\frac{x^{2}+y^{2}-\left(  z_{\tau_{n}%
}/\tau_{n}\right)  ^{2}}{2xy}\rightarrow\frac{x^{2}+y^{2}}{2xy}\text{ as
}n\rightarrow\infty\text{. }%
\]
By the triangle inequality, $z_{\tau_{n}}/\tau_{n}\geq\left\vert
x-y\right\vert $, whence $x=y$, and we have:
\[
\lim_{n\rightarrow\infty}\cos\alpha_{0}\left(  \tau_{n}\right)  =1.
\]
Hence, $\overline{\alpha}_{0}=0$, and
\[
\lim_{t\rightarrow0+}\alpha_{0}\left(  t\right)  =0
\]
follows.

\textbf{II. }By Corollary \ref{Cor_cos_tau}, $\cos\alpha_{0}\left(  \tau
_{n}\right)  \geq\cos\alpha_{0}\left(  t_{n}\right)  +\mathcal{O}\left(
t_{n}\right)  $ for every $n\in\mathbb{N}$. So, by passing to the limit as
$n\rightarrow\infty$ in both sides of the last inequality, we get the
inequality $\overline{\alpha}_{0}\leq\underline{\alpha}_{0}$. This completes
the proof of Corollary \ref{Cor_prop_angle}.
\end{proof}

\subsection{Existence of angle\label{Exist_Angle}}

\begin{proposition}
\label{Prop_Exist_Angle}Let $\left(  \mathcal{M},\rho\right)  $ be a metric
space satisfying the one-sided four point $\operatorname{cosq}_{K}$ condition.
Then between any pair of shortests $\mathcal{L}$ and $\mathcal{N}$ in $\left(
\mathcal{M},\rho\right)  $, starting at a common point $P\in\mathcal{M}$,
there exists Aleksandrov's angle.
\end{proposition}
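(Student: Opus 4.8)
The plan is to show that Aleksandrov's angle between $\mathcal{L}$ and $\mathcal{N}$ exists by verifying $\overline{\measuredangle}(\mathcal{L},\mathcal{N})=\underline{\measuredangle}(\mathcal{L},\mathcal{N})$; since $\underline{\measuredangle}\leq\overline{\measuredangle}$ always holds, only the reverse inequality is at issue. Denote the common point by $A$ and fix $B\in\mathcal{L}$, $C\in\mathcal{N}$, keeping the notation $x=AB$, $y=AC$, $X_{s}=\mathfrak{g}_{r,\mathcal{L}}(s)$, $Y_{t}=\mathfrak{g}_{r,\mathcal{N}}(t)$, $z_{s,t}=X_{s}Y_{t}$ and $\alpha_{0}(s,t)=\measuredangle_{0}X_{s}AY_{t}$ of Section~\ref{Sec_cross_diag_est}. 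As $(s,t)$ ranges over $(0,1]\times(0,1]$, the pair $(AX_{s},AY_{t})$ ranges over all sufficiently small pairs of distances along $\mathcal{L}$ and $\mathcal{N}$; since the upper and lower angles do not depend on $K$, they are exactly the $\overline{\lim}$ and $\underline{\lim}$ of $\alpha_{0}(s,t)$ as $s,t\rightarrow0+$. Thus the goal is $\overline{\lim}_{s,t\rightarrow0+}\alpha_{0}(s,t)\leq\underline{\lim}_{s,t\rightarrow0+}\alpha_{0}(s,t)$.

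First I would reduce arbitrary approaches to proportional ones. Choose sequences realizing the two limits: $(\sigma_{n},\tau_{n})\rightarrow0$ with $\alpha_{0}(\sigma_{n},\tau_{n})\rightarrow\overline{\measuredangle}$ and $(s_{n},t_{n})\rightarrow0$ with $\alpha_{0}(s_{n},t_{n})\rightarrow\underline{\measuredangle}$. Passing to subsequences, the ratios converge, say $\sigma_{n}/\tau_{n}\rightarrow\overline{r}$ and $s_{n}/t_{n}\rightarrow\underline{r}$ in $[0,+\infty]$. In the principal case $\overline{r},\underline{r}\in(0,+\infty)$, a short continuity-in-the-ratio estimate (moving an endpoint a distance $o(\tau_{n})$, resp.\ $o(t_{n})$, along a shortest changes the Euclidean comparison angle by $o(1)$) lets me replace each sequence by an exactly proportional one with the same ratio and the same limiting angle; by Corollary~\ref{Cor_prop_angle} these proportional limits are precisely the proportional angles $\psi(\overline{r})=\overline{\measuredangle}$ and $\psi(\underline{r})=\underline{\measuredangle}$. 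Finally, passing to a further subsequence I may assume that the limsup-realizing pair is much finer than the liminf-realizing pair, say $\max\{\sigma_{n},\tau_{n}\}\leq(\max\{s_{n},t_{n}\})^{2}$.

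The key step is a mixed-ratio version of the almost monotonicity of Corollary~\ref{Cor_cos_tau}. Fixing $0<\underline{m}\leq\overline{m}<+\infty$ so that $\overline{r}$, $\underline{r}$, and the auxiliary inner ratios $(\sigma_{n}t_{n})/(\tau_{n}s_{n})$ all lie in $[\underline{m},\overline{m}]$, I would rerun the computation of Lemma~\ref{Lemma_z_tau_1}, but now applying the growth estimate lemma (Lemma~\ref{Lemma_Growth_est}) to the triangle $AX_{s_{n}}Y_{t_{n}}$ with its interior points chosen to be the much finer points $X_{\sigma_{n}}$ and $Y_{\tau_{n}}$. This yields a bound of the form $z_{\sigma_{n},\tau_{n}}/\max\{\sigma_{n},\tau_{n}\}\leq z_{s_{n},t_{n}}/\max\{s_{n},t_{n}\}+\mathcal{O}(\max\{s_{n},t_{n}\})$, and feeding it into the law of cosines for $\cos\alpha_{0}$ gives the non-diagonal analogue
\[
\cos\alpha_{0}(\sigma_{n},\tau_{n})\geq\cos\alpha_{0}(s_{n},t_{n})-\mathcal{O}(\max\{s_{n},t_{n}\}).
\]
Letting $n\rightarrow\infty$ produces $\cos\overline{\measuredangle}\geq\cos\underline{\measuredangle}$, that is $\overline{\measuredangle}\leq\underline{\measuredangle}$, as required. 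The degenerate cases $\overline{r}$ or $\underline{r}\in\{0,+\infty\}$ (one leg infinitesimal compared with the other) I would treat separately, exactly as in Case~I of the proof of Corollary~\ref{Cor_prop_angle}: there the triangle inequality forces $z_{s,t}/\max\{AX_{s},AY_{t}\}$ to control the comparison angle and pins the corresponding limit, so such approaches cannot violate $\overline{\measuredangle}\leq\underline{\measuredangle}$.

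I expect the main obstacle to be the mixed-ratio comparison in the third paragraph: the growth estimate, and hence the almost monotonicity derived from it, only compares configurations whose parameter ratios stay in a fixed compact window, whereas $\overline{\measuredangle}$ and $\underline{\measuredangle}$ are a priori realized along approaches with possibly different ratios $\overline{r}\neq\underline{r}$. Making the comparison legitimate requires both (i) the continuity-in-the-ratio reduction together with the existence of proportional angles, so that each extreme angle is genuinely realized by a proportional approach and thus identified with a value $\psi(r)$, and (ii) checking that the error terms $\mathcal{O}(\xi)=\mathcal{O}_{\lambda,\eta,\underline{m},\overline{m},K}(\xi)$ stay uniform along the chosen subsequences, which is guaranteed precisely because all ratios are kept inside $[\underline{m},\overline{m}]$ throughout.
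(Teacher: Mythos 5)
Your overall logic (pass to finer-vs-coarser subsequences, then prove a one-sided cosine comparison) is sound, but the key step --- the ``mixed-ratio version'' of Corollary~\ref{Cor_cos_tau} --- does not follow from the growth estimate lemma, and this is exactly where the whole content of the proposition beyond Corollary~\ref{Cor_prop_angle} lives. Write $a=\sigma_{n}x$, $b=\tau_{n}y$, $c=z_{\sigma_{n},\tau_{n}}$, $\mu=s_{n}/\sigma_{n}$, $\nu=t_{n}/\tau_{n}$. Applying Lemma~\ref{Lemma_Growth_est} to the triangle $AX_{s_{n}}Y_{t_{n}}$ with interior points $X_{\sigma_{n}},Y_{\tau_{n}}$ and expressing $\cos\beta_{0}(\sigma_{n},\tau_{n})$, $\cos\gamma_{0}(\sigma_{n},\tau_{n})$ by the Euclidean law of cosines, the small-parameter expansion gives
\begin{equation*}
\frac{(\mu+\nu)\,c}{2}+\frac{(\mu-\nu)\left(a^{2}-b^{2}\right)}{2c}\leq z_{s_{n},t_{n}}+\mathcal{O}(\cdot),
\end{equation*}
which collapses to the clean scaling bound of Lemma~\ref{Lemma_z_tau_1} only in the homothetic case $\mu=\nu$, i.e.\ $\sigma_{n}/\tau_{n}=s_{n}/t_{n}$. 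When the ratios differ, substituting this into the law of cosines for $\cos\alpha_{0}(s_{n},t_{n})$ yields
\begin{equation*}
\cos\alpha_{0}(\sigma_{n},\tau_{n})\geq\cos\alpha_{0}(s_{n},t_{n})-\frac{(\mu-\nu)^{2}}{2\mu\nu}\cdot\frac{ab\sin^{2}\alpha_{0}(\sigma_{n},\tau_{n})}{c^{2}}-\mathcal{O}(\cdot),
\end{equation*}
and the middle term does \emph{not} vanish as $n\rightarrow\infty$: one has $(\mu-\nu)^{2}/(\mu\nu)\rightarrow(\overline{r}-\underline{r})^{2}/(\overline{r}\,\underline{r})>0$ when $\overline{r}\neq\underline{r}$, while $ab\sin^{2}\alpha_{0}/c^{2}$ is of order one. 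This defect is not sloppy bookkeeping: in $\mathbb{R}^{2}$ the growth estimate is exactly the statement that the projection of the chord $X_{s_{n}}Y_{t_{n}}$ onto the direction of $X_{\sigma_{n}}Y_{\tau_{n}}$ is at most its length, and the slack equals precisely this term whenever the two chords are not parallel, i.e.\ whenever $\mu\neq\nu$. So the inequality $\cos\alpha_{0}(\sigma_{n},\tau_{n})\geq\cos\alpha_{0}(s_{n},t_{n})-\mathcal{O}(\max\{s_{n},t_{n}\})$ you need cannot be extracted this way; your method proves that the limit exists along approaches with a common limiting ratio (an extension of Corollary~\ref{Cor_prop_angle} to each fixed ratio), but not that these limits agree across different ratios, which is the actual claim.

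There is a second gap in the degenerate cases $\overline{r},\underline{r}\in\{0,+\infty\}$: hypothesis M3 makes the growth estimate machinery inapplicable there, and Case~I of Corollary~\ref{Cor_prop_angle} concerns a different degeneracy ($z_{\tau_{n}}/\tau_{n}\rightarrow0$, forcing $x=y$ and zero angle); it says nothing about comparison angles along approaches with wildly unequal legs, which a priori can converge to anything. For contrast, the paper proves the proposition without any ratio-versus-ratio monotonicity: arguing by contradiction from $\overline{\measuredangle}-\underline{\measuredangle}=\varepsilon_{0}>0$, it builds an interleaved quadruple $X,\widetilde{X}\in\mathcal{L}$, $Y,\widetilde{Y}\in\mathcal{N}$ (normalized by $x/\widetilde{x}=\widetilde{y}/y$) whose various comparison angles realize $\overline{\measuredangle}$ and $\underline{\measuredangle}$ within $\varepsilon/4$, and then shows that $\operatorname{cosq}_{K}\left(\overrightarrow{X\widetilde{Y}},\overrightarrow{\widetilde{X}Y}\right)>1$ under the upper condition, respectively $\operatorname{cosq}_{K}\left(\overrightarrow{X\widetilde{Y}},\overrightarrow{Y\widetilde{X}}\right)<-1$ under the lower condition. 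The coupling of the two scales is carried by the four-point quantity itself rather than by a monotonicity estimate, which is what makes the contradiction argument go through where the bootstrapping attempt stalls.
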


\begin{proof}
Set $\alpha_{av}=\left(  \overline{\measuredangle}(\mathcal{L},\mathcal{N}%
)+\underline{\measuredangle}\left(  \mathcal{L},\mathcal{N}\right)  \right)
/2$. If $\overline{\measuredangle}(\mathcal{L},\mathcal{N})=0$ or
$\underline{\measuredangle}(\mathcal{L},\mathcal{N})=\pi$, we are done. So, we
can assume that $\sin\alpha_{av}>0$. Contrary to the claim of the proposition,
suppose that $\overline{\measuredangle}(\mathcal{L},\mathcal{N})-\underline
{\measuredangle}(\mathcal{L},\mathcal{N})=\varepsilon_{0}>0$.

\textbf{I. }In Step 1 of the proof of Proposition 20 in \cite{BergNik2008}, we
showed that for every $0<\varepsilon<\varepsilon_{0}$, there are points
$\widetilde{X}$, $X\in\mathcal{L}\backslash\left\{  P\right\}  $ and $Y$,
$\widetilde{Y}\in\mathcal{N}\backslash\left\{  P\right\}  $, or $\widetilde{X}
$, $X\in\mathcal{N}\backslash\left\{  P\right\}  $ and $Y,$ $\widetilde{Y}%
\in\mathcal{L}\backslash\left\{  P\right\}  $ such that the following
conditions are satisfied (for simplicity, we drop $\varepsilon$ from our
notation for these points):

(i) $\widetilde{X}$\ is contained between $X$\ and $P$, and $Y$\ is contained
between $\widetilde{Y}$\ and $P$, as illustrated in Figure \ref{fig13}, and
the points $X,\widetilde{X}$, $\widetilde{Y}$ and $Y$ can be selected
arbitrary close to the point $P$.

(ii) $0\leq\gamma^{\prime\prime}=\measuredangle_{0}\widetilde{X}%
PY<\underline{\measuredangle}\left(  \mathcal{L},\mathcal{N}\right)
+\varepsilon/4.$

(iii) $\gamma^{\prime}=\measuredangle_{0}\widetilde{X}P\widetilde
{Y}>\underline{\measuredangle}\left(  \mathcal{L},\mathcal{N}\right)
-\varepsilon/4.$

(iv) $0\leq\underline{\gamma}=\measuredangle_{0}XP\widetilde{Y}<\underline
{\measuredangle}\left(  \mathcal{L},\mathcal{N}\right)  +\varepsilon/4.$

(v) $\overline{\gamma}=\measuredangle_{0}XPY>\overline{\measuredangle}\left(
\mathcal{L},\mathcal{N}\right)  -\varepsilon/4$\textit{.}

(vi) $x/\widetilde{x}=\widetilde{y}/y$, where $\widetilde{x}=P\widetilde{X}$
and $y=PY$.
\begin{figure}
[pth]
\begin{center}
\includegraphics[
natheight=1.482400in,
natwidth=1.329300in,
height=1.4104in,
width=1.2675in
]%
{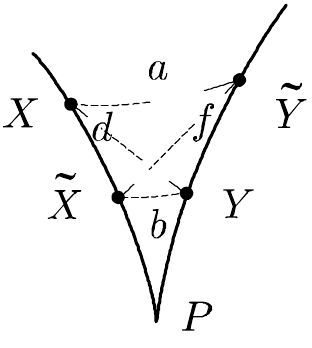}%
\caption{Sketch for Proposition \ref{Prop_Exist_Angle}}%
\label{fig13}%
\end{center}
\end{figure}

With little effort, the proof of (i)-(vi) for $K=0$ in \cite{BergNik2008} can
be extended to non-zero $K$. Indeed, by the definition of the lower angle, for
every $\eta>0$, there is $t_{\eta}\in\left(  0,1\right)  $ and $\xi,\zeta
\in\left(  0,t_{\eta}\right)  $ such that
\[
\measuredangle_{0}\left(  \xi,\zeta\right)  <\underline{\measuredangle}\left(
\mathcal{L},\mathcal{N}\right)  +\eta.
\]
By Corollary \ref{Cor_cos_tau},
\[
\cos\measuredangle_{0}\left(  \tau\xi,\tau\zeta\right)  \geq\cos
\measuredangle_{0}\left(  \xi,\zeta\right)  -\mu^{\prime}t,\text{ }%
\]
where $t>0$ is sufficiently small and $0<\tau<t^{2}$. So,
\[
\cos\measuredangle_{0}\left(  \tau\xi,\tau\zeta\right)  \geq\cos\left(
\underline{\measuredangle}\left(  \mathcal{L},\mathcal{N}\right)
+\eta\right)  -\mu^{\prime}t\text{.}%
\]
Hence, given $\varepsilon\in\left(  0,\varepsilon_{0}\right)  $, there is
$t^{\prime}\in\left(  0,1\right)  $ such that the following inequality holds:
\[
\measuredangle_{0}\left(  \tau\xi,\tau\zeta\right)  \leq\underline
{\measuredangle}\left(  \mathcal{L},\mathcal{N}\right)  +\frac{\varepsilon}%
{4},\text{ }0<\tau<t^{^{\prime}}.
\]
After this point, the proof of (i)-(vi) is the same as in Step I of the proof
of Proposition 20 in \cite{BergNik2008}.

Let $\widehat{\gamma}=\max\left\{  \gamma^{\prime\prime},\underline{\gamma
}\right\}  $. By (ii) and (iv), for sufficiently small~positive $\varepsilon$,
the following inequalities hold:

(vii) $\widehat{\gamma}\leq\underline{\measuredangle}\left(  \mathcal{L}%
,\mathcal{N}\right)  +\varepsilon/4<\pi.$

Now consider $I=2\cos\widehat{\gamma}-\left[  \cos\overline{\gamma}+\cos
\gamma^{\prime}\right]  $. By (iii) and (v),%
\begin{align*}
I  &  \geq\cos\left(  \underline{\measuredangle}\left(  \mathcal{L}%
,\mathcal{N}\right)  +\varepsilon/4\right)  -\cos\left(  \overline
{\measuredangle}\left(  \mathcal{L},\mathcal{N}\right)  -\varepsilon/4\right)
\\
&  +\cos\left(  \underline{\measuredangle}\left(  \mathcal{L},\mathcal{N}%
\right)  +\varepsilon/4\right)  -\cos\left(  \underline{\measuredangle}\left(
\mathcal{L},\mathcal{N}\right)  -\varepsilon/4\right)  =\\
&  2\sin\frac{\overline{\measuredangle}\left(  \mathcal{L},\mathcal{N}\right)
-\underline{\measuredangle}\left(  \mathcal{L},\mathcal{N}\right)
-\varepsilon/2}{2}\sin\alpha_{av}-2\sin\frac{\varepsilon}{4}\sin
\underline{\measuredangle}\left(  \mathcal{L},\mathcal{N}\right) \\
&  >2\sin\frac{\varepsilon_{0}}{4}\sin\alpha_{av}-2\sin\frac{\varepsilon}%
{4}\sin\underline{\measuredangle}\left(  \mathcal{L},\mathcal{N}\right)  .
\end{align*}
Hence, for small positive $\varepsilon$, the inequality
\begin{equation}
I=2\cos\widehat{\gamma}-\left[  \cos\overline{\gamma}+\cos\gamma^{\prime
}\right]  >\sin\frac{\varepsilon_{0}}{4}\sin\alpha_{av}>0 \label{Ineq_11}%
\end{equation}
follows.

By Corollary \ref{Cor_uniqueness}, there is no restriction in assuming that
$X\neq\widetilde{Y}$ and $\widetilde{X}\neq Y$. In what follows $t=\widetilde
{x}/x$.

\textbf{II. Let }$\left(  \mathcal{M},\rho\right)  $\textbf{\ satisfy the
upper four point }$\operatorname{cosq}_{K}$\textbf{\ condition}. Set
\[
p=\operatorname{cosq}_{K}\left(  \overrightarrow{X\widetilde{Y}}%
,\overrightarrow{\widetilde{X}Y}\right)  .
\]
Let $f=\widetilde{X}\widetilde{Y}$ and $d=XY$, as shown in Fig. \ref{fig13}.
Then
\begin{align*}
p  &  =\frac{\cos\widehat{\kappa}\left(  1-t\right)  \widetilde{y}%
+\cos\widehat{\kappa}a\cos\widehat{\kappa}b}{\sin\widehat{\kappa}a\sin
\widehat{\kappa}b}-\frac{\left(  \cos\widehat{\kappa}a+\cos\widehat{\kappa
}f\right)  \left(  \cos\widehat{\kappa}b+\cos\widehat{\kappa}d\right)
}{\left[  1+\cos\widehat{\kappa}\left(  1-t\right)  x\right]  \sin
\widehat{\kappa}a\sin\widehat{\kappa}b}\\
&  =\frac{\cos\widehat{\kappa}\widetilde{y}+\widehat{\kappa}\left(  ty\right)
\sin\widehat{\kappa}\widetilde{y}+\cos\widehat{\kappa}a+\mathcal{O}\left(
\lambda^{2}t^{2}\right)  }{\sin\widehat{\kappa}a\sin\widehat{\kappa}b}\\
&  -\frac{\left(  \cos\widehat{\kappa}a+\cos\widehat{\kappa}f\right)  \left[
1+\cos\widehat{\kappa}d+\mathcal{O}\left(  \lambda^{2}t^{2}\right)  \right]
}{\sin\widehat{\kappa}a\sin\widehat{\kappa}b}\times\\
&  \left[  \frac{1}{1+\cos\widehat{\kappa}x}-\frac{\widehat{\kappa}\left(
tx\right)  \sin\widehat{\kappa}}{\left(  1+\cos\widehat{\kappa}x\right)  ^{2}%
}\allowbreak+\mathcal{O}\left(  \lambda^{2}t^{2}\right)  \right]  .
\end{align*}

Let\textbf{\ }$K>0$\textbf{. }Set $\gamma_{K}^{\prime}=\measuredangle
_{K}\widetilde{X}P\widetilde{Y}$ and $\overline{\gamma}_{K}=\measuredangle
_{K}XPY$. By the spherical cosine formula, $\cos\kappa f=\cos\kappa
tx\cos\kappa\widetilde{y}+\sin\kappa tx\sin\kappa\widetilde{y}\cos\gamma
_{K}^{\prime}$. Recall that $\gamma_{K}^{\prime}-\gamma^{\prime}%
=\mathcal{O}\left(  \sigma\left(  \widetilde{X}P\widetilde{Y}\right)  \right)
=\mathcal{O}\left(  \lambda t\right)  $, whence $\cos\gamma_{K}^{\prime}%
=\cos\gamma^{\prime}+\mathcal{O}\left(  \lambda t\right)  $. So, we get:
\begin{equation}
\cos\kappa f=\cos\kappa\widetilde{y}+\kappa\left(  tx\right)  \sin
\kappa\widetilde{y}\cos\gamma^{\prime}+\mathcal{O}\left(  \lambda^{2}%
t^{2}\right)  . \label{diag1}%
\end{equation}
In a similar way,
\begin{equation}
\cos\kappa d=\cos\kappa x+\kappa\left(  t\widetilde{y}\right)  \sin\kappa
x\cos\overline{\gamma}+\mathcal{O}\left(  \lambda^{2}t^{2}\right)  .
\label{diag2}%
\end{equation}
For the sake of brevity, set $\mu=\cos\kappa a+\cos\kappa\widetilde{y}$ and
$\nu=1+\cos\kappa x$. By (\ref{diag1}), (\ref{diag2}) and by invoking the
upper four point $\operatorname{cosq}_{K}$ condition, we get:
\begin{align}
p  &  =\frac{\mu+\kappa\left(  t\widetilde{y}\right)  \sin\kappa\widetilde
{y}+\mathcal{O}\left(  \lambda^{2}t^{2}\right)  }{\sin\widehat{\kappa}%
a\sin\widehat{\kappa}b}-\left[  \mu+\kappa\left(  tx\right)  \sin
\kappa\widetilde{y}\cos\gamma^{\prime}+\mathcal{O}\left(  \lambda^{2}%
t^{2}\right)  \right]  \times\nonumber\\
&  \frac{\left[  \nu+\kappa\left(  t\widetilde{y}\right)  \sin\kappa
x\cos\overline{\gamma}+\mathcal{O}\left(  \lambda^{2}t^{2}\right)  \right]
\left[  1-\frac{\kappa\left(  tx\right)  \sin\kappa x}{\nu}\allowbreak
+\mathcal{O}\left(  \lambda^{2}t^{2}\right)  \right]  }{\nu\sin\kappa
a\sin\kappa b}\nonumber\\
&  =\kappa t\frac{\sin\kappa\widetilde{y}\left(  \widetilde{y}-x\cos
\gamma^{\prime}\right)  +\frac{\mu}{\nu}\sin\kappa x\left(  x-\widetilde
{y}\cos\overline{\gamma}\right)  +\mathcal{O}\left(  \lambda^{2}t\right)
}{\sin\kappa a\sin\kappa b}\leq1. \label{est_an_ex}%
\end{align}
Now we approximate (\ref{est_an_ex}) w.r.t. $x$ and $\widetilde{y}$:
\begin{align*}
p  &  =\kappa^{2}t\frac{\widetilde{y}\left(  \widetilde{y}-x\cos\gamma
^{\prime}\right)  +x\left(  x-\widetilde{y}\cos\overline{\gamma}\right)
+\mathcal{O}\left(  t\lambda^{2}\right)  +\mathcal{O}\left(  \lambda
^{4}\right)  }{\sin\kappa a\sin\kappa b}=\\
&  \kappa^{2}t\frac{x^{2}+\widetilde{y}^{2}-x\widetilde{y}\left(
\cos\overline{\gamma}+\cos\gamma^{\prime}\right)  +\mathcal{O}\left(
t\lambda^{2}\right)  +\mathcal{O}\left(  \lambda^{4}\right)  }{\sin\kappa
a\sin\kappa b}.
\end{align*}
Let $A=x^{2}+\widetilde{y}^{2}-x\widetilde{y}\left(  \cos\overline{\gamma
}+\cos\gamma^{\prime}\right)  +\mathcal{O}\left(  t\lambda^{2}\right)
+\mathcal{O}\left(  \lambda^{4}\right)  $ and $B=x^{2}+\widetilde{y}%
^{2}-2x\widetilde{y}\cos\widehat{\gamma}$. Notice that by (\ref{Ineq_11}),
\begin{align}
A  &  >B+x\widetilde{y}\sin\frac{\varepsilon_{0}}{4}\sin\alpha_{av}%
+\mathcal{O}\left(  t\lambda^{2}\right)  +\mathcal{O}\left(  \lambda
^{4}\right) \nonumber\\
&  >B+\frac{1}{2}x\widetilde{y}\sin\frac{\varepsilon_{0}}{4}\sin\alpha
_{av}\geq\frac{1}{2}x\widetilde{y}\sin\frac{\varepsilon_{0}}{4}\sin\alpha
_{av}>0 \label{length_esta}%
\end{align}
for sufficiently small $\lambda$ and $t$. Set
\[
a^{\prime}=\sqrt{x^{2}+\widetilde{y}^{2}-2x\widetilde{y}\cos\widehat{\gamma}%
}\text{ and }b^{\prime}=t\sqrt{x^{2}+\widetilde{y}^{2}-2x\widetilde{y}%
\cos\widehat{\gamma}}.
\]
Because $\underline{\gamma},\gamma^{\prime\prime}\leq$ $\widehat{\gamma}$, we
readily see that $a\leq a^{\prime}$ and $b\leq b^{\prime}$. Hence,
\begin{align*}
p  &  \geq k^{2}t\frac{A}{\sin\kappa a^{\prime}\sin\kappa b^{\prime}}%
=t\frac{A}{a^{\prime}b^{\prime}}\left[  1+\mathcal{O}\left(  \lambda
^{2}\right)  \right] \\
&  =\frac{x^{2}+\widetilde{y}^{2}-x\widetilde{y}\left(  \cos\overline{\gamma
}+\cos\gamma^{\prime}\right)  +\mathcal{O}\left(  t\lambda^{2}\right)
+\mathcal{O}\left(  \lambda^{4}\right)  }{x^{2}+\widetilde{y}^{2}%
-2x\widetilde{y}\cos\widehat{\gamma}}.
\end{align*}
So, by invoking the upper four point $\operatorname{cosq}_{K}$ condition,
(\ref{Ineq_11}) and (\ref{length_esta}), for sufficiently small $\lambda$ and
$t$, we get:
\begin{align*}
1  &  <1+\frac{\frac{x\widetilde{y}}{2}\sin\frac{\varepsilon_{0}}{4}\sin
\alpha_{av}}{x^{2}+\widetilde{y}^{2}-2x\widetilde{y}\cos\widehat{\gamma}}\\
&  \leq\frac{x^{2}+\widetilde{y}^{2}-x\widetilde{y}\left(  \cos\overline
{\gamma}+\cos\gamma^{\prime}\right)  +\mathcal{O}\left(  t\lambda^{2}\right)
+\mathcal{O}\left(  \lambda^{4}\right)  }{x^{2}+\widetilde{y}^{2}%
-2x\widetilde{y}\cos\widehat{\gamma}}\leq p\leq1,
\end{align*}
a contradiction. The case of negative $K$ is similar.

\textbf{III. Let }$\left(  \mathcal{M},\rho\right)  $\textbf{\ satisfy the
lower four point }$\operatorname{cosq}_{K}$\textbf{\ condition}. Set
\[
q=\operatorname{cosq}_{K}\left(  \overrightarrow{X\widetilde{Y}}%
,\overrightarrow{Y\widetilde{X}}\right)  .
\]
We have:%
\begin{align*}
q  &  =\frac{\cos\widehat{\kappa}f+\cos\widehat{\kappa}a\cos\widehat{\kappa}%
b}{\sin\widehat{\kappa}a\sin\widehat{\kappa}b}-\\
&  \frac{\left[  \cos\widehat{\kappa}a+\cos\widehat{\kappa}\left(  1-t\right)
\widetilde{y}\right]  \left[  \cos\widehat{\kappa}b+\cos\widehat{\kappa
}\left(  1-t\right)  x\right]  }{\left(  1+\cos\widehat{\kappa}d\right)
\sin\widehat{\kappa}a\sin\widehat{\kappa}b}.
\end{align*}
Approximating $q$ relative to $t$, we get $q=I/\left(  J\sin\widehat{\kappa
}a\sin\widehat{\kappa}b\right)  $, where%
\begin{align*}
I  &  =\left[  \cos\widehat{\kappa}f+\cos\widehat{\kappa}a+\mathcal{O}\left(
\lambda^{2}t^{2}\right)  \right]  \left[  1+\cos\widehat{\kappa}d\right]  -\\
&  \left[  \mu+\widehat{\kappa}t\widetilde{y}\sin\widehat{\kappa}\widetilde
{y}+\mathcal{O}\left(  \lambda^{2}t^{2}\right)  \right]  \left[  \nu
+\widehat{\kappa}\left(  tx\right)  \sin\widehat{\kappa}x+\mathcal{O}\left(
\lambda^{2}t^{2}\right)  \right]  ,\\
J  &  =\left(  1+\cos\widehat{\kappa}d\right)  ,
\end{align*}
and where we set $\mu=\cos\widehat{\kappa}a+\cos\widehat{\kappa}\widetilde{y}
$ and $\nu=1+\cos\widehat{\kappa}x$.

Let $K>0$. By recalling (\ref{diag1}) and (\ref{diag2}), we get:
\begin{align*}
I  &  =\left[  \mu+\kappa\left(  tx\right)  \sin\kappa\widetilde{y}\cos
\gamma^{\prime}\right]  \left[  \nu+\kappa\left(  t\widetilde{y}\right)
\sin\kappa x\cos\overline{\gamma}\right]  -\left[  \mu+\kappa t\widetilde
{y}\sin\widehat{\kappa}\widetilde{y}\right]  \times\\
&  \left[  \nu+\kappa\left(  tx\right)  \sin\kappa x\right]  +\mathcal{O}%
\left(  \lambda^{2}t^{2}\right)  ,J=\left(  1+\cos\kappa d\right)  .
\end{align*}
After simplifications, we have:%
\[
I=-kt\left[  \nu\sin k\widetilde{y}\left(  \widetilde{y}-x\cos\gamma^{\prime
}\right)  +\mu\sin\kappa x\left(  x-\widetilde{y}\cos\overline{\gamma}\right)
+\mathcal{O}\left(  \lambda^{2}t\right)  \right]  .
\]
By (\ref{diag2}), $J^{-1}=\nu^{-1}\left[  1+\mathcal{O}\left(  \lambda
^{2}t\right)  \right]  $. By the lower four point $\operatorname{cosq}_{K}$
condition,
\[
-q=kt\frac{\sin k\widetilde{y}\left(  \widetilde{y}-x\cos\gamma^{\prime
}\right)  +\frac{\mu}{\nu}\sin\kappa x\left(  x-\widetilde{y}\cos
\overline{\gamma}\right)  +\mathcal{O}\left(  \lambda^{2}t\right)  }%
{\sin\kappa a\sin\kappa b}\leq1\text{.}%
\]
So, we derived from the lower four point $\operatorname{cosq}$ condition
inequality (\ref{est_an_ex}). Hence, by using the arguments of part II, we see
that the lower four point $\operatorname{cosq}$ condition also implies
existence of Aleksandrov's angle. The case of negative $K$ is similar.

The proof of Proposition \ref{Prop_Exist_Angle} is complete.
\end{proof}

\subsection{Angle comparison theorem\label{Angle_Comparison}}

We begin with the following identity in the $K$-plane:

\begin{proposition}
\label{Prop_sper_ident}Let $K\neq0$ and $\mathcal{T}=ABC$ \ be a triangle in
$\mathbb{S}_{K}$. Set $x=AB,y=AC,z=BC$, ($x,y,z>0$), $\alpha=\measuredangle
BAC$ and $\beta=\measuredangle ABC$, as illustrated in Fig. \ref{fig14}. Then%
\begin{equation}
\sin\widehat{k}z=\frac{\cos\widehat{k}y+\cos\widehat{k}z}{1+\cos\widehat{k}%
x}\sin\widehat{k}x\cos\beta-\sin\widehat{k}y\cos\left(  \alpha+\beta\right)  .
\label{sper_ident}%
\end{equation}
In particular, if $K>0$, then
\[
\sin kz=\frac{\cos ky+\cos kz}{1+\cos kx}\sin kx\cos\beta-\sin ky\cos\left(
\alpha+\beta\right)
\]
and if $K<0$, then%
\[
\sinh kz=\frac{\cosh ky+\cosh kz}{1+\cosh kx}\sinh kx\cos\beta-\sinh
ky\cos\left(  \alpha+\beta\right)  .
\]%
\begin{figure}
[pth]
\begin{center}
\includegraphics[
natheight=1.214000in,
natwidth=1.274800in,
height=1.1605in,
width=1.2167in
]%
{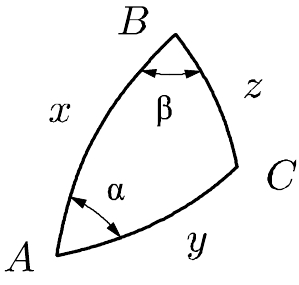}%
\caption{Sketch for Proposition \ref{Prop_sper_ident}}%
\label{fig14}%
\end{center}
\end{figure}

\end{proposition}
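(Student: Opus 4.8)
The plan is to prove \eqref{sper_ident} as a purely trigonometric identity in the model plane, deducing it from the law of cosines and the law of sines. First I would pass to the unified notation of the paper: write $C_u=\cos\widehat{\kappa}u$ and $S_u=\sin\widehat{\kappa}u$, so that $S_u^2+C_u^2=1$, the two laws of cosines at the vertices $A$ and $B$ read $C_z=C_xC_y+S_xS_y\cos\alpha$ and $C_y=C_xC_z+S_xS_z\cos\beta$, and the law of sines reads $\sin\alpha/S_z=\sin\beta/S_y$. All three hold simultaneously for $K>0$ and $K<0$ under the convention $\widehat{\kappa}=i\kappa$: for $K>0$ one has $S_u^2=1-C_u^2$ directly, while for $K<0$ the relation $\sin\widehat{\kappa}u=i\sinh\kappa u$ both turns $\cosh^2-\sinh^2=1$ into the same identity $S_u^2=1-C_u^2$ and flips the sign in the law of cosines to match its hyperbolic form. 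This reduces both displayed special cases to a single algebraic verification.

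Then I would expand $\cos(\alpha+\beta)=\cos\alpha\cos\beta-\sin\alpha\sin\beta$ on the right-hand side of \eqref{sper_ident} and collect the terms carrying $\cos\beta$:
\[
\text{RHS}=\cos\beta\Bigl[\tfrac{C_y+C_z}{1+C_x}S_x-S_y\cos\alpha\Bigr]+S_y\sin\alpha\sin\beta.
\]
The heart of the computation is simplifying the bracket. Substituting $S_y\cos\alpha=(C_z-C_xC_y)/S_x$ from the law of cosines at $A$, putting everything over the common denominator $(1+C_x)S_x$, and using $S_x^2=1-C_x^2=(1-C_x)(1+C_x)$ to extract the factor $1+C_x$, the numerator collapses to $(1+C_x)(C_y-C_xC_z)$, so the bracket equals $(C_y-C_xC_z)/S_x$. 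Recognizing the law of cosines at $B$ in the form $(C_y-C_xC_z)/S_x=S_z\cos\beta$, the first summand becomes $S_z\cos^2\beta$.

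For the remaining summand I would invoke the law of sines $\sin\alpha=(S_z/S_y)\sin\beta$, which gives $S_y\sin\alpha\sin\beta=S_z\sin^2\beta$. Adding the two contributions yields $\text{RHS}=S_z(\cos^2\beta+\sin^2\beta)=S_z=\sin\widehat{\kappa}z$, which is \eqref{sper_ident}; reading off $\widehat{\kappa}=\kappa$ and $\widehat{\kappa}=i\kappa$ produces the two displayed specializations. I do not expect a genuine obstacle here, since the statement is an exact model-space identity; the only real care is bookkeeping, namely running one computation for both curvature signs through the $\widehat{\kappa}$-convention and verifying the cancellation of $1+C_x$ correctly.

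It is worth remarking that \eqref{sper_ident} is precisely the sharp form of the growth estimate of Lemma \ref{Lemma_Growth_est} in $\mathbb{S}_K$, that is, the case in which the inequality is an equality and the error term vanishes: since $\gamma=\pi-\alpha-\beta$ gives $\cos\gamma=-\cos(\alpha+\beta)$, the identity coincides with the model-space specialization of that estimate. This is the consistency check that both motivates \eqref{sper_ident} and governs the way it will be paired with the limit form of the growth estimate in the angle comparison of Proposition~\ref{Prop_angle_comp}.
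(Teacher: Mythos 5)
Your proof is correct, and the algebra checks out: the numerator $(C_y+C_z)S_x^2-(1+C_x)(C_z-C_xC_y)$ indeed factors as $(1+C_x)(C_y-C_xC_z)$, the bracket becomes $S_z\cos\beta$ by the law of cosines at $B$, and the law of sines in product form $S_y\sin\alpha=S_z\sin\beta$ turns the remaining term into $S_z\sin^2\beta$, so the right-hand side collapses to $S_z(\cos^2\beta+\sin^2\beta)=S_z$. The paper proves the identity from the same two ingredients (the unified laws of cosines and sines in $\widehat{\kappa}$-notation) but organizes the algebra differently: it first establishes the intermediate identity $\frac{\cos\widehat{\kappa}y+\cos\widehat{\kappa}z}{1+\cos\widehat{\kappa}x}\sin\widehat{\kappa}x=\frac{\sin\widehat{\kappa}y}{\sin\beta}\sin(\alpha+\beta)$, substitutes it into the right-hand side, and finishes with $\sin(\alpha+\beta)\cos\beta-\cos(\alpha+\beta)\sin\beta=\sin\alpha$ together with $\sin\widehat{\kappa}y=\sin\widehat{\kappa}z\sin\beta/\sin\alpha$. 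Because that route divides by $\sin\beta$ and $\sin\alpha$, the paper must first dispose of the three degenerate collinear configurations ($C$ between $A$ and $B$, $A$ between $B$ and $C$, $B$ between $A$ and $C$) by direct verification of elementary identities. Your decomposition never divides by $\sin\alpha$ or $\sin\beta$ — the law of sines enters only in the product form, which holds trivially (both sides vanish) when the triangle degenerates — so your single computation covers all cases at once; this is a small but genuine simplification, and you should state it explicitly rather than leave it implicit. The only hypotheses to record are $\sin\widehat{\kappa}x\neq0$ and $1+\cos\widehat{\kappa}x\neq0$, which hold since $0<x<\pi/\sqrt{K}$ in the open hemisphere when $K>0$ and always when $K<0$, and the observation that the unified laws of cosines and sines remain valid for degenerate triangles. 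Your closing remark agrees with the paper's use of the result: Proposition \ref{Prop_sper_ident} is exactly the model-space equality form of the growth estimate and is paired with its limit form in Proposition \ref{Prop_angle_comp}.
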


\begin{proof}
The following cases are possible:

(i) $C$ is between $A$ and $B$. Then $\alpha=\beta=0,x>y$ and $z=x-y$.

(ii) $A$ is between $B$ and $C$. Then $\alpha=\pi,\beta=0$ and $z=x+y$.

(iii) $B$ is between $A$ and $C$. Then $\alpha=0,\beta=\pi,y>x$ and $z=y-x $.

(iv) $\mathcal{T}$ $\ $is a non-degenerate triangle. Then $\alpha,\beta
\in\left(  0,\pi\right)  $.

For example, in case (i), the verification of (\ref{sper_ident}) reduces to
the direct verification of the elementary trigonometric identity%
\[
\sin\widehat{k}\left(  x-y\right)  =\frac{\cos\widehat{k}y+\cos\widehat
{k}\left(  x-y\right)  }{1+\cos\widehat{k}x}\sin\widehat{k}x-\sin\widehat
{k}y.
\]
Cases (ii) and (iii) are similar.

Now we consider case (iv). Let%
\[
I=\frac{\sin\widehat{k}y}{\sin\beta}\sin\left(  \alpha+\beta\right)
=\frac{\sin\widehat{k}y\sin\alpha\cos\beta}{\sin\beta}+\sin\widehat{k}%
y\cos\alpha.
\]
By the sine formula in $\mathbb{S}_{K}$,
\[
\frac{\sin\widehat{k}y\sin\alpha\cos\beta}{\sin\beta}=\sin\widehat{k}%
z\cos\beta.
\]
By the cosine formula in $\mathbb{S}_{K}$,
\[
\cos\beta=\frac{\cos\widehat{k}y-\cos\widehat{k}x\cos\widehat{k}z}%
{\sin\widehat{k}x\sin\widehat{k}z},
\]
whence%
\[
\frac{\sin\widehat{k}y}{\sin\beta}\sin\alpha\cos\beta=\frac{\cos\widehat
{k}y-\cos\widehat{k}x\cos\widehat{k}z}{\sin\widehat{k}x}.
\]
Again, by the cosine formula in $\mathbb{S}_{K}$,%
\[
\cos\alpha=\frac{\cos\widehat{k}z-\cos\widehat{k}x\cos\widehat{k}y}%
{\sin\widehat{k}x\sin\widehat{k}y},
\]
whence%
\[
\sin\widehat{k}y\cos\alpha=\frac{\cos\widehat{k}z-\cos\widehat{k}x\cos
\widehat{k}y}{\sin\widehat{k}x}.
\]
So,
\[
I=\frac{\left(  1-\cos\widehat{k}x\right)  \left(  \cos\widehat{k}%
y+\cos\widehat{k}z\right)  }{\sin\widehat{k}x}=\frac{\cos\widehat{k}%
y+\cos\widehat{k}z}{1+\cos\widehat{k}x}\sin\widehat{k}x,
\]
whence%
\[
\frac{\cos\widehat{k}y+\cos\widehat{k}z}{1+\cos\widehat{k}x}\sin\widehat
{k}x\cos\beta=\frac{\sin\widehat{k}y}{\sin\beta}\sin\left(  \alpha
+\beta\right)  \cos\beta.
\]
Hence, if $J$ denotes the right-hand side of (\ref{sper_ident}), then%
\[
J=\frac{\sin\widehat{k}y}{\sin\beta}\sin\left(  \alpha+\beta\right)  \cos
\beta-\sin\widehat{k}y\cos\left(  \alpha+\beta\right)  .
\]
Recall that by the sine formula in $\mathbb{S}_{K}$, $\sin\widehat{k}%
y=\sin\widehat{k}z\sin\beta/\sin\alpha$. So,%
\[
J=\frac{\sin\widehat{k}z}{\sin\alpha}\left[  \sin\left(  \alpha+\beta\right)
\cos\beta-\cos\left(  \alpha+\beta\right)  \sin\beta\right]  =\sin\widehat
{k}z,
\]
as needed.

The proof of Proposition \ref{Prop_sper_ident} is complete.
\end{proof}

Let $K\neq0$ and let $\left\{  A,B,C\right\}  $ be a triple of distinct points
in a metric space $\left(  \mathcal{M},\rho\right)  $ of diameter less than
$\pi/2\sqrt{K}$ if $K>0$. In what follows, we assume that the points $A $ and
$B$ can be joined by a shortest $\mathcal{L}=\mathcal{AB}$ and the points $A$
and $C$ can be joined by a shortest $\mathcal{N}=\mathcal{AC}$. By Proposition
\ref{Prop_Exist_Angle}, there exists an angle $\alpha$ between the shortests
$\mathcal{L}$ and $\mathcal{N}$. In what follows, we assume that $0<\alpha
\leq\pi$. Set $x=AB$ and $y=AC$.

To state our next lemma, we need \ the following notation. Let $K^{\prime}%
\in\left\{  0,K\right\}  $. Consider a geodesic triangle $\mathcal{T}%
^{K^{\prime}}=\widetilde{A}^{K^{\prime}}\widetilde{B}^{K^{\prime}}%
\widetilde{C}^{K^{\prime}}$ in $\mathbb{S}_{K^{\prime}}$ such that
$\widetilde{A}^{K^{\prime}}\widetilde{B}^{K^{\prime}}=x,\widetilde
{A}^{K^{\prime}}\widetilde{C}^{K^{\prime}}=y$ and $\alpha=\measuredangle
\widetilde{B}^{K^{\prime}}\widetilde{A}^{K^{\prime}}\widetilde{C}^{K^{\prime}%
}$. If $K^{\prime}=K$, set%
\[
\widetilde{A}^{K^{\prime}}=\widetilde{A},\text{ }\widetilde{B}^{K^{\prime}%
}=\widetilde{B},\text{ }\widetilde{C}^{K^{\prime}}=\widetilde{C},\text{
}\widetilde{B}\widetilde{C}=\widetilde{z}\text{ and }\widetilde{\beta
}=\measuredangle\widetilde{A}\widetilde{B}\widetilde{C},
\]
as illustrated in Fig. \ref{fig15}.%
\begin{figure}
[pth]
\begin{center}
\includegraphics[
natheight=2.128100in,
natwidth=1.362500in,
height=2.0119in,
width=1.2979in
]%
{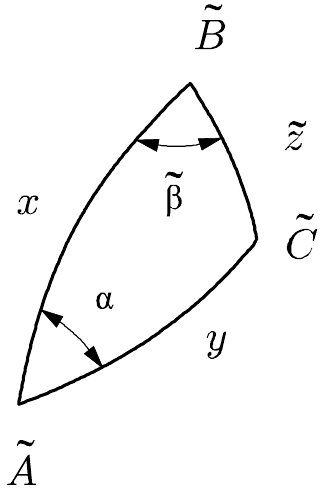}%
\caption{Sketch for Lemma \ref{Angle_conv_comp_th}}%
\label{fig15}%
\end{center}
\end{figure}
Suppose that for $t\in(0,1]$, points $\widehat{X}_{t}\in\mathcal{L}%
\backslash\left\{  A\right\}  $ and $\widehat{Y}_{t}\in\mathcal{N}%
\backslash\left\{  A\right\}  $ (in the metric space $\left(  \mathcal{M}%
,\rho\right)  $) have been selected. Consider the Euclidean triangle
$\widetilde{\mathcal{T}}_{t}^{0}=\widetilde{A}^{0}\widetilde{X}_{t}%
^{0}\widetilde{Y}_{t}^{0}$ such that $A\widehat{X}_{t}=\widetilde{A}%
^{0}\widetilde{X}_{t}^{0},A\widehat{Y}_{t}=\widetilde{A}^{0}\widetilde{Y}%
_{t}^{0}$ and $\measuredangle\widetilde{X}_{t}^{0}\widetilde{A}^{0}%
\widetilde{Y}_{t}^{0}=\alpha$. We claim that given small $t\in(0,1]$, there is
$s_{t}\in(0,1]$ such that if $A\widehat{X}_{t}=s_{t}x,A\widehat{Y}_{t}=ty$
(and $\measuredangle\widetilde{X}_{t}^{0}\widetilde{A}^{0}\widetilde{Y}%
_{t}^{0}=\alpha$), then $\measuredangle\widetilde{A}^{0}\widetilde{X}_{t}%
^{0}\widetilde{Y}_{t}^{0}=\widetilde{\beta}$, as illustrated in Fig.
\ref{fig16}.%
\begin{figure}
[pth]
\begin{center}
\includegraphics[
natheight=1.502700in,
natwidth=2.488800in,
height=1.4289in,
width=2.3495in
]%
{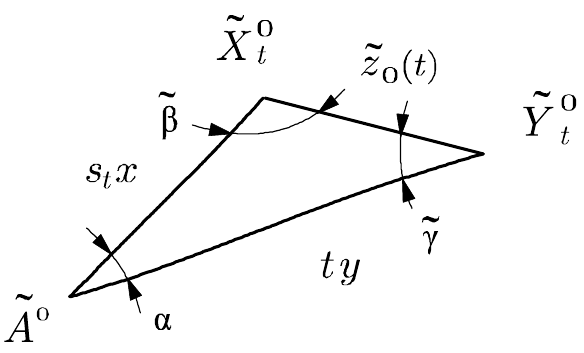}%
\caption{Definition of $s_{t}$}%
\label{fig16}%
\end{center}
\end{figure}
Indeed, if $\alpha=\pi$, then $\widetilde{\beta}=0$. Set $s_{t}=t$, and we are
done. Now let $\alpha\in\left(  0,\pi\right)  $. First, we remark that
$\alpha+\widetilde{\beta}<\pi$. \ It is sufficient to consider $K>0$. Let
$\delta=\measuredangle\widetilde{A}\widetilde{C}\widetilde{B}$. Because
$y,\widetilde{z}<\pi/2\sqrt{K}$, we can extend the shortests $\widetilde
{\mathcal{C}}\widetilde{\mathcal{A}}$ and $\widetilde{\mathcal{C}}%
\widetilde{\mathcal{B}}$ to the shortests $\widetilde{\mathcal{C}}%
\mathcal{A}^{\prime}$ and $\widetilde{\mathcal{C}}\mathcal{B}^{\prime}$ of the
lengths $\pi/2\sqrt{K}$. Consider the spherical triangle $\mathcal{T}^{\prime
}=\widetilde{\mathcal{C}}A^{\prime}B^{\prime}$. We have: $\measuredangle
\widetilde{\mathcal{C}}A^{\prime}B^{\prime}=\measuredangle\widetilde
{\mathcal{C}}B^{\prime}A^{\prime}=\pi/2$. Hence, by recalling the Gauss-Bonnet
theorem, we we see that
\[
\delta+\alpha+\widetilde{\beta}<\delta+\frac{\pi}{2}+\frac{\pi}{2}\text{,}%
\]
whence $\alpha+\widetilde{\beta}<\pi$ follows. In particular, $\alpha
\in\left(  0,\pi\right)  $, and setting $\widetilde{\gamma}=\pi-\alpha
-\widetilde{\beta}$, we see that $\widetilde{\gamma}\in\left(  0,\pi\right)
$. Hence, we select $s_{t}=ty\sin\widetilde{\gamma}/\left(  x\sin
\widetilde{\beta}\right)  $.

Finally, set
\begin{align*}
\widehat{\alpha}_{K^{\prime}}\left(  t\right)   &  =\measuredangle_{K^{\prime
}}\widehat{X}_{t}^{K^{\prime}}A^{K^{\prime}}\widehat{Y}_{t}^{K^{\prime}%
},\widehat{\beta}_{K^{\prime}}\left(  t\right)  =\measuredangle_{K^{\prime}%
}A^{K^{\prime}}\widehat{X}_{t}^{K^{\prime}}\widehat{Y}_{t}^{K^{\prime}},\\
\widehat{\gamma}_{K^{\prime}}\left(  t\right)   &  =\measuredangle_{K^{\prime
}}A^{K^{\prime}}\widehat{Y}_{t}^{K^{\prime}}\widehat{X}_{t}^{K^{\prime}}\text{
and }z\left(  t\right)  =\widehat{X}_{t}\widehat{Y}_{t}\text{,}%
\end{align*}
as shown in Fig. \ref{fig17}.%
\begin{figure}
[pth]
\begin{center}
\includegraphics[
natheight=1.375400in,
natwidth=1.868000in,
height=1.3099in,
width=1.7702in
]%
{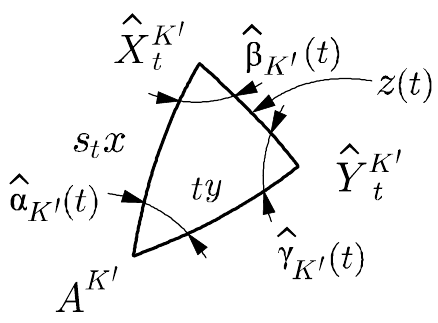}%
\caption{Sketch for Lemma \ref{Angle_conv_comp_th}}%
\label{fig17}%
\end{center}
\end{figure}

\begin{lemma}
\label{Angle_conv_comp_th}Let $K\neq0$. If $0<\alpha\leq\pi$, then%
\[
\lim_{t\rightarrow0+}\widehat{\beta}_{K}\left(  t\right)  =\widetilde{\beta}%
\]
(for the notation, see Fig. \ref{fig15} and Fig. \ref{fig17} for $K^{\prime
}=K$).
\end{lemma}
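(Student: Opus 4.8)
The plan is to reduce the statement to a continuity property of plane Euclidean comparison triangles. The two observations that drive the argument are (a) for a triangle that shrinks to a point the $K$-comparison angles differ from the Euclidean ones only by a term of order the area, and (b) the points $\widehat{X}_t,\widehat{Y}_t$ were chosen along $\mathcal{L},\mathcal{N}$ at distances $s_tx$ and $ty$ whose \emph{ratio} is independent of $t$.

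First I would pass from the $K$-comparison angle to the Euclidean one. Since $A\widehat{X}_t=s_tx=\mathcal{O}(t)$ and $A\widehat{Y}_t=ty=\mathcal{O}(t)$, the triangle $A\widehat{X}_t\widehat{Y}_t$ has area $\sigma\left(A\widehat{X}_t\widehat{Y}_t\right)=\mathcal{O}(t^2)$, so by the well-known estimate recorded before Corollary \ref{Coroll_growth_K} we have $\widehat{\beta}_K(t)-\widehat{\beta}_0(t)=\mathcal{O}(t^2)\to 0$ and likewise $\widehat{\alpha}_K(t)-\widehat{\alpha}_0(t)\to 0$. Hence it suffices to prove $\lim_{t\to0+}\widehat{\beta}_0(t)=\widetilde{\beta}$. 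Moreover, because the angle $\alpha$ between $\mathcal{L}$ and $\mathcal{N}$ exists (Proposition \ref{Prop_Exist_Angle}), the double limit of $\measuredangle_K(u,v)$ as $u,v\to0+$ equals $\alpha$; applying this along $u=s_tx\to0$ and $v=ty\to0$ gives $\widehat{\alpha}_K(t)=\measuredangle_K(s_tx,ty)\to\alpha$, and therefore $\widehat{\alpha}_0(t)\to\alpha$ as well.

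Next I would dispose of the degenerate case $\alpha=\pi$, where $\widetilde{\beta}=0$ and $s_t=t$. Here the apex angle $\widehat{\alpha}_0(t)\to\pi$, and in a Euclidean triangle whose apex angle tends to $\pi$ both base angles tend to $0$; thus $\widehat{\beta}_0(t)\to 0=\widetilde{\beta}$, and by the first step $\widehat{\beta}_K(t)\to\widetilde{\beta}$. For $\alpha\in(0,\pi)$ I would exploit the frozen ratio. The comparison triangle $A^0\widehat{X}_t^0\widehat{Y}_t^0$ is determined by the two sides $s_tx,ty$ meeting at the apex $A^0$ together with the included angle $\widehat{\alpha}_0(t)$; consequently its angle $\widehat{\beta}_0(t)$ at $\widehat{X}_t^0$ is a scale-invariant continuous function $\Phi(r,\theta)$ of the side ratio $r=s_tx/(ty)$ and the included angle $\theta$. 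From the definition $s_t=ty\sin\widetilde{\gamma}/(x\sin\widetilde{\beta})$ one computes
\[
r=\frac{s_tx}{ty}=\frac{\sin\widetilde{\gamma}}{\sin\widetilde{\beta}},\qquad \widetilde{\gamma}=\pi-\alpha-\widetilde{\beta}\in(0,\pi),
\]
so $r$ does not depend on $t$. By the very construction of $s_t$, the auxiliary Euclidean triangle with the same two apex sides but included angle exactly $\alpha$ has angle $\widetilde{\beta}$ at the vertex corresponding to $\widehat{X}_t^0$; that is, $\Phi(r,\alpha)=\widetilde{\beta}$.

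Finally I would conclude by continuity: the ratio is fixed at $r$ while the included angle $\widehat{\alpha}_0(t)\to\alpha$, so $\widehat{\beta}_0(t)=\Phi(r,\widehat{\alpha}_0(t))\to\Phi(r,\alpha)=\widetilde{\beta}$, and combined with the first step this yields $\widehat{\beta}_K(t)\to\widetilde{\beta}$. The main point requiring care is not any one estimate but ensuring the comparison triangles remain non-degenerate in the limit, i.e.\ that $\alpha+\widetilde{\beta}<\pi$ so that $\widetilde{\gamma}\in(0,\pi)$ and $\Phi$ is continuous at $(r,\alpha)$; this non-degeneracy is exactly what the Gauss--Bonnet argument preceding the lemma establishes, so the continuity step is unobstructed.
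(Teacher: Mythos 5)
Your proposal is correct and takes essentially the same route as the paper: the same split into the cases $\alpha=\pi$ and $\alpha\in\left(0,\pi\right)$, the same reduction $\widehat{\beta}_{K}\left(t\right)-\widehat{\beta}_{0}\left(t\right)=\mathcal{O}\left(t^{2}\right)$ via the area estimate, the same use of Proposition \ref{Prop_Exist_Angle} to get $\widehat{\alpha}_{0}\left(t\right)\rightarrow\alpha$, and the same exploitation of the side ratio frozen by the definition of $s_{t}$. The only difference is one of packaging: where the paper executes the final step through the explicit cosine-rule formulas (\ref{ineq_z_over_t}) for $t/z\left(t\right)$ and $t/\widetilde{z}_{0}\left(t\right)$ and the Euclidean sine formula, you invoke continuity of the scale-invariant SAS angle function $\Phi\left(r,\theta\right)$ at $\left(r,\alpha\right)$, which is if anything slightly cleaner, since it pins down the angle $\widehat{\beta}_{0}\left(t\right)$ itself rather than only its sine.
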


\begin{proof}
\textbf{I. }Let $\alpha=\pi$; then $\widetilde{\beta}=0$. We have:
$\lim_{t\rightarrow0+}\widehat{\alpha}_{0}\left(  t\right)  =\pi$, whence
$\lim_{t\rightarrow0+}\widehat{\beta}_{0}\left(  t\right)  =0$. Because
$\widehat{\beta}_{0}\left(  t\right)  -\widehat{\beta}_{K}\left(  t\right)
=\mathcal{O}\left(  A\widehat{X}_{t}\widehat{Y}_{t}\right)  =\mathcal{O}%
\left(  t^{2}\right)  $, we have: $\lim_{t\rightarrow0+}\widehat{\beta}%
_{K}\left(  t\right)  =0$, as needed.

\textbf{II. }Now let $\alpha\in\left(  0,\pi\right)  $. Then $\widetilde
{\beta},\widetilde{\gamma}\in\left(  0,\pi\right)  $, see Fig. \ref{fig16}. By
the Euclidean sine formula applied to the triangle $\widetilde{X}_{t}%
^{0}\widetilde{A}^{0}\widetilde{Y}_{t}^{0}$,
\[
\sin\widetilde{\beta}=\frac{ty\sin\alpha}{\widetilde{z}_{0}\left(  t\right)
}\text{.}%
\]
By the Euclidean sine formula applied to the triangle $\widehat{X}_{t}%
^{0}\widehat{A}^{0}\widehat{Y}_{t}^{0}$ (see Fig. \ref{fig17} for $K^{\prime
}=0$),%
\[
\sin\widehat{\beta}_{0}\left(  t\right)  =\frac{ty\sin\widehat{\alpha}%
_{0}\left(  t\right)  }{z\left(  t\right)  }.
\]
So, by recalling Proposition \ref{Prop_Exist_Angle} and because $\widehat
{\beta}_{0}\left(  t\right)  -\widehat{\beta}_{K}\left(  t\right)
=\mathcal{O}\left(  t^{2}\right)  $, all we have to do is to show that
$\lim_{t\rightarrow0+}t/z\left(  t\right)  =\lim_{t\rightarrow0+}%
t/\widetilde{z}_{0}\left(  t\right)  $ (in fact, $t/\widetilde{z}_{0}\left(
t\right)  =const$). Indeed, by the Euclidean cosine formula applied to the
triangle $\widetilde{X}_{t}^{0}\widetilde{A}^{0}\widetilde{Y}_{t}^{0}$ and
$\widehat{X}_{t}^{0}\widehat{A}^{0}\widehat{Y}_{t}^{0}$, and by recalling that
$s_{t}=ty\sin\widetilde{\gamma}/\left(  x\sin\widetilde{\beta}\right)  $, we
get:%
\begin{align}
\frac{t}{\widetilde{z}_{0}\left(  t\right)  }  &  =\frac{1}{y}\frac
{\sin\widetilde{\beta}}{\sqrt{\left(  \sin\widetilde{\beta}-\sin
\widetilde{\gamma}\right)  ^{2}+4\sin\widetilde{\beta}\sin\widetilde{\gamma
}\sin^{2}\frac{\alpha}{2}}},\nonumber\\
\frac{t}{z\left(  t\right)  }  &  =\frac{1}{y}\frac{\sin\widetilde{\beta}%
}{\sqrt{\left(  \sin\widetilde{\beta}-\sin\widetilde{\gamma}\right)
^{2}+4\sin\widetilde{\beta}\sin\widetilde{\gamma}\sin^{2}\frac{\widehat
{\alpha}_{0}\left(  t\right)  }{2}}}. \label{ineq_z_over_t}%
\end{align}

By Proposition \ref{Prop_Exist_Angle}, $\lim_{t\rightarrow0+}\widehat{\alpha
}_{0}\left(  t\right)  =\alpha$. Also recall that $\alpha,\widetilde{\beta
},\widetilde{\gamma}\in\left(  0,\pi\right)  $. Hence, $\lim_{t\rightarrow
0+}t/z\left(  t\right)  $ and $\lim_{t\rightarrow0+}t/\widetilde{z}_{0}\left(
t\right)  $ exist and they are equal.

The proof of Lemma \ref{Angle_conv_comp_th} is complete.
\end{proof}

\begin{proposition}
\label{Prop_angle_comp}Let $K\neq0$ and let $\left\{  A,B,C\right\}  $ be a
triple of distinct points in a metric space $\left(  \mathcal{M},\rho\right)
$ such that the points $A$ and $B$ can be joined by a shortest $\mathcal{L}%
=\mathcal{AB}$ and the points $A$ and $C$ can be joined by a shortest
$\mathcal{N}=\mathcal{AC}$, and $AB,AC\leq\pi/\left(  6\sqrt{K}\right)  $ if
$K>0$. If $\left(  \mathcal{M},\rho\right)  $ satisfies the one-sided four
point $\operatorname{cosq}_{K}$ condition, then $\measuredangle BAC\leq
\measuredangle_{K}BAC$.
\end{proposition}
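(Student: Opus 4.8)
The plan is to convert the angle inequality into a comparison of third sides and then to play the limiting growth estimate against the exact identity of Proposition \ref{Prop_sper_ident}. Write $x = AB$, $y = AC$, $z = BC$, and let $\alpha = \measuredangle BAC$ be the Aleksandrov angle, which exists by Proposition \ref{Prop_Exist_Angle}; by hypothesis $0 < \alpha \le \pi$. Let $\widetilde{A}\widetilde{B}\widetilde{C}$ be the hinge model triangle in $\mathbb{S}_K$ with $\widetilde{A}\widetilde{B} = x$, $\widetilde{A}\widetilde{C} = y$ and $\measuredangle\widetilde{B}\widetilde{A}\widetilde{C} = \alpha$, and set $\widetilde{z} = \widetilde{B}\widetilde{C}$, $\widetilde{\beta} = \measuredangle\widetilde{A}\widetilde{B}\widetilde{C}$. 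Since in $\mathbb{S}_K$ the vertex angle at $A$ of a triangle with two fixed sides $x,y$ is a strictly increasing function of the opposite side (immediate from the cosine formula), the desired inequality $\measuredangle BAC = \alpha \le \measuredangle_K BAC$ is \emph{equivalent} to $\widetilde{z} \le z$. Thus the whole proof reduces to establishing $\widetilde{z} \le z$.

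Next I would run the growth estimate along the special points $\widehat{X}_t,\widehat{Y}_t$ introduced before Lemma \ref{Angle_conv_comp_th}. Taking the big triangle to be $ABC$ and applying Corollary \ref{Coroll_growth_K} with $X_{s_t}=\widehat{X}_t$, $Y_t=\widehat{Y}_t$, one first verifies M1--M3 and the condition $0<\underline{m}\le z_{s,t}/(sx)$: here $s_t/t = y\sin\widetilde{\gamma}/(x\sin\widetilde{\beta})$ is constant, and since the comparison angle at $A$ tends to $\alpha\in(0,\pi]$ the ratio $\widehat{X}_t\widehat{Y}_t/(s_tx)$ stays bounded below by a positive constant for small $t$. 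With these points $\beta_K(s_t,t)=\widehat{\beta}_K(t)$, $\alpha_K(s_t,t)=\widehat{\alpha}_K(t)$, and $\xi\to 0$ as $t\to 0+$. Letting $t\to 0+$ and using $\lim\widehat{\alpha}_0(t)=\alpha$ (Proposition \ref{Prop_Exist_Angle}), $\lim\widehat{\beta}_K(t)=\widetilde{\beta}$ (Lemma \ref{Angle_conv_comp_th}), and $\widehat{\alpha}_K(t)-\widehat{\alpha}_0(t)=\mathcal{O}(t^2)$, the corollary yields, for $K>0$,
\[
\frac{\cos\kappa y+\cos\kappa z}{1+\cos\kappa x}\sin\kappa x\cos\widetilde{\beta}-\sin\kappa y\cos(\alpha+\widetilde{\beta})\le\sin\kappa z,
\]
and the analogous inequality with hyperbolic functions for $K<0$.

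Now introduce the single-variable function
\[
G(w)=\frac{\cos\kappa y+\cos\kappa w}{1+\cos\kappa x}\sin\kappa x\cos\widetilde{\beta}-\sin\kappa y\cos(\alpha+\widetilde{\beta})-\sin\kappa w
\]
for $K>0$ (and its hyperbolic analogue for $K<0$). The limiting growth estimate says exactly $G(z)\le 0$, while Proposition \ref{Prop_sper_ident} applied to the hinge model triangle $\widetilde{A}\widetilde{B}\widetilde{C}$ (sides $x,y,\widetilde{z}$, angles $\alpha,\widetilde{\beta}$) says exactly $G(\widetilde{z})=0$. Both $z$ and $\widetilde{z}$ lie in $[0,x+y]$ by the triangle inequality, and $x+y\le\pi/(3\kappa)$ by the hypothesis $x,y\le\pi/(6\sqrt{K})$. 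If $G$ is strictly decreasing on this interval, then $G(z)\le 0=G(\widetilde{z})$ forces $z\ge\widetilde{z}$, which is what we want. For $K<0$ monotonicity is automatic: since $\cosh\kappa w>\sinh\kappa w$ and $\tanh(\kappa x/2)<1$ one gets $G'(w)<0$ for all $w$, so no diameter restriction is needed there, matching the statement. For $K>0$ one computes $G'(w)=-\kappa\bigl[\sin\kappa w\,\tan(\kappa x/2)\cos\widetilde{\beta}+\cos\kappa w\bigr]$, and using $\tan(\kappa x/2)\le\tan(\pi/12)$ and $\kappa w\le\pi/3$ checks that $\cos\kappa w-\sin\kappa w\,\tan(\kappa x/2)>0$, which dominates the $\cos\widetilde{\beta}$ term irrespective of its sign and gives $G'(w)<0$.

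The main obstacle is precisely this monotonicity of $G$ for positive $K$: it is the single place where the sharp bound $\pi/(6\sqrt{K})$ enters, guaranteeing that $\cos\kappa w$ outweighs $\sin\kappa w\,\tan(\kappa x/2)$ even when $\cos\widetilde{\beta}<0$. A secondary point is the careful checking that the special points $\widehat{X}_t,\widehat{Y}_t$ meet the lower-bound hypothesis $z_{s,t}/(sx)\ge\underline{m}$ of the growth estimate, and that the degenerate case $\alpha=\pi$ (where $\widetilde{\beta}=0$, $\widetilde{z}=x+y$, forcing $z=x+y$ and $\measuredangle_K BAC=\pi$) is covered by the same argument. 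Once $\widetilde{z}\le z$ is secured, the strict monotonicity of the model vertex angle in the opposite side gives $\measuredangle BAC\le\measuredangle_K BAC$, completing the proof.
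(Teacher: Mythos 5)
Your proof is correct and follows essentially the same route as the paper's: the same reduction of $\measuredangle BAC\leq\measuredangle_{K}BAC$ to the third-side comparison $\widetilde{z}\leq z$, the same passage to the limit in Corollary \ref{Coroll_growth_K} along the points $\widehat{X}_{t},\widehat{Y}_{t}$ via Proposition \ref{Prop_Exist_Angle} and Lemma \ref{Angle_conv_comp_th}, the same confrontation with the identity of Proposition \ref{Prop_sper_ident}, and the same monotonicity argument, since your decreasing function $G$ is, up to an additive constant, the negative of the paper's increasing function $f\left(  u\right)  =\sin\kappa u-\frac{\cos\kappa u}{1+\cos\kappa x}\sin\kappa x\cos\widetilde{\beta}$. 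Your explicit check that $\cos\kappa w-\sin\kappa w\tan\left(  \kappa x/2\right)  >0$ when $\kappa w\leq\pi/3$ and $\kappa x/2\leq\pi/12$ merely spells out what the paper dismisses as ``readily seen,'' so the two arguments coincide in substance.
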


\begin{remark}
In the hypothesis of Proposition \ref{Prop_angle_comp}, we do not require that
$\left(  \mathcal{M},\rho\right)  $ be a geodesically connected metric space.
Also, the bound on $AB$ and $AC$ is not sharp.
\end{remark}

\begin{proof}
Let $\alpha=\measuredangle BAC$ and $\alpha_{K}=\measuredangle_{K}BAC$. There
is no restriction in assuming that $\alpha\in(0,\pi]$. To prove the inequality
$\alpha\leq\alpha_{K}$, we consider a geodesic triangle $\widehat{\mathcal{T}%
}_{t}^{K}=A^{K}\widehat{B}_{t}^{K}\widehat{C}_{t}^{K}$ in $\mathbb{S}_{K}$
such that $A^{K}\widehat{B}_{t}^{K}=x,A^{K}\widehat{C}_{t}^{K}=y$ and
$\measuredangle\widehat{B}_{t}^{K}A^{K}\widehat{C}_{t}^{K}=\widehat{\alpha
}_{K}\left(  t\right)  $. Set $\widehat{z}_{K}\left(  t\right)  =\widehat
{B}_{t}^{K}\widehat{C}_{t}^{K}$, as illustrated in Fig. \ref{fig18}.%
\begin{figure}
[pth]
\begin{center}
\includegraphics[
natheight=2.122600in,
natwidth=4.397400in,
height=2.0073in,
width=4.1299in
]%
{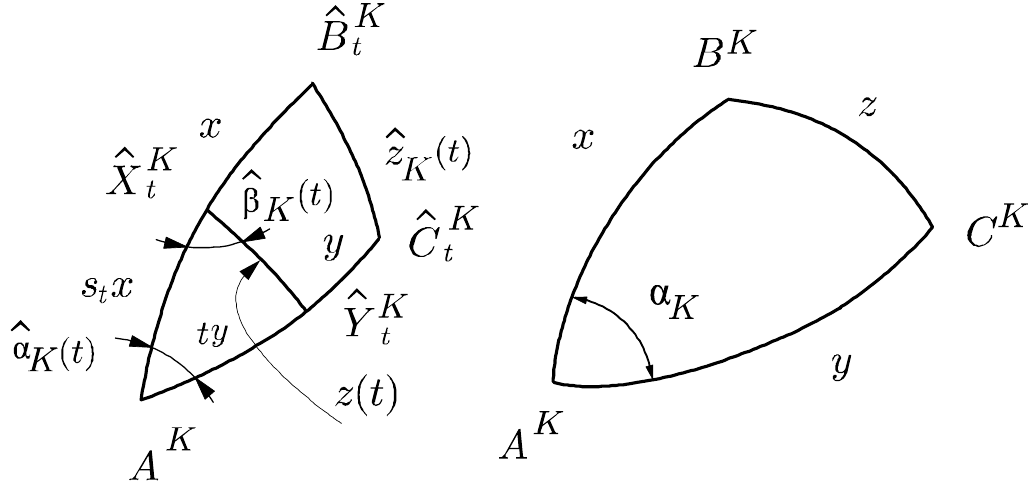}%
\caption{Sketch for Proposition \ref{Prop_angle_comp}}%
\label{fig18}%
\end{center}
\end{figure}
It is readily seen that $\alpha\leq\alpha_{K}$ if and only if $\widetilde
{z}=\lim_{t\rightarrow0+}\widehat{z}_{K}\left(  t\right)  \leq z$ (for the
notation, see Fig. \ref{fig15} and Fig. \ref{fig17} for $K^{\prime}=K$). So,
our goal is to derive the inequality $\widetilde{z}\leq z$ .

By Proposition \ref{Prop_Exist_Angle}, $\alpha=\lim_{t\rightarrow0+}%
\widehat{\alpha}_{K}\left(  t\right)  $. It is readily seen that if
$\alpha=\pi$, then $z\left(  t\right)  /t=x+y$, i.e., it is bounded above and
below by positive constants. Let $\alpha\in\left(  0,\pi\right)  $. Because
$\widehat{\alpha}_{0}\left(  t\right)  \rightarrow\alpha$ as $t\rightarrow0+
$,%
\[
\sin\frac{\widehat{\alpha}_{0}\left(  t\right)  }{2}\geq\frac{1}{2}\sin
\frac{\alpha}{2}%
\]
for small $t$. Then by recalling (\ref{ineq_z_over_t}), it is not difficult to
see that
\[
\frac{t}{z\left(  t\right)  }\leq\frac{1}{2y\sqrt{\sin\widetilde{\gamma}}%
\sin\frac{\widehat{\alpha}_{0}\left(  t\right)  }{2}}\leq\frac{1}{y\sqrt
{\sin\widetilde{\gamma}}\sin\frac{\alpha}{2}}<+\infty\text{.}%
\]
So, the hypotheses of Corollary \ref{Coroll_growth_K} are satisfied.

Let $K>0$. By Corollary \ref{Coroll_growth_K},
\begin{align*}
&  \frac{\cos\kappa y+\cos\kappa z}{1+\cos\kappa x}\sin\kappa x\cos
\widehat{\beta}_{K}\left(  t\right)  -\sin\kappa y\cos\left(  \widehat{\alpha
}_{K}\left(  t\right)  +\widehat{\beta}_{K}\left(  t\right)  \right) \\
&  \leq\sin\kappa z+\mathcal{O}\left(  t\right)  ,
\end{align*}
By Proposition \ref{Prop_Exist_Angle}, $\lim_{t\rightarrow0+}\widehat{\alpha
}_{K}\left(  t\right)  =\alpha$ and by Lemma \ref{Angle_conv_comp_th},
$\lim_{t\rightarrow0+}\widehat{\beta}_{K}\left(  t\right)  =\widetilde{\beta}%
$. Let $K>0$. By letting $t\rightarrow0+$, we get%
\begin{align*}
&  \sin\kappa z-\frac{\cos\kappa z}{1+\cos\kappa x}\sin\kappa x\cos
\widetilde{\beta}\\
&  \geq\frac{\cos\kappa y}{1+\cos\kappa x}\sin\kappa x\cos\widetilde{\beta
}-\sin\kappa y\cos\left(  \alpha+\widetilde{\beta}\right)  ,
\end{align*}
By Proposition \ref{Prop_sper_ident},
\[
\sin\kappa\widetilde{z}-\frac{\cos\kappa\widetilde{z}}{1+\cos\kappa x}%
\sin\kappa x\cos\widetilde{\beta}=\frac{\cos\kappa y}{1+\cos\kappa x}%
\sin\kappa x\cos\widetilde{\beta}-\sin\kappa y\cos\left(  \alpha
+\widetilde{\beta}\right)  ,
\]
whence%
\begin{align}
&  \sin\kappa z-\frac{\cos\kappa z}{1+\cos\kappa x}\sin\kappa x\cos
\widetilde{\beta}\nonumber\\
&  \geq\sin\kappa\widetilde{z}-\frac{\cos\kappa\widetilde{z}}{1+\cos\kappa
x}\sin\kappa x\cos\widetilde{\beta}. \label{ineq_pos}%
\end{align}
By the triangle inequality, $z,\widetilde{z}\leq\pi/\left(  3\kappa\right)  $.
By Corollary \ref{Cor_uniqueness}, there is no restriction in assuming that
$z>0$. So, we can also assume that $\widetilde{z}$ is also positive. Consider
the function%
\[
f\left(  u\right)  =\sin\kappa u-\frac{\cos\kappa u}{1+\cos\kappa x}\sin\kappa
x\cos\widetilde{\beta},u\in(0,\frac{\pi}{3\kappa}]\text{.}%
\]
It is readily seen that $f\left(  u\right)  $ is a strictly increasing
function if $u\in(0,\pi/\left(  3\kappa\right)  ]$. So, the inequality
$\widetilde{z}\leq z$ for positive $K$ follows from inequality (\ref{ineq_pos}%
), as needed.

In a similar way, for $K<0$, we have:%
\begin{align}
&  \sinh\kappa z-\frac{\cosh\kappa z}{1+\cosh\kappa x}\sinh\kappa
x\cos\widetilde{\beta}\nonumber\\
&  \geq\sinh\kappa\widetilde{z}-\frac{\cosh\kappa\widetilde{z}}{1+\cosh\kappa
x}\sinh\kappa x\cos\widetilde{\beta}. \label{ineq_neg}%
\end{align}
It is easy to see that the function
\[
g\left(  u\right)  =\sinh\kappa u-\frac{\cosh\kappa u}{1+\cosh\kappa x}%
\sinh\kappa x\cos\widetilde{\beta},u\in\left(  0,+\infty\right)
\]
is an increasing function if $u\in\left(  0,+\infty\right)  $. Hence,
(\ref{ineq_neg}) implies the inequality $\widetilde{z}\leq z$ for negative
$K$, as claimed.

The proof of Proposition \ref{Prop_angle_comp} is complete.
\end{proof}

\begin{corollary}
\label{cor_final}Let $K>0$ and let $\left(  \mathcal{M},\rho\right)  $ be a
geodesically connected metric space such that $\operatorname{diam}\left(
\mathcal{M}\right)  \leq\pi/\left(  2\sqrt{K}\right)  $ when $K>0$. If
$\left(  \mathcal{M},\rho\right)  $ satisfies the one-sided four point
$\operatorname{cosq}_{K}$ condition, then it is an $\Re_{K}$ domain with the
same diameter restriction.
\end{corollary}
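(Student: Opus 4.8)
The plan is to verify directly the three defining properties of an $\Re_K$ domain for $(\mathcal{M},\rho)$. Property (i), convexity (geodesic connectedness), is part of the hypothesis. Property (ii), that every geodesic triangle has perimeter less than $2\pi/\sqrt{K}$ when $K>0$, is immediate from the diameter bound: since $\operatorname{diam}(\mathcal{M})\leq\pi/(2\sqrt{K})$, each side of a triangle is at most $\pi/(2\sqrt{K})$, so the perimeter does not exceed $3\pi/(2\sqrt{K})<2\pi/\sqrt{K}$. The substantive content is property (iii), the non-positivity of the $K$-excess $\delta_K(\mathcal{T})$ of every geodesic triangle $\mathcal{T}$ in $\mathcal{M}$.

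First I would collect the structural facts already established under the one-sided four point $\operatorname{cosq}_K$ condition. By Corollary \ref{Cor_uniqueness} any two points of $\mathcal{M}$ are joined by a unique shortest, and by Lemma \ref{Lemma_cont_geod} these shortests depend continuously on their endpoints. By Proposition \ref{Prop_Exist_Angle} the Aleksandrov angle between any two shortests issuing from a common point exists, so that in the definition of $\delta_K(\mathcal{T})$ each upper angle $\overline{\measuredangle}$ coincides with the corresponding genuine angle $\measuredangle$; this ensures the $K$-excess is computed from actual angles.

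Next I would apply Proposition \ref{Prop_angle_comp} at each of the three vertices of a sufficiently small triangle. For a triangle $\mathcal{T}=ABC$ all of whose sides are at most $\pi/(6\sqrt{K})$, Proposition \ref{Prop_angle_comp} gives $\measuredangle BAC\leq\measuredangle_K BAC$, and, applied cyclically with each vertex as apex, $\measuredangle ABC\leq\measuredangle_K ABC$ and $\measuredangle ACB\leq\measuredangle_K ACB$; summing yields $\delta_K(\mathcal{T})\leq0$ for every such small triangle, which is precisely the local angle comparison. To pass from this local statement to the global non-positivity of $K$-excess for arbitrary triangles, I would invoke Theorem 9 of \cite[\S 3]{A1957a}: in a metric space in which shortests depend continuously on their endpoints, local angle comparison implies global angle comparison. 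Its continuity hypothesis is supplied by Lemma \ref{Lemma_cont_geod} and its local hypothesis by Proposition \ref{Prop_angle_comp}, so the theorem delivers $\overline{\measuredangle}\leq\measuredangle_K$ at every vertex of every geodesic triangle, whence $\delta_K(\mathcal{T})\leq0$ in general. Together with (i) and (ii), this shows $(\mathcal{M},\rho)$ is an $\Re_K$ domain, and the diameter bound persists since the space is unchanged; combined with Theorem \ref{ThRKBound} this completes the $K>0$ case of Theorem \ref{MainTh}.

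The main obstacle is the globalization step, namely confirming that the hypotheses of Aleksandrov's patching theorem are genuinely met. The delicate points are that Proposition \ref{Prop_angle_comp} supplies comparison only for triangles whose sides lie below the threshold $\pi/(6\sqrt{K})$, and that the subdivision-and-regluing argument underlying Theorem 9 of \cite{A1957a} requires both the uniqueness (Corollary \ref{Cor_uniqueness}) and the continuity (Lemma \ref{Lemma_cont_geod}) of shortests in order that the comparison configurations for adjacent subtriangles fit together consistently and the local bound propagate to large triangles without violating the perimeter restriction. Once this compatibility is checked, the remainder of the verification is routine.
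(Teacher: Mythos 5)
Your proposal is correct and follows essentially the same route as the paper: geodesic connectedness and the perimeter bound come from the hypotheses, Proposition \ref{Prop_angle_comp} supplies the local angle comparison, Lemma \ref{Lemma_cont_geod} supplies continuity of shortests, and Aleksandrov's Theorem 9 from \cite[\S 3]{A1957a} globalizes to give the $\Re_{K}$ property. The only cosmetic difference is that you phrase Theorem 9 as ``local angle comparison plus continuity implies global angle comparison,'' while the paper invokes it in the equivalent form ``every point has a neighborhood that is an $\Re_{K}$ domain, plus continuity, implies $\Re_{K}$.''
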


\begin{proof}
Theorem 9 in \cite[\S \ 3]{A1957a} states that a metric space $\left(
\mathcal{M},\rho\right)  $ such that \newline(i) $\left(  \mathcal{M}%
,\rho\right)  $ is geodesically connected, \newline(ii) the perimeter of every
geodesic triangle in $\left(  \mathcal{M},\rho\right)  $ is less than
$2\pi/\sqrt{K^{\prime}}$ if $K^{\prime}>0$, \newline(iii) every point of
$\left(  \mathcal{M},\rho\right)  $ has a neighborhood which is an
$\Re_{K^{\prime}}$ domain, \newline(iv) shortests in $\left(  \mathcal{M}%
,\rho\right)  $ depend continuously on their end points\newline is an
$\Re_{K^{\prime}}$ domain.

By the hypothesis of Corollary \ref{cor_final}, (i) and (ii) for $K^{\prime
}=K$ are satisfied; (iii) for $K^{\prime}=K$ is satisfied by Proposition
\ref{Prop_angle_comp}, and (iv) is satisfied by Lemma \ref{Lemma_cont_geod}.
Hence, $\left(  \mathcal{M},\rho\right)  $ is an $\Re_{K}$ domain.
\end{proof}

Finally, Theorem \ref{MainTh} follows from Theorem \ref{ThRKBound},
Proposition \ref{Prop_angle_comp} ($K<0$) and Corollary \ref{cor_final} ($K>0
$).

\section{Proof of Theorem \ref{ThExtr} \label{SecThExtr}}

In this section, we consider an extremal case of Theorem \ref{MainTh} when
$\left\vert \operatorname{cosq}_{K}\right\vert =1$. We will need a rigidity
lemma on geodesic convex hulls of quadruples.

In \cite[\S \ 4, Theorem 6]{A1957a}, Aleksandrov established the following
rigidity result: if $\mathcal{T=ABC}$ is a triangle in an $\Re_{K}$ domain and
$\measuredangle ABC=\measuredangle_{K}ABC$, then $BX=B^{K}X^{K}$ for every
$X\in\mathcal{AC}$ and $X^{K}\in\mathcal{A}^{K}\mathcal{C}^{K}$ such that
$AX=A^{K}X^{K}$. Aleksandrov's proof also implies the converse: if
$BX_{0}=B^{K}X_{0}^{K}$ for at least one point $X_{0}\in\mathcal{AC}%
\backslash\left\{  A,C\right\}  $, then $\measuredangle ABC=\measuredangle
_{K}ABC$. In \cite[Proposition 2.9]{BrH1990}, Bridson and Haefliger slightly
improved Aleksandrov's theorem by proving isometry of the convex hulls of the
triangles (see also (1) and (2) of Sec. 2.10 in \cite{BrH1990}). The following
rigidity lemma is close to Aleksandrov's rigidity theorem in its spirit and in
the method of the proof. For completeness, we include the rigidity lemma and
its proof.

\begin{lemma}
\label{Lemma_hull}Let $K\in%
\mathbb{R}
$ and let $\mathfrak{Q=}\left\{  A,P,Q,B\right\}  $ be a quadruple of distinct
points in an $\Re_{K}$ domain. Let $\mathcal{R}$ be a convex quadrangle in
$\mathbb{S}_{K}$ bounded by the closed polygonal curve $\mathcal{L}^{\prime
}=\mathcal{A}^{\prime}\mathcal{P}^{\prime}\mathcal{Q}^{\prime}\mathcal{B}%
^{\prime}\mathcal{A}^{\prime}$ with the vertices at $A^{\prime},$ $P^{\prime
},$ $Q^{\prime}$ and $B^{\prime}$. Suppose that there is an isometry $f$ from
$\mathfrak{Q}$ onto the quadruple $\mathfrak{Q}^{\prime}\mathfrak{=\{}%
A^{\prime},$ $P^{\prime},$ $Q^{\prime},$ $B^{\prime}\}$ such that $f\left(
A\right)  =A^{\prime},$ $f\left(  P\right)  =P^{\prime},$ $f\left(  Q\right)
=Q^{\prime}$ and $f\left(  B\right)  =B^{\prime}$. Then the geodesic convex
hull of $\mathfrak{Q}$ is isometric to $\mathcal{R}$.
\end{lemma}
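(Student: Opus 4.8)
The plan is to cut the convex quadrangle $\mathcal{R}$ by a diagonal into two comparison triangles, to reduce the rigidity to the Aleksandrov rigidity theorem quoted just before the statement (applied separately to the two geodesic triangles that share the diagonal $\mathcal{AQ}$ in the $\Re_{K}$ domain), and then to glue the two rigid pieces along the diagonal.

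First I would fix a shortest $\mathcal{AQ}$ in the $\Re_{K}$ domain and form the two geodesic triangles $T_{1}=APQ$ and $T_{2}=AQB$, which share the side $\mathcal{AQ}$. Their comparison triangles $T_{1}^{\prime}=A^{\prime}P^{\prime}Q^{\prime}$ and $T_{2}^{\prime}=A^{\prime}Q^{\prime}B^{\prime}$ in $\mathbb{S}_{K}$ have side lengths $(AP,PQ,AQ)$ and $(AQ,QB,AB)$. Since $\mathcal{R}$ is convex with boundary $\mathcal{A}^{\prime}\mathcal{P}^{\prime}\mathcal{Q}^{\prime}\mathcal{B}^{\prime}$ and $f$ is an isometry, the diagonal $\mathcal{A}^{\prime}\mathcal{Q}^{\prime}$ has length $A^{\prime}Q^{\prime}=\rho(A,Q)$, so the two triangles into which $\mathcal{A}^{\prime}\mathcal{Q}^{\prime}$ cuts $\mathcal{R}$ are exactly $T_{1}^{\prime}$ and $T_{2}^{\prime}$, and $\mathcal{R}=T_{1}^{\prime}\cup_{\mathcal{A}^{\prime}\mathcal{Q}^{\prime}}T_{2}^{\prime}$. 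Because $T_{1},T_{2}$ are triangles in an $\Re_{K}$ domain, the canonical non-expanding comparison fillings $T_{i}^{\prime}\to T_{i}$ exist (this is the two-triangle case of Reshetnyak's lemma used in the proof of Theorem \ref{ThRKBound}); they agree on $\mathcal{A}^{\prime}\mathcal{Q}^{\prime}$, hence assemble into a single non-expanding map $\Phi\colon\mathcal{R}\to\mathcal{M}$ with $\Phi(A^{\prime})=A$, $\Phi(P^{\prime})=P$, $\Phi(Q^{\prime})=Q$, $\Phi(B^{\prime})=B$, isometric on $\partial\mathcal{R}$ and on the shared diagonal, and with image in $\mathcal{GC}[\mathfrak{Q}]$.

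Next I would locate a crossing point from the \emph{other} diagonal. Since $f$ is an isometry, $\rho(P,B)=P^{\prime}B^{\prime}$. In the convex quadrangle $\mathcal{R}$ the shortest $\mathcal{P}^{\prime}\mathcal{B}^{\prime}$ meets $\mathcal{A}^{\prime}\mathcal{Q}^{\prime}$ at an interior point $O^{\prime}$, so $P^{\prime}B^{\prime}=P^{\prime}O^{\prime}+O^{\prime}B^{\prime}$. Writing $O=\Phi(O^{\prime})\in\mathcal{AQ}$ and using that $\Phi$ is non-expanding on each of $\mathcal{P}^{\prime}\mathcal{O}^{\prime}$ and $\mathcal{O}^{\prime}\mathcal{B}^{\prime}$, we get $\rho(P,O)\le P^{\prime}O^{\prime}$ and $\rho(O,B)\le O^{\prime}B^{\prime}$, while their sum is at least $\rho(P,B)=P^{\prime}O^{\prime}+O^{\prime}B^{\prime}$; hence both are equalities, with $O$ interior to $\mathcal{AQ}$. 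Now the quoted Aleksandrov rigidity theorem, in its converse form together with the Bridson--Haefliger refinement to convex hulls, applies to the triangle $PAQ$ with the base point $O\in\mathcal{AQ}\setminus\{A,Q\}$ realizing the comparison distance $\rho(P,O)=P^{\prime}O^{\prime}$: it forces the geodesic convex hull of $\{A,P,Q\}$ to be isometric to $T_{1}^{\prime}$. Applying the same to $BAQ$ with $\rho(O,B)=O^{\prime}B^{\prime}$ shows the hull of $\{A,Q,B\}$ is isometric to $T_{2}^{\prime}$; in particular $\Phi$ is an isometry on each of $T_{1}^{\prime}$ and $T_{2}^{\prime}$, and the actual angles at $A$ and at $Q$ along $\mathcal{AQ}$ equal the corresponding comparison angles.

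Finally I would glue. It remains to see that $\Phi$ preserves the distance between $U^{\prime}\in T_{1}^{\prime}$ and $V^{\prime}\in T_{2}^{\prime}$ and that $T_{1}\cup T_{2}=\mathcal{GC}[\mathfrak{Q}]$. The $\mathbb{S}_{K}$-shortest $\mathcal{U}^{\prime}\mathcal{V}^{\prime}$ crosses $\mathcal{A}^{\prime}\mathcal{Q}^{\prime}$ at a point $W^{\prime}$, so $U^{\prime}V^{\prime}=U^{\prime}W^{\prime}+W^{\prime}V^{\prime}$ and non-expansion gives $\rho(\Phi U^{\prime},\Phi V^{\prime})\le U^{\prime}V^{\prime}$ immediately. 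The \emph{hard part} is the reverse inequality, i.e. ruling out a shortcut in $\mathcal{M}$ that avoids the diagonal; this is exactly a gluing statement. I expect to resolve it by using the angle equalities at $A$ and $Q$ (whose sums are $\measuredangle_{K}PAB$ and $\measuredangle_{K}PQB$ by the convexity of $\mathcal{R}$) to show that $T_{1}\cup T_{2}$ is a convex subset of the $\Re_{K}$ domain, so that every shortest from $\Phi U^{\prime}$ to $\Phi V^{\prime}$ stays inside it and meets $\mathcal{AQ}$ at some $Z=\Phi(Z^{\prime})$; then $\rho(\Phi U^{\prime},\Phi V^{\prime})=\rho(\Phi U^{\prime},Z)+\rho(Z,\Phi V^{\prime})=U^{\prime}Z^{\prime}+Z^{\prime}V^{\prime}\ge U^{\prime}V^{\prime}$, since $\Phi$ is isometric on each triangle. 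This pins $\rho(\Phi U^{\prime},\Phi V^{\prime})=U^{\prime}V^{\prime}$, so $\Phi$ is a distance-preserving bijection of $\mathcal{R}$ onto $T_{1}\cup T_{2}$; convexity of $T_{1}\cup T_{2}$ (which contains $\mathfrak{Q}$ and consists of shortest-points between points of $\mathfrak{Q}$) then gives $T_{1}\cup T_{2}=\mathcal{GC}[\mathfrak{Q}]$, completing the isometry. The degenerate case, where $\mathcal{R}$ collapses to a shortest, is immediate since then all four points lie on one shortest.
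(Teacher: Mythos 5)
Your construction up through the rigidity of the two half-triangles is correct, and in fact it retraces the opening of the paper's own proof: your diagonal argument producing $O=\Phi(O')\in\mathcal{AQ}\cap\mathcal{PB}$ with $PO=P'O'$ and $OB=O'B'$ is the paper's step II, and your application of the Aleksandrov--Bridson--Haefliger rigidity criterion to the triangles $APQ$ and $AQB$, with the interior point $O$ as witness, is a correct use of what the paper isolates as its step I. The gap is exactly at the step you yourself flag as the hard part. You propose to obtain the cross-distances by showing that $T_{1}\cup T_{2}$ is convex in $\mathcal{M}$, justified only by the angle equalities at the endpoints $A$ and $Q$; but that justification cannot work, and no argument is supplied in its place. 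Convexity of $T_{1}\cup T_{2}$ \emph{is} the statement being proved: it says that every shortest from a point of $T_{1}$ to a point of $T_{2}$ meets $\mathcal{AQ}$ additively, which is precisely the ``no shortcut'' inequality, so invoking it is circular unless an independent proof is given. The danger such a proof must exclude is branching of $\mathcal{M}$ along the \emph{interior} of $\mathcal{AQ}$ (for instance a third sheet attached along $\mathcal{AQ}$), through which points $U\in T_{1}$ and $V\in T_{2}$ near an interior point $Z\neq O$ might be joined more cheaply. The equalities $\measuredangle PAQ+\measuredangle QAB=\measuredangle_{K}PAB$ and its analogue at $Q$ say nothing about any such $Z$; combined with the triangle inequality for angles they merely reproduce $\measuredangle PAB\leq\measuredangle_{K}PAB$, which holds in every $\Re_{K}$ domain and carries no rigidity information. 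The only interior point of $\mathcal{AQ}$ at which your data controls the transverse geometry is the single point $O$, where $\measuredangle POB=\pi$.

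What is missing is a propagation argument: for each new pair of points one must manufacture a new crossing point on $\mathcal{AQ}$ and reapply the rigidity criterion. This is the actual content of the paper's steps III and IV (organized there as a fan from $P'$, with surjectivity handled by induction in step V), and it is also how your proof can be completed without changing its architecture. From $O$ you get, in addition, rigidity of the triangle $APB$ (witness $X=A$, $Y=O\in\mathcal{PB}$, since $AO=A'O'$). Any $U\in T_{1}$ lies on a fan segment $\mathcal{PE}$ with $E\in\mathcal{AQ}$; the triangle $PEB$ has the model side lengths ($PE=P'E'$ and $EB=E'B'$ by rigidity of $T_{1}$ and $T_{2}$, and $PB=P'B'$ by hypothesis) and is rigid with witness $O$ (note $EO=E'O'$ because $E$ and $O$ both lie on $\mathcal{AQ}$), which yields $\rho(U,B)=U'B'$. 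Then any $V\in T_{2}$ lies on a fan segment $\mathcal{BF}$ with $F\in\mathcal{AQ}$; the triangle $UBF$ again has model side lengths, and the point $W\in\mathcal{AQ}$ with $AW=A'W'$, where $W'$ is the point at which the $\mathbb{S}_{K}$-shortest from $U'$ to $B'$ crosses $\mathcal{A}'\mathcal{Q}'$, lies on $\mathcal{UB}$ by your own step-3 argument and serves as witness, giving $\rho(U,V)=U'V'$. Only after these cross-distances are in hand does $T_{1}\cup T_{2}$ become convex and equal to $\mathcal{GC}\left[\mathfrak{Q}\right]$; convexity is a consequence of the lemma here, not a tool for proving it.
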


\begin{proof}
The proof will be done in a series of steps.

Let\textit{\ }$\mathcal{L}$ be a polygonal curve $\mathcal{A}_{1}%
\mathcal{A}_{2}\ldots\mathcal{A}_{n}$ in $\Re_{K}$ and $\mathcal{L}^{\prime}$
be a polygonal curve $\mathcal{A}_{1}^{\prime}\mathcal{A}_{2}^{\prime}%
\ldots\mathcal{A}_{n}^{\prime}$ in $\mathbb{S}_{K}$ such that $A_{j}%
A_{j+1}=A_{j}^{\prime}A_{j+1}^{\prime}$ for every $j\in\left\{
1,2,...,n-1\right\}  $. Let $\mathfrak{g}_{al,\mathcal{L}}$, $\mathfrak{g}%
_{al,\mathcal{L}^{\prime}}$ denote the arc length parametrizations of
$\mathcal{L}$ and $\mathcal{L}^{\prime}$ relative to $A_{1}$ and
$A_{1}^{\prime}$, respectively (for the notation, see Sec.
\ref{Al_upper_curv_cond}). Define $\varphi_{\mathcal{L},\mathcal{L}^{\prime}%
}:\mathcal{L}^{\prime}\rightarrow\mathcal{L}$ as follows. If $X^{\prime}%
\in\mathcal{L}^{\prime}$ and $X^{\prime}=\mathfrak{g}_{al,\mathcal{L}^{\prime
}}\left(  t_{0}\right)  $, then set $X=\varphi_{\mathcal{L},\mathcal{L}%
^{\prime}}\left(  X^{\prime}\right)  =\mathfrak{g}_{al,\mathcal{L}}$ $\left(
t_{0}\right)  \in\mathcal{L}$.

\textbf{I. }Let\textit{\ }$\mathcal{T}=ABC$\textit{\ }be a geodesic triangle
in $\Re_{K}$\ of perimeter less than $2\pi/\sqrt{K}$ if $K>0$ and let
$\mathcal{T}^{\prime}=A^{\prime}B^{\prime}C^{\prime}$\ be its isometric copy
in $\mathbb{S}_{K}$\textit{. }If $X$ is a point on the side $\mathcal{A}%
\mathcal{B}$, then $X^{\prime}$ denotes the point on the side $\mathcal{A}%
^{\prime}\mathcal{B}^{\prime}$\textit{\ }such that $BX=B^{\prime}X^{\prime}$.
The point $Y^{\prime}\in\mathcal{B}^{\prime}\mathcal{C}^{\prime}$
corresponding to a point $Y\in\mathcal{BC}$ is defined in a similar way. We
begin with the following corollary of \cite{BrH1990}, Proposition 2.9 and (1),
(2) of Sec. 2.10: \textit{The convex hull }$\mathcal{G}\left[  A,B,C\right]  $
\textit{in} $\Re_{K}$ \textit{is isometric to the convex hull} $\mathcal{G}%
\left[  A^{\prime},B^{\prime},C^{\prime}\right]  $\textit{\ in }%
$\mathbb{S}_{K}$ \textit{if and only if there is }$X\in\mathcal{AB}%
\backslash\left\{  B\right\}  $ and $Y\in\mathcal{BC}\backslash\left\{
B\right\}  $ \textit{such that} $XY=X^{\prime}Y^{\prime}$ \textit{where either
}$X\neq A$ or $Y\neq C$\textit{. }

\textbf{II. }Let $O^{\prime}$ be the point of intersection of the shortests
$\mathcal{A}^{\prime}\mathcal{Q}^{\prime}$ and $\mathcal{B}^{\prime
}\mathcal{P}^{\prime}$. Let $u=A^{\prime}O^{\prime}$ and $v=O^{\prime
}Q^{\prime}$. Because $AQ=A^{\prime}Q^{\prime}$, we can select $O\in
\mathcal{AQ}$ such that $AO=u$ and $OQ=v$. By the triangle inequality, $PB\leq
PO+OB$. By $K$-concavity (Theorem 2 in \cite[\S \ 3]{A1957a}), $PO\leq
P^{\prime}O^{\prime}$ and $OB\leq O^{\prime}B^{\prime}$. Hence, we have:%
\[
PB\leq PO+OB\leq P^{\prime}O^{\prime}+O^{\prime}B^{\prime}=P^{\prime}%
B^{\prime}=PB,
\]
whence $PB=PO+OB$ follows. By the uniqueness property of shortests in $\Re
_{K}$, the polygonal curve $\mathcal{POB}$ coincides with the shortest
$\mathcal{PB}$. We also have: $PO=P^{\prime}O^{\prime}$ and $OB=O^{\prime
}B^{\prime}$.

\textbf{III. }Consider the closed polygonal curves
\[
\mathcal{L}=\mathcal{PQBAP}\text{ and }\mathcal{L}^{\prime}=\mathcal{P}%
^{\prime}\mathcal{Q}^{\prime}\mathcal{B}^{\prime}\mathcal{A}^{\prime
}\mathcal{P}^{\prime},
\]
and set $\varphi_{P^{\prime}}=\varphi_{\mathcal{L},\mathcal{L}^{\prime}}$.

III$_{\text{a}}$. Let $E^{\prime}\in\mathcal{B}^{\prime}\mathcal{Q}^{\prime}$.
Set $E=\varphi_{P^{\prime}}\left(  E^{\prime}\right)  $. Then $AE=A^{\prime
}E^{\prime}$. Indeed, consider triangle $AQB$. By II, $BO=B^{\prime}O^{\prime
}$. Then by I, $AE=A^{\prime}E^{\prime}$, as needed. In a similar way, all
distances from a point of $\mathfrak{Q}$ to a point on one of the shortests
$\mathcal{AP}$, $\mathcal{PQ}$, $\mathcal{QB}$ and $\mathcal{AB}$ are the same
as the corresponding distances in $\mathbb{S}_{K}$.

III$_{\text{b }}$. Now, let $E^{\prime}\in\mathcal{A}^{\prime}\mathcal{P}%
^{\prime}$ (we can assume that $E^{\prime}\neq P^{\prime}$), $F^{\prime}%
\in\mathcal{P}^{\prime}\mathcal{Q}^{\prime}$, $E=\varphi_{P^{\prime}}\left(
E^{\prime}\right)  $ and $F=\varphi_{P^{\prime}}\left(  F^{\prime}\right)  $.
Consider the triangle $E^{\prime}Q^{\prime}P^{\prime}$. Let $G^{\prime}%
\in\mathcal{P}^{\prime}\mathcal{E}^{\prime}\backslash\left\{  P^{\prime
},E^{\prime}\right\}  $ and $G=\varphi_{P^{\prime}}\left(  G^{\prime}\right)
$. By III$_{\text{a }}$, $QG=Q^{\prime}G^{\prime}$, whence by I,
$EF=E^{\prime}F^{\prime}$ follows.

III$_{\text{c}}$. Next, let $E^{\prime}\in\mathcal{A}^{\prime}\mathcal{P}%
^{\prime}$, $F^{\prime}\in\mathcal{B}^{\prime}\mathcal{Q}^{\prime}$,
$E=\varphi_{P^{\prime}}\left(  E^{\prime}\right)  $ and $F=\varphi_{P^{\prime
}}\left(  F^{\prime}\right)  $. Let $O^{\prime}$ be the point of intersection
of the shortest $\mathcal{A}^{\prime}\mathcal{Q}^{\prime}$ and $\mathcal{E}%
^{\prime}\mathcal{B}^{\prime}$. Recall that by III$_{\text{a}}$, $E^{\prime
}Q^{\prime}=EQ$ and $\mathcal{EB=E}^{\prime}\mathcal{B}^{\prime}$. There is
$O\in\mathcal{EB}$ such that $EO=E^{\prime}O^{\prime}$ and $OB=O^{\prime
}B^{\prime}$. By employing arguments similar to those of II, we see that
$OQ=O^{\prime}Q^{\prime}$. Hence, by I, applied to triangle $BEQ$, we have:
$EF=E^{\prime}F^{\prime}$.

So, by III, $\varphi_{P^{\prime}}$ is an isometry in \ from $\mathcal{L}%
^{\prime}$ onto $\mathcal{L}$.

\textbf{IV. }The isometry $\varphi_{P^{\prime}}$ from $\mathcal{L}^{\prime}$
onto $\mathcal{L}$ can be extended to an isometry from $\mathcal{R}$ into
$\mathcal{GC}\left[  \mathfrak{Q}\right]  $. Indeed, let $X^{\prime}%
,Y^{\prime}\in\mathcal{R}$. For definiteness, suppose that there are
$D^{\prime}\in\mathcal{A}^{\prime}B^{\prime}$ and $F^{\prime}\in
\mathcal{B}^{\prime}Q^{\prime}$ such that $X^{\prime}\in\mathcal{P}^{\prime
}D^{\prime}$ and $Y^{\prime}\in\mathcal{P}^{\prime}\mathcal{F}^{\prime}$. Let
$D=\varphi_{P^{\prime}}\left(  D^{\prime}\right)  $ and $F=\varphi_{P^{\prime
}}\left(  F^{\prime}\right)  $. By III$_{\text{a}}$, $P^{\prime}D^{\prime}=PD$
and $P^{\prime}F^{\prime}=PF$. Hence, we can select $X\in\mathcal{PD}$ such
that $\mathcal{P}^{\prime}X^{\prime}=PX$ and $X^{\prime}D^{\prime}=XD$. Point
$Y\in\mathcal{PF}$ is selected in a similar way so that $P^{\prime}Y^{\prime
}=PY$, as illustrated in Fig. \ref{fig19}.
\begin{figure}
[pth]
\begin{center}
\includegraphics[
natheight=1.430700in,
natwidth=3.822700in,
height=1.3634in,
width=3.5939in
]%
{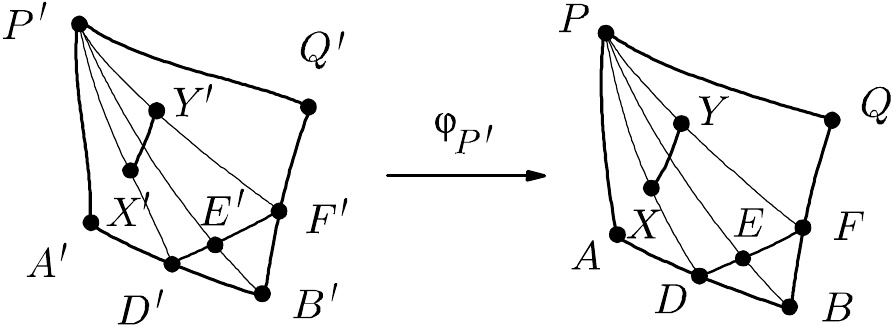}%
\caption{Sketch for part IV of Lemma \ref{Lemma_hull}}%
\label{fig19}%
\end{center}
\end{figure}
Set $\varphi_{P^{\prime}}\left(  X^{\prime}\right)  =X$ and $\varphi
_{P^{\prime}}\left(  Y^{\prime}\right)  =Y$. We claim that $X^{\prime
}Y^{\prime}=XY$. Indeed, by III$_{\text{b}}$, $D^{\prime}F^{\prime}=DF$. Let
$E^{\prime}$ be the point of intersection of the shortest $\mathcal{P}%
^{\prime}\mathcal{B}^{\prime}$ and $\mathcal{D}^{\prime}\mathcal{F}^{\prime}$.
Because $DF=D^{\prime}F^{\prime}$, we can select $E\in\mathcal{DF}$ such that
$DE=D^{\prime}E^{\prime}$ and $EF=E^{\prime}F^{\prime}$. By using arguments
similar to those of II, we see that $PE=P^{\prime}E^{\prime}$. Hence, by I,
$\mathcal{GC}\left[  \left\{  D^{\prime},P^{\prime},F^{\prime}\right\}
\right]  $ is isometric to $\mathcal{GC}\left[  \left\{  D,P,F\right\}
\right]  $, and $XY=X^{\prime}Y^{\prime}$ follows. Thus, $\varphi_{P^{\prime}%
}$ is an isometry from $\mathcal{R}$ into $\mathcal{GC}\left[  \mathfrak{Q}%
\right]  $.

\textbf{V}. $\varphi_{P^{\prime}}$ is a surjection. Because $\mathcal{R}$ is
convex it is sufficient to prove the following claim $\mathfrak{P}\left(
n\right)  $: \textit{the isometry} $\varphi_{P^{\prime}}$\textit{\ from}
$\mathcal{G}^{n}\left[  \mathfrak{O}^{\prime}\right]  $ \textit{into}
$\mathcal{G}^{n}\left[  \mathfrak{O}\right]  $ \textit{is a surjection for
every }$n=0,$ $1,2,...$. Indeed, clearly $\mathfrak{P}\left(  0\right)  $ is
true. Suppose that $\mathfrak{P}\left(  n\right)  $ is true. Let $Z\in$
$\mathcal{G}^{n+1}\left[  \mathfrak{O}\right]  $. Then, there are $X,Y\in$
$\mathcal{G}^{n}\left[  \mathfrak{O}\right]  $ such that $Z\in\mathcal{XY}$.
Because $\varphi_{P^{\prime}}:\mathcal{G}^{n}\left[  \mathfrak{O}^{\prime
}\right]  \rightarrow\mathcal{G}^{n}\left[  \mathfrak{O}\right]  $ is a
bijection, there are (unique) $X^{\prime}=\varphi_{P^{\prime}}^{-1}\left(
X\right)  $, $Y^{\prime}=\varphi_{P^{\prime}}^{-1}\left(  Y\right)  $
satisfying $XY=X^{\prime}Y^{\prime}$. Without loss of generality, we can
assume that there are $D^{\prime}\in\mathcal{A}^{\prime}\mathcal{B}^{\prime}$
and $F^{\prime}\in\mathcal{B}^{\prime}\mathcal{Q}^{\prime}$ such that
$X^{\prime}\in\mathcal{P}^{\prime}\mathcal{D}^{\prime}$ and $Y^{\prime}%
\in\mathcal{P}^{\prime}\mathcal{F}^{\prime}$. Then, by the definition of
$\varphi_{P^{\prime}}$, we see that $X\in\mathcal{PD}$, $Y\in\mathcal{PF}$,
where $D=$ $\varphi_{P^{\prime}}\left(  D^{\prime}\right)  $ and
$F=\varphi_{P^{\prime}}\left(  F^{\prime}\right)  $, and $P^{\prime}D^{\prime
}=PD,$ $P^{\prime}F^{\prime}=PF$, as illustrated in Fig. \ref{fig20}.%
\begin{figure}
[pth]
\begin{center}
\includegraphics[
natheight=1.681700in,
natwidth=3.868800in,
height=1.5959in,
width=3.6354in
]%
{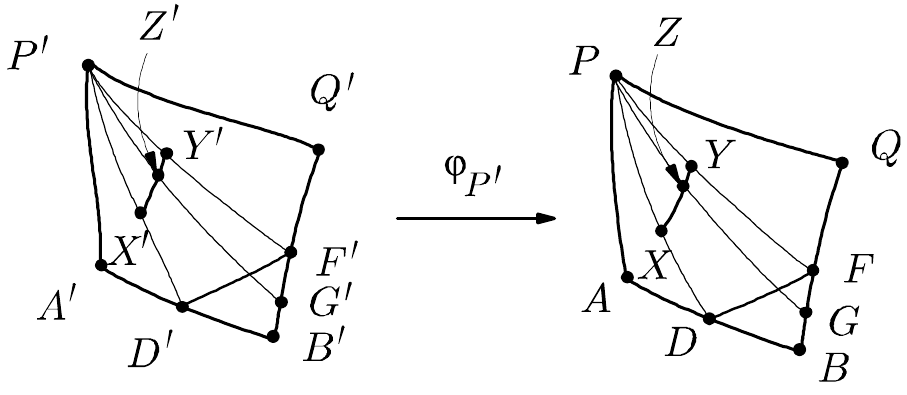}%
\caption{Sketch for part V of Lemma \ref{Lemma_hull}}%
\label{fig20}%
\end{center}
\end{figure}
For definiteness, suppose that $Z^{\prime}\in\mathcal{P}^{\prime}%
\mathcal{G}^{\prime}\backslash\left\{  P,G\right\}  $ where $G\in
\mathcal{B}^{\prime}\mathcal{Q}^{\prime}$. Set $G=\varphi_{P^{\prime}}\left(
G^{\prime}\right)  $. Then $PG=P^{\prime}G^{\prime}$. By using arguments of
II, we see that the polygonal curve $PZG$ coincides with the shortest joining
$P$ to $G$. Hence, $Z=\varphi_{P^{\prime}}\left(  Z^{\prime}\right)  $. Thus,
$\varphi_{P^{\prime}}:\mathcal{G}^{n+1}\left[  \mathfrak{O}^{\prime}\right]
\rightarrow\mathcal{G}^{n+1}\left[  \mathfrak{O}\right]  $ is a surjection.

The proof of Lemma \ref{Lemma_hull} is complete.
\end{proof}

Finally, we complete the proof of Theorem \ref{ThExtr}. By Theorem
\ref{MainTh}, $\left(  \mathcal{M},\rho\right)  $ is an $\Re_{K}$ domain. Let
$\operatorname{cosq}_{K}\left(  \overrightarrow{AP},\overrightarrow
{BQ}\right)  =1$. Because $\operatorname{diam}\left(  A,P,Q,B\right)
<\pi/\left(  2\sqrt{K}\right)  $ if $K>0$, we have: $AP+PQ+BQ+AB<2\pi/\sqrt
{K}$, and Reshetnyak's majorization theorem is applicable to the closed curve
$\mathcal{L=APQBA}$. So, as in the proof of Theorem \ref{ThRKBound}, consider
the closed polygonal curve $\mathcal{L}$ and a convex domain $\mathcal{V}%
\subseteq\mathbb{S}_{K}$ ($\partial\mathcal{V=}\mathcal{L}^{\prime
}=\mathcal{A}^{\prime}\mathcal{P}^{\prime}\mathcal{Q}^{\prime}\mathcal{B}%
^{\prime}\mathcal{A}^{\prime}$) majorizing the curve $\mathcal{L}$ and
satisfying (\ref{diag_cond}). Then, as we showed in the proof of Theorem
\ref{ThRKBound},
\[
\operatorname{cosq}_{K}\left(  \overrightarrow{AP},\overrightarrow{BQ}\right)
\leq\operatorname{cosq}_{K}\left(  \overrightarrow{A^{\prime}P^{\prime}%
},\overrightarrow{B^{\prime}Q^{\prime}}\right)  \leq1.
\]
If either $d=PB<d^{\prime}=P^{\prime}B^{\prime}$, or $f=AQ<f^{\prime
}=A^{\prime}Q^{\prime}$, then $1=\operatorname{cosq}_{K}\left(
\overrightarrow{AP},\overrightarrow{BQ}\right)  $ $<\operatorname{cosq}%
_{K}\left(  \overrightarrow{A^{\prime}P^{\prime}},\overrightarrow{B^{\prime
}Q^{\prime}}\right)  $, a contradiction. So, $f=f^{\prime}$ and $d=d^{\prime}$ follows.

Let $\operatorname{cosq}_{K}\left(  \overrightarrow{AP},\overrightarrow
{BQ}\right)  =-1 $. By the hypothesis, Reshetnyak's majorization theorem is
applicable to the closed curve $\mathcal{N=AQBPA}$. The reader should follow
the proof of Theorem 4.2 in \cite{BergNik2007a} to arrive at the same
conclusion $f=f^{\prime}$ and $d=d^{\prime}$.

So, if $\left\vert \operatorname{cosq}_{K}\left(  \overrightarrow
{AP},\overrightarrow{BQ}\right)  \right\vert =1$, then the quadruple $\left\{
A,P,B,Q\right\}  $ in $\left(  \mathcal{M},\rho\right)  $ is isometric to the
quadruple $\left\{  A^{\prime},P^{\prime},B^{\prime},Q^{\prime}\right\}  $ in
$\mathbb{S}_{K}$. Hence, the statement of Theorem \ref{ThExtr} follows from
Lemma \ref{Lemma_hull}.

\begin{example}
\label{Ex_toExtr_th}Theorem \ref{ThExtr} need not be true if we allow
$\operatorname{diam}\left(  \mathcal{M}\right)  =\pi/2$. Indeed, consider the
metric space $\left(  \mathcal{M},\rho\right)  =\left(  \mathcal{M}%
_{\varepsilon},\rho_{\varepsilon}\right)  $ of Example \ref{Ex_counter_1} for
$\varepsilon=0$. Notice that $\left(  \mathcal{M},\rho\right)  $ is an
$\Re_{1}$ domain, $\operatorname{diam}\left(  \mathcal{M}\right)  =\pi/2$ and
$\operatorname{cosq}_{1}\left(  \overrightarrow{PO},\overrightarrow
{BQ}\right)  =1$, whereas $\mathcal{GC}\left[  \left\{  B,Q,O,P\right\}
\right]  =\mathcal{M}$ cannot be isometric to a convex domain in the
half-sphere $\mathbb{S}_{1}$.
\end{example}

\section{Proof of Theorem \ref{Thsemimetr} \label{SecThsemimetr}}

In this section, we extend Theorem \ref{MainTh} to complete weakly convex
semimetric spaces satisfying the one-sided four point $\operatorname{cosq}%
_{K}$ condition. We begin with the following

\begin{lemma}
\label{Lemma_semimetr}Let $K\neq0$ and let $\left(  \mathcal{M},\rho\right)  $
be a semimetric space such that $p\left(  \mathcal{T}\right)  <2\pi/\sqrt{K} $
if $K>0$ for every triple \ of distinct points $\mathcal{T=}\left\{
A,B,C\right\}  $ in $\mathcal{M}$. If $\left(  \mathcal{M},\rho\right)  $
satisfies the one-sided four point $\operatorname{cosq}_{K}$ condition, then
$\left(  \mathcal{M},\rho\right)  $ is a metric space.
\end{lemma}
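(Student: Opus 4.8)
The plan is to exploit that a semimetric already satisfies every metric axiom except the triangle inequality: $\rho$ is non-negative, symmetric, and vanishes exactly on the diagonal. So the entire content of the lemma is to deduce $\rho(A,C)\le\rho(A,B)+\rho(B,C)$ for every triple $A,B,C$ from the one-sided four point $\operatorname{cosq}_{K}$ condition. The cases in which two of the three points coincide are trivial, so I would fix three distinct points and observe that the triangle inequality for a triple is equivalent to its largest-side instance: if the largest of the three distances is bounded by the sum of the other two, the remaining two inequalities follow automatically. Hence I would relabel the points so that $r:=AC$ is the largest distance, set $p:=AB$ and $q:=BC$, and aim to prove $r\le p+q$.

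The key step is to feed two very degenerate four point configurations into the definition of $\operatorname{cosq}_{K}$. Using the two bound vectors with a common tail, $\overrightarrow{BA}$ and $\overrightarrow{BC}$ (so the tail-tail distance is $0$), the defining expression collapses after cancellation; for $K>0$ it becomes
\[
\operatorname{cosq}_{K}\left(\overrightarrow{BA},\overrightarrow{BC}\right)=\frac{\cos\kappa r-\cos\kappa p\cos\kappa q}{\sin\kappa p\sin\kappa q},
\]
which is precisely $\cos\measuredangle_{K}BAC$, while the head-to-tail configuration $\overrightarrow{AB},\overrightarrow{BC}$ (the shared point $B$ being the head of the first vector and the tail of the second) collapses to the negative of the same quantity. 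Consequently, if $(\mathcal{M},\rho)$ satisfies the lower condition I would apply it to the first configuration and read off $\cos\kappa r\ge\cos\kappa(p+q)$; if it satisfies the upper condition I would apply it to the second configuration and obtain the identical inequality. Thus, whichever half of the one-sided hypothesis holds, it produces the single scalar inequality $\cos\kappa r\ge\cos\kappa(p+q)$ for $K>0$, and analogously $\cosh\kappa r\le\cosh\kappa(p+q)$ for $K<0$ (with $\cosh,\sinh$ in place of $\cos,\sin$ throughout).

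The only real obstacle is the admissibility of these configurations and the passage from the trigonometric inequality back to $r\le p+q$ when $K>0$, since a priori an individual side could be as large as the perimeter bound permits. Two observations settle this. First, because $r=AC$ is the largest distance, if either $p$ or $q$ were $\ge\pi/\sqrt{K}$ then $r\ge\pi/\sqrt{K}$ as well, forcing $p+q+r\ge 2\pi/\sqrt{K}$ and contradicting the standing perimeter hypothesis; hence $p,q<\pi/\sqrt{K}$, so $\sin\kappa p,\sin\kappa q>0$, the tail-tail distance ($0$ or $p$) is $<\pi/\sqrt{K}$, and the one-sided condition is genuinely applicable to the chosen configuration. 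Second, to conclude from $\cos\kappa r\ge\cos\kappa(p+q)$, I would note that the perimeter bound gives $\kappa r+\kappa(p+q)<2\pi$ and invoke the elementary fact that $\cos u\ge\cos v$ with $u,v>0$ and $u+v<2\pi$ forces $u\le v$, which follows from $\cos u-\cos v=-2\sin\frac{u+v}{2}\sin\frac{u-v}{2}$ together with $0<\frac{u+v}{2}<\pi$ and $\frac{u-v}{2}\in(-\pi,\pi)$; this yields $r\le p+q$. For $K<0$ there is no range restriction and $\cosh$ is strictly increasing on $[0,\infty)$, so $\cosh\kappa r\le\cosh\kappa(p+q)$ gives $r\le p+q$ at once. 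Since the largest-side inequality implies the remaining two instances, this establishes the triangle inequality for every triple and hence shows that $(\mathcal{M},\rho)$ is a metric space.
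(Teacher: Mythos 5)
Your proof is correct, and it rests on the same basic device as the paper's: feed degenerate quadruples built from the triple (two bound vectors sharing a point) into the one-sided condition, so that $\operatorname{cosq}_{K}$ collapses to a law-of-cosines expression. But the execution is genuinely different, and in one respect tighter. The paper pairs the \emph{upper} condition with the common-tail configuration $\left(\overrightarrow{CA},\overrightarrow{CB}\right)$ and the \emph{lower} condition with the head-to-tail configuration $\left(\overrightarrow{CA},\overrightarrow{BC}\right)$; either way it arrives at the same inequality (\ref{ineq_sym}), i.e.\ the \emph{difference} bound $c\geq\left\vert a-b\right\vert$, which gives $b\leq a+c$, and the remaining triangle inequalities are dismissed as ``similar,'' i.e.\ by permuting the vertices. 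You pair the conditions the opposite way (lower with common tail, upper with head-to-tail), which produces the \emph{sum} bound $\cos\kappa r\geq\cos\kappa\left(p+q\right)$ directly, and you combine it with the reduction to the largest-side instance. This buys two things the paper's sketch glosses over for $K>0$: first, admissibility --- your configurations involve only the two shorter sides $p,q$ and tail-tail distances $0$ or $p$, which you check are $<\pi/\sqrt{K}$ from the perimeter bound, whereas the paper's vertex-permutation step needs \emph{all three} sides $<\pi/\sqrt{K}$, something the perimeter hypothesis alone does not guarantee (one side may be as large as nearly $2\pi/\sqrt{K}$ minus the other two, and for such a triple the permuted common-tail configurations are inadmissible while the difference bound alone cannot exclude it --- one really needs the sum bound, i.e.\ your pairing); second, a single application of the hypothesis suffices, and the passage from $\cos\kappa r\geq\cos\kappa\left(p+q\right)$ to $r\leq p+q$ is justified carefully via the perimeter bound and the product formula for $\cos u-\cos v$, a point the paper leaves implicit. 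The only slip is cosmetic: the common-tail quantity $\left(\cos\kappa r-\cos\kappa p\cos\kappa q\right)/\left(\sin\kappa p\sin\kappa q\right)$ is $\cos\measuredangle_{K}ABC$, the angle at the shared tail $B$, not $\cos\measuredangle_{K}BAC$; nothing in your argument depends on this identification.
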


\begin{proof}
Set $a=BC,$ $b=AC$ and $c=AB$. We have to prove the triangle inequality for
$\rho$.

\textbf{I}. \textbf{Let }$\left(  \mathcal{M},\rho\right)  $\textbf{\ satisfy
the upper four point }$\operatorname{cosq}_{K}$\textbf{\ condition}. Then%
\[
\operatorname{cosq}_{K}\left(  \overrightarrow{CA},\overrightarrow{CB}\right)
=\frac{\cos\widehat{\kappa}c-\cos\widehat{\kappa}a\cos\widehat{\kappa}b}%
{\sin\widehat{\kappa}a\sin\widehat{\kappa}b}\leq1,
\]
whence%
\begin{equation}
\frac{\cos\widehat{\kappa}c-\cos\widehat{\kappa}\left(  b-a\right)  }%
{\sin\widehat{\kappa}a\sin\widehat{\kappa}b}\leq0\text{.} \label{ineq_sym}%
\end{equation}

If $K>0$, we get: $\cos\kappa c-\cos\kappa\left(  b-a\right)  \leq0$, whence
$b\leq a+c$. If $K<0$, we get $\cosh\kappa c-\cosh\kappa\left(  b-a\right)
\geq0$, whence $b\leq a+c$. Verification of remaining triangle inequalities
for $\mathcal{T}$ is similar.

\textbf{II}. \textbf{Let }$\left(  \mathcal{M},\rho\right)  $\textbf{\ satisfy
the lower four point }$\operatorname{cosq}_{K}$\textbf{\ condition}. Then%
\[
\operatorname{cosq}_{K}\left(  \overrightarrow{CA},\overrightarrow{BC}\right)
=\frac{\cos\widehat{\kappa}a\cos\widehat{\kappa}b-\cos\widehat{\kappa}c}%
{\sin\widehat{\kappa}a\sin\widehat{\kappa}b}\geq-1,
\]
whence, (\ref{ineq_sym}) follows. As in I, this implies the triangle
inequality for $\rho$.

The proof of Lemma \ref{Lemma_semimetr} is complete.
\end{proof}

By Lemma \ref{Lemma_semimetr}, $\left(  \mathcal{M},\rho\right)  $ is a metric
space. Next, we show that $\left(  \mathcal{M},\rho\right)  $ is a (complete)
geodesically connected metric space. Let $A,B\in\mathcal{M}$, $A\neq B$. By
weak convexity, there is $\lambda\in\left(  0,1\right)  $\ such that, for
every $n=1,2,...$, there is a point $C_{n}\in\mathcal{M}$%
\ satisfying\textit{\ }%
\[
\left\vert \rho\left(  A,C_{n}\right)  -\lambda\rho\left(  A,B\right)
\right\vert <1/n\text{ \textit{and} }\left\vert \rho\left(  B,C_{n}\right)
-\left(  1-\lambda\right)  \rho\left(  A,B\right)  \right\vert <1/n.
\]

We claim that\textit{\ }$\left\{  C_{n}\right\}  _{n=1,2,...}$\textit{\ }is a
Cauchy sequence.\textit{\ }The proof uses no new ideas beside those \ of the
proof of Lemma \ref{Lemma_cont_geod}. Indeed, in the proof of Lemma
\ref{Lemma_cont_geod}, for $m\neq n$, take $A_{n}:=A,$ $B_{n}:=B,$ $P:=C_{n}$,
$P_{n}:=C_{m}$, see Fig. \ref{fig10}. Set $l=AB$, and $\overline{\delta}%
_{m,n}=\overline{\lim}_{m,n\rightarrow\infty}C_{n}C_{m}$ and%
\begin{align*}
AC_{n}-\lambda AB  &  =\varepsilon_{n}^{\prime}\rightarrow0\text{ as
}n\rightarrow\infty,\\
BC_{n}-\left(  1-\lambda\right)  AB  &  =\varepsilon_{n}^{\prime\prime
}\rightarrow0\text{ as }n\rightarrow\infty\text{ }.
\end{align*}

If\textbf{\ }$\left(  \mathcal{M},\rho\right)  $\textbf{\ }satisfies the upper
four point $\operatorname{cosq}_{K}$\textbf{\ }condition, then by
(\ref{cont_g_c}),
\begin{align*}
\overline{\lim}_{m,n\rightarrow\infty}\operatorname{cosq}\left(
\overrightarrow{AC_{n}},\overrightarrow{C_{m}B}\right)   &  =\\
1+\frac{\left[  1-\cos\left(  \widehat{\kappa}\overline{\delta}_{mn}\right)
\right]  \left[  \cos\left(  \widehat{\kappa}\left(  1-\lambda\right)
l\right)  +\cos\left(  \widehat{\kappa}l\right)  \right]  }{\left(  \left[
1+\cos\left(  \widehat{\kappa}\lambda l\right)  \right]  \right)  \left(
\sin\left(  \widehat{\kappa}\lambda l\right)  \right)  \sin\left(
\widehat{\kappa}\left(  1-\lambda\right)  l\right)  }  &  \leq1,
\end{align*}
whence, as in the proof of Lemma \ref{Lemma_cont_geod}, $\overline{\delta
}_{mn}=0$ follows. The case of the lower four point $\operatorname{cosq}_{K}%
$\textbf{\ }condition is similar. Thus, we showed that \textit{\ }$\left\{
C_{n}\right\}  _{n=1,2,...}$\textit{\ }is a Cauchy sequence. By part (b) of
the hypothesis of Theorem \ref{Thsemimetr}, $\left(  \mathcal{M},\rho\right)
$ is a complete metric space. Hence, the sequence $\left\{  C_{n}\right\}
_{n=1,2,...}$ converges to a point $C\in\mathcal{M}$ such that $AC=\lambda AB$
and $BC=\left(  1-\lambda\right)  AB$. We readily see that $AB=AC+CB$. So,
every pair $A,B$ of distinct points of $\mathcal{M}$ has a point $C$ between
them. By Menger's theorem \cite{Bl1970}, Theorem 14.1, a complete convex
metric space is geodesically connected. Finally, Theorem \ref{Thsemimetr}
follows from Theorem \ref{MainTh}.

The proof of Theorem \ref{Thsemimetr} is complete.

\section{$K$-quadrilateral inequality condition\label{euler_ineq}}

In this section, we derive $K$-Euler's inequality, a generalization of a
familiar Euler's inequality \cite[Corollary 4]{Eul750} (also known as Enflo's
$2$-roundness condition \cite{Enf1969}) for $\Re_{K}$ domains for non-zero
$K$, and study the case of equality in $K$-Euler's inequality.

\subsection{$K$-Euler inequality in $\mathbb{S}_{K}$\label{SecSK}}

\begin{proposition}
\label{Prop_Euler_Eq_SK}Let $K\neq0$ and let $\mathcal{R}$ be a convex
quadrangular domain in $\mathbb{S}_{K}$ bounded by a closed polygonal curve
$\mathcal{L=ABCDA}$. Let $O_{1}$ be the midpoint of the shortest diagonal
$\mathcal{BD}$ and $O_{2}$ be the midpoint of the shortest diagonal
$\mathcal{AC}$. Set
\[
a=AB,\text{ }b=BC,\text{ }c=CD,\text{ }d=AD,\text{ }e=BD,\text{ }f=AC\text{
and }g=O_{1}O_{2},
\]
as illustrated in Fig. \ref{fig21}.%
\begin{figure}
[pth]
\begin{center}
\includegraphics[
natheight=1.630000in,
natwidth=1.630000in,
height=1.547in,
width=1.547in
]%
{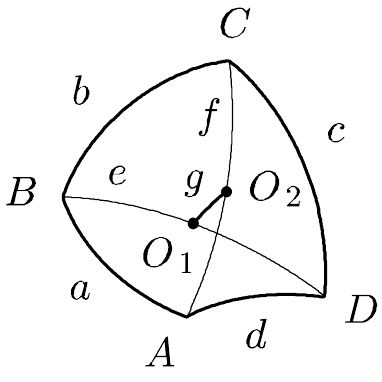}%
\caption{$K$-Euler's inequality in $\mathbb{S}_{K}$}%
\label{fig21}%
\end{center}
\end{figure}
Then the following equality, called $K$-Euler's equality holds:%
\[
\cos\widehat{\kappa}a+\cos\widehat{\kappa}b+\cos\widehat{\kappa}c+\cos
\widehat{\kappa}d=4\cos\widehat{\kappa}\frac{e}{2}\cos\widehat{\kappa}\frac
{f}{2}\cos\widehat{\kappa}g.
\]
In particular, if $K>0$, then
\[
\cos\kappa a+\cos\kappa b+\cos\kappa c+\cos\kappa d=4\cos\kappa\frac{e}{2}%
\cos\kappa\frac{f}{2}\cos\kappa g,
\]
and if $K<0$, then%
\[
\cosh\kappa a+\cosh\kappa b+\cosh\kappa c+\cosh\kappa d=4\cosh\kappa\frac
{e}{2}\cosh\kappa\frac{f}{2}\cosh\kappa g.
\]

\end{proposition}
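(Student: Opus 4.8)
The plan is to derive the $K$-Euler equality by three successive applications of the Bruhat--Tits midpoint equality, organized around the two diagonal midpoints $O_2\in\mathcal{AC}$ and $O_1\in\mathcal{BD}$. Throughout I would work in the unified notation $\widehat{\kappa}$ and use the Bruhat--Tits equality in the following form: if $M$ is the midpoint of the side opposite the vertex $R$ in a triangle $PQR$, then
\[
\cos\widehat{\kappa}(MR)=\frac{\cos\widehat{\kappa}(PR)+\cos\widehat{\kappa}(QR)}{2\cos\frac{\widehat{\kappa}(PQ)}{2}}.
\]
For $K>0$ this is precisely the midpoint case of Lemma \ref{LemmaBruhat}; for $K<0$ it is the completely analogous hyperbolic identity, proved by the same reduction to the (hyperbolic) cosine formula. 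Since $\mathcal{R}$ is convex, the diagonals $\mathcal{AC}$ and $\mathcal{BD}$ are genuine chords meeting at an interior point, so $O_1,O_2$ are well defined and the vertices $B,D$ lie on opposite sides of $\mathcal{AC}$; hence the triangles to which I apply the equality are non-degenerate (with one exceptional configuration noted below).

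First I would apply the equality to the midpoint $O_2$ of $\mathcal{AC}$ in the two triangles $ABC$ and $ACD$, whose shared side $AC$ has length $f$. This gives
\[
\cos\widehat{\kappa}(BO_2)=\frac{\cos\widehat{\kappa}a+\cos\widehat{\kappa}b}{2\cos\frac{\widehat{\kappa}f}{2}},\qquad
\cos\widehat{\kappa}(DO_2)=\frac{\cos\widehat{\kappa}c+\cos\widehat{\kappa}d}{2\cos\frac{\widehat{\kappa}f}{2}}.
\]
Adding these yields the intermediate identity
\[
\cos\widehat{\kappa}(BO_2)+\cos\widehat{\kappa}(DO_2)=\frac{\cos\widehat{\kappa}a+\cos\widehat{\kappa}b+\cos\widehat{\kappa}c+\cos\widehat{\kappa}d}{2\cos\frac{\widehat{\kappa}f}{2}},
\]
which already isolates the left-hand side of the target equality over the factor $2\cos\frac{\widehat{\kappa}f}{2}$.

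The final step is one more application of the Bruhat--Tits equality, this time to the triangle $BO_2D$, whose side $\mathcal{BD}$ of length $e$ has midpoint $O_1$ and whose opposite vertex is $O_2$. Since $O_1O_2=g$, this gives
\[
\cos\widehat{\kappa}g=\frac{\cos\widehat{\kappa}(BO_2)+\cos\widehat{\kappa}(DO_2)}{2\cos\frac{\widehat{\kappa}e}{2}},
\]
and substituting the intermediate identity and clearing denominators produces exactly
\[
\cos\widehat{\kappa}a+\cos\widehat{\kappa}b+\cos\widehat{\kappa}c+\cos\widehat{\kappa}d=4\cos\frac{\widehat{\kappa}e}{2}\cos\frac{\widehat{\kappa}f}{2}\cos\widehat{\kappa}g,
\]
from which the stated $K>0$ and $K<0$ specializations follow by unwinding $\widehat{\kappa}$. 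The computation itself is short; the only points requiring care are bookkeeping ones. The main (mild) obstacle is the degenerate configuration in which the diagonals bisect each other, so that $O_1=O_2$, the triangle $BO_2D$ collapses, and $g=0$: here one verifies the equality directly, either from the intermediate identity together with the degenerate ($\cos\widehat{\kappa}g=1$) form of Bruhat--Tits, or by a continuity argument. A secondary point to record explicitly is that the $K<0$ instance of the midpoint equality used above, although not separately displayed in the excerpt, is established by the same argument as Lemma \ref{LemmaBruhat} with $\cos$ replaced by $\cosh$.
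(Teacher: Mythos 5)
Your argument is correct, but it follows a genuinely different route from the paper's. The paper's proof cuts the quadrilateral along \emph{both} diagonals at their intersection point $O$ (this is precisely where convexity enters), expands $\cos\widehat{\kappa}a,\cos\widehat{\kappa}b,\cos\widehat{\kappa}c,\cos\widehat{\kappa}d$ by four applications of the cosine formula in $\mathbb{S}_{K}$ in terms of $x=BO$, $y=DO$, $z=AO$, $w=OC$ and the angle $\alpha=\measuredangle BOC$, regroups by product-to-sum identities to pull out the factor $4\cos\widehat{\kappa}\frac{e}{2}\cos\widehat{\kappa}\frac{f}{2}$, and finally identifies the leftover factor as $\cos\widehat{\kappa}g$ by one more application of the cosine formula, to the triangle $OO_{1}O_{2}$ with legs $OO_{1}=\frac{x-y}{2}$, $OO_{2}=\frac{w-z}{2}$ and included angle $\alpha$. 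You never introduce $O$ at all: you iterate the Bruhat--Tits midpoint equality (the midpoint case of Lemma \ref{LemmaBruhat}) three times, twice across the diagonal $\mathcal{AC}$ in the triangles $ABC$ and $ACD$, and once in the triangle $BO_{2}D$, whose side $\mathcal{BD}$ has midpoint $O_{1}$. This buys you two things: a shorter computation that reuses a lemma the paper already proves, and a formally stronger statement, since none of your three applications requires the diagonals to intersect, so your proof gives the identity for an \emph{arbitrary} quadruple in $\mathbb{S}_{K}$, convex or not; the degenerate configurations are harmless, handled either by the collinear form of the midpoint identity or by the continuity argument you invoke (one remark: the degenerate case for $BO_{2}D$ is $O_{2}$ lying on the geodesic through $B$ and $D$, which is slightly more general than your stated case $O_{1}=O_{2}$, but the continuity fallback covers it). The price is that you must supply the hyperbolic ($K<0$) version of Lemma \ref{LemmaBruhat}, which the paper never states; as you note, it follows by the same cosine-formula argument, and in fact the proof written in the unified $\widehat{\kappa}$-notation is identical for both signs of $K$. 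The paper's computation, by contrast, needs nothing beyond the cosine formula itself and makes the geometric origin of the factor $\cos\widehat{\kappa}g$ visible.
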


\begin{proof}
Let $O$ be the point of intersection of the shortests $\mathcal{BD}$ and
$\mathcal{AC}$. Set $x=BO,$ $y=DO,$ $z=AO$, $w=OC$. There is no restriction in
assuming that $x\geq y$ and $w\geq z$. Set $\alpha=\measuredangle BOC$. By the
cosine formula in $\mathbb{S}_{K}$,%
\begin{align*}
\cos\widehat{\kappa}a  &  =\cos\widehat{\kappa}x\cos\widehat{\kappa}%
z-\sin\widehat{\kappa}x\sin\widehat{\kappa}z\cos\alpha,\\
\cos\widehat{\kappa}b  &  =\cos\widehat{\kappa}x\cos\widehat{\kappa}%
w+\sin\widehat{\kappa}x\sin\widehat{\kappa}w\cos\alpha,\\
\cos\widehat{\kappa}c  &  =\cos\widehat{\kappa}y\cos\widehat{\kappa}%
w-\sin\widehat{\kappa}y\sin\widehat{\kappa}w\cos\alpha,\\
\cos\widehat{\kappa}d  &  =\cos\widehat{\kappa}y\cos\widehat{\kappa}%
z+\sin\widehat{\kappa}y\sin\widehat{\kappa}z\cos\alpha,
\end{align*}
whence%
\begin{align*}
&  \cos\widehat{\kappa}a+\cos\widehat{\kappa}b+\cos\widehat{\kappa}%
c+\cos\widehat{\kappa}d\\
&  =\cos\widehat{\kappa}x\cos\widehat{\kappa}z+\cos\widehat{\kappa}%
x\cos\widehat{\kappa}w+\cos\widehat{\kappa}w\cos\widehat{\kappa}y+\cos
\widehat{\kappa}y\cos\widehat{\kappa}z+\\
&  \left(  -\sin\widehat{\kappa}x\sin\widehat{\kappa}z+\sin\widehat{\kappa
}x\sin\widehat{\kappa}w-\sin\widehat{\kappa}y\sin\widehat{\kappa}%
w+\sin\widehat{\kappa}y\sin\widehat{\kappa}z\right)  \cos\alpha.
\end{align*}
Notice that%
\begin{align*}
&  \cos\widehat{\kappa}x\cos\widehat{\kappa}z+\cos\widehat{\kappa}%
x\cos\widehat{\kappa}w+\cos\widehat{\kappa}w\cos\widehat{\kappa}y+\cos
\widehat{\kappa}y\cos\widehat{\kappa}z\\
&  =4\cos\widehat{\kappa}\frac{z+w}{2}\cos\widehat{\kappa}\frac{w-z}{2}%
\cos\widehat{\kappa}\frac{x+y}{2}\cos\widehat{\kappa}\frac{x-y}{2}\\
&  =4\cos\widehat{\kappa}\frac{e}{2}\cos\widehat{\kappa}\frac{f}{2}%
\cos\widehat{\kappa}\frac{x-y}{2}\cos\widehat{\kappa}\frac{w-z}{2},
\end{align*}
and
\begin{align*}
&  -\sin\widehat{\kappa}x\sin\widehat{\kappa}z+\sin\widehat{\kappa}%
x\sin\widehat{\kappa}w-\sin\widehat{\kappa}y\sin\widehat{\kappa}w+\sin
\widehat{\kappa}y\sin\widehat{\kappa}z\\
&  =4\sin\widehat{\kappa}\frac{x-y}{2}\cos\widehat{\kappa}\frac{x+y}{2}%
\sin\widehat{\kappa}\frac{w-z}{2}\cos\widehat{\kappa}\frac{z+w}{2}\\
&  =4\cos\widehat{\kappa}\frac{e}{2}\cos\widehat{\kappa}\frac{f}{2}%
\sin\widehat{\kappa}\frac{x-y}{2}\sin\widehat{\kappa}\frac{w-z}{2}.
\end{align*}
We have:
\[
OO_{1}=x-\frac{x+y}{2}=\frac{x-y}{2},\text{ }OO_{2}=w-\frac{z+w}{2}=\frac
{w-z}{2}.
\]
So, by the cosine formula in $\mathbb{S}_{K}$,
\begin{align*}
&  \cos\widehat{\kappa}a+\cos\widehat{\kappa}b+\cos\widehat{\kappa}%
c+\cos\widehat{\kappa}d\\
&  =4\cos\widehat{\kappa}\frac{f}{2}\cos\widehat{\kappa}\frac{e}{2}\left(
\cos\widehat{\kappa}\frac{w-z}{2}\cos\widehat{\kappa}\frac{x-y}{2}%
+\sin\widehat{\kappa}\frac{w-z}{2}\sin\widehat{\kappa}\frac{x-y}{2}\cos
\alpha\right) \\
&  =4\cos\widehat{\kappa}\frac{f}{2}\cos\widehat{\kappa}\frac{e}{2}%
\cos\widehat{\kappa}g,
\end{align*}
as needed.

The proof of Proposition \ref{Prop_Euler_Eq_SK} is complete.
\end{proof}

\begin{corollary}
[$K$-Euler's inequality in $\mathbb{S}_{K}$]\label{Cor_eul}Under the
hypothesis of Proposition \ref{Prop_Euler_Eq_SK}, the following inequalities
hold:\newline(a) If $K>0$, then
\[
\cos\kappa a+\cos\kappa b+\cos\kappa c+\cos\kappa d\leq4\cos\kappa\frac{e}%
{2}\cos\kappa\frac{f}{2},
\]
(b) If $K<0$, then%
\[
\cosh\kappa a+\cosh\kappa b+\cosh\kappa c+\cosh\kappa d\geq4\cosh\kappa
\frac{e}{2}\cosh\kappa\frac{f}{2}.
\]

\end{corollary}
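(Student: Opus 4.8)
The plan is to read off both inequalities directly from the $K$-Euler's \emph{equality} of Proposition \ref{Prop_Euler_Eq_SK}, the only extra ingredients being the sign of the factor $\cos\widehat{\kappa}g$ together with the strict positivity of the two ``half-diagonal'' cosines. By that proposition,
\[
\cos\widehat{\kappa}a+\cos\widehat{\kappa}b+\cos\widehat{\kappa}c+\cos\widehat{\kappa}d=4\cos\widehat{\kappa}\tfrac{e}{2}\cos\widehat{\kappa}\tfrac{f}{2}\cos\widehat{\kappa}g,
\]
so it suffices to compare the right-hand side with $4\cos\widehat{\kappa}\tfrac{e}{2}\cos\widehat{\kappa}\tfrac{f}{2}$, i.e.\ to control the extra factor $\cos\widehat{\kappa}g$.

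First I would treat the case $K>0$, where $\widehat{\kappa}=\kappa$. Since $\mathcal{R}$ is a convex domain in the open hemisphere $\mathbb{S}_{K}$ of radius $1/\kappa$, any two of its points are at geodesic distance strictly less than $\pi/\kappa$; in particular the diagonals satisfy $e,f<\pi/\kappa$, hence $\kappa e/2,\kappa f/2<\pi/2$ and therefore $\cos\kappa\tfrac{e}{2}>0$ and $\cos\kappa\tfrac{f}{2}>0$. Because $\cos\kappa g\leq1$ always, multiplying the positive quantity $4\cos\kappa\tfrac{e}{2}\cos\kappa\tfrac{f}{2}$ by $\cos\kappa g$ can only decrease it, and part (a) follows at once.

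For $K<0$ one argues symmetrically with $\widehat{\kappa}=i\kappa$, so that every $\cos\widehat{\kappa}(\cdot)$ becomes $\cosh\kappa(\cdot)$. Here $\cosh\kappa\tfrac{e}{2},\cosh\kappa\tfrac{f}{2}\geq1>0$ and $\cosh\kappa g\geq1$, so the product $4\cosh\kappa\tfrac{e}{2}\cosh\kappa\tfrac{f}{2}\cosh\kappa g$ is at least $4\cosh\kappa\tfrac{e}{2}\cosh\kappa\tfrac{f}{2}$, giving part (b). I do not anticipate any genuine obstacle here: the only point deserving a word of justification is the positivity of the half-diagonal factors in the spherical case, which rests on the elementary bound on distances inside an open hemisphere recorded above; everything else is immediate from the equality together with the universal inequalities $\cos\leq1$ and $\cosh\geq1$.
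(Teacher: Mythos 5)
Your proposal is correct and is exactly the argument the paper intends: the corollary is stated as an immediate consequence of Proposition \ref{Prop_Euler_Eq_SK}, obtained by discarding the factor $\cos\widehat{\kappa}g$ via $\cos\kappa g\leq 1$ (resp.\ $\cosh\kappa g\geq 1$), with the positivity of $\cos\kappa\frac{e}{2}$ and $\cos\kappa\frac{f}{2}$ in the spherical case following from $e,f<\pi/\kappa$ in the open hemisphere, just as you note.
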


\subsection{$K$-Euler's inequality in $\Re_{K}$}

\begin{theorem}
\label{Th_quad_neq}Let $K\neq0$ and $\mathfrak{Q=}\left\{  A,B,C,D\right\}  $
be a quadruple of distinct points in an $\Re_{K}$ domain. If $AB+BC+CD+AD<2\pi
/\sqrt{K}$ if $K>0$, then the following inequalities (called $K$-Euler's, or
$K$-quadrilateral inequalities) hold:\newline(a) If $K>0$, then
\begin{equation}
\cos\kappa a+\cos\kappa b+\cos\kappa c+\cos\kappa d\leq4\cos\kappa\frac{e}%
{2}\cos\kappa\frac{f}{2}, \label{Eul_ineqKpos}%
\end{equation}
(b) If $K<0$, then%
\begin{equation}
\cosh\kappa a+\cosh\kappa b+\cosh\kappa c+\cosh\kappa d\geq4\cosh\kappa
\frac{e}{2}\cosh\kappa\frac{f}{2}, \label{Eul_ineqKneg}%
\end{equation}
where we use the notation of Sec. \ref{SecSK}.
\end{theorem}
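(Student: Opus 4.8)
The plan is to reduce the inequality in the $\Re_{K}$ domain to the already-established inequality in $\mathbb{S}_{K}$ (Corollary \ref{Cor_eul}) by means of Reshetnyak's majorization theorem, and then to transfer it back using the monotonicity of $\cos$ (resp. $\cosh$) together with the fact that the majorizing map can only lengthen the diagonals. First I would form the closed polygonal curve $\mathcal{L}=\mathcal{ABCDA}$ built from shortests $\mathcal{AB},\mathcal{BC},\mathcal{CD},\mathcal{DA}$; by the perimeter hypothesis its length equals $a+b+c+d<2\pi/\sqrt{K}$ when $K>0$, so Reshetnyak's theorem applies and yields a convex quadrangular domain $\mathcal{V}\subseteq\mathbb{S}_{K}$ whose boundary $\mathcal{L}'=\mathcal{A}'\mathcal{B}'\mathcal{C}'\mathcal{D}'\mathcal{A}'$ is a closed polygonal curve with preserved side lengths $A'B'=a$, $B'C'=b$, $C'D'=c$, $D'A'=d$, together with a non-expanding map $\phi\colon\mathcal{V}\to\mathcal{M}$ sending $A'\mapsto A$, $B'\mapsto B$, $C'\mapsto C$, $D'\mapsto D$.

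Since the cyclic order of the vertices is preserved, the diagonals of the convex quadrangle $\mathcal{L}'$ connect the same opposite pairs as $e=BD$ and $f=AC$. Because $\phi$ is non-expanding, $f=\rho(\phi(A'),\phi(C'))\le A'C'=:f'$ and $e=\rho(\phi(B'),\phi(D'))\le B'D'=:e'$. Next I would apply the $\mathbb{S}_{K}$ form of $K$-Euler's inequality (Corollary \ref{Cor_eul}) to $\mathcal{L}'$: for $K>0$, $\cos\kappa a+\cos\kappa b+\cos\kappa c+\cos\kappa d\le 4\cos\kappa\frac{e'}{2}\cos\kappa\frac{f'}{2}$, and the analogous reversed inequality with $\cosh$ for $K<0$.

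To finish I would invoke monotonicity. Since $\operatorname{diam}(\Re_{K})<\pi/\sqrt{K}$ for $K>0$, every pairwise distance lies below $\pi/\kappa$, and each diagonal $e',f'$ of the convex majorant, being bounded by half the perimeter, likewise satisfies $e',f'<\pi/\kappa$; hence all half-arguments lie in $(0,\pi/(2\kappa))$ where $\cos\kappa(\,\cdot\,/2)>0$. For $K>0$, from $e\le e'$, $f\le f'$ and the fact that $\cos$ is decreasing on this range, $\cos\kappa\frac{e}{2}\ge\cos\kappa\frac{e'}{2}>0$ and $\cos\kappa\frac{f}{2}\ge\cos\kappa\frac{f'}{2}>0$, so $4\cos\kappa\frac{e}{2}\cos\kappa\frac{f}{2}\ge 4\cos\kappa\frac{e'}{2}\cos\kappa\frac{f'}{2}$; chaining with Corollary \ref{Cor_eul} gives (\ref{Eul_ineqKpos}). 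For $K<0$, $\cosh$ is increasing, so $4\cosh\kappa\frac{e'}{2}\cosh\kappa\frac{f'}{2}\ge 4\cosh\kappa\frac{e}{2}\cosh\kappa\frac{f}{2}$, and chaining yields (\ref{Eul_ineqKneg}).

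The step I expect to demand the most care is guaranteeing that the majorant is a genuine convex quadrangle to which Proposition \ref{Prop_Euler_Eq_SK} and Corollary \ref{Cor_eul} apply. Reshetnyak's theorem delivers convexity, but the quadrangle $\mathcal{L}'$ may degenerate (a vertex angle equal to $\pi$, collapsing it to a triangle or even a shortest), in which case the diagonals need not meet at an interior point $O$ as assumed in the derivation of the $K$-Euler equality. I would dispose of this either by a direct verification of the $K$-Euler inequality in $\mathbb{S}_{K}$ for such degenerate configurations, where the quadruple lies on finitely many shortests, or by approximating $\mathcal{L}'$ by non-degenerate convex quadrangles and passing to the limit; all the remaining manipulations are routine monotonicity estimates.
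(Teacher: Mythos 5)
Your proposal is correct and follows essentially the same route as the paper's proof: Reshetnyak majorization of the closed polygonal curve $\mathcal{ABCDA}$, application of the $\mathbb{S}_{K}$ form of $K$-Euler's inequality (Corollary \ref{Cor_eul}) to the convex majorant with preserved side lengths, and then the monotonicity of $\cos\kappa(\cdot/2)$ (resp.\ $\cosh\kappa(\cdot/2)$) together with $e\le e'$, $f\le f'$. The degeneracy issue you flag (the majorant collapsing to a triangle or a segment) is passed over silently in the paper, and either of your suggested remedies would close that small gap.
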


\begin{proof}
Consider the closed polygonal curve $\mathcal{L=ABCDA}$ in $\Re_{K}$. We are
given that the length of $\mathcal{L}$ is less than $2\pi/\sqrt{K}$ if $K>0$.
By Reshetnyak's majorization theorem, there is a convex domain
$\mathcal{V\subseteq}\mathbb{S}_{K}$ bounded by a polygonal curve
$\mathcal{L}^{\prime}=\mathcal{A}^{\prime}\mathcal{B}^{\prime}\mathcal{C}%
^{\prime}\mathcal{D}^{\prime}\mathcal{A}^{\prime}$ such that
\begin{align*}
a  &  =AB=a^{\prime}=A^{\prime}B^{\prime},\text{ }b=BC=b^{\prime}=B^{\prime
}C^{\prime},\\
c  &  =CD=c^{\prime}=C^{\prime}D^{\prime},\text{ }d=AD=d^{\prime}=A^{\prime
}D^{\prime},\\
e  &  =BD\leq e^{\prime}=B^{\prime}D^{\prime}\text{ and }f=AC\leq f^{\prime
}=A^{\prime}C^{\prime}.
\end{align*}
Let $K>0$. By invoking Corollary \ref{Cor_eul}, we get:%
\begin{align*}
&  \cos\kappa a+\cos\kappa b+\cos\kappa c+\cos\kappa d\\
&  =\cos\kappa a^{\prime}+\cos\kappa b^{\prime}+\cos\kappa c^{\prime}%
+\cos\kappa d^{\prime}\leq\\
4\cos\kappa\frac{e^{\prime}}{2}\cos\kappa\frac{f^{\prime}}{2}  &  \leq
4\cos\kappa\frac{e}{2}\cos\kappa\frac{f}{2},
\end{align*}
as needed. The case of negative $K$ is treated in a similar way.

The proof of Theorem \ref{Th_quad_neq} is complete.
\end{proof}

\subsection{Extremal theorem for $K$-Euler's inequality}

The following theorem extends the second part of Theorem 6 in
\cite{BergNik2008} to the case of non-zero $K$.

\begin{theorem}
\label{Th_eul_extr}Let $K\neq0$ and $\mathfrak{Q=}\left\{  A,B,C,D\right\}  $
be a quadruple of distinct points in an $\Re_{K}$ domain. Suppose that
$AB+BC+CD+AD<2\pi/\sqrt{K}$ if $K>0$. Then the equality sign in $K$-Euler's
inequality (\ref{Eul_ineqKpos}) \ for positive $K$ and in (\ref{Eul_ineqKneg})
for negative $K$ holds if and only if the geodesic convex hull of
$\mathfrak{Q}$ is isometric to a parallelogramoidal domain $\mathcal{V}$ in
$\mathbb{S}_{K}$, i.e., a segment of straight line or a closed domain bounded
by a closed polygonal curve $\mathcal{L}^{\prime}=\mathcal{A}^{\prime
}\mathcal{B}^{\prime}\mathcal{C}^{\prime}\mathcal{D}^{\prime}\mathcal{A}%
^{\prime}$ such that $\operatorname{cosq}_{K}\left(  \overrightarrow
{A^{\prime}D^{\prime}},\overrightarrow{C^{\prime}B^{\prime}}\right)  =-1$ and
$x=y$ (and thereby, $a=b$).
\end{theorem}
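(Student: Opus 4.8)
The plan is to run the equality analysis directly on the proof of Theorem \ref{Th_quad_neq}, isolate the two places where an inequality was used, show that equality forces each of them to be tight, and then read off the geometric meaning of tightness from Proposition \ref{Prop_Euler_Eq_SK} and the rigidity Lemma \ref{Lemma_hull}.

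First I would treat the forward implication, say for $K>0$ (the case $K<0$ with (\ref{Eul_ineqKneg}) is identical after replacing $\cos$ by $\cosh$ and reversing the inequalities). Assume equality holds in (\ref{Eul_ineqKpos}). Reproducing the proof of Theorem \ref{Th_quad_neq}, Reshetnyak's majorization theorem supplies a convex domain $\mathcal{V}\subseteq\mathbb{S}_{K}$ bounded by a quadrangle $\mathcal{A}'\mathcal{B}'\mathcal{C}'\mathcal{D}'$ with equal sides ($a=a'$, $b=b'$, $c=c'$, $d=d'$) and with diagonals satisfying $e\le e'$ and $f\le f'$. The inequality was obtained as the chain
\[
\cos\kappa a+\cos\kappa b+\cos\kappa c+\cos\kappa d=\cos\kappa a'+\cos\kappa b'+\cos\kappa c'+\cos\kappa d'\le 4\cos\tfrac{\kappa e'}{2}\cos\tfrac{\kappa f'}{2}\le 4\cos\tfrac{\kappa e}{2}\cos\tfrac{\kappa f}{2},
\]
where the first $\le$ is Corollary \ref{Cor_eul} applied inside $\mathcal{V}$ and the second is monotonicity of $\cos$. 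Equality of the two ends forces both $\le$ to be equalities. For the second one I would note that, by the triangle inequality, each diagonal of the quadrangle is at most half its perimeter and hence $<\pi/\kappa$, so the half-diagonals lie in $(0,\pi/(2\kappa))$, where $\cos\tfrac{\kappa\,\cdot}{2}$ is positive and strictly decreasing; together with $e\le e'$ and $f\le f'$ this yields $e=e'$ and $f=f'$. Consequently all six pairwise distances of $\mathfrak{Q}=\{A,B,C,D\}$ coincide with those of $\{A',B',C',D'\}$, so $\mathfrak{Q}$ is isometric, respecting cyclic order, to the vertex set of the convex quadrangle bounding $\mathcal{V}$.

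Next I would invoke the rigidity Lemma \ref{Lemma_hull}: this isometry of quadruples extends to an isometry of the geodesic convex hull $\mathcal{GC}[\mathfrak{Q}]$ onto $\mathcal{V}$. It remains to identify $\mathcal{V}$ as parallelogramoidal. Equality in the first inequality, via Proposition \ref{Prop_Euler_Eq_SK} and Corollary \ref{Cor_eul}, is exactly $\cos\widehat{\kappa}g'=1$, i.e. the distance $g'$ between the midpoints of the two diagonals of $\mathcal{V}$ vanishes; thus the diagonals of $\mathcal{V}$ share a common midpoint $O$. I would then translate this common-midpoint property into the stated form using the $\mathbb{S}_{K}^{3}$ interpretation of $\operatorname{cosq}_{K}$ from Section \ref{cosqk_in_SK}: with $O$ the midpoint of $\mathcal{A}'\mathcal{C}'$, the vertex $B'$ is the point symmetric to $D'$ about $O$, which is precisely $\operatorname{cosq}_{K}(\overrightarrow{A'D'},\overrightarrow{C'B'})=-1$ together with the equality $x=y$ of the corresponding legs. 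Hence $\mathcal{V}$ is a parallelogramoidal domain, the degenerate (collinear) case being the limiting instance in which $\mathcal{V}$ collapses to a segment.

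Finally, the converse is short and direct: if $\mathcal{GC}[\mathfrak{Q}]$ is isometric to a parallelogramoidal domain $\mathcal{V}$, then the six distances of $\mathfrak{Q}$ equal those of the vertices of $\mathcal{V}$, and in $\mathcal{V}$ the diagonals share a midpoint, so $g=0$ and $\cos\widehat{\kappa}g=1$; Proposition \ref{Prop_Euler_Eq_SK} then gives $K$-Euler's \emph{equality} for the vertices of $\mathcal{V}$, which transports verbatim to $\mathfrak{Q}$. The main obstacle I anticipate is the bookkeeping in the forward direction: making sure a single scalar equality propagates to \emph{both} diagonal equalities $e=e'$, $f=f'$ (this is where strict monotonicity and the $<\pi/\kappa$ bound on diagonals are essential), and then matching the common-midpoint condition $g'=0$ cleanly with the $\operatorname{cosq}_{K}=-1$ description of a parallelogramoid rather than merely with the informal statement that the diagonals bisect each other.
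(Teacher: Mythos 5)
Your proposal is correct and follows essentially the same route as the paper's proof: Reshetnyak majorization, forcing equality in both links of the chain, Proposition \ref{Prop_Euler_Eq_SK} to get $g'=0$, the $\mathbb{S}_{K}^{3}$ interpretation of $\operatorname{cosq}_{K}$ to identify the parallelogramoid, and Lemma \ref{Lemma_hull} for the hull isometry, with the converse argued identically. The only difference is cosmetic ordering (you recover $e=e'$, $f=f'$ before extracting $g'=0$, whereas the paper does the reverse, proving the diagonal equalities by the same strict-monotonicity contradiction).
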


\begin{proof}
We can assume that $\mathfrak{Q}$ is not isometric to a quadruple of points in
$%
\mathbb{R}
$. Set
\[
a=AB,\text{ }y=BC,\text{ }b=CD,\text{ }x=AD,\text{ }d=AC\text{ and }f=BD.
\]
Let $K>0$.

\textbf{I. } Suppose that%
\begin{equation}
\cos\kappa a+\cos\kappa b+\cos kx+\cos\kappa y=4\cos\kappa\frac{f}{2}%
\cos\kappa\frac{d}{2}. \label{Eul_eq}%
\end{equation}
By Reshetnyak's majorization theorem, there is a convex domain $\mathcal{V}$
bounded by closed polygonal curve $\mathcal{L}^{\prime}=\mathcal{A}^{\prime
}\mathcal{B}^{\prime}\mathcal{C}^{\prime}\mathcal{D}^{\prime}$ in
$\mathbb{S}_{K}$ majorizing the closed polygonal curve $\mathcal{L=ABCD}$. Let
$d^{\prime}=A^{\prime}C^{\prime}$ and $f^{\prime}=B^{\prime}D^{\prime}$, as
illustrated in Fig. \ref{fig22}.%
\begin{figure}
[pth]
\begin{center}
\includegraphics[
natheight=1.387400in,
natwidth=3.049700in,
height=1.3219in,
width=2.8726in
]%
{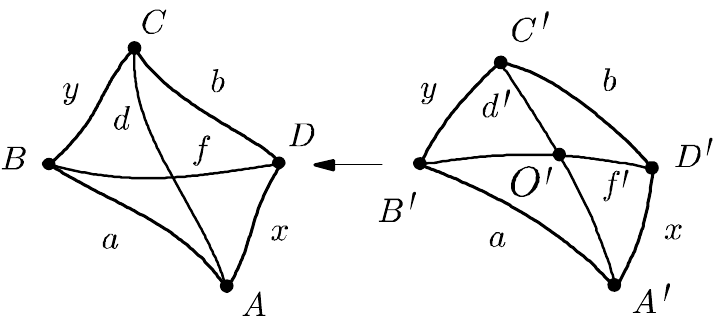}%
\caption{Sketch for Theorem \ref{Th_eul_extr}}%
\label{fig22}%
\end{center}
\end{figure}
We have: $f\leq f^{\prime} $ and $d\leq d^{\prime}$, whence by recalling
Corollary \ref{Cor_eul}, we get:
\begin{align*}
\cos\kappa a+\cos\kappa b+\cos kx+\cos\kappa y  &  =4\cos\kappa\frac{f}{2}%
\cos\kappa\frac{d}{2}\geq4\cos\kappa\frac{f^{\prime}}{2}\cos\kappa
\frac{d^{\prime}}{2}\geq\\
&  \cos\kappa a+\cos\kappa b+\cos kx+\cos\kappa y.
\end{align*}
Hence, by Proposition \ref{Prop_Euler_Eq_SK},
\[
4\cos\kappa\frac{f^{\prime}}{2}\cos\kappa\frac{d^{\prime}}{2}\cos\kappa
g^{\prime}=4\cos\kappa\frac{f^{\prime}}{2}\cos\kappa\frac{d^{\prime}}{2},
\]
where $g^{\prime}$ is the distance between the midpoints of the shortests
$\mathcal{A}^{\prime}\mathcal{C}^{\prime}$ and $\mathcal{B}^{\prime
}\mathcal{D}^{\prime}$. So, $g^{\prime}=0$, that is, the shortests
$\mathcal{A}^{\prime}\mathcal{C}^{\prime}$ and $\mathcal{B}^{\prime
}\mathcal{D}^{\prime} $ intersect at their common midpoint $O^{\prime}$. By
recalling the geometric interpretation of $\operatorname{cosq}_{K}$ in
$\mathbb{S}_{K}$ in Sec. \ref{cosqk_in_SK} (see Fig. \ref{fig2} where
$A:=A^{\prime},$ $B:=C^{\prime}, $ $P:=D^{\prime}$, $P^{\prime}:=B^{\prime}$
and $O:=O^{\prime} $), we readily see that $\operatorname{cosq}_{K}\left(
\overrightarrow{A^{\prime}D^{\prime}},\overrightarrow{C^{\prime}B^{\prime}%
}\right)  =-1$, $x=y $ and $a=b$.

Now we show that $f=f^{\prime}$ and $d=d^{\prime}$. Indeed, recall that $f\leq
f^{\prime}$ and $d\leq d^{\prime}$. If, say, $f<f^{\prime}$, then because $f,$
$d,$ $f^{\prime},$ $d^{\prime}\in(0,\pi)$,%
\[
\cos\kappa a+\cos\kappa b+\cos kx+\cos\kappa y=4\cos\kappa\frac{f^{\prime}}%
{2}\cos\kappa\frac{d^{\prime}}{2}<4\cos\kappa\frac{f}{2}\cos\kappa\frac{d}%
{2},
\]
a contradiction because of (\ref{Eul_eq}). Hence, the quadruple $\mathfrak{Q}%
=\left\{  A,\text{ }B,\text{ }C,\text{ }D\right\}  $ is isometric to the
quadruple $\mathfrak{Q}^{\prime}=\left\{  A^{\prime},\text{ }B^{\prime},\text{
}C^{\prime},\text{ }D^{\prime}\right\}  $, whence, by Lemma \ref{Lemma_hull},
$\mathcal{GC}\left[  \mathfrak{Q}\right]  $ is isometric to the
parallelogramoidal domain $\mathcal{V}$, as claimed.

\textbf{II. }Let $f$ be an isometry from $\mathcal{GC}\left[  \mathfrak{Q}%
\right]  $ onto a parallelogramoidal domain $\mathcal{V}$ in $\mathbb{S}_{K}$
bounded by the closed polygonal curve $\mathcal{L}^{\prime}=\mathcal{A}%
^{\prime}\mathcal{B}^{\prime}\mathcal{C}^{\prime}\mathcal{D}^{\prime}$ such
that $f\left(  A\right)  =A^{\prime},$ $f\left(  B\right)  =B^{\prime},$
$f\left(  C\right)  =C^{\prime}$ and $f\left(  D\right)  =D^{\prime}$. As we
mentioned in I, the shortests $\mathcal{A}^{\prime}\mathcal{C}^{\prime}$ and
$\mathcal{B}^{\prime}\mathcal{D}^{\prime}$ intersect at the common midpoint
$O^{\prime}$, i.e., $g^{\prime}=0$. Hence, by Proposition
\ref{Prop_Euler_Eq_SK},
\[
\cos\kappa a+\cos\kappa b+\cos kx+\cos\kappa y=4\cos\kappa\frac{f^{\prime}}%
{2}\cos\kappa\frac{d^{\prime}}{2}=4\cos\kappa\frac{f}{2}\cos\kappa\frac{d}%
{2},
\]
as needed.

The case of negative $K$ is similar.

The proof of Theorem \ref{Th_eul_extr} is complete.
\end{proof}

\section{Remarks\label{Remarks}}

In Sec. 7, part I, Example 21 in \cite{BergNik2008}, we showed that, for an
individual quadruple of points, the four point $\operatorname{cosq}_{0}$
condition need not imply $0$-concavity, Berestovskii's embeddability condition
or Reshetnyak's majorization condition for $K=0$. It is not difficult to
construct a similar example for non-zero $K$.

\begin{example}
\label{concl_remarks}Let $\mathfrak{Q}\mathcal{=}\left\{  A,B,C,O\right\}  $
be a four element set. The six (symmetric) distances between the pairs of
points in $\mathfrak{Q}$ are given by
\begin{align*}
\rho\left(  A,B\right)   &  =0.8,\text{ }\rho\left(  B,C\right)  =1,\text{
}\rho\left(  C,O\right)  =0.95,\\
\rho\left(  A,O\right)   &  =0.4,\text{ }\rho\left(  B,O\right)  =0.4\text{
and }\rho\left(  A,C\right)  =1.
\end{align*}
It is easy to see that $\rho$ is a metric. If we take $A:=A,P:=B,$ $B:=O$ and
$Q:=C$, then in the notation of Sec. \ref{Counter_Examples} all $12$ main
(approximate) values of $\operatorname{cosq}_{1}$ and $\operatorname{cosq}%
_{-1}$ for the four point metric space $\left(  \mathcal{M},\rho\right)  $ are
given in Tables \ref{table6} and \ref{table7}.\newline%
\begin{table}[h]
\begin{center}%
$%
\begin{tabular}
[c]{|l|l|l|l|l|l|l|}\hline
Case & I & II & III & IV & V & VI\\\hline
$\operatorname{cosq}_{1}$ & $0.0012$ & $0.2048$ & $-0.2865$ & $0.6466$ &
$-0.2865$ & $0.2841$\\\hline
Case & VII & VIII & IX & X & XI & XII\\\hline
$\operatorname{cosq}_{1}$ & $0.0012$ & $0.2048$ & $0.6466$ & $0.2841$ &
$-0.4756$ & $-0.4756$\\\hline
\end{tabular}
\ $%
\end{center}
\caption{Example \ref{concl_remarks}, $K=1$}
\label{table6}
\end{table}%
\newline\newline%
\begin{table}[h]
\begin{center}%
$%
\begin{tabular}
[c]{|l|l|l|l|l|l|l|}\hline
Case & I & II & III & IV & V & VI\\\hline
$\operatorname{cosq}_{-1}$ & $-0.0106$ & $-0.1647$ & $-0.6208$ & $0.3287$ &
$-0.6208$ & $0.6406$\\\hline
Case & VII & VIII & IX & X & XI & XII\\\hline
$\operatorname{cosq}_{-1}$ & $-0.0106$ & $-0.1647$ & $0.3287$ & $0.6406$ &
$-0.4887$ & $-0.4887$\\\hline
\end{tabular}
\ $%
\end{center}
\caption{Example \ref{concl_remarks}, $K=-1$}
\label{table7}
\end{table}%
\newline\newline Hence, $\left(  \mathfrak{Q},\rho\right)  $ satisfies the
upper four point $\operatorname{cosq}_{K}$ condition and the lower four point
$\operatorname{cosq}_{K}$ condition for $K=\pm1$. Notice, that $\mathfrak{Q}$
is a triangular quadruple: $O$ is between $A$ and $B$. The quadruple
$\mathfrak{O}$ is not a non-rectilinear quadruple satisfying Case A in
\cite{Ber1986}, as is required in Theorem 5 in \cite[Sec. 3.]{Ber1986}. Let
$\mathcal{T}_{+}^{\prime}=A_{+}^{\prime}B_{+}^{\prime}C_{+}^{\prime}$ be a
triangle in $\mathbb{S}_{1}$ and $\mathcal{T}_{-}^{\prime}=A_{-}^{\prime}%
B_{-}^{\prime}C_{-}^{\prime}$ in $\mathbb{S}_{-1}$ be such that the triple
$\left\{  A,B,C\right\}  $ is isometric to $\left\{  A_{+}^{\prime}%
,B_{+}^{\prime},C_{+}^{\prime}\right\}  $ and $\left\{  A_{-}^{\prime}%
,B_{-}^{\prime},C_{-}^{\prime}\right\}  $. Let $O_{+}^{\prime}$ be the
midpoint of the shortest $\mathcal{A}_{+}^{\prime}\mathcal{B}_{+}^{\prime}$
and $O_{-}^{\prime}$ be the midpoint of the shortest $\mathcal{A}_{-}^{\prime
}\mathcal{B}_{-}^{\prime}$. By Lemma \ref{LemmaBruhat} and similar formula for
$K=-1$, both approximate values for $C_{+}^{\prime}O_{+}^{\prime}$ and
$C_{-}^{\prime}O_{-}^{\prime}$ are easy to calculate:
\begin{align*}
C_{+}^{\prime}O_{+}^{\prime}  &  =\arccos\left(  \frac{\cos1}{\cos0.4}\right)
\approx0.943\,9<0.95=CO\text{ and}\\
C_{-}^{\prime}O_{-}^{\prime}  &  =\operatorname{arccosh}\left(  \frac{\cosh
1}{\cosh0.4}\right)  \approx0.8944<0.95=CO\text{.}%
\end{align*}
Thus, the $K$-concavity condition fails for the triangular quadruple
$\mathfrak{Q}$, and, as a corollary, both Berestovskii's embeddability
condition and Reshetnyak's majorization condition for $K=\pm1$ fail.
\end{example}

In (c) of Part I in \cite[Sec. 7]{BergNik2008}, we erroneously omitted the
condition that the triangular quadruple cannot be rectilinear and it cannot
satisfy case A in \cite{Ber1986}. We thank Professor Berestovskii for pointing
this out in a personal communication.

\end{document}